\def\section{\@startsection{section}{1}%
\z@{.7\linespacing\@plus\linespacing}{.5\linespacing}%
{\normalfont\bfseries\centering}}
\def\@settitle{\begin{center}%
  \baselineskip14\p@\relax
    \bfseries
    \LARGE\@title
  \end{center}%
}
\def\@setauthors{%
  \begingroup
  \trivlist
  \centering\footnotesize \@topsep30\p@\relax
  \advance\@topsep by -\baselineskip
  \item\relax
  \andify\authors
  \def\\{\protect\linebreak}%
 {\Large\authors}%
  \endtrivlist
  \endgroup
}
\def\maketitle{\par
  \@topnum\z@ % this prevents figures from falling at the top of page 1
  \@setcopyright
  \thispagestyle{firstpage}% this sets first page specifications
  %\uppercasenonmath\shorttitle
  \ifx\@empty\shortauthors \let\shortauthors\shorttitle
  \else \andify\shortauthors
  \fi
  \@maketitle@hook
  \begingroup
  \@maketitle
  \toks@\@xp{\shortauthors}\@temptokena\@xp{\shorttitle}%
  \toks4{\def\\{ \ignorespaces}}% defend against questionable usage
  \edef\@tempa{%
    \@nx\markboth{\the\toks4
%      \@nx\MakeUppercase{\the\toks@}}{\the\@temptokena}}%
      \@nx{\the\toks@}}{\the\@temptokena}}%
  \@tempa
  \endgroup
  \c@footnote\z@
  \def\do##1{\let##1\relax}%
  \do\maketitle \do\@maketitle \do\title \do\@xtitle \do\@title
  \do\author \do\@xauthor \do\address \do\@xaddress
  \do\email \do\@xemail \do\curraddr \do\@xcurraddr
  \do\commby \do\@commby
  \do\dedicatory \do\@dedicatory \do\thanks \do\thankses
  \do\keywords \do\@keywords \do\subjclass \do\@subjclass
}
\newtheorem{defi}{Definition}[section]
\newtheorem{lem}[defi]{Lemma}
\newtheorem{prop}[defi]{Proposition}
\newtheorem{theo}[defi]{Theorem}
\newtheorem{cor}[defi]{Corollary}
\newtheorem{rk}[defi]{Remark}
\newcommand{\Exp}{\mathbb{E}}
\renewcommand{\Pr}{\mathbb{P}}
\newcommand{\R}{\mathbb{R}}
\newcommand{\dd}{\mathrm{d}}
\newcommand{\ee}{\mathrm{e}}
\newcommand{\ind}[1]{\mathds{1}_{\{#1\}}}
\newcommand{\dto}{\downarrow}
\renewcommand{\hat}{\widehat}
\renewcommand{\tilde}{\widetilde}
\renewcommand{\bar}{\overline}
\renewcommand{\emptyset}{\varnothing}
\newcommand{\bfen}{-}
\newcommand{\bfex}{+}
\newcommand{\ret}{\mathbf{re}}
\newcommand{\exi}{\mathbf{ex}}
\newcommand{\bfren}{\ret}
\newcommand{\metasp}{\mathscr{O}}
\title{Estimation of statistics of transitions and Hill relation for Langevin dynamics}
\keywords{Langevin dynamics, transition path, invariant measure, Hill relation}
\subjclass{60J20, 60J70}
\author{Tony Lelièvre}
\address{{\bf Tony Lelièvre}\newline
{\rm \indent CERMICS, Ecole des Ponts, Matherials Joint Project-Team, Inria, Marne-la-Vallée, France}}
\email{\href{mailto:tony.lelievre@enpc.fr}{tony.lelievre@enpc.fr}}
\author{Mouad Ramil}
\address{{\bf Mouad Ramil}\newline
{\rm \indent Research Institute of Mathematics, Seoul National University, Seoul, Republic of Korea}}
\email{\href{mailto:ramilm@snu.ac.kr}{ramilm@snu.ac.kr}}
\author{Julien Reygner}
\address{{\bf Julien Reygner}\newline
{\rm \indent CERMICS, Ecole des Ponts, Marne-la-Vallée, France}}
\email{\href{mailto:julien.reygner@enpc.fr}{julien.reygner@enpc.fr}}
\begin{document}
   
\begin{abstract}
  In molecular dynamics, statistics of transitions, such as the mean transition time, are macroscopic observables which provide important dynamical information on the underlying microscopic stochastic process. A direct estimation using simulations of microscopic trajectories over long time scales is typically computationally intractable in metastable situations. To overcome this issue, several numerical methods rely on a potential-theoretic identity, sometimes attributed to Hill in the computational statistical physics litterature, which expresses statistics of transitions in terms of the invariant measure of the sequence of configurations by which the underlying process enters metastable sets. The use of this identity then allows to replace the long time simulation problem with a rare event sampling problem, for which efficient algorithms are available.
  
  In this article, we rigorously analyse such a method for molecular systems modelled by the Langevin dynamics. Our main contributions are twofold. First, we prove the Hill relation in the fairly general context of positive Harris recurrent chains, and show that this formula applies to the Langevin dynamics. Second, we provide an explicit expression of the invariant measure involved in the Hill relation, and describe an elementary exact simulation procedure. Overall, this yields a simple and complete numerical method to estimate statistics of transitions. 
\end{abstract}

\maketitle

\baselineskip=15pt

%%%%%%%%%%%%%%%%%%%%%%%%%%%%%%%%%%%%%%%%%%%%%%%%%%%%%%%%%%%%%%%%%%%%%%%%%%%%%%%%%%%%%%%%%%%%%%%%%%%%%%%%%
%%%%%%%%%%%%%%%%%%%%%%%%%%%%%%%%%%%%%%%%%%%%%%%%%%%%%%%%%%%%%%%%%%%%%%%%%%%%%%%%%%%%%%%%%%%%%%%%%%%%%%%%%
%%%%%%%%%%%%%%%%%%%%%%%%%%%%%%%%%%%%%%%%%%%%%%%%%%%%%%%%%%%%%%%%%%%%%%%%%%%%%%%%%%%%%%%%%%%%%%%%%%%%%%%%%
\section{Introduction}

%%%%%%%%%%%%%%%%%%%%%%%%%%%%%%%%%%%%%%%%%%%%%%%%%%%%%%%%%%%%%%%%%%%%%%%%%%%%%%%%%%%%%%%%%%%%%%%%%%%%%%%%%
%%%%%%%%%%%%%%%%%%%%%%%%%%%%%%%%%%%%%%%%%%%%%%%%%%%%%%%%%%%%%%%%%%%%%%%%%%%%%%%%%%%%%%%%%%%%%%%%%%%%%%%%%
\subsection{Metastability and mean transition time for the Langevin dynamics}

In computational statistical physics, the \emph{Langevin dynamics} is a stochastic process commonly used to simulate thermostated systems. It describes the evolution of the position-velocity pair $(q_t,p_t)$ in the phase space $\R^d \times \R^d$ according to the stochastic differential equation
\begin{equation}\label{eq:Langevin}
  \left\{\begin{aligned}
    \dd q_t &= p_t \dd t,\\
    \dd p_t &= F(q_t)\dd t - \gamma p_t \dd t + \sqrt{2\gamma\beta^{-1}}\dd W_t,
  \end{aligned}\right.
\end{equation}
where the vector field $F : \R^d \to \R^d$ is the \emph{force},
$\gamma>0$ is the \emph{friction} parameter, $\beta>0$ the
\emph{inverse temperature}, and the process $(W_t)_{t \geq 0}$ is a
$d$-dimensional Brownian motion~\cite{Tuc10,LeiMat15}. A particular
case of interest is the \emph{conservative case}, namely when $F =
-\nabla V$ for a function $V : \R^d \to \R$ which is called the
\emph{potential energy} of the system. In this case, upon
integrability assumption on $V$, the process $(q_t,p_t)_{t \geq 0}$ is
known to be ergodic with respect to the \emph{Boltzmann--Gibbs
  measure} with density 
\begin{equation}\label{eq:BG}
  \rho(q,p) = \frac{1}{Z_\beta} \ee^{-\beta H(q,p)}, \qquad H(q,p) := V(q) + \frac{|p|^2}{2}, \qquad Z_\beta := \int_{\R^d \times \R^d} \ee^{-\beta H(q,p)} \dd q \dd p.
\end{equation}
In a non-conservative case (or non-equilbrium case), namely when $F$ is
a non-conservative force, the invariant measure of the Langevin
dynamics is generally not explicit, see e.g.~\cite{MonRam} for details.

In most cases of interest, the process $(q_t,p_t)_{t \geq 0}$ is \emph{metastable}, which means that it spends most of its time in certain subsets of the phase space, called \emph{metastable sets}, and performs abrupt and seemingly unpredictable transitions between these sets. Metastable sets often represent macroscopic conformations of the system, on which \emph{statistics of transitions} provide important quantitative information. However, because of the scale separation between time steps employed to simulate the process $(q_t,p_t)_{t \geq 0}$ and typical times at which metastable transitions occur, the direct simulation of these rare events usually turns out to be impossible~\cite{AllValTen09,LelSto16}. 

Various methods have been introduced in the literature to numerically compute statistics of transitions. In order to provide an example of such a quantity, let us fix $A$ and $B$ two subsets of $\R^d$ with disjoint closures. We denote by $(\tau^\bfen_n)_{n \geq 0}$ the successive time indices at which the process $(q_t)_{t \geq 0}$ enters the set $A \cup B$. Under regularity assumptions on the boundary of $A$ and $B$ which will be made explicit below, the sequence $(Y^\bfen_n)_{n \geq 0}$, defined by
\begin{equation*}
  \forall n \geq 0, \qquad Y^\bfen_n := (q_{\tau^\bfen_n},p_{\tau^\bfen_n}),
\end{equation*}
is a time homogeneous Markov chain, which takes its values in the set
${\mathcal{A}^-} \cup {\mathcal{B}^-}$, where ${\mathcal{A}^-}$
(resp. ${\mathcal{B}^-}$) denotes the set of configurations $(q,p)$
such that $q \in \partial A$ (resp. $q \in \partial B$) and $p$ points
toward the interior of $A$ (resp. of $B$), see Figure~\ref{fig:section11}.

We now define the sequences $(\eta^\ret_{{\mathcal{A}^-},k})_{k \geq 0}$ and $(\eta^\ret_{{\mathcal{B}^-},k})_{k \geq 0}$ by
\begin{equation}\label{eq:etare}
  \begin{array}{ll}
    \eta^\ret_{{\mathcal{A}^-},0} := \min\{n \geq 0: Y^\bfen_n \in {\mathcal{A}^-}\}, \quad &\eta^\ret_{{\mathcal{B}^-},0} := \min\{n \geq \eta^\ret_{{\mathcal{A}^-},0}: Y^\bfen_n \in {\mathcal{B}^-}\},\\
    \eta^\ret_{{\mathcal{A}^-},k+1} := \min\{n \geq \eta^\ret_{{\mathcal{B}^-},k}: Y^\bfen_n \in {\mathcal{A}^-}\}, \quad & \eta^\ret_{{\mathcal{B}^-},k+1} := \min\{n \geq \eta^\ret_{{\mathcal{A}^-},k+1}: Y^\bfen_n \in {\mathcal{B}^-}\}, \quad k \geq 0.
  \end{array}
\end{equation}
They respectively refer to the successive return times of the chain $(Y^\bfen_n)_{n \geq 0}$ in ${\mathcal{A}^-}$ after a visit in ${\mathcal{B}^-}$, and conversely, with the convention that these times are counted from the first visit of the chain in ${\mathcal{A}^-}$. 

At the continuous-time level, we introduce the notation
\begin{equation*}
  \tau^\ret_{{\mathcal{A}^-},k} := \tau^\bfen_{\eta^\ret_{{\mathcal{A}^-},k}}, \qquad \tau^\ret_{{\mathcal{B}^-},k} := \tau^\bfen_{\eta^\ret_{{\mathcal{B}^-},k}},
\end{equation*}
and call the trajectory of $(q_t,p_t)$ on
$[\tau^\ret_{{\mathcal{A}^-},k}, \tau^\ret_{{\mathcal{B}^-},k}]$
the \emph{$k$-th transition path} between $A$ and $B$, see Figure~\ref{fig:section11}. Its (time) length is denoted by 
\begin{equation*}
  \Delta \tau^\bfren_{AB,k} := \tau^\ret_{{\mathcal{B}^-},k} - \tau^\ret_{{\mathcal{A}^-},k},
\end{equation*}
and the \emph{mean transition time} between $A$ and $B$ is then defined by
\begin{equation}\label{eq:TAB}
  T_{AB} := \lim_{\ell \to +\infty} \frac{1}{\ell}\sum_{k=0}^{\ell-1} \Delta \tau^\bfren_{AB,k}.
\end{equation}
It is a prototypical example of an average quantity over transition
paths of practical interest~\cite{EVan06,LuNol15}.

From the strong Markov property and the ergodic theorem for Markov
chains, the mean transition time may be rewritten as
\begin{equation}\label{eq:TAB-rew}
  T_{AB} = \Exp_{\nu^\ret_{\mathcal{A}^-}}\left[\Delta \tau^\bfren_{AB,0}\right],
\end{equation}
where $\nu^\ret_{\mathcal{A}^-}$ is the invariant measure of the
Markov chain $(Y^\bfen_{\eta^\ret_{{\mathcal{A}^-},k}})_{k \geq 0}$,
which is usually called the \emph{reactive entrance distribution} in $\mathcal{A}^-$~\cite{LuNol15}. Here and throughout this article, the notation $\Exp_\mu$ and $\Pr_\mu$ refers to the fact that the stochastic process with respect to which the expectation is taken has initial distribution~$\mu$. When $\mu=\delta_x$, we shall simply write $\Exp_x$ and $\Pr_x$.

\begin{psfrags}
  \psfrag{A}{$A$}
  \psfrag{B}{$B$}
  \psfrag{Yn-}{$q_{\tau^\bfen_n}$}
  \psfrag{Yn+1-}{$q_{\tau^\bfen_{n+1}}$}
\begin{figure}
\begin{center}
    \includegraphics[width=0.7\textwidth]{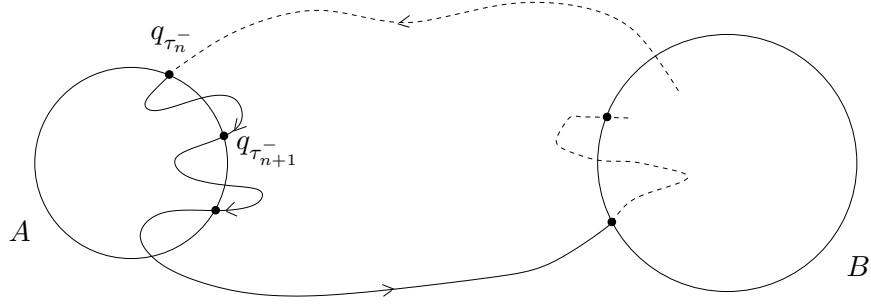}
\caption{Schematic representation of the successive entry points of
  $(q_t)_{t \ge 0}$ in $A \cup B$. The part of the trajectory in solid
line is a transition path from $A$ to $B$.}
\label{fig:section11}
\end{center}
\end{figure}
\end{psfrags}

%%%%%%%%%%%%%%%%%%%%%%%%%%%%%%%%%%%%%%%%%%%%%%%%%%%%%%%%%%%%%%%%%%%%%%%%%%%%%%%%%%%%%%%%%%%%%%%%%%%%%%%%%
%%%%%%%%%%%%%%%%%%%%%%%%%%%%%%%%%%%%%%%%%%%%%%%%%%%%%%%%%%%%%%%%%%%%%%%%%%%%%%%%%%%%%%%%%%%%%%%%%%%%%%%%%
\subsection{The Hill relation and the main contributions of the article}\label{ss:Hill-intro}

In the case where $A$ and $B$ are metastable sets for $(q_t)_{t \geq 0}$, the formula~\eqref{eq:TAB-rew} presents two main computational difficulties: 
\begin{itemize}
  \item[(i)] the reactive entrance distribution $\nu^\ret_{\mathcal{A}^-}$ is neither explicit nor easy to sample from, 
  \item[(ii)] the direct simulation of the random variable $\Delta \tau^\bfren_{AB,0}$ may be prohibitively long.
\end{itemize}
To overcome these issues, several numerical methods are based on the alternative expression
\begin{equation}\label{eq:Hill-intro}
  T_{AB} = \frac{\Exp_{\pi_{\mathcal{A}^-}}\left[\tau^\bfen_1\right]}{\Pr_{\pi_{\mathcal{A}^-}}(Y^\bfen_1 \in {\mathcal{B}^-})},
\end{equation}
where $\pi_{\mathcal{A}^-} := \pi^\bfen(\cdot|{\mathcal{A}^-})$ is the restriction (in the sense of conditional probability measures) to ${\mathcal{A}^-}$ of the invariant measure $\pi^\bfen$ of the Markov chain $(Y^\bfen_n)_{n \geq 0}$. This formula, which is sometimes attributed to Hill~\cite{Hil05}, is commonly encountered in the statistical physics literature~\cite{AllValTen09,BhaZuc11,ZucCho17}, and used for a large class of stochastic processes, beyond the specific instance of the Langevin dynamics~\eqref{eq:Langevin}\footnote{See for instance the blog post \url{http://statisticalbiophysicsblog.org/?p=8} for more context.}.

The present article is a continuation of~\cite{BauGuyLel}, in which the mathematical analysis of this formula and its use in rare event estimation was initiated for the case where the chain $(Y^\bfen_n)_{n \geq 0}$ takes its values in a compact state space. This allows in particular to obtain a formula similar to~\eqref{eq:Hill-intro} for the \emph{overdamped Langevin dynamics}
\begin{equation}\label{eq:odLangevin}
  \dd \bar{q}_t = F(\bar{q}_t)\dd t + \sqrt{2\beta^{-1}}\dd W_t,
\end{equation}
which describes the $\gamma \to +\infty$ limit of the time rescaled position process $(q_{\gamma t})_{t \geq 0}$ defined by~\eqref{eq:Langevin}. While easier to study mathematically, thanks to the uniform ellipticity of its infinitesimal generator and the fact that metastable sets are generally assumed to be bounded in $\R^d$, the overdamped Langevin dynamics~\eqref{eq:odLangevin} is arguably less physically relevant, and less commonly employed in actual molecular dynamics simulations, than the Langevin dynamics~\eqref{eq:Langevin}.

Our first main contribution is a proof of the identity~\eqref{eq:Hill-intro}, and a clarification of the assumptions under which it holds, in the general setting of positive Harris recurrent chains. In particular, it does not require metastable sets to be bounded and therefore applies to the Langevin dynamics~\eqref{eq:Langevin}.

The major benefit of the Hill relation~\eqref{eq:Hill-intro} is that it no longer involves the reactive entrance distribution~$\nu^\ret_{\mathcal{A}^-}$, but rather the measure $\pi_{\mathcal{A}^-}$. On the one hand, in metastable situations the latter is often argued to be easy to approximate, through direct simulation of the chain. Indeed, starting from ${\mathcal{A}^-}$, one may typically expect $(Y^\bfen_n)_{n \geq 0}$ to perform many steps in ${\mathcal{A}^-}$, and thus to reach a local equilibrium in ${\mathcal{A}^-}$ on a short time scale compared with the first transition toward ${\mathcal{B}^-}$ --- this phenomenon is referred to as \emph{thermalisation} in the study of metastability~\cite{OliVar05}. In~\cite{BauGuyLel}, this local equilibrium is identified as the \emph{quasistationary distribution} in ${\mathcal{A}^-}$ and shown to be a good approximation of the invariant measure $\pi_{\mathcal{A}^-}$. On the other hand, we shall prove in the present article that for the Langevin dynamics~\eqref{eq:Langevin}, the measure $\pi_{\mathcal{A}^-}$ is actually \emph{explicit} and can be directly sampled from, without any approximation argument. This is the second main contribution of this article and explains in which sense the use of the Hill relation~\eqref{eq:Hill-intro} allows to mitigate the first difficulty mentioned at the beginning of this subsection.

In order to address the second difficulty, we rewrite the right-hand side of~\eqref{eq:Hill-intro} under the form
\begin{equation}\label{eq:hill-ams}
  \frac{\Exp_{\pi_{\mathcal{A}^-}}\left[\tau^\bfen_1\right]}{\Pr_{\pi_{\mathcal{A}^-}}(Y^\bfen_1 \in {\mathcal{B}^-})} = \Exp_{\pi_{\mathcal{A}^-}}\left[\tau^\bfen_1 | Y^\bfen_1 \in {\mathcal{A}^-} \right]\left(\frac{1}{\Pr_{\pi_{\mathcal{A}^-}}(Y^\bfen_1 \in {\mathcal{B}^-})}-1\right) + \Exp_{\pi_{\mathcal{A}^-}}\left[\tau^\bfen_1 | Y^\bfen_1 \in {\mathcal{B}^-} \right].
\end{equation}
The quantity $\Exp_{\pi_{\mathcal{A}^-}}[\tau^\bfen_1 | Y^\bfen_1 \in {\mathcal{A}^-}]$ can be evaluated by brute force simulation of the chain $(Y^\bfen_n)_{n \geq 0}$, whereas both quantities $\Exp_{\pi_{\mathcal{A}^-}}[\tau^\bfen_1 | Y^\bfen_1 \in {\mathcal{B}^-}]$ and $\Pr_{\pi_{\mathcal{A}^-}}(Y^\bfen_1 \in {\mathcal{B}^-})$ are statistics of \emph{reactive trajectories} and may therefore be computed by means of rare event simulation algorithms, such as Weighted Ensemble Simulation~\cite{ZucCho17}, Transition Interface Sampling~\cite{VanMorBol03}, Forward Flux Sampling~\cite{AllValTen09}, or Adaptive Multilevel Splitting~\cite{CerGuy07} to name but a few.

%%%%%%%%%%%%%%%%%%%%%%%%%%%%%%%%%%%%%%%%%%%%%%%%%%%%%%%%%%%%%%%%%%%%%%%%%%%%%%%%%%%%%%%%%%%%%%%%%%%%%%%%%
%%%%%%%%%%%%%%%%%%%%%%%%%%%%%%%%%%%%%%%%%%%%%%%%%%%%%%%%%%%%%%%%%%%%%%%%%%%%%%%%%%%%%%%%%%%%%%%%%%%%%%%%%
\subsection{Relation with earlier works} As has already been mentioned, this work is a continuation of the article~\cite{BauGuyLel}, in which the Hill relation is proved for the overdamped Langevin dynamics~\eqref{eq:odLangevin}, and the main focus of which is put on the quantification of the error introduced when replacing, in the right-hand side of~\eqref{eq:Hill-intro}, the measure $\pi_{\mathcal{A}^-}$ with the quasistationary distribution of $(Y^\bfen_n)_{n \geq 0}$ in ${\mathcal{A}^-}$. We refer in particular to the introduction of~\cite{BauGuyLel} for more details and references on algorithms for rare event simulation in computational statistical physics.

Besides the extension of the proof of the Hill relation to the Langevin dynamics~\eqref{eq:Langevin}, the main novelty of the present article is the explicit computation of the measure $\pi_{\mathcal{A}^-}$. We believe this result to be of general interest, independently from the application to the estimation of statistics of transitions; still, in the latter context, it provides a direct simulation algorithm and spares the need to resort to the quasistationary distribution. Remarkably, this measure also appears in the formula defining the transmission coefficient in Variational Transition State Theory, see~\cite[Eq.~(68)]{VanTal05}. Let us finally mention that the study of quasistationary distributions for Langevin-like dynamics~\eqref{eq:Langevin} was recently carried out in the series of works~\cite{LelRamRey:qsd,Ram20,Ram} and~\cite{GuiNecWu,GuiNecWu:2}, and that it remains crucial for the mathematical analysis of many other algorithms of molecular dynamics dealing with metastability.

At several places in the article, we shall use technical results regarding trajectorial and analytical properties of the Langevin dynamics~\eqref{eq:Langevin} recently proved in our work~\cite{LelRamRey:kfp}. We shall also borrow notation and terminology from various fields which are connected to our study, such as \emph{Transition Path Theory}~\cite{EVan06,EVan10,LuNol15}, potential theory~\cite{BovEckGayKle04,BovDen15}, or the theory of Harris chains~\cite{Asm03,HerLas03,MeyTwe09,DouMouPriSou18}.

%%%%%%%%%%%%%%%%%%%%%%%%%%%%%%%%%%%%%%%%%%%%%%%%%%%%%%%%%%%%%%%%%%%%%%%%%%%%%%%%%%%%%%%%%%%%%%%%%%%%%%%%%
%%%%%%%%%%%%%%%%%%%%%%%%%%%%%%%%%%%%%%%%%%%%%%%%%%%%%%%%%%%%%%%%%%%%%%%%%%%%%%%%%%%%%%%%%%%%%%%%%%%%%%%%%
\subsection{Organisation of the article} The precise setting under
which we work as well as a detailed statement of our results are
presented in Section~\ref{s:mainres}. Section~\ref{s:prelim} contains
the proof of preliminary results ensuring the well-posedness of the
sequence $(Y^\bfen_n)_{n \geq 0}$. Section~\ref{s:ergo} is dedicated
to the study of its long time behaviour, and the identification of its
invariant measure $\pi^-$. We also carry out a similar study of the
sequence $(Y^\bfex_n)_{n \geq 0}$ of configurations by which the
process $(q_t,p_t)_{t \geq 0}$ \emph{exits} from metastable sets. This
part is actually written for exits from a general  open and smooth set
$\metasp$, and we later apply the obtained results to the specific case
$\metasp=A \cup B$. Section~\ref{s:Hill} is dedicated to the proof of the Hill relation~\eqref{eq:Hill-intro}. We first show a generalised version of this identity in the abstract setting of positive Harris recurrent chains, and then check that this identity applies to the Langevin dynamics~\eqref{eq:Langevin}. Last, generalities on Harris chains are collected in Appendix~\ref{app:Harris}, Appendix~\ref{app:DL} contains the proof of an auxiliary result used in the paper concerning the probabilistic interpretation of the Dirichlet problem for the Langevin dynamics, and Appendix~\ref{app:Etau} is dedicated to the discussion of an assumption made in our last statement regarding the finiteness of~$T_{AB}$.

%%%%%%%%%%%%%%%%%%%%%%%%%%%%%%%%%%%%%%%%%%%%%%%%%%%%%%%%%%%%%%%%%%%%%%%%%%%%%%%%%%%%%%%%%%%%%%%%%%%%%%%%%
%%%%%%%%%%%%%%%%%%%%%%%%%%%%%%%%%%%%%%%%%%%%%%%%%%%%%%%%%%%%%%%%%%%%%%%%%%%%%%%%%%%%%%%%%%%%%%%%%%%%%%%%%
%%%%%%%%%%%%%%%%%%%%%%%%%%%%%%%%%%%%%%%%%%%%%%%%%%%%%%%%%%%%%%%%%%%%%%%%%%%%%%%%%%%%%%%%%%%%%%%%%%%%%%%%%
\section{Setting and statement of the main results}\label{s:mainres}

%%%%%%%%%%%%%%%%%%%%%%%%%%%%%%%%%%%%%%%%%%%%%%%%%%%%%%%%%%%%%%%%%%%%%%%%%%%%%%%%%%%%%%%%%%%%%%%%%%%%%%%%%
%%%%%%%%%%%%%%%%%%%%%%%%%%%%%%%%%%%%%%%%%%%%%%%%%%%%%%%%%%%%%%%%%%%%%%%%%%%%%%%%%%%%%%%%%%%%%%%%%%%%%%%%%
\subsection{Basic notation}\label{ss:basnot} For $a,b \in \R$, we write $a \wedge b = \min(a,b)$, $a \vee b = \max(a,b)$, $[a]_+ = 0 \vee a$, $[a]_- = 0 \vee (-a)$. The notation $|u| = \sqrt{u \cdot u}$ refers to the Euclidean norm on $\R^d$. For any $r>0$, $\mathrm{B}(u,r)$ and $\bar{\mathrm{B}}(u,r)$ respectively denote the open and closed balls centered in $u \in \R^d$ and with radius $r$. More generally, the closure of a set $A \subset \R^d$ is denoted by $\bar{A}$.

For a random variable $X$ in some measurable space $\mathcal{S}$ and a
probability measure $\mu$ on $\mathcal{S}$, the notation $X \sim \mu$
means that $\mu$ is the law of $X$. For any $f \in
L^1(\mathcal{S},\mu)$, we use the notation $\mu(f)$ as a shorthand for
$\int_{\mathcal S} f \dd \mu$. Given a probability measure $\mu(\dd x)$ and a Markov kernel $P(x,\dd y)$ on $\mathcal{S}$, we denote by $\mu \otimes P$ the probability measure $\mu(\dd x)P(x,\dd y)$ on $\mathcal{S} \times \mathcal{S}$. Finally, for any measurable function~$g:\mathcal{S} \to \mathcal{T}$, where $\mathcal{S}$ and $\mathcal{T}$ are measurable spaces, we denote by $\mu \circ g^{-1}$ or $\mu(g^{-1}(\dd x))$ the pushforward of $\mu$ by $g$, and we denote by $P(x,g^{-1}(\dd y))$ the pushforward of $P(x,\cdot)$ by $g$.

%\CCC{les tableaux 1 et 2 arrivent trop tôt ou alors il faut faire une
% bafouille préalable pour expliquer ce que sont $\Sigma$ et $\Gamma$. De
% plus, une figure plus détaillée que la figure 1 pour illustrer les
% notations de ces 2 tableaux aurait été souhaitable.}

%%%%%%%%%%%%%%%%%%%%%%%%%%%%%%%%%%%%%%%%%%%%%%%%%%%%%%%%%%%%%%%%%%%%%%%%%%%%%%%%%%%%%%%%%%%%%%%%%%%%%%%%%
%%%%%%%%%%%%%%%%%%%%%%%%%%%%%%%%%%%%%%%%%%%%%%%%%%%%%%%%%%%%%%%%%%%%%%%%%%%%%%%%%%%%%%%%%%%%%%%%%%%%%%%%%
\subsection{Assumptions on the continuous dynamics}\label{ss:ass-Langevin}

Throughout this work, we assume that the vector field $F : \R^d \to \R^d$ is $C^\infty$. As a consequence, the coefficients of the stochastic differential equation~\eqref{eq:Langevin} are locally Lipschitz continuous, therefore this equation possesses a unique strong solution, which is defined on some filtered probability space $(\Omega,\mathcal{F},(\mathcal{F}_t)_{t \geq 0}, \Pr)$ up to some explosion time $\tau_\infty$. We shall work under the following set of assumptions:
\begin{enumerate}[label=(A\arabic*), ref=A\arabic*]
  \item\label{ass:A1} $\tau_\infty=\infty$, almost surely;
  \item\label{ass:A2} the process $(q_t,p_t)_{t \geq 0}$ has a unique stationary distribution $\mu(\dd q\dd p)$, this measure has a smooth and positive density $\rho(q,p)$ with respect to the Lebesgue measure on $\R^d \times \R^d$, and for any $G \in L^1(\R^d \times \R^d, \mu)$,
  \begin{equation*}
    \lim_{t \to +\infty} \frac{1}{t}\int_0^t G(q_s,p_s)\dd s = \mu(G), \qquad \text{almost surely,}
  \end{equation*}
  for any initial condition;
  \item\label{ass:A3} the density $\rho(q,p)$ satisfies
  \begin{equation*}
      \int_{\R^d \times \R^d} \left\{(|F(q)| + |p|) \rho(q,p) + |\nabla_p \rho(q,p)|\right\} \dd q\dd p < +\infty.
  \end{equation*}
\end{enumerate}

In the conservative case described in the introduction of the article, namely when $F = -\nabla V$ for some $C^\infty$ potential function $V : \R^d \to \R$, these assumptions are satisfied if 
\begin{equation*}
  \lim_{|q| \to +\infty} V(q) = +\infty, \qquad \int_{\R^d} (1+|\nabla V(q)|)\mathrm{e}^{-\beta V(q)}\dd q < +\infty.
\end{equation*} 
In this case, $\mu$ is the Boltzmann--Gibbs measure with density $\rho$ defined in~\eqref{eq:BG}. We refer to~\cite[Examples~5.10 and~7.3]{Rey06}, \cite[Remark~2.21]{LelRamRey:kfp} and~\cite[Théorème~1]{Pag01} for details.

%Dans le détail: A1 vient du fait que le hamiltonien $H$ est une fonction de Lyapounov (RB Ex. 5.10). Pour A2 l'existence, la régularité et la positivité de la mesure invariante sont évidentes. Par ailleurs, en conséquence de A1 et d'après notre remarque 2.21 dans JEEQ (+ Rey-Bellet Ex. 7.3), la densité de transition est régulière et strictement positive, ce qui d'après Pagès Théorème 1 montre que la mesure invariante est unique et que la LGN a lieu pour toute condition initiale. (Dans Pagès, le critère de Hasminksi n'est pas vérifié car $\mathcal{L}H$ peut rester positif lorsque $q \to \infty$, $p=0$, mais ce n'est pas grave car il ne sert qu'à montrer l'existence d'une mesure invariante, ce dont on n'a pas besoin ici.

\begin{rk}
  The Langevin dynamics can more generally be written
  \begin{equation*}
    \left\{\begin{aligned}
      \dd q_t &= M^{-1}p_t \dd t,\\
      \dd p_t &= F(q_t)\dd t - \gamma M^{-1}p_t \dd t + \sqrt{2\gamma\beta^{-1}}\dd W_t,
    \end{aligned}\right.
  \end{equation*}
  where $M$ is a diagonal matrix with positive diagonal entries, describing the masses of the particles. In this formulation, the coordinate $p_t \in \R^d$ refers to a vector of \emph{momenta} rather than velocities. This system may however be reduced to~\eqref{eq:Langevin}, for which $M$ is the identity, through a simple change of variables~\cite[Remark~3.37, p.~210]{LelRouSto10}.
\end{rk}

%%%%%%%%%%%%%%%%%%%%%%%%%%%%%%%%%%%%%%%%%%%%%%%%%%%%%%%%%%%%%%%%%%%%%%%%%%%%%%%%%%%%%%%%%%%%%%%%%%%%%%%%%
%%%%%%%%%%%%%%%%%%%%%%%%%%%%%%%%%%%%%%%%%%%%%%%%%%%%%%%%%%%%%%%%%%%%%%%%%%%%%%%%%%%%%%%%%%%%%%%%%%%%%%%%%
\subsection{Notation and assumptions on metastable sets} In this subsection, we let $\metasp \subset \R^d$ satisfy the following assumptions.
\begin{enumerate}[label=(B), ref=B]
  \item\label{ass:B1} The sets $\metasp$ and $\R^d \setminus
    \bar{\metasp}$ are open, nonempty, and have a $C^2$ boundary $\Sigma$.
\end{enumerate}
A straightforward consequence of this assumption is that $\metasp$ and $\R^d \setminus
    \bar{\metasp}$ have positive Lebesgue measures.
Let us emphasise the fact that, throughout this article, neither $\metasp$ nor $\R^d \setminus \bar{\metasp}$ are assumed to be bounded.

Under these assumptions, for any $q \in \Sigma$ we denote by $\mathsf{n}(q) \in \R^d$ the unit normal vector to $\Sigma$ which is oriented toward the exterior of $\metasp$. This allows us to introduce the partition of $\Sigma \times \R^d$ into three sets
\begin{align*}
  \Gamma^\bfex &:= \{(q,p) \in \Sigma \times \R^d : p \cdot \mathsf{n}(q) > 0\},\\
  \Gamma^\bfen &:= \{(q,p) \in \Sigma \times \R^d : p \cdot \mathsf{n}(q) < 0\},\\
  \Gamma^0 &:= \{(q,p) \in \Sigma \times \R^d : p \cdot \mathsf{n}(q) = 0\}.
\end{align*}

The results of Subsections~\ref{ss:defY} and~\ref{ss:erg-Y} are stated
for a general set $\metasp$ satisfying Assumption~(\ref{ass:B1}). In
Subsection~\ref{ss:Hill-Langevin}, in order to state the Hill
relation~\eqref{eq:Hill-intro}, we shall apply these results to the
specific case discussed in the introduction where $\metasp = A \cup B$ for two open subsets $A, B \subset \R^d$ with disjoint closures.

%%%%%%%%%%%%%%%%%%%%%%%%%%%%%%%%%%%%%%%%%%%%%%%%%%%%%%%%%%%%%%%%%%%%%%%%%%%%%%%%%%%%%%%%%%%%%%%%%%%%%%%%%
%%%%%%%%%%%%%%%%%%%%%%%%%%%%%%%%%%%%%%%%%%%%%%%%%%%%%%%%%%%%%%%%%%%%%%%%%%%%%%%%%%%%%%%%%%%%%%%%%%%%%%%%%
\subsection{The sequences \texorpdfstring{$(Y^\bfex_n)_{n \geq 0}$}{Y+} and \texorpdfstring{$(Y^\bfen_n)_{n \geq 0}$}{Y-}}\label{ss:defY}

The main objects of interest in this article are the sequences $(Y^\bfex_n)_{n \geq 0}$ and $(Y^\bfen_n)_{n \geq 0}$ of successive exits from, and entrances in, the set $\metasp \times \R^d$. To define these sequences, we need the following preliminary result.

\begin{lem}[Return time to $\Sigma$]\label{lem:return}
  Under Assumptions~(\ref{ass:A1}--\ref{ass:A2}) and~(\ref{ass:B1}), let $\tau := \inf\{t > 0: q_t \in \Sigma\}$. For any $(q,p) \in (\R^d \times \R^d) \setminus \Gamma^0$, we have $0 < \tau < +\infty$, $\mathbb{P}_{(q,p)}$-almost surely. Besides:
  \begin{enumerate}[label=(\roman*),ref=\roman*]
    \item if $q \in \metasp$ or $(q,p) \in \Gamma^\bfen$, then $(q_\tau,p_\tau) \in \Gamma^\bfex$;
    \item if $q \in \R^d \setminus \bar{\metasp}$ or $(q,p) \in \Gamma^\bfex$, then $(q_\tau,p_\tau) \in \Gamma^\bfen$.
  \end{enumerate}
  Besides, if $(q,p) \in \Gamma^0$, then
  \begin{equation*}
      \inf\{t \geq 0: (q_t,p_t) \in \Gamma^\bfen\} = \inf\{t \geq 0: (q_t,p_t) \in \Gamma^\bfex\} = 0, \qquad \text{$\Pr_{(q,p)}$-almost surely.}
  \end{equation*}
\end{lem}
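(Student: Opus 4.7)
\emph{Plan.} The statement splits into three parts, which I would address in turn: positivity of $\tau$, its almost sure finiteness together with the identification of the exit face, and the behaviour when $(q,p)\in\Gamma^0$.

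\textbf{Positivity.} If $q\notin\Sigma$, $\tau>0$ follows immediately from path continuity. If $q\in\Sigma$ with $p\cdot\mathsf{n}(q)\neq 0$, I would pick a local $C^2$ defining function $\phi$ of $\Sigma$ near $q$, with $\nabla\phi(q)=\mathsf{n}(q)$ and $\phi<0$ on $\metasp$. The process $t\mapsto\phi(q_t)$ is $C^1$ with derivative $p\cdot\mathsf{n}(q)\neq 0$ at $t=0$, so $\phi(q_t)\neq 0$ for all small $t>0$, giving $\tau>0$.

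\textbf{Finiteness and exit face.} To show $\tau<+\infty$ when $q\in\metasp$ I would argue by contradiction: on $\{\tau=+\infty\}$, continuity forces $q_t\in\metasp$ for every $t\geq 0$, whereas Assumption~(\ref{ass:A2}) yields
\begin{equation*}
\frac{1}{t}\int_0^t\iind{q_s\notin\bar{\metasp}}\dd s \to \mu\bigl((\R^d\setminus\bar{\metasp})\times\R^d\bigr) \qquad \text{almost surely},
\end{equation*}
the limit being strictly positive because $\rho>0$ everywhere and $\R^d\setminus\bar{\metasp}$ has positive Lebesgue measure by~(\ref{ass:B1}). The argument is symmetric for $q\in\R^d\setminus\bar{\metasp}$; the boundary case $(q,p)\in\Gamma^\bfen\cup\Gamma^\bfex$ then reduces to these by applying the strong Markov property at a deterministic $t_0>0$ with $q_{t_0}\notin\Sigma$, as delivered by the positivity step. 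For the direction of crossing, assume $q\in\metasp$: by continuity and the minimality of $\tau$, $\phi(q_t)\leq 0$ on $[0,\tau]$, so $p_\tau\cdot\mathsf{n}(q_\tau)=\dot\phi(q_\tau)\geq 0$. Case~(ii) for $q\in\R^d\setminus\bar{\metasp}$ is symmetric.

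The main obstacle is strengthening these weak inequalities to strict ones, i.e.\ excluding $(q_\tau,p_\tau)\in\Gamma^0$. For this I would appeal to the trajectorial and analytical properties of the Langevin dynamics established in~\cite{LelRamRey:kfp}, together with the Dirichlet-problem interpretation discussed in Appendix~\ref{app:DL}, which jointly imply that the hitting distribution of $\Sigma\times\R^d$ does not charge the tangential set $\Gamma^0$ (morally a consequence of hypoellipticity of the Langevin generator and smoothness of the first-passage density). The cases $(q,p)\in\Gamma^\bfen\cup\Gamma^\bfex$ then follow by the strong Markov property applied at a small $t_0>0$ for which $q_{t_0}$ lies in $\metasp$ or $\R^d\setminus\bar{\metasp}$, respectively.

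\textbf{Degenerate case $(q,p)\in\Gamma^0$.} Using $p\cdot\mathsf{n}(q)=0$, Itô's formula gives for small $t$
\begin{equation*}
p_t\cdot\nabla\phi(q_t)=\sqrt{2\gamma\beta^{-1}}\,\mathsf{n}(q)\cdot W_t+O(t),
\end{equation*}
whose sign oscillates in every right neighbourhood of~$0$ by the law of the iterated logarithm. Integrating, $\phi(q_t)=\int_0^t p_s\cdot\nabla\phi(q_s)\dd s$ is a small perturbation of $\sqrt{2\gamma\beta^{-1}}\int_0^t\mathsf{n}(q)\cdot W_s\dd s$, an integrated Brownian motion of order $t^{3/2}$ which also changes sign in every right neighbourhood of~$0$ (e.g.\ by a Chung-type estimate). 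At each such zero $q_t\in\Sigma$, and by selecting zeros at which $p_t\cdot\mathsf{n}(q_t)$ takes each sign one obtains the two claimed identities. The technical subtlety lies in controlling jointly the sign behaviour of $\phi(q_t)$ and $p_t\cdot\nabla\phi(q_t)$ near $t=0^+$, which is the residual analytic difficulty of this step.
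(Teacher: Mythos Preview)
Your skeleton matches the paper: positivity via a local defining function (the paper uses the equivalent interior/exterior sphere property), finiteness via ergodicity, and the weak inequality $p_\tau\cdot\mathsf{n}(q_\tau)\geq 0$ from the sign of $\phi$ on $[0,\tau]$.

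The point where your proposal is imprecise, and where the paper is cleaner, is the exclusion of $\Gamma^0$. The paper isolates this as a separate lemma (Lemma~\ref{lem:nonatt}): from any $(q,p)\notin\Gamma^0$, the process \emph{never} visits $\Gamma^0$, almost surely. This is stronger than ``the hitting distribution does not charge $\Gamma^0$'', and it is not derived from hypoellipticity or from the Dirichlet problem of Appendix~\ref{app:DL}; it comes from the specific trajectorial result~\cite[Proposition~2.7]{LelRamRey:kfp}, extended here by a localisation argument to unbounded $\metasp$ and merely locally Lipschitz $F$. With nonattainability in hand, the exit-face claim is one line: $(q_\tau,p_\tau)\in\Gamma^\bfex\cup\Gamma^0$ by differentiability of $t\mapsto q_t$, and $\Gamma^0$ is ruled out.

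For the initial condition in $\Gamma^0$, your integrated-Brownian-motion expansion is the heuristic behind~\cite[Proposition~2.8(i)]{LelRamRey:kfp}, which the paper simply cites: starting from $\Gamma^0$, the process enters both $\metasp$ and $\R^d\setminus\bar\metasp$ immediately. But there is a step you do not take. Once you know $q_t\in\metasp$ and $q_{t'}\in\R^d\setminus\bar\metasp$ for arbitrarily small $t,t'$, continuity forces a crossing of $\Sigma$ in between, and at that crossing the normal velocity is nonzero \emph{because of the nonattainability lemma}, applied via the strong Markov property from the earlier of $t,t'$ (where the state is no longer in $\Gamma^0$). Your attempt to jointly control the signs of $\phi(q_t)$ and $\dot\phi(q_t)$ near $t=0^+$ is therefore unnecessary once nonattainability is available, and without that ingredient the ``residual analytic difficulty'' you flag is a genuine gap rather than a technicality.
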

The first part of Lemma~\ref{lem:return} allows us to define, for any starting point $(q,p) \in (\R^d \times \R^d) \setminus \Gamma^0$, the sequences of stopping times $(\tau^\bfex_n)_{n \geq 0}$ and $(\tau^\bfen_n)_{n \geq 0}$ by: 
\begin{align*}
  \tau^\bfex_0 := \inf\{t \geq 0: (q_t,p_t) \in \Gamma^\bfex\}, \qquad \tau^\bfex_{n+1} := \inf\{t > \tau^\bfex_n: (q_t,p_t) \in \Gamma^\bfex\}, \quad n \geq 0,\\
  \tau^\bfen_0 := \inf\{t \geq 0: (q_t,p_t) \in \Gamma^\bfen\}, \qquad \tau^\bfen_{n+1} := \inf\{t > \tau^\bfen_n: (q_t,p_t) \in \Gamma^\bfen\}, \quad n \geq 0.
\end{align*}
Notice that the process $(q_t,p_t)_{t \geq 0}$ satisfies the strong Markov property, which combined with Lemma~\ref{lem:return} shows that both sequences are increasing.

\begin{lem}[Nonaccumulation of $(\tau^\bfex_n)_{n \geq 0}$ and $(\tau^\bfen_n)_{n \geq 0}$]\label{lem:nonacc}
  Under the assumptions of Lemma~\ref{lem:return}, for any $(q,p) \in (\R^d \times \R^d) \setminus \Gamma^0$, we have 
  \begin{equation*}
    \lim_{n \to +\infty} \tau^\bfex_n = \lim_{n \to +\infty} \tau^\bfen_n = +\infty, \qquad \text{$\mathbb{P}_{(q,p)}$-almost surely.}
  \end{equation*}
\end{lem}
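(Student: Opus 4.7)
The plan is to argue by contradiction. First I would check that the sequences $(\tau^\bfex_n)_{n\geq 0}$ and $(\tau^\bfen_n)_{n\geq 0}$ are strictly increasing and interleaved. Applying the strong Markov property at $\tau^\bfex_n$ together with Lemma~\ref{lem:return}(ii) to the process restarted from $(q_{\tau^\bfex_n},p_{\tau^\bfex_n})\in\Gamma^\bfex$ shows that the next hit of $\Sigma$ occurs at a strictly positive time and lies on $\Gamma^\bfen$, so it equals $\tau^\bfen_n$; symmetrically, Lemma~\ref{lem:return}(i) at $\tau^\bfen_n$ yields $\tau^\bfex_{n+1}>\tau^\bfen_n$. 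Since consecutive exits are separated by exactly one entrance (and vice versa), the two limits coincide in $(0,+\infty]$; writing $T$ for this common limit, it suffices to show $\Pr_{(q,p)}(T<\infty)=0$.

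Next, suppose $\Pr_{(q,p)}(T<\infty)>0$ and work on the event $\{T<\infty\}$. Path continuity gives $(q_{\tau^\bfex_n},p_{\tau^\bfex_n})\to(q_T,p_T)$ with $q_T\in\Sigma$. Fix a local $C^2$ defining function $\phi:\R^d\to\R$ for $\Sigma$ around $q_T$, with $\phi=0$ and $\nabla\phi\neq 0$ on $\Sigma$ and appropriate signs on the two sides of $\metasp$. Since $\dot q_s=p_s$, the map $s\mapsto\phi(q_s)$ is $C^1$ with derivative $s\mapsto\nabla\phi(q_s)\cdot p_s$, and it vanishes at every $\tau^\bfex_n$ with $n$ large enough that $q_{\tau^\bfex_n}$ lies in the domain of $\phi$. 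Rolle's theorem then produces points $t_n\in(\tau^\bfex_n,\tau^\bfex_{n+1})$ at which $\nabla\phi(q_{t_n})\cdot p_{t_n}=0$, and since $t_n\to T$, continuity forces $\nabla\phi(q_T)\cdot p_T=0$, i.e. $(q_T,p_T)\in\Gamma^0$.

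The hard part will be to turn this into a contradiction, that is, to rule out a finite-time accumulation of transverse crossings of $\Sigma$ at a tangential limit. My plan is to establish that almost surely, the number of such crossings on any finite time interval is finite. To that end I would first work in stationarity and invoke a Rice-type identity
\begin{equation*}
\Exp_\mu\bigl[\#\{n:\tau^\bfex_n\leq t\}\bigr]=t\int_\Sigma\int_{\R^d}(p\cdot\mathsf{n}(q))_+\,\rho(q,p)\,\dd p\,\dd\sigma(q),
\end{equation*}
whose right-hand side is dominated by $\int_\Sigma\int_{\R^d}|p|\rho\,\dd p\,\dd\sigma$, finite thanks to Assumption~(\ref{ass:A3}) combined with the smoothness of $\rho$ provided by~(\ref{ass:A2}); this yields $\mu$-almost-sure finiteness of the crossing count on $[0,t]$. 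The conclusion would then be transferred to an arbitrary starting point outside $\Gamma^0$ by applying the strong Markov property at an arbitrarily small time $\varepsilon>0$ and using the absolute continuity at positive times of the law of $(q_\varepsilon,p_\varepsilon)$ with respect to Lebesgue measure, which follows from the regularity results of~\cite{LelRamRey:kfp}.
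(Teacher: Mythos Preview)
Your first two paragraphs essentially reproduce the paper's argument, though you take a slightly longer route to the conclusion $(q_T,p_T)\in\Gamma^0$: rather than Rolle's theorem on a local defining function, the paper simply notes that $p_{\tau^\bfex_n}\cdot\mathsf{n}(q_{\tau^\bfex_n})>0$ and $p_{\tau^\bfen_n}\cdot\mathsf{n}(q_{\tau^\bfen_n})<0$ both converge to $p_T\cdot\mathsf{n}(q_T)$ by continuity of $\mathsf{n}$ on $\Sigma$, which forces the limit to vanish.

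The real gap is in your third paragraph. You treat the conclusion $(q_T,p_T)\in\Gamma^0$ as merely the starting point of the hard work, whereas in the paper it \emph{is} the contradiction. The key missing ingredient is the nonattainability of $\Gamma^0$ (Lemma~\ref{lem:nonatt}): for any $(q,p)\in(\R^d\times\R^d)\setminus\Gamma^0$, one has $\Pr_{(q,p)}(\exists t\geq 0:(q_t,p_t)\in\Gamma^0)=0$. This is proved by a localisation argument reducing to the bounded-domain, globally-Lipschitz case of~\cite[Proposition~2.7]{LelRamRey:kfp}. Once this lemma is available, the event $\{T<\infty\}$ is contained in $\{\exists t\geq 0:(q_t,p_t)\in\Gamma^0\}$ and therefore has probability zero; the proof ends there.

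Your proposed Rice-formula detour is thus unnecessary, and as written it also has problems. Lemma~\ref{lem:nonacc} is stated only under Assumptions~(\ref{ass:A1}--\ref{ass:A2}) and~(\ref{ass:B1}); neither~(\ref{ass:A3}) nor~(\ref{ass:C}) is assumed, so the surface integral $\int_\Sigma\int_{\R^d}(p\cdot\mathsf{n}(q))_+\,\rho(q,p)\,\dd p\,\dd\sigma_\Sigma(q)$ need not be finite (Assumption~(\ref{ass:A3}) controls volume integrals over $\R^d\times\R^d$, not integrals over the possibly unbounded hypersurface $\Sigma$). The Rice-type identity itself would also require an independent justification for the degenerate Langevin diffusion.
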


Lemmas~\ref{lem:return} and~\ref{lem:nonacc} are proved in Section~\ref{s:prelim}.

\begin{rk}[Intertwining between the sequences $(\tau^\bfex_n)_{n \geq 0}$ and $(\tau^\bfen_n)_{n \geq 0}$]\label{rk:intertwining}
  The sequences $(\tau^\bfex_n)_{n \geq 0}$ and $(\tau^\bfen_n)_{n \geq 0}$ are intertwined in the following sense: 
  \begin{itemize}
    \item if $q \in \metasp$ or $(q,p) \in \Gamma^\bfex$, then $\tau_0^\bfex < \tau_0^\bfen < \tau_1^\bfex < \cdots$,
    \item if $q \in \R^d \setminus \bar{\metasp}$ or $(q,p) \in \Gamma^\bfen$, then $\tau_0^\bfen < \tau_0^\bfex < \tau_1^\bfen < \cdots$.
  \end{itemize}
\end{rk}
By the strong Markov property, the random sequences $(Y^\bfex_n)_{n \geq 0}$ and $(Y^\bfen_n)_{n \geq 0}$ defined by 
\begin{equation*}
  Y^\bfex_n := (q_{\tau^\bfex_n}, p_{\tau^\bfex_n}), \qquad Y^\bfen_n := (q_{\tau^\bfen_n}, p_{\tau^\bfen_n}),
\end{equation*}
are time homogeneous Markov chains respectively taking their values in $\Gamma^\bfex$ and $\Gamma^\bfen$, and correspond to the successive exit and entrance points in $\metasp \times \R^d$. We describe their ergodic behaviour in Subsection~\ref{ss:erg-Y} and then present the Hill relation in Subsection~\ref{ss:Hill-Langevin}. We summarise in Tables~\ref{tab:Y} and~\ref{tab:eta} the main notation on which we rely.

%%%%%%%%%%%%%%%%%%%%%%%%%%%%%%%%%%%%%%%%%%%%%%%%%%%%%%%%%%%%%%%%%%%%%%%%%%%%%%%%%%%%%%%%%%%%%%%%%%%%%%%%%
%%%%%%%%%%%%%%%%%%%%%%%%%%%%%%%%%%%%%%%%%%%%%%%%%%%%%%%%%%%%%%%%%%%%%%%%%%%%%%%%%%%%%%%%%%%%%%%%%%%%%%%%%
\subsection{Ergodic behaviour of the sequences \texorpdfstring{$(Y^\bfex_n)_{n \geq 0}$}{Y+} and \texorpdfstring{$(Y^\bfen_n)_{n \geq 0}$}{Y-}}\label{ss:erg-Y}

We denote by $\dd\sigma_\Sigma(q)$ the surface measure on $\Sigma
\subset \mathbb R^d$ induced by the Lebesgue measure in $\mathbb R^d$
and the Euclidean scalar product. In addition to Assumptions~(\ref{ass:A1}--\ref{ass:A2}--\ref{ass:A3}) and~(\ref{ass:B1}), we suppose that
\begin{enumerate}[label=(C),ref=C]
  \item\label{ass:C} the function $(q,p) \in \Sigma \times \R^d \mapsto |p \cdot \mathsf{n}(q)| \rho(q,p)$ is in $L^1(\Sigma \times \R^d, \dd\sigma_\Sigma(q)\dd p)$,
\end{enumerate}
and introduce the notation
\begin{equation*}
  Z^\bfex := \int_{\Gamma^\bfex} |p \cdot \mathsf{n}(q)| \rho(q,p) \dd\sigma_\Sigma(q)\dd p, \qquad Z^\bfen := \int_{\Gamma^\bfen} |p \cdot \mathsf{n}(q)| \rho(q,p) \dd\sigma_\Sigma(q)\dd p.
\end{equation*}
Notice that by Assumption~(\ref{ass:A2}), $\rho(q,p)>0$ on $\Sigma
\times \R^d$ and therefore $Z^\bfex > 0$ and $Z^\bfen > 0$. Let us
point out that in the conservative case $F=-\nabla V$, Assumption~~(\ref{ass:C}) equivalently rewrites
\begin{equation*}
    \int_\Sigma \ee^{-\beta V(q)}\dd\sigma_\Sigma(q) < +\infty,
\end{equation*}
which is not necessarily implied by the overall integrability of $\ee^{-\beta V}$ on $\R^d$.

%%%%%%%%%%%%%%%%%%%%%%%%%%%%%%%%%%%%%%%%%%%%%%%%%%%%%%%%%%%%%%%%%%%%%%%%%%%%%%%%%%%%%%%%%%%%%%%%%%%%%%%%%
\subsubsection{Main result} The following theorem is the first main result of this article. We refer to Appendix~\ref{app:Harris} for a summary of basic facts regarding Harris recurrent chains.

\begin{theo}[Ergodicity of $(Y^\bfex_n)_{n \geq 0}$ and $(Y^\bfen_n)_{n \geq 0}$]\label{theo:main}
  Let Assumptions~(\ref{ass:A1}--\ref{ass:A2}--\ref{ass:A3}), (\ref{ass:B1}) and~(\ref{ass:C}) hold.
  \begin{enumerate}[label=(\roman*),ref=\roman*]
    \item The Markov chains $(Y^\bfex_n)_{n \geq 0}$ and $(Y^\bfen_n)_{n \geq 0}$ are positive Harris recurrent, with unique invariant probability measures respectively denoted by $\pi^\bfex$ and $\pi^\bfen$.
    \item The measures $\pi^\bfex$ and $\pi^\bfen$ have respective densities
  \begin{equation}\label{eq:rho+-}
    \varrho^\bfex(q,p) := \frac{1}{Z^\bfex} \ind{(q,p) \in \Gamma^\bfex} |p \cdot \mathsf{n}(q)| \rho(q,p), \qquad \varrho^\bfen(q,p) := \frac{1}{Z^\bfen} \ind{(q,p) \in \Gamma^\bfen} |p \cdot \mathsf{n}(q)| \rho(q,p),
  \end{equation}
  with respect to the measure $\dd\sigma_\Sigma(q)\dd p$ on $\Sigma \times \R^d$.
  \end{enumerate}
\end{theo}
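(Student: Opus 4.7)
The strategy is to focus on $(Y^\bfex_n)_{n \geq 0}$; the case of $(Y^\bfen_n)_{n \geq 0}$ follows by a symmetric argument (for instance by applying the same reasoning to the time-reversed dynamics, which swaps $\Gamma^\bfex$ and $\Gamma^\bfen$ up to negation of momenta). The heart of the proof is the identification of the candidate invariant density $\varrho^\bfex$ via a \emph{flux-balance} identity that couples the discrete chain to the continuous-time ergodicity granted by Assumption~(\ref{ass:A2}). Concretely, the first step would be to establish that for any bounded measurable $f : \Gamma^\bfex \to \R$ whose support stays at positive distance from $\Gamma^0$, and for any initial condition $(q_0,p_0) \notin \Gamma^0$,
\begin{equation*}
  \frac{1}{t}\sum_{n : \tau^\bfex_n \leq t} f(Y^\bfex_n) \xrightarrow[t \to \infty]{\text{a.s.}} \int_{\Gamma^\bfex} f(q,p)\,|p \cdot \mathsf{n}(q)|\,\rho(q,p)\,\dd\sigma_\Sigma(q)\,\dd p.
\end{equation*}
Specialising to $f \equiv 1$ identifies the asymptotic rate of exits as $Z^\bfex$, and a Palm-type division then yields almost-sure convergence of the empirical measures $(1/N)\sum_{n=0}^{N-1}\delta_{Y^\bfex_n}$ to the probability measure $\pi^\bfex$ with density $\varrho^\bfex$. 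By the Markov property this forces $\pi^\bfex$ to be invariant for $(Y^\bfex_n)$, and any other invariant probability measure to coincide with it.

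The flux identity itself would be proven by a tubular-neighbourhood argument. For $\epsilon>0$ small, set $\Sigma_\epsilon := \{q \in \R^d : \dist(q,\Sigma) < \epsilon\}$, and apply the continuous-time ergodic theorem in~(\ref{ass:A2}) to test functions of the form $g_\epsilon(q,p) \simeq (2\epsilon)^{-1} f(q,p)\,\mathbf{1}_{\Sigma_\epsilon}(q)$. The time-average of $g_\epsilon(q_s,p_s)$ converges by (\ref{ass:A2}) to a Lebesgue integral which, by the co-area formula $\dd q \simeq \dd n \, \dd\sigma_\Sigma(q)$ near $\Sigma$ and letting $\epsilon \dto 0$, equals the right-hand side above. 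On the other hand, each exit crossing at $(q,p)=Y^\bfex_n$ contributes to this time-average, up to lower-order terms, a quantity of order $(2\epsilon)^{-1}\cdot \epsilon\,|p\cdot \mathsf{n}(q)|^{-1} \cdot |p\cdot \mathsf{n}(q)| f(q,p) = f(q,p)/2$, with a matching contribution coming from the corresponding entrance crossing; summing over crossings in $[0,t]$ recovers the left-hand side.

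Harris recurrence would then follow from standard arguments. By Hörmander's hypoellipticity theorem and the Stroock--Varadhan support theorem applied to~\eqref{eq:Langevin}, the transition semigroup of $(q_t,p_t)_{t \geq 0}$ admits a smooth density that is strictly positive on $(\R^d\times\R^d)^2$ for every $t>0$. Combined with Lemmas~\ref{lem:return} and~\ref{lem:nonacc}, this yields that starting from any point of $\Gamma^\bfex$ the chain $(Y^\bfex_n)$ reaches any nonempty open subset of $\Gamma^\bfex$ with positive probability in finitely many steps; in particular the chain is $\psi$-irreducible with $\psi=\pi^\bfex$. In conjunction with the invariant probability measure $\pi^\bfex$ constructed above, the criteria recalled in Appendix~\ref{app:Harris} then yield positive Harris recurrence and uniqueness.

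The main technical obstacle lies in rigorously implementing the tubular-neighbourhood argument, and more precisely in controlling trajectories that graze the boundary near $\Gamma^0$, where the factor $|p \cdot \mathsf{n}(q)|^{-1}$ diverges. Assumption~(\ref{ass:C}) is tailored to guarantee integrability of this singularity, but exploiting it requires quantitative regularity estimates on $\rho$ near $\Sigma$ and uniform bounds on the occupation time of $\Sigma_\epsilon \times \R^d$; such estimates are precisely those provided by the analytical machinery of~\cite{LelRamRey:kfp}. A secondary subtlety is the non-compactness of $\Sigma$ and of the momentum variable: standard Harris criteria typically require drift conditions toward compact small sets, so one must leverage the integrability built into Assumptions~(\ref{ass:A2})--(\ref{ass:A3}) and the tail behaviour of $\rho$ to rule out mass escape along $\Gamma^\bfex$.
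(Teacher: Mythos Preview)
Your plan takes a genuinely different route from the paper on both fronts, and while the heuristics are sound, each step has a gap that the paper's approach sidesteps.

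For the invariant measure, the paper does \emph{not} count crossings trajectorially. It works analytically: for bounded continuous $f^\bfen$ on $\Gamma^\bfen\cup\Gamma^0$, the function $u^\bfen(q,p)=\Exp_{(q,p)}[f^\bfen(q_{\tau^\bfen_0},p_{\tau^\bfen_0})]$ is shown to solve the Dirichlet problem $\mathcal{L}u^\bfen=0$ on $(\R^d\setminus\bar\metasp)\times\R^d$ with boundary data $f^\bfen$ (Proposition~\ref{prop:Dirichlet-Langevin}). Integrating $\rho\,\mathcal{L}u^\bfen=0$ by parts over this domain and using $\mathcal{L}^*\rho=0$ leaves only the boundary term, which reads $\int_{\Gamma^\bfex}(p\cdot\mathsf{n})u^\bfen\rho\,\dd\sigma_\Sigma\dd p=-\int_{\Gamma^\bfen}(p\cdot\mathsf{n})f^\bfen\rho\,\dd\sigma_\Sigma\dd p$; this is exactly the statement that the pushforward of $\pi^\bfex$ by one step of $(Y^\Sigma_m)$ is $\pi^\bfen$, and by symmetry $\pi^\bfen\mapsto\pi^\bfex$. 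The work lies in justifying the integration by parts on an unbounded domain with $\nabla u^\bfen$ possibly blowing up at the boundary, handled by a two-parameter cutoff under~(\ref{ass:A3}) and~(\ref{ass:C}). Your tubular-neighbourhood argument would have to control, uniformly in $\epsilon$, the contribution of trajectories that enter and exit $\Sigma_\epsilon$ many times between consecutive actual crossings of $\Sigma$; near $\Gamma^0$ this count is unbounded, and Assumption~(\ref{ass:C}) alone does not obviously tame it. The PDE route avoids any $\epsilon$-discretisation.

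For Harris recurrence, the paper again avoids abstract irreducibility. It builds an explicit regeneration set $\mathcal{R}\subset\Gamma^\bfex$: a Gronwall and Cameron--Martin estimate shows that from any point of $\mathcal{R}$ the process reaches a fixed compact $K\subset(\R^d\setminus\bar\metasp)\times\R^d$ before returning to $\Sigma$ with probability bounded below, and then a \emph{Harnack inequality} for $\mathcal{L}$ on $K$ (from~\cite{LelRamRey:kfp}) yields the minorisation $\Pr_{y}(Y^\Sigma_1\in\cdot)\geq\epsilon\lambda(\cdot)$ uniformly over $y\in\mathcal{R}$. Recurrence of $\mathcal{R}$ is proved separately. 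Your appeal to H\"ormander and Stroock--Varadhan gives strict positivity of fixed-time transition densities for the continuous process in $\R^{2d}$, but the chain $(Y^\bfex_n)$ lives on a codimension-one surface and its transition involves a random hitting time, not a fixed-time marginal; pointwise positivity of the induced kernel on $\Gamma^\bfex$ is neither immediate nor sufficient for a regeneration set, which needs a \emph{uniform} lower bound. Closing that gap essentially forces you back to a Harnack-type estimate.
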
 

Theorem~\ref{theo:main} is proved in Section~\ref{s:ergo}.

\begin{rk}\label{rk:z}
  We shall see in the proof of Theorem~\ref{theo:main} that $Z^\bfex=Z^\bfen$. In the conservative case described in Subsection~\ref{ss:ass-Langevin}, this fact is actually obvious since it can be checked directly that $\rho(q,p)=\rho(q,-p)$, which then implies $Z^\bfex=Z^\bfen$.
\end{rk}

\begin{rk}
  The proof of Theorem~\ref{theo:main} shows that if Assumption~(\ref{ass:C}) does not hold, then the $\sigma$-finite measures with densities
  \begin{equation*}
    \ind{(q,p) \in \Gamma^\bfex} |p \cdot \mathsf{n}(q)| \rho(q,p), \qquad \ind{(q,p) \in \Gamma^\bfen} |p \cdot \mathsf{n}(q)| \rho(q,p),
  \end{equation*}
  with respect to the measure $\dd\sigma_\Sigma(q)\dd p$ on $\Sigma \times \R^d$ remain the unique (up to a multiplicative constant) invariant $\sigma$-finite measures of the Harris recurrent Markov chains $(Y^\bfex_n)_{n \geq 0}$ and $(Y^\bfen_n)_{n \geq 0}$.
\end{rk}

%%%%%%%%%%%%%%%%%%%%%%%%%%%%%%%%%%%%%%%%%%%%%%%%%%%%%%%%%%%%%%%%%%%%%%%%%%%%%%%%%%%%%%%%%%%%%%%%%%%%%%%%%
\subsubsection{Sequence of successive crossings} One may also be interested in the Markov chain $(Y^\Sigma_m)_{m \geq 0}$ defined as the sequence of the successive crossings of the surface $\Sigma$ by the process $(q_t,p_t)_{t \geq 0}$. Following Remark~\ref{rk:intertwining}, this sequence writes either $(Y^\bfex_0, Y^\bfen_0, Y^\bfex_1, \ldots)$ or $(Y^\bfen_0, Y^\bfex_0, Y^\bfen_1, \ldots)$ depending on whether $\tau^\bfex_0 < \tau^\bfen_0$ or $\tau^\bfen_0 < \tau^\bfex_0$. In this perspective, the equivalent of Theorem~\ref{theo:main} for the Markov chain $(Y^\Sigma_m)_{m \geq 0}$ reads as follows.

\begin{theo}[Ergodicity of $(Y^\Sigma_m)_{m \geq 0}$]\label{theo:Y}
  Under the assumptions of Theorem~\ref{theo:main}, the Markov chain $(Y^\Sigma_m)_{m \geq 0}$ is positive Harris recurrent with unique invariant probability measure
  \begin{equation*}
    \pi^\Sigma := \frac{1}{2}\left(\pi^\bfex + \pi^\bfen\right) = \frac{1}{2Z^\Sigma}|p \cdot \mathsf{n}(q)| \rho(q,p)\dd\sigma_\Sigma(q)\dd p,
  \end{equation*}
  where following Remark~\ref{rk:z} we have set $Z^\Sigma:=Z^\bfex=Z^\bfen$.
\end{theo}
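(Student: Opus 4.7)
The plan is to exploit the structural relation between the chain $(Y^\Sigma_m)_{m\geq 0}$ and the two subchains $(Y^\bfex_n)_{n\geq 0}$, $(Y^\bfen_n)_{n\geq 0}$ already handled by Theorem~\ref{theo:main}. By Remark~\ref{rk:intertwining} combined with Lemma~\ref{lem:return}, the transition kernel $P^\Sigma$ of $(Y^\Sigma_m)_{m\geq 0}$ is a Markov kernel on $\Gamma^\bfex \cup \Gamma^\bfen$ which sends $\Gamma^\bfex$ into $\Gamma^\bfen$ and $\Gamma^\bfen$ into $\Gamma^\bfex$. The restrictions of $(P^\Sigma)^2$ to $\Gamma^\bfex$ and to $\Gamma^\bfen$ coincide respectively with the kernels $P^\bfex$ and $P^\bfen$ of $(Y^\bfex_n)_{n\geq 0}$ and $(Y^\bfen_n)_{n\geq 0}$, which are the key identifications on which everything rests.

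Step 1 (invariance of $\pi^\Sigma$). Because $\pi^\bfex$ is carried by $\Gamma^\bfex$, the measure $\pi^\bfex P^\Sigma$ is a probability measure carried by $\Gamma^\bfen$, and the associativity $(\pi^\bfex P^\Sigma)(P^\Sigma)^2 = (\pi^\bfex (P^\Sigma)^2) P^\Sigma = \pi^\bfex P^\Sigma$ shows it is invariant for $P^\bfen$. By the uniqueness part of Theorem~\ref{theo:main}, $\pi^\bfex P^\Sigma = \pi^\bfen$. Symmetrically $\pi^\bfen P^\Sigma = \pi^\bfex$, whence
\begin{equation*}
  \pi^\Sigma P^\Sigma = \tfrac{1}{2}(\pi^\bfex P^\Sigma + \pi^\bfen P^\Sigma) = \tfrac{1}{2}(\pi^\bfen + \pi^\bfex) = \pi^\Sigma .
\end{equation*}
The explicit density expression then follows from~\eqref{eq:rho+-} and from Remark~\ref{rk:z}, once we note that the set $\Gamma^0$ has zero $\dd\sigma_\Sigma(q)\dd p$-measure (it is the zero set of the polynomial $p \mapsto p\cdot\mathsf{n}(q)$ in the $p$-variable for each $q \in \Sigma$) and that $|p\cdot\mathsf{n}(q)|$ vanishes on it anyway.

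Step 2 (Harris recurrence). Let $A \subset \Gamma^\bfex \cup \Gamma^\bfen$ be measurable with $\pi^\Sigma(A)>0$. Then at least one of $\pi^\bfex(A\cap \Gamma^\bfex)$ or $\pi^\bfen(A\cap\Gamma^\bfen)$ is positive. Since $(Y^\bfex_n)_n$ and $(Y^\bfen_n)_n$ are Harris recurrent by Theorem~\ref{theo:main}, the corresponding subsequence $(Y^\Sigma_{2m})_m$ or $(Y^\Sigma_{2m+1})_m$ visits $A$ infinitely often almost surely from any starting point in $\Gamma^\bfex \cup \Gamma^\bfen$; hence so does $(Y^\Sigma_m)_m$. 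Combined with Step~1, this yields that $(Y^\Sigma_m)_m$ is positive Harris recurrent, and uniqueness of the invariant probability measure then follows from standard facts on positive Harris chains (Appendix~\ref{app:Harris}).

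I expect the proof to be short, the bulk of the work having already been done in Theorem~\ref{theo:main}. The only subtle point is the verification that $P^\Sigma$ is a well-defined Markov kernel on $\Gamma^\bfex\cup\Gamma^\bfen$ mapping each side into the other, but this is an immediate consequence of Lemmas~\ref{lem:return}–\ref{lem:nonacc} and Remark~\ref{rk:intertwining}, which guarantee that starting from $\Gamma^\bfex\cup\Gamma^\bfen$ the successive crossing times are a.s.\ finite, strictly ordered, and non-accumulating, and that exits and entrances genuinely alternate.
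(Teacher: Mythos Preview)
Your argument is internally sound once Theorem~\ref{theo:main} is granted, but it is circular in the paper's architecture: the Harris recurrence and uniqueness statements for $(Y^\bfex_n)$ and $(Y^\bfen_n)$ in Theorem~\ref{theo:main} are obtained \emph{from} Theorem~\ref{theo:Y}, not the other way around. In Subsection~\ref{ss:pfmain} the paper first completes the proof of Theorem~\ref{theo:Y}, and only then deduces Harris recurrence of $(Y^\bfex_n)$ and $(Y^\bfen_n)$ as traces of $(Y^\Sigma_m)$ via Lemma~\ref{lem:trace-Harris}. So both your Step~1 (which invokes uniqueness of the invariant measure of $(Y^\bfen_n)$) and your Step~2 (which invokes Harris recurrence of $(Y^\bfex_n)$, $(Y^\bfen_n)$) rest on a result whose proof, in this paper, presupposes the very theorem you are proving.

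Concretely, the paper establishes the two ingredients for Theorem~\ref{theo:Y} independently and beforehand. Invariance of $\pi^\Sigma$ comes from Proposition~\ref{prop:exist}, which proves directly (via an integration-by-parts argument for the Dirichlet problem) that $\pi^\bfex P^\Sigma=\pi^\bfen$ and $\pi^\bfen P^\Sigma=\pi^\bfex$; this replaces your indirect uniqueness argument in Step~1. Harris recurrence of $(Y^\Sigma_m)$ is proved from scratch in Proposition~\ref{prop:HRYSigma} by exhibiting a regeneration set $\mathcal{R}_{\delta_q,\delta_p}\subset\Gamma^\bfex$: the minorisation condition (Lemma~\ref{lem:Harris:1}) uses a Harnack inequality for $\mathcal{L}$ after a short-time control showing that trajectories from $\mathcal{R}_{\delta_q,\delta_p}$ reach a fixed interior compact before recrossing~$\Sigma$, and the recurrence of $\mathcal{R}_{\delta_q,\delta_p}$ (Lemma~\ref{lem:Harris:2}) is obtained from ergodicity of the continuous process. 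This direct construction is where the real work lies, and it is precisely what your proposal outsources to Theorem~\ref{theo:main}.
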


Theorem~\ref{theo:Y} is also proved in Section~\ref{s:ergo}. We provide in Table~\ref{tab:Y} a summary of some notation introduced so far.

\begin{table}[htbp]
\begin{center}\footnotesize
\begin{tabular}{c|c|c|c}
& Stopping times & Markov Chains & Stationary measures \\\hline
Entry points in $A \cup B$& $\tau_{n+1}^-=\inf\{t > \tau_{n}^-,\,
                            (q_t,p_t) \in \Gamma^-\}$ & $Y_n^-=(q_{\tau_n^-},p_{\tau_n^-})$ & $\pi^-$\\
Exit points from $A \cup B$ & $\tau_{n+1}^+=\inf\{t > \tau_{n}^+,\,
                            (q_t,p_t) \in \Gamma^+\}$ & $Y_n^+=(q_{\tau_n^+},p_{\tau_n^+})$ & $\pi^+$\\
Crossing points of $\Sigma$& $\tau_{m+1}=\inf\{t > \tau_{m},\,
                            (q_t,p_t) \in \Gamma^+\cup \Gamma^-\}$ 
&  $Y_m^\Sigma=(q_{\tau_m},p_{\tau_m})$     & $ \pi^\Sigma=\frac 1 2 (\pi^-+\pi^+)$\\\hline
\end{tabular}
\end{center}
\caption{Definitions of the Markov chains $(Y_n^-)_{n \ge 0}$,
  $(Y_n^+)_{n \ge 0}$ and $(Y_m^\Sigma)_{m \ge 0}$, with values
  respectively in $\Gamma^-$, $\Gamma^+$ and $\Gamma^- \cup \Gamma^+$.}\label{tab:Y}
\end{table}

%%%%%%%%%%%%%%%%%%%%%%%%%%%%%%%%%%%%%%%%%%%%%%%%%%%%%%%%%%%%%%%%%%%%%%%%%%%%%%%%%%%%%%%%%%%%%%%%%%%%%%%%%
\subsubsection{Sampling from \texorpdfstring{$\pi^\bfen$}{} and \texorpdfstring{$\pi^\bfex$}{} in the conservative case}\label{sss:sample} In the conservative case $F=-\nabla V$, we deduce from~\eqref{eq:BG} and Theorem~\ref{theo:main} that the probability measures $\pi^\pm$ have a density proportional to
\begin{equation*}
  [p \cdot \mathsf{n}(q)]_\pm \ee^{-\beta H(q,p)}, \qquad H(q,p) = V(q) + \frac{|p|^2}{2},
\end{equation*}
with respect to the measure $\dd\sigma_\Sigma(q)\dd p$. From the numerical point of view, sampling from these measures can be achieved through the following two-step procedure:
\begin{itemize}
  \item[(i)] draw $q$ according to the probability measure with density proportional to $\ee^{-\beta V(q)}$ with respect to the surface measure $\dd\sigma_\Sigma(q)$ on $\Sigma$;
  \item[(ii)] conditionally on $\mathsf{n}(q)$, draw $p$ according to the density proportional to $[p \cdot \mathsf{n}(q)]_\pm\ee^{-\beta|p|^2/2}$ with respect to the Lebesgue measure on $\R^d$.
\end{itemize}
Several methods are available to sample from densities on
manifolds~\cite{DiaHolSha13,LelRouSto19,LSZ22} and may be employed to
draw $q$ in the first step. The second step only requires to draw
$d+1$ independent standard Gaussian variables $G_0, G_1, \ldots, G_d$, and set 
\begin{equation*}
  p = \frac{1}{\sqrt{\beta}}\left(\pm\sqrt{G_0^2 + G_1^2} \mathsf{n}(q) + G_2 \mathsf{e}_2 + \cdots + G_d \mathsf{e}_d\right),
\end{equation*}
where $(\mathsf{e}_2, \ldots, \mathsf{e}_d)$ is an orthonormal basis
of $\mathsf{n}(q)^\perp$, which is the tangent space of $\Sigma$ at the point $q$. Then it is elementary to check that, conditionally on $q$, the vector $p$ has the claimed distribution. 

%%%%%%%%%%%%%%%%%%%%%%%%%%%%%%%%%%%%%%%%%%%%%%%%%%%%%%%%%%%%%%%%%%%%%%%%%%%%%%%%%%%%%%%%%%%%%%%%%%%%%%%%%
%%%%%%%%%%%%%%%%%%%%%%%%%%%%%%%%%%%%%%%%%%%%%%%%%%%%%%%%%%%%%%%%%%%%%%%%%%%%%%%%%%%%%%%%%%%%%%%%%%%%%%%%%
%%%%%%%%%%%%%%%%%%%%%%%%%%%%%%%%%%%%%%%%%%%%%%%%%%%%%%%%%%%%%%%%%%%%%%%%%%%%%%%%%%%%%%%%%%%%%%%%%%%%%%%%%
\subsubsection{Reversibility up to momentum reversal in the conservative case} In the conservative case, it is known that if $(q_0,p_0)$ is distributed according to the stationary Boltzmann--Gibbs measure~\eqref{eq:BG}, then for any $T>0$, the following equality in law holds
$$(q_t,p_t)_{0 \le t \le T} \stackrel{\mathcal L}{=}  (\mathsf{R}(q_{T-t},p_{T-t}))_{0 \le t \le T},$$
where $\mathsf{R}$ is the momentum reversal operator defined by $\mathsf{R}(q,p)=(q,-p)$. This property of the Langevin dynamics is sometimes called the reversibility up to momentum reversal~\cite[Section 2.2.3]{LelRouSto10}. The next statement shows that it also holds at the level of the Markov chain $(Y^\Sigma_m)_{m \geq 0}$. 

\begin{prop}[Reversibility up to momentum reversal for $(Y^\Sigma_m)_{m \geq 0}$]\label{prop:rev}
  Let the assumptions of Theorem~\ref{theo:Y} hold, with $F=-\nabla V$. If $Y^\Sigma_0 \sim \pi^\Sigma$, then the pairs $(Y^\Sigma_0,Y^\Sigma_1)$ and $(\mathsf{R}(Y^\Sigma_1),\mathsf{R}(Y^\Sigma_0))$ have the same law.
\end{prop}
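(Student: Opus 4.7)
The strategy is to lift the continuous-time reversibility up to momentum reversal to the discrete chain $(Y^\Sigma_m)_{m \geq 0}$ through a pathwise argument combined with an ergodic averaging. A preliminary observation is that the involution $\mathsf{R}$ maps $\Sigma \times \R^d$ to itself and exchanges $\Gamma^\bfex$ and $\Gamma^\bfen$, while in the conservative case $\rho(q,p)=\rho(q,-p)$. Combining this with the explicit density of $\pi^\Sigma$ provided by Theorem~\ref{theo:Y} and with the invariance of $\dd\sigma_\Sigma(q)\dd p$ under $(q,p) \mapsto (q,-p)$, one checks directly that $\pi^\Sigma \circ \mathsf{R}^{-1} = \pi^\Sigma$. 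Thus the marginal of $Y^\Sigma_0$ is preserved by $\mathsf{R}$, and the proposition reduces to proving that, for every pair of bounded measurable functions $f,g : \Gamma^\bfex \cup \Gamma^\bfen \to \R$,
\[
\Exp_{\pi^\Sigma}\bigl[f(Y^\Sigma_0)\, g(Y^\Sigma_1)\bigr] = \Exp_{\pi^\Sigma}\bigl[f(\mathsf{R}(Y^\Sigma_1))\, g(\mathsf{R}(Y^\Sigma_0))\bigr].
\]

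To establish this identity, I would fix $T>0$ and start the Langevin dynamics at $(q_0,p_0) \sim \mu$. Denoting by $\mathcal{N}_T$ the number of crossings of $\Sigma$ by $(q_t)_{t\in[0,T]}$ and by $Y^\Sigma_0,\ldots,Y^\Sigma_{\mathcal{N}_T-1}$ the corresponding ordered crossing points, the quantity
\[
S_T(f,g) := \sum_{m=0}^{\mathcal{N}_T-2} f(Y^\Sigma_m)\, g(Y^\Sigma_{m+1})
\]
is a measurable functional of the path $(q_t,p_t)_{t\in[0,T]}$. Applying the reversibility identity $(q_t,p_t)_{0\le t \le T} \stackrel{\mathcal{L}}{=} (\mathsf{R}(q_{T-t},p_{T-t}))_{0\le t\le T}$ and observing by inspection that the $k$-th crossing of the reversed path occurs at time $T-\tau^\Sigma_{\mathcal{N}_T-1-k}$ with value $\mathsf{R}(Y^\Sigma_{\mathcal{N}_T-1-k})$, a reindexing yields
\[
\Exp_\mu[S_T(f,g)] = \Exp_\mu\Biggl[\sum_{m=0}^{\mathcal{N}_T-2} f(\mathsf{R}(Y^\Sigma_{m+1}))\, g(\mathsf{R}(Y^\Sigma_m))\Biggr].
\]

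The last step is to divide by $T$ and let $T\to\infty$. The Harris ergodic theorem for $(Y^\Sigma_m)_{m\ge 0}$ provided by Theorem~\ref{theo:Y} gives $\frac{1}{N}\sum_{m=0}^{N-1} f(Y^\Sigma_m)g(Y^\Sigma_{m+1}) \to \Exp_{\pi^\Sigma}[f(Y^\Sigma_0)g(Y^\Sigma_1)]$ almost surely, while the continuous-time ergodic theorem from Assumption~(\ref{ass:A2}) combined with Assumption~(\ref{ass:C}) yields $\mathcal{N}_T/T \to \lambda$ almost surely, with $\lambda := 2Z^\Sigma > 0$ identified via the outward/inward flux of $\mu$ across $\Sigma$. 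Thus $S_T/T$ converges almost surely to $\lambda\,\Exp_{\pi^\Sigma}[f(Y^\Sigma_0)g(Y^\Sigma_1)]$, and analogously for the reversed sum. The main obstacle is then the passage from the pathwise identity at fixed $T$ to an identity in the limit, for which I would bound $|S_T/T| \le \|f\|_\infty \|g\|_\infty \mathcal{N}_T/T$ and establish the sharp identity $\Exp_\mu[\mathcal{N}_T] = \lambda T$ by a direct surface-flux computation based on the stationarity of $\mu$ and Assumption~(\ref{ass:C}). Dominated convergence then transfers the equality of expectations to the limit and concludes the proof.
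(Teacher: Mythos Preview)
Your strategy is genuinely different from the paper's and, with one correction, it works. The paper proceeds analytically: it represents $P^\Sigma f$ through the Dirichlet problem $\mathcal{L}u=0$ (Proposition~\ref{prop:Dirichlet-Langevin}), performs an integration by parts on $(\R^d\setminus\bar\metasp)\times\R^d$ that encodes the continuous-time reversibility identity $\int v\,\mathcal{L}u\,\rho = \int \mathcal{L}(v\circ\mathsf{R})\,(u\circ\mathsf{R})\,\rho$ plus boundary terms, and justifies this via the same truncation scheme as in Proposition~\ref{prop:exist} together with the energy estimate of Lemma~\ref{lem:energy}. Your route, by contrast, stays entirely probabilistic: it lifts the pathwise reversibility of $(q_t,p_t)_{0\le t\le T}$ under $\Pr_\mu$ directly to the sequence of crossing points, then averages ergodically. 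This is more elementary in spirit and avoids the delicate boundary analysis; the price is that it gives no analytic by-products like Lemma~\ref{lem:energy}.

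There is, however, a real gap in your final step. You claim that Assumption~(\ref{ass:A2}) together with~(\ref{ass:C}) yields $\mathcal{N}_T/T\to 2Z^\Sigma$ almost surely, but the ergodic theorem in~(\ref{ass:A2}) concerns time averages of $L^1(\mu)$ functions, not crossing counts; the statement you need is essentially equivalent to $\Exp_{\pi^\Sigma}[\tau^\Sigma_1]<\infty$, i.e.\ Assumption~(\ref{ass:D}), which Proposition~\ref{prop:rev} does not assume. Moreover, even granting the Rice identity $\Exp_\mu[\mathcal{N}_T]=2Z^\Sigma T$ (which is plausible but not proved in the paper and requires its own justification), boundedness in $L^1$ of $\mathcal{N}_T/T$ does not by itself license dominated convergence.

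The fix is simple: normalise by $\mathcal{N}_T-1$ rather than by $T$. Since the number of crossings in $[0,T]$ of the time-reversed path equals $\mathcal{N}_T$, the pathwise reversibility still gives, for every $T$,
\[
\Exp_\mu\!\left[\frac{1}{\mathcal{N}_T-1}\sum_{m=0}^{\mathcal{N}_T-2} f(Y^\Sigma_m)g(Y^\Sigma_{m+1})\right]
=\Exp_\mu\!\left[\frac{1}{\mathcal{N}_T-1}\sum_{m=0}^{\mathcal{N}_T-2} f(\mathsf{R}(Y^\Sigma_{m+1}))g(\mathsf{R}(Y^\Sigma_m))\right]
\]
(with any fixed convention when $\mathcal{N}_T\le 1$). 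Both integrands are bounded by $\|f\|_\infty\|g\|_\infty$, and since $\mathcal{N}_T\to\infty$ a.s.\ by Lemmas~\ref{lem:return}--\ref{lem:nonacc}, the ergodic theorem for the pair chain (Lemma~\ref{lem:pair-Harris} with Proposition~\ref{prop:LLN-Harris}) gives a.s.\ convergence of each integrand to $\Exp_{\pi^\Sigma}[f(Y^\Sigma_0)g(Y^\Sigma_1)]$ and $\Exp_{\pi^\Sigma}[f(\mathsf{R}(Y^\Sigma_1))g(\mathsf{R}(Y^\Sigma_0))]$ respectively. Bounded convergence then yields the desired equality, with no need for a crossing-rate limit or a Rice formula.
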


Proposition~\ref{prop:rev} is proved in Section~\ref{s:ergo}. It rewrites under the `detailed-balance' form
\begin{equation*}
    \pi^\Sigma(\dd y_0) P^\Sigma(y_0, \dd y_1) =\pi^\Sigma(\mathsf{R}^{-1}(\dd y_1)) P^\Sigma(\mathsf{R}^{-1}(y_1),  \mathsf{R}^{-1} (\dd y_0)),
\end{equation*}
where $P^\Sigma$ denotes the transition kernel of $(Y^\Sigma_m)_{m \geq 0}$ and we recall that the notation for pushforwards of measures and kernels is introduced in Subsection~\ref{ss:basnot}. Combining this identity with the fact that the invariant measure $\pi^\Sigma$ is left invariant by the momentum reversal map $\mathsf{R}$, % c'est une csq de la dernière proposition, et aussi de la formule analytique de $\pi^\Sigma$
one can easily check that   under the assumptions of Proposition~\ref{prop:rev}, if $Y^\Sigma_0 \sim \pi^\Sigma$, then the triples $(Y^\Sigma_0,Y^\Sigma_1,Y^\Sigma_2)$ and $(\mathsf{R}(Y^\Sigma_2),\mathsf{R}(Y^\Sigma_1),\mathsf{R}(Y^\Sigma_0))$ also have the same law. Applying this statement with test functions $\ind{y_0 \in \Gamma^\pm}g^\pm(y_0,y_2)$, for $g^\pm : \Gamma^\pm \times \Gamma^\pm \to \R$, leads to the following result.

\begin{cor}[Intertwinned reversibility for $(Y^\bfen_n)_{n \geq 0}$ and $(Y^\bfex_n)_{n \geq 0}$]\label{cor:rev}
  Under the assumptions of Proposition~\ref{prop:rev},
  \begin{enumerate}[label=(\roman*),ref=\roman*]
      \item the law of $(Y_0^\bfex,Y_1^\bfex)$ under $\Pr_{\pi^\bfex}$ is the same as the law of $(\mathsf{R}(Y_1^\bfen),\mathsf{R}(Y_0^\bfen))$ under $\Pr_{\pi^\bfen}$;
      \item the law of $(Y_0^\bfen,Y_1^\bfen)$ under $\Pr_{\pi^\bfen}$ is the same as the law of $(\mathsf{R}(Y_1^\bfex),\mathsf{R}(Y_0^\bfex))$ under $\Pr_{\pi^\bfex}$. \end{enumerate}
\end{cor}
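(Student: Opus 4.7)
The plan is to take the triple reversibility displayed in the paragraph just before Corollary~\ref{cor:rev} (a consequence of Proposition~\ref{prop:rev} combined with the invariance of $\pi^\Sigma$ under $\mathsf{R}$) as given, and deduce parts (i) and (ii) by plugging in test functions of the appropriate form. I will detail (i); part (ii) is entirely analogous.

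First, I would set up the bookkeeping between $(Y^\Sigma_m)_{m \ge 0}$ and the two chains $(Y^\bfex_n)_{n \ge 0}$, $(Y^\bfen_n)_{n \ge 0}$. By Remark~\ref{rk:intertwining} the crossings alternate: on the event $\{Y^\Sigma_0 \in \Gamma^\bfex\}$ one has $Y^\Sigma_0 = Y^\bfex_0$, $Y^\Sigma_1 = Y^\bfen_0$, $Y^\Sigma_2 = Y^\bfex_1$; on the event $\{Y^\Sigma_0 \in \Gamma^\bfen\}$ one has $Y^\Sigma_0 = Y^\bfen_0$, $Y^\Sigma_1 = Y^\bfex_0$, $Y^\Sigma_2 = Y^\bfen_1$. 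Moreover, because $\pi^\Sigma = \tfrac12(\pi^\bfex + \pi^\bfen)$ with $\pi^\bfex$ and $\pi^\bfen$ carried respectively by $\Gamma^\bfex$ and $\Gamma^\bfen$, each of these events has $\pi^\Sigma$-probability $1/2$ and the conditional law of $Y^\Sigma_0$ is $\pi^\bfex$ (resp.\ $\pi^\bfen$).

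Next, for an arbitrary bounded measurable $g^\bfex : \Gamma^\bfex \times \Gamma^\bfex \to \R$, I apply the triple-reversibility identity to the test function $(y_0,y_1,y_2) \mapsto \ind{y_0 \in \Gamma^\bfex}\, g^\bfex(y_0,y_2)$. On the left-hand side, the indicator selects the event $\{Y^\Sigma_0 \in \Gamma^\bfex\}$ and the identifications above yield $\tfrac12 \Exp_{\pi^\bfex}[g^\bfex(Y^\bfex_0,Y^\bfex_1)]$. On the right-hand side, since $\mathsf{R}$ exchanges $\Gamma^\bfex$ and $\Gamma^\bfen$, the indicator becomes $\ind{Y^\Sigma_2 \in \Gamma^\bfen}$; but by intertwining $Y^\Sigma_2 \in \Gamma^\bfen$ is the same event as $\{Y^\Sigma_0 \in \Gamma^\bfen\}$, on which $(Y^\Sigma_0,Y^\Sigma_2) = (Y^\bfen_0, Y^\bfen_1)$. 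This gives $\tfrac12\Exp_{\pi^\bfen}[g^\bfex(\mathsf{R}(Y^\bfen_1),\mathsf{R}(Y^\bfen_0))]$. Cancelling the factors $1/2$ and letting $g^\bfex$ vary yields (i). The proof of (ii) follows by the same argument with the test function $\ind{y_0 \in \Gamma^\bfen}\,g^\bfen(y_0,y_2)$.

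There is no genuine analytic difficulty here beyond careful index tracking: the only subtlety is the systematic use of the intertwining (both to convert between $Y^\Sigma$ and $Y^\pm$, and to turn an indicator on $Y^\Sigma_2$ into one on $Y^\Sigma_0$), and of the fact that $\mathsf{R}$ maps $\Gamma^\bfex$ bijectively onto $\Gamma^\bfen$. This last observation is also what ensures that the conclusions of Corollary~\ref{cor:rev} are internally consistent, since the law on the right-hand sides is automatically supported on the correct product space.
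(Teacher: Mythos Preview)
Your proposal is correct and follows exactly the approach sketched in the paper: take the triple-reversibility for $(Y^\Sigma_0,Y^\Sigma_1,Y^\Sigma_2)$ as given and apply it with test functions of the form $\ind{y_0 \in \Gamma^\pm}g^\pm(y_0,y_2)$, using the intertwining from Remark~\ref{rk:intertwining} and the fact that $\mathsf{R}$ swaps $\Gamma^\bfex$ and $\Gamma^\bfen$. Your bookkeeping is accurate and in fact more explicit than the paper's one-line justification.
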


%%%%%%%%%%%%%%%%%%%%%%%%%%%%%%%%%%%%%%%%%%%%%%%%%%%%%%%%%%%%%%%%%%%%%%%%%%%%%%%%%%%%%%%%%%%%%%%%%%%%%%%%%
%%%%%%%%%%%%%%%%%%%%%%%%%%%%%%%%%%%%%%%%%%%%%%%%%%%%%%%%%%%%%%%%%%%%%%%%%%%%%%%%%%%%%%%%%%%%%%%%%%%%%%%%%
\subsection{The Hill relation for the Langevin dynamics}\label{ss:Hill-Langevin} In this subsection, we assume that $\metasp = A \cup B$, where $A$ and $B$ are nonempty open $C^2$ subsets of $\R^d$, with $\bar{A} \cap \bar{B} = \emptyset$. Then Assumption~(\ref{ass:B1}) is satisfied, and we introduce the partition of $\Gamma^\bfen$ into the two sets
\begin{equation*}
  {\mathcal{A}^-} := \Gamma^\bfen \cap (\partial A \times \R^d), \qquad {\mathcal{B}^-} := \Gamma^\bfen \cap (\partial B \times \R^d).
\end{equation*}

%%%%%%%%%%%%%%%%%%%%%%%%%%%%%%%%%%%%%%%%%%%%%%%%%%%%%%%%%%%%%%%%%%%%%%%%%%%%%%%%%%%%%%%%%%%%%%%%%%%%%%%%%
\subsubsection{Hill relation} Under the assumptions of Theorem~\ref{theo:main}, the Markov chain $(Y^\bfen_n)_{n \geq 0}$ visits infinitely often both sets ${\mathcal{A}^-}$ and ${\mathcal{B}^-}$, which allows to define the sequences $(\eta^\ret_{{\mathcal{A}^-},k})_{k \geq 0}$ and $(\eta^\ret_{{\mathcal{B}^-},k})_{k \geq 0}$ as in~\eqref{eq:etare}, see also Table~\ref{tab:eta} below. We then introduce the notation
\begin{equation*}
  \forall k \geq 0, \qquad Y^\ret_{{\mathcal{A}^-},k} := Y^\bfen_{\eta^\ret_{{\mathcal{A}^-},k}}.
\end{equation*}

\begin{prop}[Definition of the reactive entrance distribution]\label{prop:nuA}
  Under the assumptions of Theorem~\ref{theo:main} with $\metasp = A
  \cup B$, the sequence $(Y^\ret_{{\mathcal{A}^-},k})_{k \geq 0}$ is a
  positive Harris recurrent Markov chain. Its unique invariant
  probability measure $\nu^\ret_{\mathcal{A}^-}$ is the so-called \emph{reactive entrance distribution} in ${\mathcal{A}^-}$.
\end{prop}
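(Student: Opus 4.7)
My approach is to realise $(Y^\ret_{\mathcal{A}^-,k})_{k \geq 0}$ as the induced chain of a suitable augmentation of $(Y^\bfen_n)_{n \geq 0}$ on a well-chosen subset, and then to invoke the general theory of induced chains collected in Appendix~\ref{app:Harris}.

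The plan is to first verify that all the stopping times in~\eqref{eq:etare} are almost surely finite. From the explicit density~\eqref{eq:rho+-}, the strict positivity of $\rho$ (Assumption~(\ref{ass:A2})), and the fact that both $\partial A$ and $\partial B$ carry positive $\dd\sigma_\Sigma$-measure, one gets $\pi^\bfen(\mathcal{A}^-) > 0$ and $\pi^\bfen(\mathcal{B}^-) > 0$. Positive Harris recurrence of $(Y^\bfen_n)$ (Theorem~\ref{theo:main}) then ensures that both sets are visited infinitely often almost surely, making $\eta^\ret_{\mathcal{A}^-,k}$ and $\eta^\ret_{\mathcal{B}^-,k}$ almost surely finite for every $k$. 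Applying the strong Markov property of $(Y^\bfen_n)$ at the stopping time $\eta^\ret_{\mathcal{A}^-,k}$ shows that $Y^\ret_{\mathcal{A}^-,k+1}$, which is a measurable functional of the post-$\eta^\ret_{\mathcal{A}^-,k}$ trajectory, has a conditional distribution depending only on $Y^\ret_{\mathcal{A}^-,k}$; hence $(Y^\ret_{\mathcal{A}^-,k})_{k \geq 0}$ is a time-homogeneous Markov chain, with transition kernel given by the composition of the first-passage kernels from $\mathcal{A}^-$ to $\mathcal{B}^-$ and from $\mathcal{B}^-$ to $\mathcal{A}^-$.

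For positive Harris recurrence and the existence and uniqueness of the invariant measure, I would introduce the augmented chain $\tilde Y_n := (Y^\bfen_n, \xi_n)$ on $\Gamma^\bfen \times \{A, B\}$, where $\xi_n \in \{A, B\}$ records which of $\mathcal{A}^-$ or $\mathcal{B}^-$ has been most recently visited strictly before time $n$ (with a harmless convention when no such visit exists). Since $\xi_n$ is updated by a deterministic rule from the past trajectory of $(Y^\bfen_n)$, the process $\tilde Y_n$ is a time-homogeneous Markov chain, and positive Harris recurrence lifts from $(Y^\bfen_n)$ to $\tilde Y_n$, yielding a unique invariant probability measure $\tilde\pi$. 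The stopping times $\eta^\ret_{\mathcal{A}^-,k}$ for $k \geq 1$ are then precisely the successive visits of $\tilde Y_n$ to the set $E := \mathcal{A}^- \times \{B\}$; the previous step together with $\tilde\pi(E) > 0$ (which follows from the Harris recurrence of $\mathcal{B}^-$ and the positivity of $\pi^\bfen(\mathcal{A}^-)$) allow the induced-chain theorem of Appendix~\ref{app:Harris} to be applied. It gives that the induced chain of $\tilde Y_n$ on $E$ is positive Harris recurrent with invariant probability measure $\tilde\pi(\cdot \mid E)$; projecting on the first coordinate yields both the positive Harris recurrence of $(Y^\ret_{\mathcal{A}^-,k})_{k \geq 0}$ and its unique invariant probability measure $\nu^\ret_{\mathcal{A}^-}$.

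The main obstacle is the careful lifting of Harris recurrence and positivity from $(Y^\bfen_n)$ to the augmented chain $\tilde Y_n$, together with the identification of the induced chain on $E$ with $(Y^\ret_{\mathcal{A}^-,k})_{k \geq 0}$. This uses only the strong Markov property and general induced-chain arguments but requires some bookkeeping around the initial-condition convention for $\xi_0$, which however does not affect the invariant measure. A more direct alternative would be to establish Harris recurrence of $(Y^\ret_{\mathcal{A}^-,k})$ by hand, showing that every set $C \subset \mathcal{A}^-$ with $\pi^\bfen(C) > 0$ is visited infinitely often by iteratively invoking the Harris property of $(Y^\bfen_n)$, and then controlling the mean return time to conclude positivity.
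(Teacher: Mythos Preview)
Your approach is correct and is a close variant of the paper's. Both routes begin with the strong Markov property to obtain the Markov structure, and both recognise the reactive entrance times as the hitting times of a product-type set for a suitable enlargement of $(Y^\bfen_n)$. The paper uses the pair chain $\bar Y_n = (Y^\bfen_n, Y^\bfen_{n+1})$, which is positive Harris recurrent with invariant law $\pi^\bfen \otimes P^\bfen$ by Lemma~\ref{lem:pair-Harris}; the $\eta^\ret_{\mathcal{A}^-,k}$ (for $k\ge 1$) are then exactly the visits of $\bar Y_{n-1}$ to $\mathcal{B}^- \times \mathcal{A}^-$, and Lemma~\ref{lem:trace-Harris} together with Proposition~\ref{prop:LLN-Harris} yields positive Harris recurrence along with the explicit expression $\nu^\ret_{\mathcal{A}^-}(\dd y) = \Pr_{\pi^\bfen}(Y^\bfen_1 \in \dd y \mid Y^\bfen_0 \in \mathcal{B}^-, Y^\bfen_1 \in \mathcal{A}^-)$. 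Your augmented chain is in fact a deterministic coarsening of the pair chain: since every step lies in $\mathcal{A}^- \cup \mathcal{B}^-$, your label $\xi_n$ is simply the indicator of which set contains $Y^\bfen_{n-1}$, so $\tilde Y_n$ is a function of $\bar Y_{n-1}$ and your set $E$ corresponds exactly to $\mathcal{B}^- \times \mathcal{A}^-$. Consequently the step you flag as the main obstacle---lifting positive Harris recurrence to $\tilde Y$---is most cleanly carried out by invoking Lemma~\ref{lem:pair-Harris}, at which point the two arguments coincide. The paper's packaging has the minor advantage of producing the explicit formula for $\nu^\ret_{\mathcal{A}^-}$ as an immediate byproduct.
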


Notice that the term \emph{reactive entrance distribution} has been in
particular introduced in the framework of the \emph{Transition Path Theory}, see~\cite{EVan06,EVan10,LuNol15}. We refer to Table~\ref{tab:eta} for a summary of the notation related with the reactive entrance distribution.

\begin{table}[htbp]
\begin{center}\footnotesize
\begin{tabular}{c|c|c|c}
& Stopping times & Markov Chains & Stationary measures \\\hline
Entry in $A$ coming from $B$& $\eta^\ret_{{\mathcal{A}^-},k+1} = \min\{n \geq \eta^\ret_{{\mathcal{B}^-},k}: Y^\bfen_n \in {\mathcal{A}^-}\}$ & $Y^\ret_{{\mathcal{A}^-},k}=Y^-_{\eta^\ret_{{\mathcal{A}^-},k}}$ & $\nu^\ret_{\mathcal{A}^-}$\\
Entry in $B$ coming from $A$ & $\eta^\ret_{{\mathcal{B}^-},k+1} = \min\{n \geq \eta^\ret_{{\mathcal{A}^-},k+1}: Y^\bfen_n \in {\mathcal{B}^-}\}$ &  $Y^\ret_{{\mathcal{B}^-},k}=Y^-_{\eta^\ret_{{\mathcal{B}^-},k}}$ & $\nu^\ret_{\mathcal{B}^-}$\\\hline
\end{tabular}
\end{center}
\caption{Definitions of the Markov chains
  $(Y^\ret_{{\mathcal{A}^-},k})_{k \ge 0}$ and
  $(Y^\ret_{{\mathcal{B}^-},k})_{k \ge 0}$, with values respectively
  in $\mathcal A^-= \Gamma^- \cap (\partial A \times \R^d)$ and $\mathcal
  B^-= \Gamma^- \cap (\partial B \times \R^d)$.}\label{tab:eta}
\end{table}

We are now in position to present the second main result of this work:
the Hill relation for the Langevin dynamics. Let us first state the Hill relation for the time-discrete dynamics
$(Y^-_n)_{n \ge 0}$.
\begin{theo}[Hill relation for the Langevin dynamics]\label{theo:Hill-Langevin}
  In the setting of Proposition~\ref{prop:nuA}, let $\pi_{\mathcal{A}^-}$ refer to the conditional measure $\pi^\bfen(\cdot|{\mathcal{A}^-})$. For any $g \in L^1({\mathcal{A}^-},\pi_{\mathcal{A}^-})$, we have
  \begin{equation*}
    \Exp_{\nu^\ret_{\mathcal{A}^-}}\left[\sum_{n=0}^{\eta^\ret_{{\mathcal{B}^-},0}-1} |g(Y^\bfen_n)|\right] < +\infty,
  \end{equation*}
  and
  \begin{equation*}
    \Exp_{\nu^\ret_{\mathcal{A}^-}}\left[\sum_{n=0}^{\eta^\ret_{{\mathcal{B}^-},0}-1} g(Y^\bfen_n)\right] = \frac{\pi_{\mathcal{A}^-}(g)}{\Pr_{\pi_{\mathcal{A}^-}}(Y^\bfen_1 \in {\mathcal{B}^-})}.
  \end{equation*}
\end{theo}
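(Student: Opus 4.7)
The plan is to deduce the statement from the abstract Hill relation for positive Harris recurrent chains established earlier in Section~\ref{s:Hill}, applied to the chain $(Y^\bfen_n)_{n \geq 0}$ and the partition $\Gamma^\bfen = \mathcal{A}^- \sqcup \mathcal{B}^-$ of its state space. So the task reduces to checking the hypotheses of that abstract statement.

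First I would verify that the decomposition is really a partition: since $\bar A \cap \bar B = \emptyset$, one has $\Sigma = \partial(A \cup B) = \partial A \sqcup \partial B$, and then $\Gamma^\bfen = \mathcal{A}^- \sqcup \mathcal{B}^-$ by the very definitions of these sets. Next, Theorem~\ref{theo:main} provides positive Harris recurrence of $(Y^\bfen_n)_{n \geq 0}$ with invariant probability measure $\pi^\bfen$ of explicit density $\varrho^\bfen$. Since $A$ and $B$ are nonempty open sets with $C^2$ boundaries, the surface measures of $\partial A$ and $\partial B$ are positive; combined with the smooth positivity of $\rho$ guaranteed by Assumption~(\ref{ass:A2}), this forces $\pi^\bfen(\mathcal{A}^-) > 0$ and $\pi^\bfen(\mathcal{B}^-) > 0$. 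Harris recurrence then guarantees that both sets are visited infinitely often from any starting point, so that the stopping times $\eta^\ret_{\mathcal{A}^-,k}$ and $\eta^\ret_{\mathcal{B}^-,k}$ defined in~\eqref{eq:etare} are almost surely finite, and Proposition~\ref{prop:nuA} identifies $\nu^\ret_{\mathcal{A}^-}$ as the unique invariant probability of the entrance subchain $(Y^\ret_{\mathcal{A}^-,k})_{k \ge 0}$. With all these ingredients in hand, invoking the abstract Hill relation (applied once to $|g|$ for the integrability claim and once to $g$ for the identity) concludes the argument.

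The mechanism behind the abstract statement is a Kac-type cycle argument: the occupation measure
\begin{equation*}
\mu(\dd y) := \Exp_{\nu^\ret_{\mathcal{A}^-}}\left[\sum_{n=0}^{\eta^\ret_{\mathcal{B}^-,0}-1}\delta_{Y^\bfen_n}(\dd y)\right]
\end{equation*}
on $\mathcal{A}^-$ is shown, via the strong Markov property and the ergodic theorem for Harris chains, to be proportional to $\pi^\bfen|_{\mathcal{A}^-}$, the proportionality constant being pinned down by the stationary balance between the asymptotic rate of entrances into $\mathcal{A}^-$ from $\mathcal{B}^-$ and the one-step exit probability $\Pr_{\pi_{\mathcal{A}^-}}(Y^\bfen_1 \in \mathcal{B}^-)$. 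The main obstacle is therefore upstream, in proving this abstract relation in sufficient generality to accommodate the non-compact state space $\Gamma^\bfen$; from the perspective of the present theorem, what remains is the routine verification of positivity and measurability outlined above.
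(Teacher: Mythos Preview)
Your proposal is correct and matches the paper's approach: the paper likewise verifies that $(Y^\bfen_n)_{n\ge 0}$ satisfies the abstract Assumptions~(\ref{ass:Harris1}--\ref{ass:Harris2}) via Theorem~\ref{theo:main} and the explicit density of $\pi^\bfen$, and then concludes directly from the general Hill relation (Theorem~\ref{theo:Hill}). The only cosmetic difference is that the paper's proof of the abstract relation goes through the pair chain $(Y^\bfen_n,Y^\bfen_{n+1})_{n\ge 0}$ and the representation formula of Proposition~\ref{prop:rep-pi}, rather than the ergodic-theorem phrasing you sketch, but this concerns the upstream abstract result and not the present theorem.
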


The formula~\eqref{eq:Hill-intro} for the mean transition time
$T_{AB}$ then comes as a corollary of Proposition~\ref{prop:nuA} and
Theorem~\ref{theo:Hill-Langevin}, by considering as a test function $g(y)=\Exp_y[\tau_1^\bfen]$, under the following supplementary assumption:
\begin{enumerate}[label=(D),ref=D]
  \item\label{ass:D} in the setting of Theorem~\ref{theo:main}, $\Exp_{\pi^\bfen}[\tau_0^\bfex] + \Exp_{\pi^\bfex}[\tau_0^\bfen] < +\infty$.
\end{enumerate}
This is stated in the next corollary, where we actually address more general \emph{statistics of transitions}, of the form
\begin{equation*}
    \lim_{\ell \to +\infty} \frac{1}{\ell}\sum_{k=0}^{\ell-1} \int_{\tau^\ret_{{\mathcal{A}^-},k}}^{\tau^\ret_{{\mathcal{B}^-},k}} G(q_s,p_s)\dd s,
\end{equation*}
for some function $G : \R^d \times \R^d \to \R$, by applying %the results of 
Theorem~\ref{theo:Hill-Langevin} to the test function
$g(y)=\Exp_y[\int_0^{\tau_1^\bfen} G(q_s,p_s) \dd s]$.

\begin{cor}[Hill relation for statistics of transitions]\label{cor:MRT}
  In the setting of Proposition~\ref{prop:nuA} and under the supplementary Assumption~(\ref{ass:D}), let $G : \R^d \times \R^d \to \R$ be a bounded and measurable function. We have
  \begin{equation}\label{eq:MRT:exp}
    \Exp_{\nu^\ret_{\mathcal{A}^-}}\left[\int_0^{\tau^\ret_{{\mathcal{B}^-},0}} |G(q_s,p_s)|\dd s\right] < +\infty, \qquad \Exp_{\pi_{\mathcal{A}^-}}\left[\int_0^{\tau^\bfen_1} |G(q_s,p_s)|\dd s\right] < +\infty.
  \end{equation}
  Besides, for any initial condition $(q_0,p_0)$,
  \begin{equation}\label{eq:MRT:limps}
    \lim_{\ell \to +\infty} \frac{1}{\ell}\sum_{k=0}^{\ell-1} \int_{\tau^\ret_{{\mathcal{A}^-},k}}^{\tau^\ret_{{\mathcal{B}^-},k}} G(q_s,p_s)\dd s = \Exp_{\nu^\ret_{\mathcal{A}^-}}\left[\int_0^{\tau^\ret_{{\mathcal{B}^-},0}} G(q_s,p_s)\dd s\right] \qquad \text{almost surely,}
  \end{equation}
  and the right-hand side satisfies the identity
  \begin{equation}\label{eq:MRT:Hill}
    \Exp_{\nu^\ret_{\mathcal{A}^-}}\left[\int_0^{\tau^\ret_{{\mathcal{B}^-},0}} G(q_s,p_s)\dd s\right] = \frac{\displaystyle\Exp_{\pi_{\mathcal{A}^-}}\left[\int_0^{\tau^\bfen_1} G(q_s,p_s)\dd s\right]}{\Pr_{\pi_{\mathcal{A}^-}}(Y^\bfen_1 \in {\mathcal{B}^-})}.
  \end{equation}
\end{cor}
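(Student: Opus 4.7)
The plan is to deduce Corollary~\ref{cor:MRT} from Theorem~\ref{theo:Hill-Langevin} applied to the test functions $g(y) := \Exp_y\bigl[\int_0^{\tau^\bfen_1} G(q_s,p_s)\,\dd s\bigr]$ and $\tilde g(y) := \Exp_y\bigl[\int_0^{\tau^\bfen_1} |G(q_s,p_s)|\,\dd s\bigr]$, both defined for $y \in \mathcal{A}^-$. Since $G$ is bounded, the pointwise bound $\max(|g(y)|,\tilde g(y)) \leq (\sup|G|)\,\Exp_y[\tau^\bfen_1]$ reduces the required integrability to $\Exp_{\pi_{\mathcal{A}^-}}[\tau^\bfen_1]<+\infty$. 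The explicit formula~\eqref{eq:rho+-} for $\varrho^\bfen$ gives $\pi^\bfen(\mathcal{A}^-)>0$, so it suffices to control $\Exp_{\pi^\bfen}[\tau^\bfen_1]$. By Remark~\ref{rk:intertwining} and the strong Markov property at $\tau^\bfex_0$, one has $\tau^\bfen_1 = \tau^\bfex_0 + \tau^\bfen_0\circ\theta_{\tau^\bfex_0}$ under $\Pr_{\pi^\bfen}$; moreover, the stationarity of $\pi^\Sigma = (\pi^\bfen+\pi^\bfex)/2$ for the crossing chain $(Y^\Sigma_m)_{m \ge 0}$ (Theorem~\ref{theo:Y}) implies that the first exit point $Y^\bfex_0$ is distributed as $\pi^\bfex$ under $\Pr_{\pi^\bfen}$. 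Therefore $\Exp_{\pi^\bfen}[\tau^\bfen_1] = \Exp_{\pi^\bfen}[\tau^\bfex_0] + \Exp_{\pi^\bfex}[\tau^\bfen_0]$, which is finite by Assumption~(\ref{ass:D}).

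Next I convert trajectorial integrals into sums along the chain $(Y^\bfen_n)_{n \ge 0}$. Under $\Pr_{\nu^\ret_{\mathcal{A}^-}}$ one has $\eta^\ret_{\mathcal{A}^-,0}=0=\tau^\bfen_0$ and, since $\Gamma^\bfen = \mathcal{A}^-\cup\mathcal{B}^-$, also $Y^\bfen_n\in\mathcal{A}^-$ for every $n<\eta^\ret_{\mathcal{B}^-,0}$, so that
\[\int_0^{\tau^\ret_{\mathcal{B}^-,0}} G(q_s,p_s)\,\dd s = \sum_{n=0}^{\eta^\ret_{\mathcal{B}^-,0}-1} \int_{\tau^\bfen_n}^{\tau^\bfen_{n+1}} G(q_s,p_s)\,\dd s.\]
Conditioning on $\mathcal{F}_{\tau^\bfen_n}$ and exploiting that $\{n<\eta^\ret_{\mathcal{B}^-,0}\}$ is $\mathcal{F}_{\tau^\bfen_n}$-measurable, the strong Markov property yields
\[\Exp_{\nu^\ret_{\mathcal{A}^-}}\!\biggl[\int_0^{\tau^\ret_{\mathcal{B}^-,0}} |G(q_s,p_s)|\,\dd s\biggr] = \Exp_{\nu^\ret_{\mathcal{A}^-}}\!\biggl[\sum_{n=0}^{\eta^\ret_{\mathcal{B}^-,0}-1} \tilde g(Y^\bfen_n)\biggr],\]
which is finite by Theorem~\ref{theo:Hill-Langevin} applied to $\tilde g$. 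This establishes~\eqref{eq:MRT:exp}, and the same computation without absolute values, combined with Theorem~\ref{theo:Hill-Langevin} applied to $g$, yields the Hill identity~\eqref{eq:MRT:Hill}.

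Finally, the almost sure convergence~\eqref{eq:MRT:limps} follows from the ergodic theorem applied to the positive Harris recurrent chain $(Y^\ret_{\mathcal{A}^-,k})_{k\ge 0}$ (Proposition~\ref{prop:nuA}), extended to the excursion functional $F_k := \int_{\tau^\ret_{\mathcal{A}^-,k}}^{\tau^\ret_{\mathcal{B}^-,k}} G(q_s,p_s)\,\dd s$. Conditional on $Y^\ret_{\mathcal{A}^-,k}$, the strong Markov property identifies the law of $F_k$ with that of $F_0$ under $\Pr_{Y^\ret_{\mathcal{A}^-,k}}$, and the integrability $\Exp_{\nu^\ret_{\mathcal{A}^-}}[|F_0|]<+\infty$ is precisely what the previous step delivered. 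The main anticipated obstacle is the formal passage from the ergodic theorem for bounded measurable functions of the chain to a strong law of large numbers at the level of excursion functionals; I would handle this either through the splitting construction of regeneration times for Harris chains, after which the successive excursions become i.i.d.\ and the classical strong law of large numbers applies, or equivalently via a Birkhoff-type ergodic theorem on the canonical path space of $(Y^\bfen_n)_{n\ge 0}$ under $\Pr_{\nu^\ret_{\mathcal{A}^-}}$.
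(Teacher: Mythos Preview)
Your argument for~\eqref{eq:MRT:exp} and~\eqref{eq:MRT:Hill} is essentially identical to the paper's: the integrability reduction via $\Exp_{\pi^\bfen}[\tau^\bfen_1] = \Exp_{\pi^\bfen}[\tau^\bfex_0] + \Exp_{\pi^\bfex}[\tau^\bfen_0]$ is exactly Lemma~\ref{lem:integ-g} (the paper cites Proposition~\ref{prop:exist} rather than Theorem~\ref{theo:Y}, but the content is the same), and the decomposition of the trajectorial integral into increments $g(Y^\bfen_n)$ followed by Theorem~\ref{theo:Hill-Langevin} is exactly the paper's computation (the paper works with $G \geq 0$ and splits into positive and negative parts at the end, which is equivalent to your introduction of~$\tilde g$).

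For~\eqref{eq:MRT:limps}, you correctly identify the obstacle --- the excursion functional $F_k$ is not a function of $Y^\ret_{\mathcal{A}^-,k}$ alone --- but leave the resolution at the level of a sketch. The paper's route is more concrete and avoids both splitting and path-space ergodic theorems: it enlarges the state space and considers the pair chain $(Y^\ret_{\mathcal{A}^-,k}, Z^\bfren_{AB,k})_{k \geq 0}$ on $\mathcal{A}^- \times [0,+\infty)$, where $Z^\bfren_{AB,k} := F_k$. Because the conditional law of $Z^\bfren_{AB,k}$ depends only on $Y^\ret_{\mathcal{A}^-,k}$ (strong Markov), the transition kernel factors as $P^\ret_{\mathcal{A}^-}(y_0,\dd y)\,Q^\bfren_{AB}(y,\dd z)$, independently of $z_0$. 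From this product structure one checks directly (Proposition~\ref{prop:phrMRP}) that the pair chain is positive Harris recurrent with invariant measure $\nu^\ret_{\mathcal{A}^-}(\dd y)\,Q^\bfren_{AB}(y,\dd z)$, by lifting a regeneration set $\mathcal{R}$ for $(Y^\ret_{\mathcal{A}^-,k})_{k \ge 0}$ to $\mathcal{R} \times [0,+\infty)$. The standard ergodic theorem for Harris chains (Proposition~\ref{prop:LLN-Harris}) then applies to the function $\mathbf{f}(y,z)=z$, whose integrability under the invariant measure is precisely the first part of~\eqref{eq:MRT:exp}. Your regeneration/splitting proposal would ultimately lead to the same conclusion, but the paper's formulation stays entirely within the Harris-chain framework of Appendix~\ref{app:Harris} and requires no additional machinery.
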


In particular, taking $G \equiv 1$, we deduce that the limit~\eqref{eq:TAB} exists almost surely, and satisfies the identities~\eqref{eq:TAB-rew} and~\eqref{eq:Hill-intro}. 

Proposition~\ref{prop:nuA}, Theorem~\ref{theo:Hill-Langevin} and
Corollary~\ref{cor:MRT} are proved in Section~\ref{s:Hill}, where the
potential theoretic interpretation of the Hill relation is also
discussed. Assumption~(\ref{ass:D}) is discussed in
Appendix~\ref{app:Etau}, where we use results by Kopec~\cite{Kop15} to
show that it holds for example if $A$ and $B$ are bounded, and
$F=-\nabla V$ where $V$ is smooth and grows at least quadratically at infinity.

%%%%%%%%%%%%%%%%%%%%%%%%%%%%%%%%%%%%%%%%%%%%%%%%%%%%%%%%%%%%%%%%%%%%%%%%%%%%%%%%%%%%%%%%%%%%%%%%%%%%%%%%%

\subsubsection{Practical use in rare event algorithms} Let the assumptions of Corollary~\ref{cor:MRT} hold. Following the identity~\eqref{eq:hill-ams}, to compute such an observable as $T_{AB}$, one has to estimate $\Exp_{\pi_{\mathcal{A}^-}}[\tau^\bfen_1 | Y^\bfen_1 \in {\mathcal{B}^-}]$ and $\Pr_{\pi_{\mathcal{A}^-}}(Y^\bfen_1 \in {\mathcal{B}^-})$. To proceed, since under $\pi_{\mathcal{A}^-}$, $\tau^\bfen_0=0$, one may sample an initial condition $(q_0,p_0)$ from $\pi_{\mathcal{A}^-}$, using the procedure described in §~\ref{sss:sample}, and then use a rare event sampling algorithm to simulate a trajectory of $(q_t,p_t)$ over $[0,\tau^\bfen_1]$, conditionally on the event $\{(q_{\tau^\bfen_1}, p_{\tau^\bfen_1}) \in {\mathcal{B}^-}\}$. It is worth pointing out here that, by the strong Markov property and Proposition~\ref{prop:exist} below,
\begin{equation*}
    \Pr_{\pi_{\mathcal{A}^-}}\left((q_{\tau^\bfen_1}, p_{\tau^\bfen_1}) \in {\mathcal{B}^-}\right) = \Pr_{\pi_{\mathcal{A}^+}}\left((q_{\tau^\bfen_0}, p_{\tau^\bfen_0}) \in {\mathcal{B}^-}\right),
\end{equation*}
where the notation $\pi_{\mathcal{A}^+}$ refers to the restriction
of $\pi^\bfex$ to $\mathcal{A}^+:=\Gamma^\bfex \cap (\partial A \times
\R^d)$. Therefore, to estimate the left-hand side, it is also possible
to initialise the rare event algorithm directly under
$\pi_{\mathcal{A}^+}$ (following again the procedure from
§~\ref{sss:sample}). When using this trick to estimate
$\Exp_{\pi_{\mathcal{A}^-}}[\tau^\bfen_1 | Y^\bfen_1 \in
{\mathcal{B}^-}]$, one has to take into account the time elapsed between
$0$ and $\tau^\bfex_0$ to get the correct expected time (this small
correction is however often neglected in practice, see for
example~\cite[Equation (6)]{AllValTen09}, where the whole duration of the
reactive path is neglected).

%%%%%%%%%%%%%%%%%%%%%%%%%%%%%%%%%%%%%%%%%%%%%%%%%%%%%%%%%%%%%%%%%%%%%%%%%%%%%%%%%%%%%%%%%%%%%%%%%%%%%%%%%
%%%%%%%%%%%%%%%%%%%%%%%%%%%%%%%%%%%%%%%%%%%%%%%%%%%%%%%%%%%%%%%%%%%%%%%%%%%%%%%%%%%%%%%%%%%%%%%%%%%%%%%%%
%%%%%%%%%%%%%%%%%%%%%%%%%%%%%%%%%%%%%%%%%%%%%%%%%%%%%%%%%%%%%%%%%%%%%%%%%%%%%%%%%%%%%%%%%%%%%%%%%%%%%%%%%
\section{Proofs of Lemmas~\ref{lem:return} and~\ref{lem:nonacc}}\label{s:prelim}

The proofs of Lemmas~\ref{lem:return} and~\ref{lem:nonacc} rely on the following nonattainability result for the set $\Gamma^0$, which is stated in~\cite[Proposition~2.7]{LelRamRey:kfp} in the case where $\metasp$ is bounded and $F$ is globally bounded and Lipschitz continuous on $\R^d$.

\begin{lem}[Nonattainability of $\Gamma^0$]\label{lem:nonatt}
  Under the assumptions of Lemma~\ref{lem:return}, for any $(q,p) \in (\R^d \times \R^d) \setminus \Gamma^0$,
  \begin{equation*}
    \Pr_{(q,p)}\left(\exists t \geq 0: (q_t,p_t) \in \Gamma^0\right) = 0.
  \end{equation*}
\end{lem}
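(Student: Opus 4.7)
The strategy is to reduce the statement to the bounded setting treated in \cite[Proposition~2.7]{LelRamRey:kfp} by a localisation argument exploiting the non-explosion assumption (\ref{ass:A1}). Fix $(q,p) \in (\R^d \times \R^d) \setminus \Gamma^0$, and for each integer $k \geq 1$ introduce the stopping time
\begin{equation*}
  \tau_k := \inf\{t \geq 0 : |q_t| \vee |p_t| \geq k\}.
\end{equation*}
By Assumption~(\ref{ass:A1}), $\tau_k \uparrow +\infty$ almost surely under $\Pr_{(q,p)}$, so
\begin{equation*}
  \bigl\{\exists t \geq 0 : (q_t,p_t) \in \Gamma^0\bigr\} = \bigcup_{k \geq 1} \bigl\{\exists t \in [0,\tau_k] : (q_t,p_t) \in \Gamma^0\bigr\},
\end{equation*}
and by monotone convergence it is enough to prove that each term on the right-hand side has probability zero, for all $k$ large enough that $|q|\vee |p| < k$.

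For such a fixed $k$, I construct a surrogate dynamics and a surrogate domain to which \cite[Proposition~2.7]{LelRamRey:kfp} directly applies. First, I pick a smooth cutoff $\chi_k \in C^\infty_c(\R^d)$ with $\chi_k \equiv 1$ on $\bar{\mathrm{B}}(0,k)$ and supported in $\mathrm{B}(0,k+1)$, and set $\tilde F := \chi_k F$, which is globally bounded, smooth, and Lipschitz continuous on $\R^d$. The Langevin SDE with force $\tilde F$ admits a unique strong global solution $(\tilde q_t,\tilde p_t)_{t \geq 0}$, and by pathwise uniqueness one has $(\tilde q_t,\tilde p_t) = (q_t,p_t)$ for all $t \leq \tau_k$, $\Pr_{(q,p)}$-almost surely.

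Second, I replace $\metasp$ by a bounded open set $\tilde\metasp$ with $C^2$ boundary $\tilde\Sigma$ satisfying $\tilde\Sigma \cap \mathrm{B}(0,k) = \Sigma \cap \mathrm{B}(0,k)$ and a compatible choice of outer normal. To do this, I invoke Sard's theorem applied to the distance-to-origin function restricted to $\Sigma$ to pick $r_k \in (k+1,k+2)$ such that $\partial \mathrm{B}(0,r_k)$ is transverse to $\Sigma$; then $\Sigma \cap \bar{\mathrm{B}}(0,r_k)$ is a compact $C^2$ hypersurface with $C^2$ boundary lying on $\partial\mathrm{B}(0,r_k)$, which can be capped by a suitable piece of $\partial\mathrm{B}(0,r_k)$ and smoothed along the junction by a standard partition-of-unity / mollification argument to yield $\tilde\Sigma$ as the boundary of a bounded domain $\tilde\metasp$. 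Applying \cite[Proposition~2.7]{LelRamRey:kfp} to $(\tilde q_t,\tilde p_t)$ and $\tilde\metasp$, whose associated tangency set is denoted by $\tilde\Gamma^0$, yields
\begin{equation*}
  \Pr_{(q,p)}\bigl(\exists t \geq 0 : (\tilde q_t,\tilde p_t) \in \tilde\Gamma^0\bigr) = 0.
\end{equation*}
Since on the event $\{\exists t \in [0,\tau_k] : (q_t,p_t) \in \Gamma^0\}$ one has $(q_t,p_t) \in \mathrm{B}(0,k) \times \mathrm{B}(0,k)$ and $(\tilde q_t,\tilde p_t) = (q_t,p_t)$, and since $\tilde\Gamma^0$ and $\Gamma^0$ agree over $\mathrm{B}(0,k)\times\R^d$, this event is contained in the null event above, which concludes the proof.

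The main technical obstacle is the construction of the bounded $C^2$ surrogate domain $\tilde\metasp$: the transversality argument based on Sard's theorem is what allows the smooth capping to be carried out without introducing corners or tangential contacts. Everything else — the pathwise coincidence of the original and truncated processes up to $\tau_k$, and the countable decomposition over $k$ — is routine once this construction is in place.
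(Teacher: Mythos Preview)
Your proposal is correct and follows essentially the same localisation strategy as the paper: truncate $F$ to a globally Lipschitz force, replace $\metasp$ by a bounded $C^2$ domain agreeing with $\metasp$ on a large ball, and invoke \cite[Proposition~2.7]{LelRamRey:kfp}. The only cosmetic differences are that the paper works with a fixed time horizon $T$ and lets $k\to\infty$ at the end (rather than using the stopping times $\tau_k$), and that the paper simply \emph{asserts} the existence of the bounded surrogate domains $\metasp_k$, whereas you spell out a construction via Sard's theorem and smoothing---a welcome bit of extra care, since this step is not entirely trivial.
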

\begin{proof}
  Let $\tau^0:=\inf\{t>0:(q_t,p_t)\in\Gamma^0\}$. It is sufficient to prove that for all $T>0$ and $(q,p) \in (\R^d \times \R^d) \setminus \Gamma^0$,
  \begin{equation*}
    \Pr_{(q,p)}\left(\tau^0\leq T\right) = 0.
  \end{equation*}
  Let $(F_k)_{k \geq 1}$ be a sequence of smooth compactly supported functions on $\R^d$ such that $F_k=F$ on $\mathrm{B}(0,k)$. Let $(\metasp_k)_{k\geq1}$ be a sequence of open, $C^2$ bounded sets of $\R^d$ such that $\metasp_k \cap \mathrm{B}(0,k) = \metasp \cap \mathrm{B}(0,k)$. Let $\Gamma^0_k:=\{(q,p)\in\partial \metasp_k \times \R^d: p \cdot \mathsf{n}_k(q)=0\}$ where $\mathsf{n}_k$ is the outward unitary vector to $\metasp_k$. Notice that $\Gamma^0_k \cap \mathrm{B}(0,k) = \Gamma^0 \cap \mathrm{B}(0,k)$. 
  
  Fix $(q,p) \in (\R^d \times \R^d) \setminus \Gamma^0$, $k > |q|$ and consider the process $(q_{k,t},p_{k,t})_{t\geq0}$ defined as the unique strong solution to~\eqref{eq:Langevin} with $F_k$ instead of $F$, driven by the same Brownian motion as $(q_t,p_t)_{t \geq 0}$ and with the same initial condition $(q,p)$. It is then a standard result on strong solutions that $(q_{k,t},p_{k,t})$ and $(q_t,p_t)$ coincide until the first time they exit $\mathrm{B}(0,k) \times \R^d$. Let 
  \begin{equation*}
    \tau^0_k := \inf\{t \geq 0: (q_{k,t},p_{k,t}) \in \Gamma^0_k\}.
  \end{equation*}
  By~\cite[Proposition~2.7]{LelRamRey:kfp}, one has that for all $T>0$, $\Pr_{(q,p)}(\tau^0_k\leq T)=0$, since $\metasp_k$ is a $C^2$ bounded set of $\R^d$ and $F_k$ is bounded and globally Lipschitz continuous.
  
  Let $T>0$. For $k\geq1$,
  \begin{equation*}
    \Pr_{(q,p)}\left(\tau^0 \leq T \right) = \Pr_{(q,p)}\left(\tau^0 \leq T, q_{\tau^0} \in \mathrm{B}(0,k)\right) +\Pr_{(q,p)}\left(\tau^0\leq T,q_{\tau^0}\notin \mathrm{B}(0,k)\right).
  \end{equation*}
  On the one hand, $\Pr_{(q,p)}(\tau^0\leq T,q_{\tau^0}\in \mathrm{B}(0,k))\leq\Pr_{(q,p)}(\tau^0_k\leq T)=0$, since $\Gamma^0_k \cap\mathrm{B}(0,k) = \Gamma^0 \cap \mathrm{B}(0,k)$. On the other hand, 
  \begin{equation*}
    \Pr_{(q,p)}\left(\tau^0\leq T,q_{\tau^0}\notin \mathrm{B}(0,k)\right)\leq\Pr_{(q,p)}\left(\sup_{t\in[0,T]}| q_t|\geq k\right)\underset{k\to\infty}{\longrightarrow}0,
  \end{equation*}
  which completes the proof.
\end{proof}

We may now present the proofs of Lemmas~\ref{lem:return} and~\ref{lem:nonacc}.

\begin{proof}[Proof of Lemma~\ref{lem:return}]
To prove the first part of Lemma~\ref{lem:return} on initial
conditions $(q,p) \in (\R^d \times \R^d) \setminus \Gamma^0$, we concentrate on the case when $q \in \metasp$ or
$(q,p) \in \Gamma^\bfen$: the case when $q \in \R^d \setminus
\bar{\metasp}$ or $(q,p) \in \Gamma^\bfex$ can be treated similarly
using the exterior sphere property instead of the interior sphere property.

Let us first prove that $\tau>0$. If $q \in \metasp$, this is
obvious. If $(q,p) \in \Gamma^\bfen$, this follows from the interior sphere property: Assumption~(\ref{ass:B1}) ensures that there exist $r>0$ and $q_\mathrm{int} \in \metasp$ such that $\mathrm{B}(q_\mathrm{int},r) \subset \metasp$ and $\bar{\mathrm{B}}(q_\mathrm{int},r) \cap (\R^d \setminus \metasp) = \{q\}$. Then it is clear that $q-q_\mathrm{int}=r\mathsf{n}(q)$, therefore using the fact that $q_t-q \sim tp$ when $t \to 0$, we get
  \begin{equation*}
    |q_t-q_\mathrm{int}|^2 = |q_t-q|^2 + 2 (q_t-q)\cdot(q-q_\mathrm{int}) + |q-q_\mathrm{int}|^2 = r^2 + t\left(2rp \cdot \mathsf{n}(q) + o(1)\right),
  \end{equation*}
  which implies that $q_t \in \mathrm{B}(q_\mathrm{int},r)$ for $t>0$
  small enough since $p \cdot \mathsf{n}(q)<0$, and therefore that $\tau > 0$. 
  
Let us now conclude the proof of the first part of Lemma~\ref{lem:return}. It follows from Assumption~(\ref{ass:B1}) that
  $\R^d \setminus \bar{\metasp}$ has positive Lebesgue measure and thus, by
  Assumptions~(\ref{ass:A1}--\ref{ass:A2}), it is a recurrent set. Therefore,
  almost surely, there exists $t>0$ such that $q_t \in \R^d \setminus
  \bar{\metasp}$, which by continuity of the trajectory $(q_t)_{t \geq 0}$
  implies that $\tau < +\infty$. Moreover, by the differentiability of this
  trajectory, we have $(q_\tau,p_\tau) \in \Gamma^\bfex \cup
  \Gamma^0$ and by Lemma~\ref{lem:nonatt}, we conclude that
  $(q_\tau,p_\tau) \in \Gamma^\bfex$. This concludes the proof of the first part of Lemma~\ref{lem:return}.
  
Let us now prove the second part of Lemma~\ref{lem:return}, concerning initial
conditions $(q,p) \in \Gamma^0$. We will reuse the notation $F_k$, $\metasp_k$, $(q_{k,t},p_{k,t)})_{t \geq 0}$ from the proof of Lemma~\ref{lem:nonatt}, and let $\metasp'_k$ be an open, $C^2$ bounded set of $\R^d$ such that $\metasp'_k \cap \mathrm{B}(0,k) = (\R^d \setminus \bar{\metasp}) \cap \mathrm{B}(0,k)$. For $k > |q|$, by~\cite[Proposition~2.8~(i)]{LelRamRey:kfp} we have, almost surely,
  \begin{equation*}
      \inf\{t \geq 0: q_{k,t} \in \metasp_k\} = \inf\{t \geq 0: q_{k,t} \in \metasp'_k\} = 0,
  \end{equation*}
  which by the continuity of the sample paths of $(q_{k,t})_{t \geq 0}$ and Lemma~\ref{lem:nonatt} implies that, almost surely,
  \begin{equation*}
      \inf\{t \geq 0: (q_{k,t},p_{k,t}) \in \Gamma_k^\bfen\} = \inf\{t \geq 0: (q_{k,t},p_{k,t}) \in \Gamma_k^\bfex\} = 0.
  \end{equation*}
  By the same localisation argument as at the end of the proof of Lemma~\ref{lem:nonatt}, we conclude that the same property holds for $(q_t,p_t)_{t \geq 0}$ in place of $(q_{k,t},p_{k,t})_{t \geq 0}$.
\end{proof}

\begin{proof}[Proof of Lemma~\ref{lem:nonacc}]
 Let us argue by contradiction and assume that either $\sup_{n \geq 0}
 \tau^\bfex_n = T < +\infty$ or $\sup_{n \geq 0} \tau^\bfen_n = T <
 +\infty$. Then Remark~\ref{rk:intertwining} shows that both sequences
 accumulate at the same time $T$. By the continuity of the trajectory of $(q_t,p_t)_{t \geq 0}$, we deduce that
  \begin{equation*}
    \lim_{n \to +\infty} Y^\bfex_n = \lim_{n \to +\infty} Y^\bfen_n = (q_T,p_T).
  \end{equation*}
  Since, by Assumption~(\ref{ass:B1}), the mapping $q \mapsto \mathsf{n}(q)$ is continuous on $\Sigma$, we then obtain
  \begin{equation*}
    \lim_{n \to +\infty} p_{\tau^\bfex_n} \cdot \mathsf{n}(q_{\tau^\bfex_n}) = \lim_{n \to +\infty} p_{\tau^\bfen_n} \cdot \mathsf{n}(q_{\tau^\bfen_n}) = p_T \cdot \mathsf{n}(q_T).
  \end{equation*}
  But since $p_{\tau^\bfex_n} \cdot \mathsf{n}(q_{\tau^\bfex_n}) > 0$
  while $p_{\tau^\bfen_n} \cdot \mathsf{n}(q_{\tau^\bfen_n}) < 0$, one
  gets as a consequence that $p_T \cdot \mathsf{n}(q_T) = 0$. In other
  words, there exists $T < +\infty$ such that $(q_T,p_T) \in
  \Gamma^0$, which by Lemma~\ref{lem:nonatt} has probability $0$ under
  $\Pr_{(q,p)}$, for any $(q,p)\in (\R^d \times \R^d) \setminus \Gamma^0$.
\end{proof}

%%%%%%%%%%%%%%%%%%%%%%%%%%%%%%%%%%%%%%%%%%%%%%%%%%%%%%%%%%%%%%%%%%%%%%%%%%%%%%%%%%%%%%%%%%%%%%%%%%%%%%%%%
%%%%%%%%%%%%%%%%%%%%%%%%%%%%%%%%%%%%%%%%%%%%%%%%%%%%%%%%%%%%%%%%%%%%%%%%%%%%%%%%%%%%%%%%%%%%%%%%%%%%%%%%%
%%%%%%%%%%%%%%%%%%%%%%%%%%%%%%%%%%%%%%%%%%%%%%%%%%%%%%%%%%%%%%%%%%%%%%%%%%%%%%%%%%%%%%%%%%%%%%%%%%%%%%%%%
\section{Proofs of Theorems~\ref{theo:main} and~\ref{theo:Y}, and of Proposition~\ref{prop:rev}}\label{s:ergo}

This section is organised as follows. In Subsection~\ref{ss:exist}, we
first show that the probability measures $\pi^\bfex$ and $\pi^\bfen$
defined in Theorem~\ref{theo:main} are invariant for the Markov chains
$(Y^\bfex_n)_{n \geq 0}$ and $(Y^\bfen_n)_{n \geq 0}$,
respectively. In Subsection~\ref{ss:HRYSigma}, we show that the Markov
chain $(Y^\Sigma_m)_{m \geq 0}$ defined in Theorem~\ref{theo:Y} is
Harris recurrent. These two results essentially yield all the necessary ingredients to complete the proofs of Theorems~\ref{theo:main} and~\ref{theo:Y}, which is carried out in Subsection~\ref{ss:pfmain}. Subsection~\ref{ss:rev} is then dedicated to the proof of Proposition~\ref{prop:rev}.

%%%%%%%%%%%%%%%%%%%%%%%%%%%%%%%%%%%%%%%%%%%%%%%%%%%%%%%%%%%%%%%%%%%%%%%%%%%%%%%%%%%%%%%%%%%%%%%%%%%%%%%%%
%%%%%%%%%%%%%%%%%%%%%%%%%%%%%%%%%%%%%%%%%%%%%%%%%%%%%%%%%%%%%%%%%%%%%%%%%%%%%%%%%%%%%%%%%%%%%%%%%%%%%%%%%
\subsection{Existence and identification of the invariant measure}\label{ss:exist} Let $\mathcal{L}$ be the infinitesimal generator of the solution to~\eqref{eq:Langevin}, which is defined on smooth functions $\phi : \R^d \times \R^d \to \R$ by
\begin{equation*}
  \mathcal{L}\phi = p \cdot \nabla_q \phi + F(q) \cdot \nabla_p \phi - \gamma p \cdot \nabla_p \phi + \gamma\beta^{-1}\Delta_p \phi.
\end{equation*}

The building block of the identification of $\pi^\bfex$ and $\pi^\bfen$ as invariant measures for $(Y^\bfex_n)_{n \geq 0}$ and $(Y^\bfen_n)_{n \geq 0}$ is the probabilistic interpretation of the Dirichlet problem associated with $\mathcal{L}$, presented in Proposition~\ref{prop:Dirichlet-Langevin} below. While similar statements are standard for elliptic diffusions, we were not able to find a proof of it in the literature which covers the case which we consider, namely with the degenerate operator $\mathcal{L}$ and the unbounded domain $\metasp \times \R^d$ in the phase space. Therefore we provide in Appendix~\ref{app:DL} a complete proof, partially based on our previous results~\cite{LelRamRey:kfp} on the kinetic Fokker--Planck equation.

In the next statement, we extend the definition of the stopping times $\tau^\bfen_0$ and $\tau^\bfex_0$ for initial conditions $(q,p) \in \Gamma^0$ by letting in this case $\tau^\bfen_0=\tau^\bfex_0=0$. According to the second part of Lemma~\ref{lem:return}, it remains true that $\tau^\bfen_0=\inf\{t \geq 0: (q_t,p_t) \in \Gamma^\bfen\}$ and $\tau^\bfex_0=\inf\{t \geq 0: (q_t,p_t) \in \Gamma^\bfex\}$.

\begin{prop}[Dirichlet problem]\label{prop:Dirichlet-Langevin}
  Let the assumptions of Theorem~\ref{theo:main} hold. 
  \begin{enumerate}[label=(\roman*),ref=\roman*] 
    \item\label{it:propDL:1} Let $f^\bfen: \Gamma^\bfen\cup\Gamma^0 \to \R$ be continuous and bounded, and 
    \begin{equation*}
      u^\bfen : (q,p) \in \R^d \times \R^d \mapsto \Exp_{(q,p)}\left[f^\bfen(q_{\tau^\bfen_0},p_{\tau^\bfen_0})\right].
    \end{equation*} 
    The function $u^\bfen$ is continuous on the closed set $(\R^d \setminus \metasp)\times \R^d$, $C^\infty$ on the open set $(\R^d \setminus \bar{\metasp}) \times \R^d$ and it satisfies
    \begin{equation}\label{eq:Dirichlet-en}
      \left\{\begin{aligned}
        \mathcal{L}u^\bfen & = 0 &\qquad \text{in $(\R^d \setminus \bar{\metasp}) \times \R^d$,}\\
        u^\bfen & = f^\bfen &\qquad\text{on $\Gamma^\bfen \cup \Gamma^0$.} 
      \end{aligned}\right.
    \end{equation}
    \item\label{it:propDL:2} Let $f^\bfex: \Gamma^\bfex\cup\Gamma^0 \to \R$ be continuous and bounded, and 
    \begin{equation*}
      u^\bfex : (q,p) \in \R^d \times \R^d \mapsto \Exp_{(q,p)}\left[f^\bfex(q_{\tau^\bfex_0},p_{\tau^\bfex_0})\right].
    \end{equation*} 
    The function $u^\bfex$ is continuous on the closed set $\bar{\metasp} \times \R^d$, $C^\infty$ on the open set $\metasp \times \R^d$ and it satisfies
    \begin{equation}\label{eq:Dirichlet-ex}
      \left\{\begin{aligned}
        \mathcal{L}u^\bfex & = 0 &\qquad \text{in $\metasp \times \R^d$,}\\
        u^\bfex & = f^\bfex &\qquad\text{on $\Gamma^\bfex \cup \Gamma^0$.} 
      \end{aligned}\right.
    \end{equation}
  \end{enumerate} 
\end{prop}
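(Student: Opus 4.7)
The plan is to reduce the proposition to the bounded case already treated in \cite{LelRamRey:kfp} by a localisation argument, then recover interior smoothness and the PDE via hypoelliptic estimates, and boundary continuity via a short-time barrier argument combined with the nonattainability of $\Gamma^0$ provided by Lemma~\ref{lem:nonatt}. We focus on part~\eqref{it:propDL:1}; part~\eqref{it:propDL:2} is completely analogous, exchanging $\Gamma^\bfen$ with $\Gamma^\bfex$ and $\R^d \setminus \bar\metasp$ with $\metasp$.

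\textbf{Step 1: Localisation.} Reusing the construction from the proof of Lemma~\ref{lem:nonatt}, for each $k \geq 1$ pick a smooth compactly supported $F_k$ with $F_k = F$ on $\mathrm{B}(0,k)$ and a bounded $C^2$ open set $\metasp_k$ with $\metasp_k \cap \mathrm{B}(0,k) = \metasp \cap \mathrm{B}(0,k)$. Let $(q_{k,t},p_{k,t})_{t\geq 0}$ be the associated strong solution coupled to $(q_t,p_t)_{t\geq 0}$ by the same Brownian motion, so that the two processes coincide up to the exit time $\sigma_k$ of $\mathrm{B}(0,k)\times\R^d$. Extend $f^\bfen$ by Tietze's theorem to a bounded continuous function on $\R^d\times\R^d$ and define
\begin{equation*}
  u^\bfen_k(q,p) := \Exp_{(q,p)}\bigl[f^\bfen(q_{k,\tau^\bfen_{k,0}},p_{k,\tau^\bfen_{k,0}})\bigr],
\end{equation*}
where $\tau^\bfen_{k,0}$ denotes the analog of $\tau^\bfen_0$ for the truncated dynamics and domain $\metasp_k$. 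Since $\metasp_k$ is bounded and $F_k$ is globally Lipschitz and bounded, \cite{LelRamRey:kfp} yields that $u^\bfen_k$ is continuous on $(\R^d\setminus\metasp_k)\times\R^d$, is $C^\infty$ on $(\R^d\setminus\bar\metasp_k)\times\R^d$, and satisfies $\mathcal{L}_k u^\bfen_k = 0$ there with boundary data $f^\bfen$ on $\Gamma^\bfen_k\cup\Gamma^0_k$.

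\textbf{Step 2: Pointwise convergence and interior regularity.} Fix $(q,p)\in(\R^d\setminus\bar\metasp)\times\R^d$. By Lemma~\ref{lem:return} we have $\tau^\bfen_0<+\infty$ almost surely, and $\sigma_k\to+\infty$ almost surely, so on the event $\{\tau^\bfen_0<\sigma_k\}$ (which has probability tending to one) the coupled trajectories coincide until $\tau^\bfen_0$ and in particular $\tau^\bfen_{k,0}=\tau^\bfen_0$. Dominated convergence then gives $u^\bfen_k(q,p)\to u^\bfen(q,p)$. The operator $\mathcal{L}$ is hypoelliptic by Hörmander's bracket condition, and on any bounded open set $U$ with $\bar U\subset (\R^d\setminus\bar\metasp)\times\R^d\cap(\mathrm{B}(0,k)\times\R^d)$ one has $\mathcal{L}u^\bfen_k=0$ in $U$ for all $k$ large enough, with $\|u^\bfen_k\|_\infty\leq\|f^\bfen\|_\infty$ uniformly. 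Hypoelliptic interior estimates then yield relative compactness of $(u^\bfen_k)$ in $C^\infty_{\mathrm{loc}}((\R^d\setminus\bar\metasp)\times\R^d)$; the only possible limit is $u^\bfen$, which is therefore $C^\infty$ and classically solves $\mathcal{L}u^\bfen=0$.

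\textbf{Step 3: Boundary continuity.} For $(q_0,p_0)\in\Gamma^\bfen$, choose $k$ large enough that a neighbourhood of $(q_0,p_0)$ lies well inside $\mathrm{B}(0,k)\times\R^d$. The exterior sphere property of $\metasp$ combined with the inward-pointing velocity at $(q_0,p_0)$ (exactly as in the proof of Lemma~\ref{lem:return}) produces a short-time barrier: starting from $(q,p)$ close to $(q_0,p_0)$ with $q\in\R^d\setminus\bar\metasp$, the process reaches $\Gamma^\bfen$ in a time much shorter than $\sigma_k$, with a hitting point close to $(q_0,p_0)$, with probability tending to one as $(q,p)\to(q_0,p_0)$. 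Since $\Gamma^\bfen$ and $\Gamma^\bfen_k$ coincide near $(q_0,p_0)$ and the boundary datum is the same, this implies both $u^\bfen(q,p)\to f^\bfen(q_0,p_0)$ and $u^\bfen_k(q,p)\to f^\bfen(q_0,p_0)$. For $(q_0,p_0)\in\Gamma^0$, Lemma~\ref{lem:return} gives $\tau^\bfen_0=0$ almost surely under $\Pr_{(q_0,p_0)}$, yielding $u^\bfen(q_0,p_0)=f^\bfen(q_0,p_0)$, and the same barrier plus the nonattainability of $\Gamma^0$ from Lemma~\ref{lem:nonatt} provides the limit from nearby initial conditions in $(\R^d\setminus\metasp)\times\R^d$.

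\textbf{Main obstacle.} The delicate step is Step~3: the hypoelliptic degeneracy of $\mathcal{L}$ prevents the use of classical elliptic barriers, so we must exploit the transport term $p\cdot\nabla_q$ to drive trajectories across $\Sigma$ in short time, uniformly near $\Gamma^\bfen$ and carefully across $\Gamma^0$. The short-time trajectorial estimates in \cite{LelRamRey:kfp} provide the technical backbone, and the localisation step needs to be combined with them so that the approximation by $u^\bfen_k$ does not spoil the boundary limit.
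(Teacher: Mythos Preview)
Your overall strategy---localise to a bounded domain, use hypoellipticity for interior regularity, and treat the boundary separately---matches the paper's. There are, however, two substantive differences worth noting.

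\textbf{Harmonicity.} You cite \cite{LelRamRey:kfp} directly for the \emph{stationary} Dirichlet problem on bounded domains, but that reference treats the parabolic equation $\partial_t u = \mathcal{L}u$. The paper does not assume the bounded stationary case is available; instead it adds an elliptic regularisation $\epsilon\Delta_q$ to obtain a uniformly elliptic operator $\mathcal{L}_{\epsilon,m}$ on exhausting bounded sets $V_k$, invokes classical elliptic theory \cite{Fri75} for $u^\bfen_{\epsilon,m,k}$, and then passes to the limit $k\to\infty$, $\epsilon\to 0$, $m\to\infty$ in the weak formulation (Proposition~\ref{prop:pfDL}). Hypoellipticity is only used at the very end to upgrade the distributional identity to a classical one. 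Your shortcut is fine if you can extract the bounded stationary result from the parabolic one in \cite{LelRamRey:kfp} (e.g.\ by letting $t\to\infty$ and using the a.s.\ finiteness of the hitting time), but that step should be spelled out rather than assumed.

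\textbf{Boundary continuity.} The paper's argument in Subsection~\ref{ss:u-cont} is more direct than your barrier approach: it couples trajectories from nearby initial conditions by the same Brownian motion, uses a local Gronwall estimate~\eqref{eq:lipX} to keep them uniformly close on compact time intervals, and then exploits that the limit trajectory enters $\metasp$ immediately after $\tau^\infty$ (Lemma~\ref{lem:return}) to squeeze $\tau^n\to\tau^\infty$. This works uniformly over $\Gamma^\bfen\cup\Gamma^0$ without any separate barrier construction, which is precisely where your Step~3 is vague: at $\Gamma^0$ the transport term vanishes, so a barrier driven by $p\cdot\nabla_q$ is unavailable, and you would need the instantaneous-crossing result from \cite[Proposition~2.8]{LelRamRey:kfp} anyway. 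The coupling argument avoids this altogether.
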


\begin{rk}[Uniqueness for the Dirichlet problem]
  Converse statements to Proposition~\ref{prop:Dirichlet-Langevin} also hold. Namely, if $u^\bfen$ is bounded, continuous on $((\R^d \setminus \bar{\metasp}) \times \R^d) \cup \Gamma^\bfen$, $C^2$ on $(\R^d \setminus \bar{\metasp}) \times \R^d$, and satisfies the Dirichlet problem
    \begin{equation*}
      \left\{\begin{aligned}
        \mathcal{L}u^\bfen & = 0 &\qquad \text{in $(\R^d \setminus \bar{\metasp}) \times \R^d$,}\\
        u^\bfen & = f^\bfen &\qquad\text{on $\Gamma^\bfen$,} 
      \end{aligned}\right.
    \end{equation*}
  then for any $(q,p) \in ((\R^d \setminus \bar{\metasp}) \times \R^d) \cup \Gamma^\bfen$, we have $u^\bfen(q,p)=\Exp_{(q,p)}[f^\bfen(q_{\tau^\bfen_0},p_{\tau^\bfen_0})]$. This statement is obvious for $(q,p) \in \Gamma^\bfen$. If $(q,p) \in (\R^d \setminus \bar{\metasp}) \times \R^d$, then by Itô's formula, we have for any $t \geq 0$,
  \begin{equation*}
      u^\bfen\left(q_{t \wedge \tau^\bfen_0},p_{t \wedge \tau^\bfen_0}\right) = u^\bfen(q,p) + \sqrt{2\gamma\beta^{-1}}\int_0^{t \wedge \tau^\bfen_0} \nabla_p u^\bfen(q_s,p_s) \cdot \dd W_s, 
  \end{equation*}
  and then
  \begin{equation*}
      u^\bfen(q,p) = \Exp_{(q,p)}\left[u^\bfen\left(q_{t \wedge \tau^\bfen_0},p_{t \wedge \tau^\bfen_0}\right)\right],
  \end{equation*}
  see~\cite[Section~3.1]{LelRamRey:kfp} for details. The conclusion then follows from the dominated convergence theorem, letting $t \to +\infty$ and using the fact that by Lemma~\ref{lem:return}, $\tau^\bfen_0 < +\infty$ and $(q_{\tau^\bfen_0},p_{\tau^\bfen_0}) \in \Gamma^\bfen$, almost surely.
  
  Of course, a similar statement holds for the Dirichlet problem~\eqref{eq:Dirichlet-ex}.
\end{rk}

Using Proposition~\ref{prop:Dirichlet-Langevin}, one can show the
following result concerning the invariance of the probability measures  $\pi^\bfex$ and $\pi^\bfen$.
\begin{prop}[Invariance of $\pi^\bfex$ and $\pi^\bfen$]\label{prop:exist}
  Let the assumptions of Theorem~\ref{theo:main} hold, and let~$\pi^\bfex$ and~$\pi^\bfen$ be the probability measures defined there.  
  \begin{enumerate}[label=(\roman*)] 
    \item If $(q_0,p_0) \sim \pi^\bfex$, then $(q_{\tau^\bfen_0},p_{\tau^\bfen_0}) \sim \pi^\bfen$.
    \item If $(q_0,p_0) \sim \pi^\bfen$, then $(q_{\tau^\bfex_0},p_{\tau^\bfex_0}) \sim \pi^\bfex$.
  \end{enumerate}
\end{prop}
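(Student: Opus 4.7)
The strategy is to reduce both items of the proposition to a single \emph{flux identity}: for a bounded continuous test function $f^\bfen : \Gamma^\bfen \cup \Gamma^0 \to \R$, with $u^\bfen$ the harmonic extension provided by Proposition~\ref{prop:Dirichlet-Langevin}\,(i), I claim that
\begin{equation*}
\int_{\Sigma \times \R^d} u^\bfen(q,p)\,(p \cdot \mathsf{n}(q))\,\rho(q,p)\,\dd\sigma_\Sigma(q)\,\dd p = 0.
\end{equation*}
Once this identity is in hand, splitting the domain of integration along the partition $\Gamma^\bfex \cup \Gamma^\bfen \cup \Gamma^0$ (the last being $\dd\sigma_\Sigma \otimes \dd p$-negligible), using $|p \cdot \mathsf{n}| = p \cdot \mathsf{n}$ on $\Gamma^\bfex$ and $|p \cdot \mathsf{n}| = -p \cdot \mathsf{n}$ on $\Gamma^\bfen$, together with $u^\bfen = f^\bfen$ on $\Gamma^\bfen$, yields
\begin{equation*}
\int_{\Gamma^\bfex} u^\bfen\,|p \cdot \mathsf{n}|\,\rho\,\dd\sigma_\Sigma\,\dd p = \int_{\Gamma^\bfen} f^\bfen\,|p \cdot \mathsf{n}|\,\rho\,\dd\sigma_\Sigma\,\dd p.
\end{equation*}
Specialising to $f^\bfen \equiv 1$ (whence $u^\bfen \equiv 1$, since $\tau^\bfen_0 < +\infty$ almost surely by Lemma~\ref{lem:return}) yields the incidental equality $Z^\bfex = Z^\bfen$. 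Dividing the general identity by this common value and recalling the definition of $u^\bfen$ then produces
\begin{equation*}
\Exp_{\pi^\bfex}\!\left[f^\bfen(q_{\tau^\bfen_0}, p_{\tau^\bfen_0})\right] = \pi^\bfen(f^\bfen),
\end{equation*}
which is exactly item~(i) for bounded continuous $f^\bfen$; a monotone class argument extends it to all bounded measurable $f^\bfen$, thereby identifying the law of $(q_{\tau^\bfen_0}, p_{\tau^\bfen_0})$ under $\Pr_{\pi^\bfex}$ as $\pi^\bfen$.

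To derive the flux identity, I would multiply the PDE $\mathcal{L} u^\bfen = 0$, valid on $(\R^d \setminus \bar{\metasp}) \times \R^d$, by the smooth invariant density $\rho$ and integrate over this domain. Integrating by parts each of the four terms of $\mathcal{L}$ transfers all derivatives onto $\rho$ and yields the bulk integrand $u^\bfen\,\mathcal{L}^*\rho$, which vanishes by the stationary Kolmogorov equation $\mathcal{L}^*\rho = 0$ implied by Assumption~(\ref{ass:A2}). The three integrations by parts in the $p$ variable, run over all of $\R^d$, generate no surface contribution at infinity thanks to the integrability of $|F|\rho$, $|p|\rho$ and $|\nabla_p \rho|$ guaranteed by Assumption~(\ref{ass:A3}), combined with the uniform bound $|u^\bfen| \leq \|f^\bfen\|_\infty$. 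The only surviving contribution stems from the integration by parts in $q$ applied to the transport term $(p \cdot \nabla_q u^\bfen)\rho$: since the outward normal to $\R^d \setminus \bar{\metasp}$ along $\Sigma$ is $-\mathsf{n}$, this contribution is precisely $-\int_{\Sigma \times \R^d} u^\bfen (p \cdot \mathsf{n})\rho\,\dd\sigma_\Sigma\,\dd p$, and setting the overall sum to zero gives the identity.

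Item~(ii) then follows by an entirely symmetric argument: use Proposition~\ref{prop:Dirichlet-Langevin}\,(ii) to build the exit harmonic extension $u^\bfex$ on $\bar{\metasp} \times \R^d$, and perform the analogous computation on $\metasp \times \R^d$, whose outward normal along $\Sigma$ is now $+\mathsf{n}$. The flux identity comes out in the same form, the exchanged roles of the boundary pieces yielding the claim that $(q_{\tau^\bfex_0}, p_{\tau^\bfex_0}) \sim \pi^\bfex$ under $\Pr_{\pi^\bfen}$.

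The main obstacle will be the rigorous justification of these integrations by parts, for two reasons: Proposition~\ref{prop:Dirichlet-Langevin} supplies only continuity (not $C^1$-regularity) of $u^\bfen$ and $u^\bfex$ up to $\Sigma$, and the domain $(\R^d \setminus \bar{\metasp}) \times \R^d$ may be unbounded in both variables. The natural remedy is a truncation-and-limit scheme: apply the classical Gauss--Green theorem on bounded $C^2$ subdomains of the form $\{q \in \R^d \setminus \bar{\metasp} : \dist(q, \Sigma) > \varepsilon,\ |q| < R\} \times \{|p| < R\}$, on which every integrand is smooth; then send $\varepsilon \dto 0$, exploiting continuity of $u^\bfen$ up to $\Sigma$ and the bound $|u^\bfen| \leq \|f^\bfen\|_\infty$, and finally $R \to +\infty$ along a subsequence extracted via Fubini from the integrability~(\ref{ass:A3}), on which the residual fluxes at infinity in $p$ (and in $q$, if $\metasp$ is unbounded) tend to zero.
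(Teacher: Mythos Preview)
Your approach is essentially identical to the paper's: the same flux identity
\[
\int_{\Sigma\times\R^d} u^\bfen(q,p)\,(p\cdot\mathsf{n}(q))\,\rho(q,p)\,\dd\sigma_\Sigma(q)\,\dd p=0
\]
obtained from $\mathcal{L}u^\bfen=0$ and $\mathcal{L}^*\rho=0$ via integration by parts on $(\R^d\setminus\bar{\metasp})\times\R^d$, the same splitting over $\Gamma^\bfex\cup\Gamma^\bfen$, the same deduction $Z^\bfex=Z^\bfen$ from $f^\bfen\equiv 1$, and the symmetric argument for item~(ii).

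One technical point in your truncation scheme deserves attention. A hard cutoff $\{|p|<R\}$ in the momentum variable produces, after integrating the term $\gamma\beta^{-1}\rho\,\Delta_p u^\bfen$ by parts, a boundary contribution $\int_{\{|p|=R\}}\rho\,(\nabla_p u^\bfen\cdot\nu)\,\dd\sigma$; Assumption~(\ref{ass:A3}) gives you no handle on $\nabla_p u^\bfen$, so the Fubini/subsequence extraction you describe does not directly dispose of this term. The paper circumvents this by multiplying by a smooth compactly supported cutoff $\iota_M(q,p)$ rather than truncating the domain sharply: one can then integrate by parts \emph{all} derivatives off $u^\bfen$ and onto $\iota_M$ and $\rho$, so that the error terms involve only the bounded function $u^\bfen$ against quantities like $(|F|+|p|)\rho\,|\nabla\iota_M|$, $\rho\,|\Delta_p\iota_M|$ and $|\nabla_p\rho|\,|\nabla_p\iota_M|$, which is precisely what~(\ref{ass:A3}) controls as $M\to\infty$. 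The $\varepsilon$-layer near $\Sigma$ is handled as you outline, using only the continuity and boundedness of $u^\bfen$ up to the boundary. This is a local adjustment to your plan, not a change of strategy.
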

Combined with Lemma~\ref{lem:return} and the strong Markov property,
Proposition~\ref{prop:exist} entails that the probability measures $\pi^\bfex$ and $\pi^\bfen$ are invariant for $(Y^\bfex_n)_{n \geq 0}$ and $(Y^\bfen_n)_{n \geq 0}$, respectively.

\begin{proof}
  We only prove the first point, the proof of the second point follows
  from symmetric arguments. We thus assume that $(q_0,p_0) \sim
  \pi^\bfex$, fix $f^\bfen: \Gamma^\bfen\cup\Gamma^0 \to \R$
  continuous and bounded, and define the function $u^\bfen$ on $\R^d
  \times \R^d$ as in Proposition~\ref{prop:Dirichlet-Langevin}.
  Recall that $\pi^\bfex$ and $\pi^\bfen$ respectively have densities
  $\varrho^\bfex$ and~$\varrho^\bfen$ (defined in~\eqref{eq:rho+-}) with respect to the measure $\dd\sigma_\Sigma(q)\dd p$.
 
\medskip \noindent
  \emph{Sketch of the argument.}   The idea of the proof relies on the
  following integration by parts formula
  \begin{equation}\label{eq:pf-exist:ipp}
    \begin{split}\int_{(\R^d \setminus \bar{\metasp}) \times \R^d} \mathcal{L}u^\bfen(q,p)\rho(q,p)\dd q \dd p &= \int_{\Sigma \times \R^d} p \cdot (-\mathsf{n}(q)) u^\bfen(q,p)\rho(q,p) \dd\sigma_\Sigma(q)\dd p\\
    &\quad + \int_{(\R^d \setminus \bar{\metasp}) \times \R^d} u^\bfen(q,p)\mathcal{L}^*\rho(q,p)\dd q \dd p,
    \end{split}
  \end{equation}
  where the differential operator $\mathcal{L}^*$ is the formal $L^2(\R^d \times \R^d,\dd q \dd p)$
  adjoint of $\mathcal L$ defined by
  \begin{equation}\label{eq:Lstar}
    \mathcal{L}^*\psi = -p \cdot \nabla_q \psi - \nabla_p \cdot (F(q)\psi) + \gamma \nabla_p \cdot (p \psi) + \gamma\beta^{-1}\Delta_p \psi,
  \end{equation}
  for smooth functions $\psi : \R^d \times \R^d \to \R$. Now, Proposition~\ref{prop:Dirichlet-Langevin} shows that $\mathcal{L}u^\bfen=0$ on $(\R^d
  \setminus \bar{\metasp}) \times \R^d$, while by Assumption~(\ref{ass:A2}), $\mathcal{L}^*\rho=0$ everywhere. Therefore, we deduce that
  \begin{equation}\label{eq:pf-exist:ipp2}
    \begin{split}
    0 &= \int_{\Sigma \times \R^d} p \cdot \mathsf{n}(q) u^\bfen(q,p)\rho(q,p) \dd\sigma_\Sigma(q)\dd p\\
    &= \int_{\Gamma^\bfen} p \cdot \mathsf{n}(q) u^\bfen(q,p)\rho(q,p) \dd\sigma_\Sigma(q)\dd p + \int_{\Gamma^\bfex} p \cdot \mathsf{n}(q) u^\bfen(q,p)\rho(q,p) \dd\sigma_\Sigma(q)\dd p.
    \end{split}
  \end{equation}
We will  show below that~\eqref{eq:pf-exist:ipp2} implies the result. However, since the domain of integration $(\R^d \setminus \bar{\metasp}) \times \R^d$ is not bounded, and the derivatives of $u^\bfen$ are generally known to blow up when $q$ approaches $\Sigma$~\cite{HwaJanVel14}, the integration by parts formula~\eqref{eq:pf-exist:ipp} requires some care. Thus, a detailed proof of~\eqref{eq:pf-exist:ipp2} is provided below.
  
\medskip \noindent
{\em Proof of Proposition~\ref{prop:exist} under the assumption that \eqref{eq:pf-exist:ipp2} holds.}
Let us first conclude the proof taking~\eqref{eq:pf-exist:ipp2} for granted.
By the definition of $u^\bfen$, $\pi^\bfex$ and $\Gamma^\bfex$, we have
  \begin{equation}\label{eq:pf-exist:1}
    \begin{aligned}
      \Exp_{\pi^\bfex}\left[f^\bfen\left(q_{\tau^\bfen_0},p_{\tau^\bfen_0}\right)\right] &= \int_{\Sigma \times \R^d} \Exp_{(q,p)}\left[f^\bfen\left(q_{\tau^\bfen_0},p_{\tau^\bfen_0}\right)\right]\pi^\bfex(\dd q\dd p)\\
      &= \frac{1}{Z^\bfex}\int_{\Gamma^\bfex} p \cdot \mathsf{n}(q)u^\bfen(q,p)\rho(q,p)\dd\sigma_\Sigma(q)\dd p.
    \end{aligned}
  \end{equation}
Using \eqref{eq:pf-exist:ipp2}, we then have
  \begin{equation*}
 Z^\bfex
 \Exp_{\pi^\bfex}\left[f^\bfen\left(q_{\tau^\bfen_0},p_{\tau^\bfen_0}\right)\right]=
 -     \int_{\Gamma^\bfen} p \cdot \mathsf{n}(q) u^\bfen(q,p)\rho(q,p) \dd\sigma_\Sigma(q)\dd p,
  \end{equation*}
  while by Proposition~\ref{prop:Dirichlet-Langevin}, 
  \begin{align*}
    \int_{\Gamma^\bfen} p \cdot \mathsf{n}(q) u^\bfen(q,p)\rho(q,p) \dd\sigma_\Sigma(q)\dd p &= \int_{\Gamma^\bfen} p \cdot \mathsf{n}(q) f^\bfen(q,p)\rho(q,p) \dd\sigma_\Sigma(q)\dd p\\
    &= - Z^\bfen \int_{\Sigma \times \R^d} f^\bfen(q,p) \pi^\bfen(\dd q\dd p). 
  \end{align*}
  Letting $f^\bfen \equiv 1$ shows that $Z^\bfen=Z^\bfex$, and we finally conclude that
  \begin{equation*}
    \Exp_{\pi^\bfex}\left[f^\bfen\left(q_{\tau^\bfen_0},p_{\tau^\bfen_0}\right)\right] = \int_{\Sigma \times \R^d} f^\bfen(q,p) \pi^\bfen(\dd q\dd p),
  \end{equation*}
  which implies that, under $\Pr_{\pi^\bfex}$,
  $(q_{\tau^\bfen_0},p_{\tau^\bfen_0}) \sim \pi^\bfen$.

\medskip  \noindent
 \emph{Proof of~\eqref{eq:pf-exist:ipp2}.} Let $M \geq 0$ and $\mathscr{U}_M$ be a bounded, $C^2$ and open subset of $\R^d$ such that $\mathscr{U}_M \cap \mathrm{B}(0,M+1) = (\R^d \setminus \bar{\metasp}) \cap \mathrm{B}(0,M+1)$. Notice that in particular, $\Sigma_M := \partial \mathscr{U}_M$ and $\Sigma$ coincide on $\mathrm{B}(0,M+1)$, and on this ball, the normal vector $\mathsf{n}_M(q)$ to $\Sigma_M$ pointing toward the interior of $\mathscr{U}_M$ coincides with $\mathsf{n}(q)$.
  
  By~\cite[Lemma~14.16, p.~355]{GilTru01}, there exists $\alpha_0>0$, which depends on $M$, such that the Euclidean distance function to $\Sigma_M$, which we denote by $\mathsf{d}_{\Sigma_M}$, is $C^2$ on the set $\{q \in \bar{\mathscr{U}}_M : \mathsf{d}_{\Sigma_M}(q) < \alpha_0\}$, and it satisfies the eikonal equation $|\nabla \mathsf{d}_{\Sigma_M}(q)|=1$ there. For any $\alpha \in (0,\alpha_0)$, let 
  \begin{equation*}
      \mathscr{U}_{M,\alpha} := \{q \in \mathscr{U}_M : \mathsf{d}_{\Sigma_M}(q) > \alpha\}.
  \end{equation*}
  The set $\mathscr{U}_{M,\alpha}$ is open, bounded and by the implicit function theorem, its boundary is $C^2$. 
  
  Let $\iota_M : \R^d \times \R^d \to [0,1]$ be a $C^\infty$ function such that
  \begin{equation*}
      \iota_M(q,p) = \begin{cases}
        1 & \text{if $(q,p) \in \mathrm{B}(0,M) \times \mathrm{B}(0,M)$,}\\
        0 & \text{if $(q,p) \not\in \mathrm{B}(0,M+1) \times \mathrm{B}(0,M+1)$,}
      \end{cases}
  \end{equation*}
  and such that $\nabla_q \iota_M$, $\nabla_p \iota_M$ and $\Delta_q \iota_M$ are bounded on $\R^d \times \R^d$, uniformly in $M$.
  
  For any $(q,p) \in \mathscr{U}_{M,\alpha} \times \R^d$, we obviously have
  \begin{equation*}
      \iota_M(q,p)\rho(q,p) \mathcal{L}u^\bfen(q,p) = 0,
  \end{equation*}
  by Proposition~\ref{prop:Dirichlet-Langevin} and the construction of $\iota_M$. Since, by Proposition~\ref{prop:Dirichlet-Langevin} again and the construction of the set $\mathscr{U}_{M,\alpha}$, $u^\bfen$ is $C^\infty$ on the closure of the bounded set $\mathscr{U}_{M,\alpha} \times \mathrm{B}(0,M+1)$, all differential terms in $\mathcal{L}u^\bfen$ can be integrated by parts on $\mathscr{U}_{M,\alpha} \times \R^d$ to yield the identity
  \begin{equation}\label{eq:pf-exist:ipp-det:1}
      0 = \int_{\mathscr{U}_{M,\alpha} \times \R^d} \iota_M(q,p)\rho(q,p) \mathcal{L}u^\bfen(q,p) \dd q\dd p = \mathrm{I} + \mathrm{II} + \mathrm{III} + \mathrm{IV}, 
  \end{equation}
  with
  \begin{align*}
      \mathrm{I} &:= \int_{\Sigma_{M,\alpha} \times \R^d} \iota_M(q,p)\rho(q,p) u^\bfen(q,p) (-p \cdot \mathsf{n}_{M,\alpha}(q)) \dd \sigma_{\Sigma_{M,\alpha}}(q)\dd p,\\
      \mathrm{II} &:= \int_{\mathscr{U}_{M,\alpha} \times \R^d} \iota_M(q,p)u^\bfen(q,p)\mathcal{L}^*\rho(q,p)\dd q\dd p,\\
      \mathrm{III} &:= \int_{\mathscr{U}_{M,\alpha} \times \R^d} u^\bfen(q,p)\rho(q,p)\left\{-p \cdot \nabla_q \iota_M - (F(q)-\gamma p) \cdot \nabla_p \iota_M + \gamma\beta^{-1} \Delta_p \iota_M\right\}\dd q\dd p,\\
      \mathrm{IV}&:= 2\gamma\beta^{-1}\int_{\mathscr{U}_{M,\alpha} \times \R^d} u^\bfen(q,p)\nabla_p \rho(q,p) \cdot \nabla_p \iota_M(q,p)\dd q\dd p,
  \end{align*}
  where, in $\mathrm{I}$, $\mathsf{n}_{M,\alpha}(q)$ is the normal vector to $\Sigma_{M, \alpha} := \partial\mathscr{U}_{M,\alpha}$ pointing toward the interior of $\mathscr{U}_{M,\alpha}$ while $\dd \sigma_{\Sigma_{M,\alpha}}(q)$ is the surface measure thereon.
  
  Since $\rho$ is the invariant measure, $\mathcal{L}^*\rho=0$ and thus $\mathrm{II}=0$. Since the integrand in $\mathrm{III}$ and $\mathrm{IV}$ is bounded in $\mathscr{U}_M \times \R^d$ and vanishes outside a bounded set, it follows from the dominated convergence theorem that
  \begin{equation}\label{eq:pf-exist:ipp-det:2}
  \begin{aligned}
      \lim_{\alpha \dto 0} \mathrm{III} &= \int_{\mathscr{U}_M \times \R^d} u^\bfen(q,p)\rho(q,p)\left\{-p \cdot \nabla_q \iota_M - (F(q)-\gamma p) \cdot \nabla_p \iota_M + \gamma\beta^{-1} \Delta_p \iota_M\right\}\dd q\dd p,\\
      \lim_{\alpha \dto 0} \mathrm{IV} &= 2\gamma\beta^{-1}\int_{\mathscr{U}_M \times \R^d} u^\bfen(q,p)\nabla_p \rho(q,p) \cdot \nabla_p \iota_M(q,p)\dd q\dd p.
  \end{aligned}
  \end{equation}
  To complete the proof, we now show that 
  \begin{equation}\label{eq:pf-exist:sigma-alpha}
      \lim_{\alpha \dto 0} \mathrm{I} = \int_{\Sigma_M \times \R^d} \iota_M(q,p)\rho(q,p) u^\bfen(q,p) (-p \cdot \mathsf{n}_M(q)) \dd \sigma_{\Sigma_M}(q)\dd p,
  \end{equation}
  and then conclude by letting $M \to +\infty$.
  
\medskip \noindent
  \emph{Proof of~\eqref{eq:pf-exist:sigma-alpha}.} Since $u^\bfen$ is continuous and bounded on the closed set $(\R^d \setminus \metasp) \times \R^d$, it may be extended to a continuous and bounded function on $\R^d \times \R^d$, which we still denote by $u^\bfen$. The resulting function $g$ defined on $\R^d \times \R^d$ by
  \begin{equation*}
      g(q,p) = \iota_M(q,p)\rho(q,p)u^\bfen(q,p)
  \end{equation*}
  is then continuous and compactly supported. In particular, it is bounded and uniformly continuous on $\R^d \times \R^d$. As a consequence, it can be mollified so as to construct a family $g_\epsilon$ of $C^\infty$ functions with compact support, which converge to $g$ uniformly when $\epsilon \to 0$.
  
  For $\epsilon>0$, by the divergence theorem,
  \begin{align*}
      \int_{\Sigma_{M,\alpha} \times \R^d} g_\epsilon(q,p) p \cdot \mathsf{n}_{M,\alpha}(q)\dd \sigma_{\Sigma_{M,\alpha}}(q)\dd p &= \int_{\mathscr{U}_{M,\alpha} \times \R^d} p \cdot \nabla_q g_\epsilon(q,p) \dd q \dd p\\
      &= \int_{\mathscr{U}_M \times \R^d} \ind{\mathsf{d}_{\Sigma_M}(q)>\alpha} p \cdot \nabla_q g_\epsilon(q,p) \dd q \dd p,
  \end{align*}
  and since $\nabla_q g_\epsilon$ is globally bounded and compactly supported, by the dominated convergence theorem, the right-hand side converges to
  \begin{equation*}
      \int_{\mathscr{U}_M \times \R^d} p \cdot \nabla_q g_\epsilon(q,p) \dd q \dd p = \int_{\Sigma_M \times \R^d} g_\epsilon(q,p) p \cdot \mathsf{n}_M(q)\dd \sigma_{\Sigma_M}(q)\dd p
  \end{equation*}
  when $\alpha \dto 0$. As a consequence, for any $\epsilon>0$, we have
  \begin{align*}
      &\limsup_{\alpha \dto 0}\left|\int_{\Sigma_{M,\alpha} \times \R^d} g(q,p) p \cdot \mathsf{n}_{M,\alpha}(q)\dd \sigma_{\Sigma_{M,\alpha}}(q)\dd p - \int_{\Sigma_M \times \R^d} g_\epsilon(q,p) p \cdot \mathsf{n}_M(q)\dd \sigma_{\Sigma_M}(q)\dd p\right|\\
      &\leq \limsup_{\alpha \dto 0} \|g-g_\epsilon\|_\infty\left(\sigma_{\Sigma_{M,\alpha}}(\R^d)+\sigma_{\Sigma_M}(\R^d)\right)\int_{\mathrm{B}(0,M+1)}|p|\dd p.
  \end{align*}
  Therefore, to complete the proof of~\eqref{eq:pf-exist:sigma-alpha}, it remains to check that $\limsup_{\alpha \dto 0} \sigma_{\Sigma_{M,\alpha}}(\R^d) < +\infty$. In fact, we shall prove the more precise  result that
  \begin{equation*}
      \lim_{\alpha \dto 0} \sigma_{\Sigma_{M,\alpha}}(\R^d) = \sigma_{\Sigma_M}(\R^d).
  \end{equation*}
  To do so, we observe that the proof of~\cite[Lemma~14.16, p.~355]{GilTru01} entails the identity
  \begin{equation*}
      \mathsf{n}_{M,\alpha}(q) = \nabla\mathsf{d}_{\Sigma_M}(q)
  \end{equation*}
  for any $\alpha < \alpha_0$ and $q \in \Sigma_{M,\alpha}$. As a consequence, letting $C_{M,\alpha} := \mathscr{U}_M \setminus \mathscr{U}_{M,\alpha}$, we get
  \begin{align*}
      \sigma_{\Sigma_{M,\alpha}}(\R^d) - \sigma_{\Sigma_M}(\R^d) &= \int_{\Sigma_{M,\alpha}} \nabla\mathsf{d}_{\Sigma_M}(q) \cdot \mathsf{n}_{M,\alpha}(q) \dd \sigma_{\Sigma_{M,\alpha}}(q) - \int_{\Sigma_M} \nabla\mathsf{d}_{\Sigma_M}(q) \cdot \mathsf{n}_M(q) \dd \sigma_{\Sigma_M}(q)\\
      &= \int_{C_{M,\alpha}} \Delta \mathsf{d}_{\Sigma_M}(q) \dd q,
  \end{align*}
  and the right-hand side vanishes when $\alpha \dto 0$, because
  $\Delta \mathsf{d}_{\Sigma_M}(q)$ is bounded on
  $\{\mathsf{d}_{\Sigma_M}(q) < \alpha_0\}$. This completes the proof
  of~\eqref{eq:pf-exist:sigma-alpha}.

\medskip \noindent
  \emph{Conclusion of the proof of~\eqref{eq:pf-exist:ipp2}.} Putting together~\eqref{eq:pf-exist:ipp-det:1}, \eqref{eq:pf-exist:ipp-det:2} and~\eqref{eq:pf-exist:sigma-alpha}, we obtain the identity
  \begin{align*}
      &\int_{\Sigma_M \times \R^d} \iota_M(q,p)\rho(q,p) u^\bfen(q,p) p \cdot \mathsf{n}_M(q) \dd \sigma_{\Sigma_M}(q)\dd p\\
      &= \int_{\mathscr{U}_M \times \R^d} u^\bfen(q,p)\rho(q,p)\left\{-p \cdot \nabla_q \iota_M - (F(q)-\gamma p) \cdot \nabla_p \iota_M + \gamma\beta^{-1} \Delta_p \iota_M\right\}\dd q\dd p\\
      &\quad + 2\gamma\beta^{-1}\int_{\mathscr{U}_M \times \R^d} u^\bfen(q,p)\nabla_p \rho(q,p) \cdot \nabla_p \iota_M(q,p)\dd q\dd p,
  \end{align*}
  which, by the properties of the set $\mathscr{U}_M$ and of the function $\iota_M$, rewrites
  \begin{align*}
      &\int_{\Sigma \times \R^d} \iota_M(q,p)\rho(q,p) u^\bfen(q,p) p \cdot \mathsf{n}(q) \dd \sigma_{\Sigma}(q)\dd p\\
      &= \int_{(\R^d \setminus \bar{\metasp}) \times \R^d} u^\bfen(q,p)\rho(q,p)\left\{-p \cdot \nabla_q \iota_M - (F(q)-\gamma p) \cdot \nabla_p \iota_M + \gamma\beta^{-1} \Delta_p \iota_M\right\}\dd q\dd p\\
      &\quad + 2\gamma\beta^{-1}\int_{(\R^d \setminus \bar{\metasp}) \times \R^d} u^\bfen(q,p)\nabla_p \rho(q,p) \cdot \nabla_p \iota_M(q,p)\dd q\dd p.
  \end{align*}
  Since $u^\bfen$ is bounded, it follows from Assumption~(\ref{ass:C}) and the dominated convergence theorem that the left-hand side converges to
  \begin{equation*}
      \int_{\Sigma \times \R^d} \rho(q,p) u^\bfen(q,p) p \cdot \mathsf{n}(q) \dd \sigma_{\Sigma}(q)\dd p
  \end{equation*}
  when $M \to +\infty$. Besides, all terms $\nabla_q \iota_M$,
  $\nabla_p \iota_M$ and  $\Delta_p \iota_M$ converge pointwise to $0$
  while remaining bounded uniformly in $M$. Therefore, by
  Assumption~(\ref{ass:A3}), the boundedness of $u^\bfen$ and the
  dominated convergence theorem again, the right-hand side converges
  to $0$ when $M \to +\infty$. This completes the proof
  of~\eqref{eq:pf-exist:ipp2}, and thus of Proposition~\ref{prop:exist}. 
\end{proof}

We complete this subsection by stating an energy estimate on the functions $u^\bfen$ and $u^\bfex$, which will not be used before Subsection~\ref{ss:rev} but follows from the same arguments as the proof of Proposition~\ref{prop:exist}.

\begin{lem}[Energy estimates]\label{lem:energy}
  In the setting of Proposition~\ref{prop:Dirichlet-Langevin}, we have
  \begin{equation*}
      \int_{(\R^d \setminus \bar{\metasp}) \times \R^d} |\nabla_p u^\bfen(q,p)|^2 \rho(q,p)\dd q \dd p \leq \frac{Z^\bfen+Z^\bfex}{2\gamma\beta^{-1}} \|f^\bfen\|_\infty^2 < +\infty,
  \end{equation*}
  and
  \begin{equation*}
      \int_{\metasp \times \R^d} |\nabla_p u^\bfex(q,p)|^2 \rho(q,p)\dd q \dd p \leq \frac{Z^\bfen+Z^\bfex}{2\gamma\beta^{-1}} \|f^\bfex\|_\infty^2 < +\infty.
  \end{equation*}
\end{lem}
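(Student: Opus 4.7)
The plan is to replay the localised integration by parts from the proof of Proposition~\ref{prop:exist}, but applied to the function $(u^\bfen)^2$ rather than $u^\bfen$. The key algebraic observation is that, by Proposition~\ref{prop:Dirichlet-Langevin}\eqref{it:propDL:1}, $u^\bfen$ is $C^\infty$ and satisfies $\mathcal{L}u^\bfen = 0$ on $(\R^d \setminus \bar{\metasp}) \times \R^d$; combined with the fact that the only second-order part of $\mathcal{L}$ is $\gamma\beta^{-1}\Delta_p$, the chain rule yields the carré-du-champ identity
\begin{equation*}
  \mathcal{L}\bigl((u^\bfen)^2\bigr) = 2u^\bfen\mathcal{L}u^\bfen + 2\gamma\beta^{-1}|\nabla_p u^\bfen|^2 = 2\gamma\beta^{-1}|\nabla_p u^\bfen|^2
\end{equation*}
on that same set. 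Multiplying by $\rho$ and integrating will therefore produce the target $L^2$ energy on the left-hand side, and a tractable surface integral on $\Sigma$ on the right-hand side.

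Concretely, I would reproduce the localisation scheme of the proof of Proposition~\ref{prop:exist}, with the bounded sets $\mathscr{U}_{M,\alpha}$ and the cutoff $\iota_M$, but starting from $\int \iota_M \rho \, \mathcal{L}((u^\bfen)^2)\dd q\dd p$ rather than from $\int \iota_M \rho \, \mathcal{L}u^\bfen \dd q\dd p$. The analogue of~\eqref{eq:pf-exist:ipp-det:1} then reads
\begin{equation*}
  2\gamma\beta^{-1} \int_{\mathscr{U}_{M,\alpha} \times \R^d} \iota_M\rho\,|\nabla_p u^\bfen|^2 \dd q\dd p = \mathrm{I}' + \mathrm{III}' + \mathrm{IV}',
\end{equation*}
where $\mathrm{I}'$, $\mathrm{III}'$, $\mathrm{IV}'$ are defined exactly as $\mathrm{I}$, $\mathrm{III}$, $\mathrm{IV}$ in the proof of Proposition~\ref{prop:exist} but with $(u^\bfen)^2$ in place of $u^\bfen$, and the analogue $\mathrm{II}'$ again vanishes by $\mathcal{L}^*\rho = 0$. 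The probabilistic representation, combined with Lemma~\ref{lem:return}, provides the maximum-principle bound $\|u^\bfen\|_\infty \leq \|f^\bfen\|_\infty$, so that $(u^\bfen)^2$ is uniformly bounded. This allows me to let $\alpha \dto 0$ and then $M \to +\infty$ in $\mathrm{III}'$ and $\mathrm{IV}'$ via Assumption~(\ref{ass:A3}) and dominated convergence, exactly as in the proof of Proposition~\ref{prop:exist}, and in $\mathrm{I}'$ via Assumption~(\ref{ass:C}) and the reasoning behind~\eqref{eq:pf-exist:sigma-alpha}, to obtain the limit $\int_{\Sigma \times \R^d} (-p \cdot \mathsf{n}(q))(u^\bfen(q,p))^2 \rho(q,p)\dd\sigma_\Sigma(q)\dd p$; meanwhile monotone convergence on the left-hand side identifies its limit as $2\gamma\beta^{-1}\int_{(\R^d \setminus \bar{\metasp}) \times \R^d} \rho\,|\nabla_p u^\bfen|^2 \dd q\dd p$.

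To conclude, I would decompose $\Sigma \times \R^d = \Gamma^\bfen \cup \Gamma^\bfex \cup \Gamma^0$ (with $\Gamma^0$ having zero $\dd\sigma_\Sigma(q)\dd p$-measure) and apply the crude bound $(u^\bfen)^2 \leq \|f^\bfen\|_\infty^2$ together with the definitions of $Z^\bfen$ and $Z^\bfex$ to deduce
\begin{equation*}
  \left|\int_{\Sigma \times \R^d} p \cdot \mathsf{n}(q)(u^\bfen(q,p))^2 \rho(q,p)\dd\sigma_\Sigma(q)\dd p\right| \leq \|f^\bfen\|_\infty^2(Z^\bfen + Z^\bfex),
\end{equation*}
which combined with the previous limit yields the required inequality for $u^\bfen$. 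The estimate on $u^\bfex$ follows from the symmetric argument run on $\metasp \times \R^d$ instead of $(\R^d \setminus \bar{\metasp}) \times \R^d$, with $(u^\bfex)^2$ in place of $(u^\bfen)^2$. I do not anticipate any genuinely new obstacle beyond those already handled in Proposition~\ref{prop:exist}; the only step requiring some attention is the reuse of the surface-limit argument~\eqref{eq:pf-exist:sigma-alpha}, which goes through unchanged because $(u^\bfen)^2$ and $(u^\bfex)^2$ inherit the uniform boundedness of $u^\bfen$ and $u^\bfex$.
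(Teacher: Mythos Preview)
The proposal is correct and follows essentially the same route as the paper's own proof: the carr\'e-du-champ identity $\mathcal{L}((u^\bfen)^2)=2\gamma\beta^{-1}|\nabla_p u^\bfen|^2$, followed by the localised integration-by-parts machinery of Proposition~\ref{prop:exist} applied with $(u^\bfen)^2$ in place of $u^\bfen$, and the crude surface bound $(u^\bfen)^2\le\|f^\bfen\|_\infty^2$. Your explicit mention of monotone convergence for the left-hand side is a welcome clarification of a step the paper leaves implicit.
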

\begin{proof}
  We only prove the estimate on $u^\bfen$, the estimate on $u^\bfex$ follows from the same arguments. By Proposition~\ref{prop:Dirichlet-Langevin}, we have
  \begin{equation*}
      \mathcal{L}((u^\bfen)^2) = 2u^\bfen \mathcal{L}u^\bfen + 2 \gamma\beta^{-1}|\nabla_p u^\bfen|^2 = 2 \gamma\beta^{-1}|\nabla_p u^\bfen|^2
  \end{equation*}
  on $(\R^d \setminus \bar{\metasp}) \times \R^d$. With the notation of the proof of Proposition~\ref{prop:exist}, we deduce that
  \begin{equation*}
      \int_{\mathcal{U}_{M,\alpha} \times \R^d} \iota_M(q,p)\mathcal{L}((u^\bfen)^2)(q,p) \rho(q,p) \dd q \dd p = 2 \gamma\beta^{-1}\int_{\mathcal{U}_{M,\alpha} \times \R^d}\iota_M(q,p)|\nabla_p u^\bfen(q,p)|^2\rho(q,p) \dd q \dd p.
  \end{equation*}
  By the exact same arguments as in the proof of Proposition~\ref{prop:exist}, which only rely on the boundedness and the continuity of $u^\bfen$, the left-hand side of this identity can be decomposed as the sum of four terms $\mathrm{I}$, $\mathrm{II}$, $\mathrm{III}$ and $\mathrm{IV}$ resulting from integration by parts, and then shown to satisfy
  \begin{align*}
      &\lim_{M \to +\infty} \lim_{\alpha \dto 0} \int_{\mathcal{U}_{M,\alpha} \times \R^d} \iota_M(q,p)\mathcal{L}((u^\bfen)^2)(q,p) \rho(q,p) \dd q \dd p= \int_{\Sigma \times \R^d} (-p \cdot \mathsf{n}(q))u^\bfen(q,p)^2\rho(q,p)\dd\sigma_\Sigma(q)\dd p,
  \end{align*}
  which by Assumption~(\ref{ass:C}) and the boundedness of $u^\bfen$ is well-defined and satisfies
  \begin{align*}
      \left|\int_{\Sigma \times \R^d} (-p \cdot \mathsf{n}(q))u^\bfen(q,p)^2\rho(q,p)\dd\sigma_\Sigma(q)\dd p\right| &\leq \|f^\bfen\|_\infty^2 \int_{\Sigma \times \R^d} |p \cdot \mathsf{n}(q)|\rho(q,p)\dd\sigma_\Sigma(q)\dd p\\
      &= \|f^\bfen\|_\infty^2(Z^\bfen+Z^\bfex).
  \end{align*}
  On the other hand, it directly follows from the definition of $\mathcal{U}_{M,\alpha}$ and $\iota_M$ that 
  \begin{equation*}
      \lim_{M \to +\infty} \lim_{\alpha \dto 0} \int_{\mathcal{U}_{M,\alpha} \times \R^d}\iota_M(q,p)|\nabla_p u^\bfen(q,p)|^2\rho(q,p) \dd q \dd p = \int_{(\R^d \setminus \bar{\metasp}) \times \R^d}|\nabla_p u^\bfen(q,p)|^2\rho(q,p) \dd q \dd p,
  \end{equation*}
  which yields the conclusion.
\end{proof}

%%%%%%%%%%%%%%%%%%%%%%%%%%%%%%%%%%%%%%%%%%%%%%%%%%%%%%%%%%%%%%%%%%%%%%%%%%%%%%%%%%%%%%%%%%%%%%%%%%%%%%%%%
%%%%%%%%%%%%%%%%%%%%%%%%%%%%%%%%%%%%%%%%%%%%%%%%%%%%%%%%%%%%%%%%%%%%%%%%%%%%%%%%%%%%%%%%%%%%%%%%%%%%%%%%%
\subsection{Harris recurrence of the chain \texorpdfstring{$(Y^\Sigma_m)_{m \geq 0}$}{YSigma}}\label{ss:HRYSigma} In this subsection, we prove the following result.

\begin{prop}[Harris recurrence of the chain $(Y^\Sigma_m)_{m \geq 0}$]\label{prop:HRYSigma}
  Under the assumptions of Theorem~\ref{theo:main}, the Markov chain $(Y^\Sigma_m)_{m \geq 0}$ defined in Theorem~\ref{theo:Y} is Harris recurrent.
\end{prop}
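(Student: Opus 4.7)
The plan is to deduce Harris recurrence of $(Y^\Sigma_m)_{m \geq 0}$ by combining two ingredients: the invariant probability measure $\pi^\Sigma$ provided (via $\pi^\bfex$, $\pi^\bfen$) by Proposition~\ref{prop:exist}, and a form of irreducibility derived from the ergodicity of the underlying continuous-time dynamics. Since $\pi^\Sigma$ is already known to be invariant, what remains to establish is that, starting from any $y_0 \in \Gamma^\bfex \cup \Gamma^\bfen$, the chain visits every Borel set $B \subset \Gamma^\bfex \cup \Gamma^\bfen$ of positive $\pi^\Sigma$-mass infinitely often almost surely.

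First, I would prove the weaker statement that $\Pr_{y_0}(Y^\Sigma_m \in B \text{ for some } m \geq 0) > 0$, i.e. $\pi^\Sigma$-irreducibility. For this I would use hypoellipticity of the generator $\mathcal{L}$ (Hörmander's theorem, applied to the Langevin vector fields), which ensures that for every $t>0$ the transition kernel of $(q_t,p_t)_{t\geq 0}$ admits a smooth and everywhere positive density on $\R^d \times \R^d$. Given an interior point of $B$ (in a natural topology on $\Sigma \times \R^d$), and a small open neighbourhood $V$ of a suitably chosen pre-image in $\metasp \times \R^d$ or $(\R^d \setminus \bar{\metasp}) \times \R^d$, the process enters $V$ in positive time with positive probability, and then by Lemma~\ref{lem:return} hits $\Sigma$ transversally — continuity of the flow up to the (transverse) exit time implies that the resulting crossing lies in $B$ with positive probability.

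Second, I would strengthen this to infinite recurrence using the ergodic theorem from Assumption~(\ref{ass:A2}). The key technical device is a co-area identity: for a smooth nonnegative $\phi$ on $\Sigma \times \R^d$ with compact support away from $\Gamma^0$, and for $\varepsilon > 0$, approximate the surface integral against $\phi$ by the bulk average
\begin{equation*}
   G_\varepsilon(q,p) := \frac{1}{2\varepsilon}\mathbf{1}_{\{\dist(q,\Sigma)<\varepsilon\}} \phi(\pi_\Sigma(q),p)\,|p\cdot \mathsf{n}(\pi_\Sigma(q))|,
\end{equation*}
where $\pi_\Sigma$ is the nearest-point projection, well-defined on a tubular neighbourhood of $\Sigma$. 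A change of variables in time along the trajectory (valid outside the negligible event $\{(q_t,p_t) \in \Gamma^0\}$ ruled out by Lemma~\ref{lem:nonatt}) yields, as $\varepsilon \dto 0$,
\begin{equation*}
   \int_0^t G_\varepsilon(q_s,p_s)\, \dd s \;\longrightarrow\; \sum_{m:\,\tau_m \leq t} \phi(Y^\Sigma_m),
\end{equation*}
while ergodicity of $(q_t,p_t)$ ensures that $\frac{1}{t}\int_0^t G_\varepsilon(q_s,p_s)\,\dd s \to \mu(G_\varepsilon)$ almost surely, with $\mu(G_\varepsilon) \to \int_{\Sigma\times\R^d}\phi(q,p)\,|p\cdot\mathsf{n}(q)|\rho(q,p)\,\dd\sigma_\Sigma(q)\dd p$. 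Taking $\phi$ approximating $\mathbf{1}_B$ then gives $\sum_m \mathbf{1}_B(Y^\Sigma_m) = \infty$ almost surely, from any initial condition, which is exactly Harris recurrence.

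The main obstacle is the co-area/tubular argument in the second step, because $\Sigma$ has zero Lebesgue measure in $\R^d \times \R^d$ and the normal velocity $p \cdot \mathsf{n}(q)$ can be arbitrarily small. One must truncate to the set $\{|p \cdot \mathsf{n}(q)| > \delta\}$, use Lemma~\ref{lem:nonatt} together with the smoothness of $\Sigma$ to handle grazing trajectories, and then let $\delta \dto 0$ and $\varepsilon \dto 0$ in the right order; dealing with the unboundedness of $\Sigma$ and of the velocity fibre requires an additional localisation, typically through an exhaustion by compacts on which $G_\varepsilon$ remains uniformly integrable with respect to $\mu$.
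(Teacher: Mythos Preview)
Your strategy differs substantially from the paper's. The paper exhibits an explicit regeneration set $\mathcal{R}_{\delta_q,\delta_p} \subset \Gamma^\bfex$ near a fixed boundary point $(q^*,p^*)$ and proves two things: a one-step minorisation $\Pr_y(Y^\Sigma_1 \in \cdot) \geq \epsilon\lambda(\cdot)$ uniformly over $y \in \mathcal{R}_{\delta_q,\delta_p}$, obtained via a Harnack inequality for $\mathcal{L}$ after first pushing the trajectory into the interior of $(\R^d\setminus\bar{\metasp})\times\R^d$ (Lemma~\ref{lem:Harris:1}); and that every starting point eventually reaches $\mathcal{R}_{\delta_q,\delta_p}$, via a trajectorial argument using ergodicity of the continuous-time process (Lemma~\ref{lem:Harris:2}).

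Your co-area route has a genuine gap at the final step. The convergence $\int_0^t G_\varepsilon \to \sum_{\tau_m \leq t} \phi(Y^\Sigma_m)$ needs $\phi$ continuous and supported away from $\Gamma^0$; approximating $\mathbf{1}_B$ \emph{from below} by such $\phi$ with $\pi^\Sigma(\phi)>0$ requires $B$ to contain a $\pi^\Sigma$-positive open set. But $\varphi$-recurrence (Definition~\ref{defi:phi-rec}) demands infinite visits to \emph{every} Borel $B$ with $\pi^\Sigma(B)>0$, including sets with empty interior, and weak convergence of empirical crossing measures --- which is all your argument actually delivers --- does not upgrade to this. Your Step~1 has the same defect: positivity of the continuous-time transition density does not transfer to the crossing kernel $P^\Sigma$ without further work, and your irreducibility argument again presupposes that $B$ has an interior point. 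What your co-area computation \emph{does} give, once made rigorous, is the recurrence of any fixed open patch from every starting point --- an alternative proof of Lemma~\ref{lem:Harris:2}. But you would still need a minorisation condition, i.e.\ the content of Lemma~\ref{lem:Harris:1}, to promote that patch to a regeneration set and conclude Harris recurrence.
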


By Lemma~\ref{lem:return}, the chain $(Y^\Sigma_m)_{m \geq 0}$ takes its values in the set
\begin{equation*}
  \mathcal{S} := (\Sigma \times \R^d) \setminus \Gamma^0 = \Gamma^\bfex \cup \Gamma^\bfen.
\end{equation*}
This set is endowed with the trace on $\mathcal S$ of the Borel $\sigma$-algebra of $\R^d \times \R^d$, which makes it a separable measurable space (see Appendix~\ref{app:Harris}). To prove Proposition~\ref{prop:HRYSigma}, we construct a subset of $\mathcal{S}$ which is a \emph{regeneration set}, in the sense of Definition~\ref{defi:reg-set}, for this chain. 

To proceed, we first fix $q^* \in \Sigma$. By Assumption~(\ref{ass:B1}), the exterior sphere property ensures that there exist $q_\mathrm{ext} \in \R^d$ and $r^*>0$ such that $\mathrm{B}(q_\mathrm{ext},r^*) \subset (\R^d \setminus \bar{\metasp})$ and $\bar{\mathrm{B}}(q_\mathrm{ext},r^*) \cap \bar{\metasp} = \{q^*\}$. We set $p^* = q_\mathrm{ext}-q^*$ and remark that $p^*=r^*\mathsf{n}(q^*)$, so that $(q^*,p^*) \in \Gamma^\bfex$. For any $\delta_q>0$ and $\delta_p>0$, we now set (see Figure~\ref{fig:lemma22} below for a schematic representation of the objects introduced in this section)
\begin{equation*}
  \mathcal{R}_{\delta_q,\delta_p} := \{(q,p) \in \Gamma^\bfex: |q^*-q| \leq \delta_q, |p^*-p| \leq \delta_p\}.
\end{equation*}

%\CCC{dès l'introduction de $R_{\delta_q,\delta_p}$, vous pourriez renvoyer à la figure 2}

Proposition~\ref{prop:HRYSigma} simply follows from the combination of
Lemmas~\ref{lem:Harris:1} and~\ref{lem:Harris:2}.

\begin{lem}[Minorisation condition]\label{lem:Harris:1}
  Under the assumptions of Proposition~\ref{prop:HRYSigma}, there exist $\delta_q>0$, $\delta_p>0$, $\epsilon>0$ and a probability measure $\lambda$ on $\mathcal{S}$ such that, for any $(q,p) \in \mathcal{R}_{\delta_q,\delta_p}$, $\Pr_{(q,p)}(Y^\Sigma_1 \in \cdot) \geq \epsilon \lambda(\cdot)$.
\end{lem}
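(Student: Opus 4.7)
The plan is to combine a controllability argument with Hörmander hypoellipticity in order to construct an open set $\mathcal{V} \subset (\R^d \setminus \bar\metasp) \times \R^d$, a time $T>0$ and constants $\delta_q,\delta_p,c>0$ such that, for every $(q,p) \in \mathcal{R}_{\delta_q,\delta_p}$ and every Borel $B \subset \R^{2d}$,
\[
\Pr_{(q,p)}\!\left((q_T,p_T) \in B,\ \tau > T\right) \geq c\,|B \cap \mathcal{V}|,
\]
where $\tau := \inf\{t>0 : q_t \in \Sigma\}$ and $|\cdot|$ is the Lebesgue measure on $\R^{2d}$. The strong Markov property at $T$, combined with Lemma~\ref{lem:return} (which ensures that from any $y \in \mathcal{V}$ the first crossing of $\Sigma$ is almost surely in $\Gamma^\bfen$), will then give
\[
\Pr_{(q,p)}(Y^\Sigma_1 \in A) \geq c \int_{\mathcal V} \Pr_{y}(Y^\Sigma_0 \in A)\,\dd y =: \mu(A),
\]
and normalising the nonzero finite measure $\mu$, which is independent of $(q,p)$, will supply the desired $\epsilon$ and $\lambda$.

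To prove the bounded-below estimate on the law of $(q_T,p_T)$, I would first exploit the exterior sphere property at $q^*$ to construct a $C^2$ curve $\phi : [0,T] \to \R^d$ with $\phi(0)=q^*$, $\dot\phi(0)=p^*$, ending at a chosen target $(\phi(T),\dot\phi(T)) = (q_1,p_1)$ deep inside $(\R^d \setminus \bar\metasp) \times \R^d$, and satisfying $\dist(\phi(t),\bar\metasp) > 0$ for all $t \in (0,T]$. Reading off the associated control
\[
u(t) := \frac{\ddot\phi(t) - F(\phi(t)) + \gamma\dot\phi(t)}{\sqrt{2\gamma\beta^{-1}}}
\]
makes $(\phi,\dot\phi)$ the skeleton path of the Langevin SDE driven by $u$ from $(q^*,p^*)$. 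Continuous dependence of this ODE on its initial data, together with the exterior sphere property which keeps nearby skeletons in $\R^d \setminus \bar\metasp$ right after $t=0$, lets me shrink $\delta_q,\delta_p$ so that the same control $u$ applied from any $(q,p) \in \mathcal{R}_{\delta_q,\delta_p}$ still produces a skeleton staying in $\R^d \setminus \bar\metasp$ on $(0,T]$ and ending inside a fixed ball around $(q_1,p_1)$. The Stroock--Varadhan support theorem, which applies via Girsanov since the noise is full-rank in the $p$-variable, then yields a lower bound $c_1>0$, uniform in $(q,p) \in \mathcal{R}_{\delta_q,\delta_p}$, on the probability that the stochastic trajectory stays in a narrow tube around the corresponding skeleton on $[0,T]$; for the tube narrow enough we have, on this event, both $\tau > T$ and $(q_T,p_T) \in K := \bar{\mathrm{B}}((q_1,p_1),r)$. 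Applying the Markov property at a time $T-h$ just before $T$ and using that the Langevin transition density $p_h$ is smooth and strictly positive on $\R^{2d} \times \R^{2d}$ (Hörmander's theorem; see also~\cite{LelRamRey:kfp}), I can bound $p_h(y',y)$ uniformly from below on $K \times \bar\mathcal{V}$ for a small enough ball $\mathcal{V}$ around $(q_1,p_1)$, which yields the claimed absolutely continuous lower bound.

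The main obstacle is the uniform handling of the boundary initial condition: since $(q,p)\in\Gamma^\bfex$ lies on $\Sigma$, the skeleton tube necessarily grazes $\Sigma$ for $t$ near $0$, and one must argue separately that the stochastic trajectory does not re-touch $\Sigma$ on a short initial interval $[0,t_0]$. This follows from the exterior sphere property and a standard short-time estimate showing that $p_t \cdot \mathsf{n}(q_t) > 0$ with high probability on $[0,t_0]$, so the trajectory is pushed strictly into $\R^d \setminus \bar\metasp$ exactly as in the deterministic analysis. Once past $t_0$, the support theorem together with hypoellipticity produces the rest routinely.
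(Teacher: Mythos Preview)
Your approach is sound and leads to the same conclusion, but the route differs from the paper's in the key regularity step. Both arguments first drive the process from $\mathcal{R}_{\delta_q,\delta_p}$ into a fixed compact set inside $(\R^d\setminus\bar\metasp)\times\R^d$ with probability bounded below uniformly in the starting point; the paper does this by a direct Gronwall estimate on $|X_t-(q+tp)|$ together with the Cameron--Martin formula, which is exactly the quantitative Girsanov argument underlying your invocation of the support theorem. Where the proofs diverge is afterwards: the paper applies a Harnack inequality for nonnegative solutions of $\mathcal L u=0$ on a bounded cylinder (taken from~\cite{LelRamRey:kfp}, itself based on~\cite{GolImbMouVas19}) to the harmonic function $u^\bfen(x)=\Exp_x[f^\bfen(Y^\Sigma_1)]$, obtaining $\inf_K u^\bfen \geq \epsilon'\, u^\bfen(x_0)$ for a fixed reference point $x_0\in K$ and hence $\lambda = \Pr_{x_0}(Y^\bfen_0\in\cdot)$. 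You instead use pointwise positivity of the transition density, which gives an absolutely continuous minorising measure on $\mathcal V$ before pushing forward through the hitting map. The Harnack route avoids splitting the time interval and yields a Dirac-based $\lambda$; your route is more directly probabilistic and gives a smoother $\lambda$, but both rest on the same hypoelliptic machinery.

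One point deserves care in your argument: when you apply the Markov property at $T-h$ and invoke the lower bound on $p_h(y',y)$ over $K\times\bar{\mathcal V}$, you still need the event $\{\tau>T\}$, not just $\{\tau>T-h\}$. The free transition density does not see the absorbing boundary, so either replace $p_h$ by the density of the process killed on exiting $(\R^d\setminus\bar\metasp)\times\R^d$ (whose smoothness and strict positivity on compacts of the interior are established in~\cite{LelRamRey:kfp}), or choose $K,\mathcal V$ inside a fixed ball well separated from $\Sigma$ and take $h$ small enough that $\sup_{y'\in K}\Pr_{y'}(\tau\leq h)$ is dominated by the density lower bound. Either fix is routine, but as written the step is incomplete.
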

\begin{proof}
  By Lemma~\ref{lem:return}, for any $(q,p) \in \Gamma^\bfex$, we have
  $Y^\Sigma_1 \in \Gamma^\bfen $, $\Pr_{(q,p)}$-almost surely. Therefore, by a
  monotone class argument, it is enough to prove the existence of
  $\delta_q$, $\delta_p$, $\epsilon$ and $\lambda$ such that, for any
  continuous and bounded function $f^\bfen : \Gamma^\bfen \cup
  \Gamma^0 \to [0,+\infty)$, 
  \begin{equation*}
    \forall (q,p) \in \mathcal{R}_{\delta_q,\delta_p}, \qquad u^\bfen(q,p) := \Exp_{(q,p)}\left[f^\bfen(Y^\Sigma_1)\right] \geq \epsilon \lambda(f^\bfen).
  \end{equation*}
  Such a statement typically follows from Harnack inequalities associated with the operator $\mathcal{L}$. Indeed, by Proposition~\ref{prop:Dirichlet-Langevin}, the function $u^\bfen$ satisfies $\mathcal{L}u^\bfen=0$ on $(\R^d \setminus \bar{\metasp})\times \R^d$. %$(\R^d \times \R^d) \setminus (\Gamma^\bfen\cup\Gamma^0)$. 
  Besides, the Harnack inequality stated in~\cite[Theorem~2.15]{LelRamRey:kfp}, which is based on previous results for kinetic equations from~\cite{GolImbMouVas19}, asserts that if $\mathcal{L}u^\bfen=0$ on some subset $\metasp' \times \R^d$ of $\R^d \times \R^d$, with $\metasp'$ a bounded, connected, $C^2$ open subset of $\R^d$, then for any compact subset $K$ of $\metasp' \times \R^d$ there exists a constant $\epsilon>0$, which does not depend on $f^\bfen$, such that
  \begin{equation*}
    \inf_{(q,p) \in K} u^\bfen(q,p) \geq \epsilon \sup_{(q,p) \in K} u^\bfen(q,p),
  \end{equation*}
  and thus one may take $\lambda(\cdot) = \Pr_{(q,p)}(Y^\Sigma_1 \in
  \cdot)$ for any choice of a fixed $(q,p) \in K$. 
    
  However, this result cannot be applied directly here, because one
  would want to take for $K$ the set
  $\mathcal{R}_{\delta_q,\delta_p}$, which is a subset of $\Sigma
  \times \R^d$. But since the identity $\mathcal{L}u^\bfen=0$ is only
  satisfied on $(\R^d \setminus \bar{\metasp})\times \R^d$, it does
  not hold on cylindrical sets of the form $\metasp' \times \R^d$ with
  $\metasp' \cap \Sigma \not= \emptyset$. Therefore, before applying
  the Harnack inequality, a preliminary work is needed to show that,
  starting from $\mathcal{R}_{\delta_q,\delta_p}$, $(q_t,p_t)$ reaches
  a compact subset $K \subset \metasp' \times \R^d$, with $\metasp'
  \subset \R^d \setminus \bar{\metasp}$, before returning to $\metasp
  \times \R^d$ (and thus to $\Sigma \times \R^d$), with a probability which is uniformly bounded from below. This work is carried out in Steps~1 and~2. The conclusion of the proof, with the application of the Harnack inequality, is detailed in Step~3.

\medskip \noindent  \emph{Preparatory material.} Let us denote by $\mathsf{d}_\Sigma$ the signed distance function to $\Sigma$ in $\R^d$, with the convention that $\mathsf{d}_\Sigma(q)>0$ if $q \in \metasp$ and $\mathsf{d}_\Sigma(q)<0$ if $q \in \R^d\setminus\bar{\metasp}$. We recall that by Assumption~(\ref{ass:B1}), there is a neighbourhood of $\Sigma$ on which $\mathsf{d}_\Sigma$ is $C^2$, and for any $q \in \Sigma$ we have the identity $\mathsf{n}(q)=-\nabla \mathsf{d}_\Sigma(q)$. As a consequence, there exist $\delta'_q>0$ and $\delta'_p>0$ such that
  \begin{equation}\label{eq:pf-Harris1:0}
    \forall (q,p) \in \bar{\mathrm{B}}(q^*,\delta'_q) \times \bar{\mathrm{B}}(p^*,\delta'_p), \qquad p \cdot (-\nabla \mathsf{d}_\Sigma(q)) > 0.
  \end{equation} 
  We now fix the following positive quantities:
  \begin{equation*}
    t^* := 1 \wedge \frac{2\delta'_q}{3r^*}, \qquad \delta_q := \frac{t^*r^*}{6}, \qquad \delta_p := \frac{r^*}{6} \wedge \frac{\delta'_p}{2}, \qquad \delta'' := \frac{t^*r^*}{6} \wedge \frac{\delta'_p}{2},
  \end{equation*}
  and set
  \begin{equation*}
    K := \bar{\mathrm{B}}\left(q^*+t^*p^*, \frac{t^*r^*}{2}\right) \times \bar{\mathrm{B}}\left(p^*,\delta'_p\right).
  \end{equation*}
Notice that, thanks to the definitions of
$(t^*,\delta_q,\delta_p)$, one has 
$K \subset   \bar{\mathrm{B}}(q^*,\delta'_q) \times
\bar{\mathrm{B}}(p^*,\delta'_p)$ and $\mathcal{R}_{\delta_q,\delta_p}
\subset (\bar{\mathrm{B}}(q^*,\delta'_q) \times
\bar{\mathrm{B}}(p^*,\delta'_p)) \cap (\Sigma \times \R^d)$. We refer to
Figure~\ref{fig:lemma22} for a schematic representation of the
geometric setting in the position space.

\begin{psfrags}
  \psfrag{O}{$\mathcal O$}
  \psfrag{dq'}{$\delta_{q'}$}
  \psfrag{qext}{$q_\mathrm{ext}$}
  \psfrag{q*}{$q^*$}
  \psfrag{p*}{$p^*$}
  \psfrag{r*}{$r^*$}
  \psfrag{K}{\color{green}$\Pi_q(K)$}
  \psfrag{S}{$\Sigma$}
  \psfrag{En}{\color{blue}$\Pi_q(E_n)$}
  \psfrag{R}{\color{red}$\Pi_q(\mathcal{R}_{\delta_q,\delta_p})$}
\begin{figure}
\begin{center}
    \includegraphics[width=0.5\textwidth]{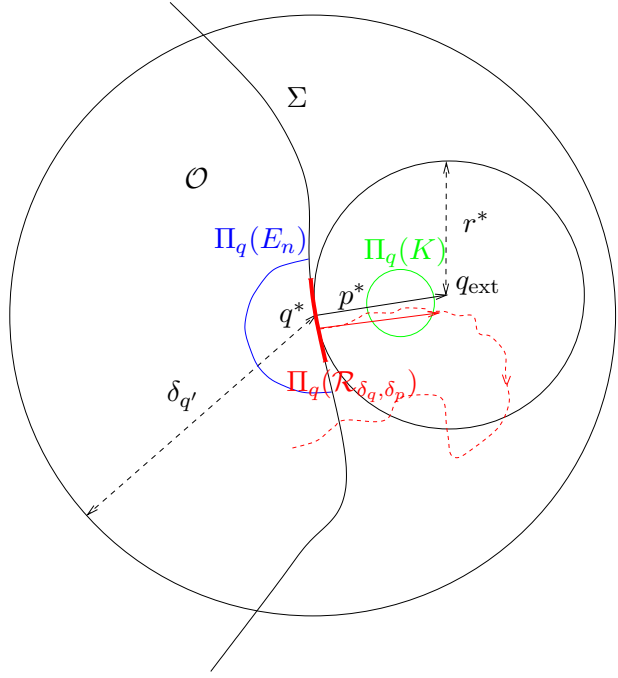}
\caption{Schematic representation of the objects introduced in the
  proofs of the Harris recurrence of the chain $(Y^\Sigma_m)_{m \ge
    0}$. Here, $\Pi_q(q,p)=q$ denotes the projection on the position space. Typically, a trajectory (dashed red line) starting in $\mathcal{R}_{\delta_q,\delta_p}$ goes through~$K$ before crossing $\Sigma$.}
\label{fig:lemma22}
\end{center}
\end{figure}
\end{psfrags}

%\CCC{17 (figure 2) : ajouter $q_{ext}$ au centre de la boule concernée}

\medskip \noindent    \emph{Step~1.} We  prove in Steps 1 and 2 that there exists $\alpha>0$ such that
  \begin{equation}\label{eq:pfHarris1:alpha:1}
    \forall (q,p) \in \mathcal{R}_{\delta_q,\delta_p}, \qquad \Pr_{(q,p)}((q_{t^*},p_{t^*}) \in K, \tau^\bfen_0 > t^*) \geq \alpha.
  \end{equation}
  To proceed, we first let $\hat{F} : \R^d \to \R^d$ be a $C^\infty$, bounded and globally Lipschitz continuous function such that $F(q)=\hat{F}(q)$ for any $q \in \bar{\mathrm{B}}(q^*,\delta'_q)$. We denote by $(\hat{q}_t,\hat{p}_t)_{t \geq 0}$ the unique strong solution to~\eqref{eq:Langevin} with $\hat{F}$ in place of $F$; driven by the same Brownian motion $(W_t)_{t \geq 0}$ and started with the same initial condition $(q,p) \in \mathcal{R}_{\delta_q,\delta_p}$ as $(q_t,p_t)_{t \geq 0}$, so that $(q_t,p_t)$ and $(\hat{q}_t,\hat{p}_t)$ coincide until they leave the set $\bar{\mathrm{B}}(q^*,\delta'_q) \times \R^d$.
  
  For any $t \in [0,t^*]$,
  \begin{align*}
    |\hat{q}_t - (q+tp)| + |\hat{p}_t-p| &= \left|\int_0^t \left(\hat{p}_s-p\right)\dd s \right| + \left|\int_0^t \left(\hat{F}(\hat{q}_s)-\gamma\hat{p}_s\right)\dd s + \sqrt{2\gamma\beta^{-1}}W_t\right|\\
    &\leq C \int_0^t \left(|\hat{q}_s - (q+sp)| + |\hat{p}_s-p|\right)\dd s + S_{q,p},
  \end{align*}
  where the constant $C \geq 0$ depends on $\gamma$ and the Lipschitz constant of $\hat{F}$, and 
  \begin{equation*}
    S_{q,p} := \sup_{t \in [0,t^*]} \left|\int_0^t \left(\hat{F}(q+sp)-\gamma p\right)\dd s + \sqrt{2\gamma\beta^{-1}}W_t\right|.
  \end{equation*}
  We deduce from the Gronwall lemma that
  \begin{equation*}
    \sup_{t \in [0,t^*]} \left\{|\hat{q}_t - (q+tp)| + |\hat{p}_t-p|\right\} \leq S_{q,p} \ee^{Ct^*}.
  \end{equation*}
  
  On the event $\{S_{q,p} \leq \delta'' \ee^{-Ct^*}\}$, we therefore
  have for any $t \in [0,t^*]$ (since $|p| \le r^* + \delta_p$),
  \begin{align*}
    |\hat{q}_t-q^*| &\leq |\hat{q}_t-(q+tp)| + |q+tp-q^*| \leq \delta'' + \delta_q + t^*(r^* + \delta_p) \leq \delta'_q,\\
    |\hat{p}_t-p^*| &\leq |\hat{p}_t-p| + |p-p^*| \leq \delta'' + \delta_p \leq \delta'_p,
  \end{align*}
  which by~\eqref{eq:pf-Harris1:0} ensures that $\hat{q}_t=q_t$, $\hat{p}_t=p_t$, and $p_t \cdot (-\nabla \mathsf{d}_\Sigma(q_t)) > 0$ for $t\in[0,t^*]$, so that $\tau^\bfen_0 > t^*$. Furthermore,
  \begin{equation*}
    |q_{t^*} - (q^* + t^* p^*)| \leq |\hat{q}_{t^*}-(q+t^*p)| + |(q+t^*p)-(q^*+t^*p^*)| \leq \delta'' + \delta_q + t^* \delta_p \leq \frac{t^*r^*}{2},
  \end{equation*}
  which implies that $(q_{t^*},p_{t^*}) \in K$. As a consequence, the estimate~\eqref{eq:pfHarris1:alpha:1} follows from the fact that
  \begin{equation}\label{eq:pfHarris1:alpha:2}
    \alpha := \inf_{(q,p) \in \mathcal{R}_{\delta_q,\delta_p}} \Pr\left(S_{q,p} \leq \delta'' \ee^{-Ct^*}\right) > 0,
  \end{equation}
  which is proved in Step~2. 
  
\medskip \noindent    \emph{Step~2.} We now prove the estimate~\eqref{eq:pfHarris1:alpha:2}. In this step we simply write 
  \begin{equation*}
    \delta := \delta'' \ee^{-Ct^*}, \qquad h_t := \frac{1}{\sqrt{2\gamma \beta^{-1}}}(\hat{F}(q+tp)-\gamma p) \in \R^d,
  \end{equation*}
  and remark that, since $\hat{F}$ is globally bounded, there exists $\bar{h} \geq 0$ such that, for any $t \in [0,t^*]$ and $(q,p) \in \mathcal{R}_{\delta_q,\delta_p}$, $|h_t| \leq \bar{h}$.
  
  By the Cameron--Martin formula, for any $(q,p) \in \mathcal{R}_{\delta_q,\delta_p}$ we have
  \begin{align*}
    &\Pr\left(S_{q,p} \leq \delta\right) = \Exp\left[\ind{\sup_{t \in [0,t^*]} |\sqrt{2\gamma\beta^{-1}} W_t| \leq \delta} M_{q,p}\right],\\
    &M_{q,p} := \exp\left(\int_0^{t^*} h_t \cdot \dd W_t - \frac{1}{2}\int_0^{t^*} \left|h_t\right|^2 \dd t\right).
  \end{align*}
  We therefore deduce from the Cauchy--Schwarz inequality that
  \begin{equation*}
    \Pr\left(S_{q,p} \leq \delta\right) \geq \frac{\Pr\left(\sup_{t \in [0,t^*]} |\sqrt{2\gamma\beta^{-1}} W_t| \leq \delta\right)^2}{\Exp\left[M_{q,p}^{-1}\right]}.
  \end{equation*}
  The numerator in the right-hand side is positive for any value of $\delta$, and does not depend on $(q,p)$. The denominator satisfies
  \begin{align*}
    \Exp\left[M_{q,p}^{-1}\right] &= \Exp\left[\exp\left(-\int_0^{t^*} h_t \cdot \dd W_t + \frac{1}{2}\int_0^{t^*} \left|h_t\right|^2 \dd t\right)\right]\\
    &= \Exp\left[\exp\left(-\int_0^{t^*} h_t \cdot \dd W_t - \frac{1}{2}\int_0^{t^*} \left|h_t\right|^2 \dd t\right)\right]\exp\left(\int_0^{t^*} \left|h_t\right|^2 \dd t\right)\\
    &\leq \exp\left(t^* \bar{h}^2\right),
  \end{align*}
  therefore~\eqref{eq:pfHarris1:alpha:2} holds with
  \begin{equation*}
    \alpha := \Pr\left(\sup_{t \in [0,t^*]} |\sqrt{2\gamma\beta^{-1}} W_t| \leq \delta\right)^2\exp\left(-t^* \bar{h}^2\right).
  \end{equation*}

\medskip \noindent    \emph{Step~3.} We may now complete the proof. Following the discussion in the introduction of this proof, we  fix a continuous and bounded function $f^\bfen : \Gamma^\bfen \cup \Gamma^0 \to [0,+\infty)$. For any $(q,p) \in \mathcal{R}_{\delta_q,\delta_p}$, we deduce from~\eqref{eq:pfHarris1:alpha:1} and the Markov property for $(q_t,p_t)_{t \geq 0}$ that
  \begin{align*}
    \Exp_{(q,p)}\left[f^\bfen(Y^\Sigma_1)\right] &= \Exp_{(q,p)}\left[f^\bfen(q_{\tau^\bfen_0},p_{\tau^\bfen_0})\right]\\
    &\geq \Exp_{(q,p)}\left[\ind{\tau^\bfen_0>t^*, (q_{t^*},p_{t^*}) \in K}f^\bfen(q_{\tau^\bfen_0},p_{\tau^\bfen_0})\right]\\
    &\geq \alpha \inf_{(q,p) \in K} u^\bfen(q,p),
  \end{align*}
  with $u^\bfen(q,p)$ defined as in Proposition~\ref{prop:Dirichlet-Langevin}. From the definition of $K$, $t^*$ and $p^*$, we deduce that for any $(q,p) \in K$,
  \begin{equation*}
    |q-q_\mathrm{ext}| \leq |q-(q^*+t^*p^*)| + |(q^*+t^*p^*)-q_\mathrm{ext}| \leq \frac{t^*r^*}{2} + (1-t^*)|p^*| = \left(1-\frac{t^*}{2}\right)r^* < r^*,
  \end{equation*}
  which by the exterior sphere property implies that $K \subset (\R^d \setminus \bar{\metasp}) \times \R^d$. Therefore there exists an open, bounded, $C^2$ and connected set $\metasp'$ such that 
  \begin{equation*}
    K \subset \metasp' \times \R^d \subset (\R^d \setminus \bar{\metasp}) \times \R^d.
  \end{equation*}
  Hence, by Proposition~\ref{prop:Dirichlet-Langevin}, $\mathcal{L}u^\bfen=0$ on $\metasp' \times \R^d$. Therefore, the Harnack inequality from~\cite[Theorem~2.15]{LelRamRey:kfp} applies and shows that there exists $\epsilon'>0$, which does not depend on the choice of $f^\bfen$, such that
  \begin{equation*}
    \inf_{(q,p) \in K} u^\bfen(q,p) \geq \epsilon' \sup_{(q,p) \in K} u^\bfen(q,p) \geq \epsilon' u^\bfen(q^*+t^*p^*,p^*).
  \end{equation*}
  We finally complete the proof by letting $\lambda(\cdot) := \Pr_{(q^*+t^*p^*,p^*)}((q_{\tau^\bfen_0},p_{\tau^\bfen_0}) \in \cdot)$ and $\epsilon := \alpha \epsilon'$.
\end{proof}

\begin{lem}[Recurrence of $\mathcal{R}_{\delta_q,\delta_p}$]\label{lem:Harris:2}
  Under the assumptions of Proposition~\ref{prop:HRYSigma}, for any $\delta_q>0$, $\delta_p>0$ and $(q,p) \in \mathcal{S}$, $\Pr_{(q,p)}(\exists m \geq 1: Y^\Sigma_m \in \mathcal{R}_{\delta_q,\delta_p})=1$.
\end{lem}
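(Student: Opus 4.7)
The plan combines a controllability estimate, in the spirit of Lemma~\ref{lem:Harris:1}, with an ergodic argument based on Assumption~(\ref{ass:A2}). For $s > 0$ small, set $\tilde q := q^* - s \mathsf n(q^*)$, which lies in $\metasp$ by the interior sphere property, and $\tilde p := p^*$. The straight-line trajectory $t \mapsto \tilde q + t \tilde p$ then exits $\metasp$ transversally at $(q^*, p^*)$ at time $t^\sharp := s/|p^*|$. I claim that there exist an open neighborhood $\mathcal N \subset \metasp \times \R^d$ of $(\tilde q, \tilde p)$ and a constant $\alpha > 0$ such that
\begin{equation*}
\forall (q',p') \in \mathcal N, \qquad \Pr_{(q',p')}\bigl((q_{\tau^\bfex_0}, p_{\tau^\bfex_0}) \in \mathcal R_{\delta_q,\delta_p}\bigr) \geq \alpha.
\end{equation*}
The proof of this estimate follows Steps~1 and~2 of the proof of Lemma~\ref{lem:Harris:1}: truncate $F$ outside a large ball so that the drift becomes globally Lipschitz, then apply the Cameron--Martin formula and Gronwall's lemma to keep the SDE trajectory $\delta$-close to the straight line on $[0, 2t^\sharp]$ with uniformly positive probability. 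Transversality of the reference crossing at $(q^*,p^*)$, together with Lemma~\ref{lem:nonatt}, then ensures that for $\delta$ small enough the first exit lies in $\mathcal R_{\delta_q,\delta_p}$.

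For the ergodic argument, since $\mathcal N$ is open and $\rho > 0$, one has $\mu(\mathcal N) > 0$; Assumption~(\ref{ass:A2}) then gives, almost surely from any initial condition,
\begin{equation*}
\int_0^{+\infty} \ind{(q_s,p_s) \in \mathcal N} \dd s = +\infty.
\end{equation*}
Introduce the almost surely finite stopping times
\begin{equation*}
\sigma_0 := \inf\{t > 0: (q_t,p_t) \in \mathcal N\}, \quad \varsigma_k := \inf\{t > \sigma_k: (q_t,p_t) \in \Gamma^\bfex\}, \quad \sigma_{k+1} := \inf\{t > \varsigma_k: (q_t,p_t) \in \mathcal N\},
\end{equation*}
whose finiteness follows from the integral above and Lemma~\ref{lem:return}. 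Setting $A_k := \{(q_{\varsigma_k},p_{\varsigma_k}) \in \mathcal R_{\delta_q,\delta_p}\}$, the strong Markov property at $\sigma_k$ combined with the controllability estimate gives $\Pr(A_k \mid \mathcal F_{\sigma_k}) \geq \alpha$. A straightforward induction then yields $\Pr(A_0^c \cap \cdots \cap A_K^c) \leq (1 - \alpha)^{K+1}$, so that $\Pr(\bigcup_k A_k) = 1$. Since $(q,p) \in \mathcal S \subset \Sigma \times \R^d$ while $\mathcal N \subset \metasp \times \R^d$, one has $\sigma_0 > 0$, so each $\varsigma_k$ is the $m_k$-th crossing time of $\Sigma$ for some $m_k \geq 1$, and on $A_k$ one has $Y^\Sigma_{m_k} \in \mathcal R_{\delta_q,\delta_p}$.

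The main obstacle lies in the controllability estimate above. The geometry differs from Lemma~\ref{lem:Harris:1} in that the target $\mathcal R_{\delta_q,\delta_p}$ lies on the boundary $\Sigma$ rather than inside a cylindrical domain on which a Harnack inequality is available; the argument must therefore rely directly on SDE sample paths rather than on elliptic regularity. The Cameron--Martin/Gronwall machinery of Steps~1 and~2 in the proof of Lemma~\ref{lem:Harris:1} adapts essentially verbatim, but care is needed to guarantee that the perturbed trajectory exits $\metasp$ cleanly close to $(q^*, p^*)$, which is in turn ensured by the transversality of the reference straight line and by the nonattainability of $\Gamma^0$ from Lemma~\ref{lem:nonatt}.
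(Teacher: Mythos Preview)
Your argument is correct and takes a genuinely different route from the paper. Both proofs combine (a) an SDE/Gronwall--Cameron--Martin estimate showing that, from a suitable neighborhood inside $\metasp \times \R^d$ near $(q^*,p^*)$, the process exits $\metasp$ through $\mathcal{R}_{\delta_q,\delta_p}$ with controlled probability, and (b) the ergodicity Assumption~(\ref{ass:A2}) to guarantee visits to that neighborhood. The difference lies in how these are balanced. The paper works with a \emph{sequence} of shrinking neighborhoods $E_n := \{(q,p) \in \metasp \times \R^d : |q-q^*| \leq 1/n, |p-p^*| \leq 1/n\}$ and proves, via a quantitative version of the transversal-exit estimate with a vanishing time horizon $t_n \to 0$, that $\sup_{(q,p) \in \overline{E_n}} \Pr_{(q,p)}(\tau_{\mathcal{R}} > t_n) \to 0$; a single visit to $E_n$ then suffices in the limit, and no Borel--Cantelli step is needed. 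You instead fix \emph{one} neighborhood $\mathcal{N}$ with a uniform lower bound $\alpha>0$ on the exit-through-$\mathcal{R}$ probability, use ergodicity to produce infinitely many returns to $\mathcal{N}$, and conclude by the geometric bound $\Pr(A_0^c \cap \cdots \cap A_K^c) \leq (1-\alpha)^{K+1}$. Your decomposition is arguably more modular---the controllability and recurrence ingredients are cleanly separated---and the SDE estimate you need is softer (positive probability rather than probability tending to~$1$); the price is the extra bookkeeping of the stopping times $\sigma_k,\varsigma_k$ and the verification that each $\varsigma_k$ is a crossing time with index $\geq 1$. One minor point: at the hitting time $\sigma_k$ the process lies in $\overline{\mathcal{N}}$ rather than $\mathcal{N}$, so you should either state the controllability estimate on a closed ball or take $\mathcal{N}$ slightly smaller than the set on which the estimate is proved.
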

\begin{proof}
  For the sake of legibility, throughout the proof we fix $\delta_q$ and $\delta_p$ and simply denote $\mathcal{R} = \mathcal{R}_{\delta_q,\delta_p}$. Let $(q,p)\in\mathcal{S}$ and let $\tau_\mathcal{R}:=\inf\{t>0:(q_t,p_t) \in \mathcal{R}\}$. Since, by Lemma~\ref{lem:nonacc}, the sequences $(\tau^\bfex_n)_{n\geq0}$ and $(\tau^\bfen_n)_{n\geq0}$ do not accumulate, it is sufficient to prove that
  \begin{equation}\label{eq:hitting time R}
    \Pr_{(q,p)}(\tau_\mathcal{R}=+\infty)=0.  
  \end{equation}
 
  For $n\geq 1$, let us define the set $E_n$ by (see Figure~\ref{fig:lemma22})
  \begin{equation*}
    E_n := \left\{(q,p) \in \metasp \times \R^d: |q-q^*| \leq \frac{1}{n}, |p-p^*| \leq \frac{1}{n}\right\},
  \end{equation*}
  and set $\tau_{E_n}:=\inf\{t> 0: (q_t,p_t) \in E_n\}$. Since the process $(q_t,p_t)_{t\geq0}$ is ergodic and $E_n$ has positive Lebesgue measure, then for all $n\geq 1$, $\Pr_{(q,p)}(\tau_{E_n}<+\infty )=1$. Therefore,
  \begin{equation*}
    \Pr_{(q,p)}\left(\bigcap_{n\geq 1}\{\tau_{E_n}<+\infty\}\right)=1,
  \end{equation*}
  and the condition~\eqref{eq:hitting time R} is equivalent to the equality 
  \begin{equation*}
    0 = \Pr_{(q,p)}\left(\bigcap_{n\geq 1}\{\tau_{E_n}<+\infty,\tau_\mathcal{R}=+\infty\}\right) = \lim_{n \to +\infty} \Pr_{(q,p)}\left(\tau_{E_n}<+\infty,\tau_\mathcal{R}=+\infty\right).
  \end{equation*}
  For $n\geq 1$, the strong Markov property at the stopping time $\tau_{E_n}$ ensures that
  \begin{equation*}
    \Pr_{(q,p)}\left(\tau_{E_n}<+\infty,\tau_\mathcal{R}=+\infty\right)=\Exp_{(q,p)}\left[\ind{\tau_{E_n}<+\infty} \Pr_{(q_{\tau_{E_n}},p_{\tau_{E_n}})}\left(\tau_\mathcal{R}=+\infty\right)\right].
  \end{equation*}
  Therefore, if one proves that
  \begin{equation*}
    \lim_{n \to +\infty} \sup_{(q,p)\in \overline{E_n}}\Pr_{(q,p)}\left(\tau_\mathcal{R}=+\infty\right)=0,
  \end{equation*}
  the claimed result easily follows from the application of the dominated convergence theorem. Let $\alpha\in(0,1/2)$ and $t_n:=n^{-1/(1+\alpha)} \leq 1$. We prove here the stronger convergence 
  \begin{equation*}
    \lim_{n \to +\infty} \sup_{(q,p)\in \overline{E_n}}\Pr_{(q,p)}\left(\tau_\mathcal{R}>t_n\right) = 0.
  \end{equation*}
  
Let $n\geq1$, $(q,p)\in \overline{E_n}$, and
  \begin{equation*}
    \sigma := \inf\left\{t>0: (q_t,p_t) \not\in \bar{B}(q^*,\delta_q) \times \bar{B}(p^*,\delta_p)\right\}.
  \end{equation*}
We first prove in Step~1 that,  for $n$ sufficiently large, if the process remains in 
$\bar{B}(q^*,\delta_q) \times \bar{B}(p^*,\delta_p)$ until the time
$t_n$ (i.e. $\sigma > t_n$), then necessarily,
$\tau_\mathcal{R}\le t_n$. Then we conclude
in Step~2 by showing that the process necessarily leaves
$\bar{B}(q^*,\delta_q) \times \bar{B}(p^*,\delta_p)$ before the time
$t_n$, uniformly in the initial condition~$(q,p)$.

\medskip \noindent {\em Step 1.}    On the event $\{\sigma>t_n\}$, for any $t \in [0, t_n]$, we have $|p_t-p| \leq c_1 t + c_2 t^\alpha$, where
  \begin{equation*}
    c_1 := \sup\left\{|F(q')-\gamma p'|, (q',p') \in \bar{B}(q^*,\delta_q) \times \bar{B}(p^*,\delta_p)\right\}, \qquad c_2 := \sup_{0 < t \leq 1} \sqrt{2\gamma\beta^{-1}}\frac{|W_t|}{t^\alpha}.
  \end{equation*}
  Notice that since $\alpha<1/2$, the random variable $c_2$ is finite, almost surely. As a result,
  \begin{align*}
    |q_{t_n} - (q^*+t_n p^*)| &= \left|q-q^* + \int_0^{t_n} (p_t-p^*) \dd t\right|\\
    &\leq |q-q^*| + t_n |p-p^*| + \int_0^{t_n} |c_1 t + c_2 t^\alpha|\dd t\\
    &\leq \frac{1+t_n}{n} + \frac{c_1t_n^2}{2} + \frac{c_2t_n^{\alpha+1}}{\alpha+1}.
  \end{align*}
  We now recall the notation $r^* = p^* \cdot \mathsf{n}(q^*) = |p^*| > 0$. By the expression of $t_n$, there exists a deterministic index $n_0\geq1$ such that for all $n\geq n_0$, 
  \begin{equation*}
    \frac{1+t_n}{n} + \frac{c_1t_n^2}{2} \leq \frac{r^*}{6}t_n.
  \end{equation*}
  Let $n \geq n_0$. On the event 
  \begin{equation*}
    A_n := \left\{\sigma>t_n,\frac{c_2t_n^{\alpha+1}}{\alpha+1} \leq \frac{r^*}{6}t_n\right\},
  \end{equation*}
  one has
  \begin{equation}\label{sortie par R eq 1}
    \left|q_{t_n}-q^*\right| \leq \left(\frac{r^*}{3}+ |p^*|\right)t_n = \frac{4r^*}{3}t_n.
  \end{equation} 
  In addition, using the Cauchy--Schwarz inequality, one has also that
  \begin{equation*}
    \left(q_{t_n}-q^*-t_np^*\right) \cdot \mathsf{n}(q^*)\geq -\frac{r^*}{3}t_n.
  \end{equation*}
  As a result, either $q_{t_n}=q^*$ or
  \begin{equation*}
    \frac{\left(q_{t_n}-q^*\right) \cdot \mathsf{n}(q^*)}{|q_{t_n}-q^*|} = \frac{\left(q_{t_n}-q^*-t_np^*\right) \cdot \mathsf{n}(q^*) + t_n p^* \cdot \mathsf{n}(q^*)}{|q_{t_n}-q^*|} \geq \frac{1}{2}.
  \end{equation*}
  We deduce from this estimate that, on the event $A_n$, 
  \begin{align*}
    |q_{t_n}-q_\mathrm{ext}|^2 &= |q_{t_n}-q^*|^2 + 2 (q_{t_n}-q^*)\cdot(q^*-q_\mathrm{ext}) + |q^*-q_\mathrm{ext}|^2\\
    &= |q_{t_n}-q^*|^2 - 2 r^* (q_{t_n}-q^*)\cdot \mathsf{n}(q^*) + (r^*)^2\\
    &\leq |q_{t_n}-q^*|^2 - r^* |q_{t_n}-q^*| + (r^*)^2.
  \end{align*}
  Let $n_1$ be a deterministic index chosen large enough for the
  identity $t_n < 3/4$ to hold for $n \geq n_1$. Then by~\eqref{sortie
    par R eq 1}, if $n \geq n_0 \vee n_1$ then either $q_{t_n}=q^*$ or
  $|q_{t_n}-q^*|^2 - r^* |q_{t_n}-q^*|<0$ and, by the previous inequality, $|q_{t_n}-q_\mathrm{ext}|<r^*$. Thus, in both cases, by the exterior sphere property, $q_{t_n} \not\in \metasp$, and since we place ourselves on the event $\{\sigma>t_n\}$, then necessarily $\tau_\mathcal{R} \leq t_n$. 
  
  As a result, for $n \geq n_0 \vee n_1$ and $(q,p) \in \overline{E_n}$,
  \begin{equation*}
    \Pr_{(q,p)}\left(\tau_\mathcal{R}>t_n\right) = \Pr_{(q,p)}\left(\tau_\mathcal{R}>t_n, A_n^c\right) \leq \Pr_{(q,p)}\left(\sigma \leq t_n\right) + \Pr\left(\frac{c_2t_n^{\alpha+1}}{\alpha+1} > \frac{r^*}{6}t_n\right),
  \end{equation*}
  where $A_n^c$ denotes the complement of the event $A_n$. Since $c_2$ is almost surely finite and $t_n$ vanishes when $n \to +\infty$, we first note that
  \begin{equation*}
    \lim_{n \to +\infty} \Pr\left(\frac{c_2t_n^{\alpha+1}}{\alpha+1} > \frac{r^*}{6}t_n\right) = 0,
  \end{equation*}
  and the limit is uniform in the initial condition $(q,p)$. Therefore, to complete the proof, it remains to show that 
  \begin{equation*}
    \lim_{n \to +\infty} \sup_{(q,p) \in \overline{E_n}} \Pr_{(q,p)}\left(\sigma \leq t_n\right) = 0,
  \end{equation*}
which is the objective of the last step of the proof.

\medskip \noindent {\em Step 2.}   For $n$ large enough, $\overline{E}_n \subset \bar{B} := \bar{B}(q^*,\delta_q/2) \times \bar{B}(p^*,\delta_p/2)$, so that
  \begin{equation*}
    \sup_{(q,p) \in \overline{E_n}} \Pr_{(q,p)}\left(\sigma \leq t_n\right) \leq \sup_{(q,p) \in \bar{B}} \Pr_{(q,p)}\left(\sigma \leq t_n\right).
  \end{equation*}
  Let us define
  \begin{equation*}
    \bar{c} := \sup_{(q,p) \in  \bar{B}(q^*,\delta_q) \times \bar{B}(p^*,\delta_p)} \{|p| \vee |F(q)-\gamma p|\}.
  \end{equation*}
  Then for any $(q,p) \in \bar{B}$,
  \begin{align*}
    \Pr_{(q,p)}(\sigma \leq t_n) &= \Pr_{(q,p)}\left(\sup_{t \leq t_n} |q_{t \wedge \sigma}-q^*| > \delta_q \text{ or } \sup_{t \leq t_n} |p_{t \wedge \sigma}-p^*| > \delta_p\right)\\
    &\leq \Pr_{(q,p)}\left(\sup_{t \leq t_n} \left|\int_0^{t \wedge \sigma}p_s \dd s\right| > \frac{\delta_q}{2}\right)\\
    &\quad + \Pr_{(q,p)}\left(\sup_{t \leq t_n} \left|\int_0^{t \wedge \sigma}(F(q_s)-\gamma p_s) \dd s + \sqrt{2\gamma\beta^{-1}}W_{t \wedge \sigma}\right| > \frac{\delta_p}{2}\right).
  \end{align*}
  On the one hand,
  \begin{equation*}
    \sup_{t \leq t_n} \left|\int_0^{t \wedge \sigma}p_s \dd s\right| \leq \sup_{t \leq t_n} \bar{c} (t\wedge \sigma) \leq \bar{c} t_n;
  \end{equation*}
  on the other hand,
  \begin{align*}
    \sup_{t \leq t_n} \left|\int_0^{t \wedge \sigma}(F(q_s)-\gamma p_s) \dd s + \sqrt{2\gamma\beta^{-1}}W_{t \wedge \sigma}\right| &\leq \sup_{t \leq t_n} \left|\int_0^{t \wedge \sigma}(F(q_s)-\gamma p_s) \dd s\right| + \sup_{t \leq t_n}\sqrt{2\gamma\beta^{-1}}|W_{t \wedge \sigma}|\\
    &\leq \bar{c} t_n + \sup_{t \leq t_n}\sqrt{2\gamma\beta^{-1}}|W_t|.
  \end{align*}
  Therefore,
  \begin{equation*}
    \Pr_{(q,p)}(\sigma \leq t_n) \leq  \ind{\bar{c}t_n > \delta_q/2} + \Pr\left(\bar{c}t_n + \sup_{t \leq t_n} \sqrt{2\gamma\beta^{-1}}|W_t|  > \frac{\delta_p}{2}\right).
  \end{equation*}
  The right-hand side no longer depends on $(q,p)$ and vanishes when $n \to +\infty$, which completes the proof.
\end{proof}

%%%%%%%%%%%%%%%%%%%%%%%%%%%%%%%%%%%%%%%%%%%%%%%%%%%%%%%%%%%%%%%%%%%%%%%%%%%%%%%%%%%%%%%%%%%%%%%%%%%%%%%%%
%%%%%%%%%%%%%%%%%%%%%%%%%%%%%%%%%%%%%%%%%%%%%%%%%%%%%%%%%%%%%%%%%%%%%%%%%%%%%%%%%%%%%%%%%%%%%%%%%%%%%%%%%
\subsection{Completion of the proofs of Theorems~\ref{theo:main} and~\ref{theo:Y}}\label{ss:pfmain} We have proved in Subsection~\ref{ss:exist} that the probability measures $\pi^\bfex$ and $\pi^\bfen$ defined in Theorem~\ref{theo:main} are invariant for the Markov chains $(Y^\bfex_n)_{n \geq 0}$ and $(Y^\bfen_n)_{n \geq 0}$, and in Subsection~\ref{ss:HRYSigma} that the Markov chain $(Y^\Sigma_m)_{m \geq 0}$ defined in Theorem~\ref{theo:Y} is Harris recurrent. 

We first complete the proof of Theorem~\ref{theo:Y} by checking that the probability measure $\pi^\Sigma$ defined there is invariant for the Markov chain $(Y^\Sigma_m)_{m \geq 0}$. To do so, we let $f : \mathcal{S} \to \R$ be measurable and bounded. From the definition of $\pi^\Sigma$ and Proposition~\ref{prop:exist}, we get
\begin{align*}
  \Exp_{\pi^\Sigma}\left[f(Y^\Sigma_1)\right] &= \frac{1}{2}\int_{\Gamma^\bfex} \Exp_{(q,p)}\left[f(q_{\tau^\bfen_0},p_{\tau^\bfen_0})\right] \pi^\bfex(\dd q\dd p) + \frac{1}{2}\int_{\Gamma^\bfen} \Exp_{(q,p)}\left[f(q_{\tau^\bfex_0},p_{\tau^\bfex_0})\right] \pi^\bfen(\dd q\dd p)\\
  &= \frac{1}{2}\pi^\bfen(f) + \frac{1}{2}\pi^\bfex(f) = \pi^\Sigma(f),
\end{align*}
which is the expected result. 

It now remains to complete the proof of Theorem~\ref{theo:main} by checking that the Markov chains $(Y^\bfex_n)_{n \geq 0}$ and $(Y^\bfen_n)_{n \geq 0}$ are Harris recurrent. But since these chains are the traces of $(Y^\Sigma_m)_{m \geq 0}$ on $\Gamma^\bfex$ and $\Gamma^\bfen$, respectively, this statement immediately follows from the combination of Theorem~\ref{theo:Y} and Lemma~\ref{lem:trace-Harris}.

%%%%%%%%%%%%%%%%%%%%%%%%%%%%%%%%%%%%%%%%%%%%%%%%%%%%%%%%%%%%%%%%%%%%%%%%%%%%%%%%%%%%%%%%%%%%%%%%%%%%%%%%%
%%%%%%%%%%%%%%%%%%%%%%%%%%%%%%%%%%%%%%%%%%%%%%%%%%%%%%%%%%%%%%%%%%%%%%%%%%%%%%%%%%%%%%%%%%%%%%%%%%%%%%%%%
\subsection{Proof of Proposition~\ref{prop:rev}}\label{ss:rev} In this subsection, we assume that $F=-\nabla V$ and prove Proposition~\ref{prop:rev}. To proceed, we fix continuous and bounded functions $f_0^\bfen : \Gamma^\bfen \cup \Gamma^0 \to \R$, $f_1^\bfex : \Gamma^\bfex \cup \Gamma^0 \to \R$, and set
  \begin{equation*}
      \forall (q,p) \in \R^d \times \R^d, \qquad u_0^\bfen(q,p) := \Exp_{(q,p)}\left[f_0^\bfen(Y^\bfen_0)\right], \quad u_1^\bfen(q,p) := \Exp_{(q,p)}\left[f_1^\bfex(\mathsf{R}(Y^\bfen_0))\right].
  \end{equation*}
  We then set $v^\bfen_1 := u_1^\bfen \circ \mathsf{R}$, so that by Proposition~\ref{prop:Dirichlet-Langevin}, we have
  \begin{equation}\label{eq:pf-rev:1}
      \mathcal{L}u_0^\bfen = 0, \quad \mathcal{L}(v^\bfen_1 \circ \mathsf{R}) = \mathcal{L}u_1^\bfen = 0, \qquad \text{on $(\R^d \setminus \bar{\metasp}) \times \R^d$,}
  \end{equation}
 and on the boundaries $\Gamma^\bfen$ and $\Gamma^\bfex$:
  \begin{equation}\label{eq:pf-rev:2}
    u^\bfen_0(y) = \begin{cases}
      f^\bfen_0(y) & \text{on $\Gamma^\bfen$,}\\
      P^\Sigma f^\bfen_0(y) & \text{on $\Gamma^\bfex$,}
    \end{cases} \qquad v^\bfen_1(y) = \begin{cases}
      P^\Sigma (f^\bfex_1 \circ \mathsf{R})(\mathsf{R}(y)), &\text{on $\Gamma^\bfen$,}\\
      f^\bfex_1(y) &\text{on $\Gamma^\bfex$,}
    \end{cases}
  \end{equation}
  where $P^\Sigma$ denotes the transition kernel of the Markov chain $(Y^\Sigma_m)_{m \geq 0}$.
  
  Similarly to Proposition~\ref{prop:exist}, the proof relies on the formal integration by parts formula
  \begin{align*}
      \int_{(\R^d \setminus \bar{\metasp}) \times \R^d} v^\bfen_1(q,p) \mathcal{L} u^\bfen_0(q,p) \rho(q,p)\dd q\dd p &= \int_{\Sigma \times \R^d} p \cdot (-\mathsf{n}(q)) u^\bfen_0(q,p)v^\bfen_1(q,p)\rho(q,p)\dd\sigma_\Sigma(q)\dd p\\
      &\quad + \int_{(\R^d \setminus \bar{\metasp}) \times \R^d} \mathcal{L}(v^\bfen_1 \circ \mathsf{R})(q,p)  (u^\bfen_0 \circ \mathsf{R})(q,p) \rho(q,p)\dd q\dd p,
  \end{align*}
  which uses the reversibility up to momentum reversal of the continuous-time Langevin dynamics. By~\eqref{eq:pf-rev:1}, this identity reduces to
  \begin{equation}\label{eq:pf-rev:3}
      \int_{\Sigma \times \R^d} p \cdot \mathsf{n}(q) u^\bfen_0(q,p)v^\bfen_1(q,p)\rho(q,p)\dd\sigma_\Sigma(q)\dd p=0.
  \end{equation}
  In the sequel, we show how to complete the proof of Proposition~\ref{prop:rev} taking~\eqref{eq:pf-rev:3} for granted, and then we justify rigorously the identity~\eqref{eq:pf-rev:3}.
  
\begin{proof}[Proof of Proposition~\ref{prop:rev} under the assumption that~\eqref{eq:pf-rev:3} holds] Separating the integrals over $\Gamma^\bfen$ and $\Gamma^\bfex$ in the left-hand side of~\eqref{eq:pf-rev:3}, and using~\eqref{eq:pf-rev:2}, we end up with the identity
  \begin{equation*}
      \int_{\Gamma^\bfen} \pi^\bfen(\dd y^\bfen) f^\bfen_0(y^\bfen) P^\Sigma (f^\bfex_1 \circ \mathsf{R})(\mathsf{R}(y^\bfen)) = \int_{\Gamma^\bfex} \pi^\bfex(\dd y^\bfex) f^\bfex_1(y^\bfex) P^\Sigma f^\bfen_0(y^\bfex).
  \end{equation*}
  The right-hand side directly rewrites
  \begin{equation*}
      \int_{\Gamma^\bfex} \pi^\bfex(\dd y^\bfex) f^\bfex_1(y^\bfex) P^\Sigma f^\bfen_0(y^\bfex) = \Exp_{\pi^\bfex}\left[f^\bfex_1(Y^\Sigma_0)f^\bfen_0(Y^\Sigma_1)\right],
  \end{equation*}
  while setting $y^\bfex=\mathsf{R}(y^\bfen)$ in the left-hand side yields
  \begin{align*}
      \int_{\Gamma^\bfen} \pi^\bfen(\dd y^\bfen) f^\bfen_0(y^\bfen) P^\Sigma (f^\bfex_1 \circ \mathsf{R})(\mathsf{R}(y^\bfen)) &= \int_{\Gamma^\bfex} \pi^\bfex(\dd y^\bfex) f^\bfen_0(\mathsf{R}(y^\bfex)) P^\Sigma (f^\bfex_1 \circ \mathsf{R})(y^\bfex)\\
      &= \Exp_{\pi^\bfex}\left[f_0^\bfen(\mathsf{R}(Y^\Sigma_0))f_1^\bfex(\mathsf{R}(Y^\Sigma_1))\right],
  \end{align*}
  since it is obvious from their expressions in the conservative case that $\pi^\bfen$ and $\pi^\bfex$ are the pushforward of each other by $\mathsf{R}$. We thus conclude that under $\Pr_{\pi^\bfex}$, the pairs $(Y^\Sigma_0,Y^\Sigma_1)$ and $(\mathsf{R}(Y^\Sigma_1),\mathsf{R}(Y^\Sigma_0))$ have the same law. Using the same arguments but with an integration by parts on the domain $\metasp \times \R^d$ and test functions defined accordingly, we show that the pairs $(Y^\Sigma_0,Y^\Sigma_1)$ and $(\mathsf{R}(Y^\Sigma_1),\mathsf{R}(Y^\Sigma_0))$ also have the same law under $\Pr_{\pi^\bfen}$, and thereby under $\Pr_{\pi^\Sigma}$.
\end{proof}

\begin{proof}[Proof of~\eqref{eq:pf-rev:3}] We use the same notation as in the proof of Proposition~\ref{prop:exist}, and first write
  \begin{equation*}
      0 = \int_{\mathcal{U}_{M,\alpha}\times\R^d} \iota_M(q,p) v^\bfen_1(q,p) \mathcal{L}u^\bfen_0(q,p) \rho(q,p)\dd q\dd p,
  \end{equation*}
  thanks to~\eqref{eq:pf-rev:1}. Using the reversibility up to momentum reversal of the continuous-time Langevin dynamics with respect to $\mu(\dd q\dd p) = \rho(q,p)\dd q \dd p$, we get the integration by parts formula
    \begin{align*}
      &\int_{\mathcal{U}_{M,\alpha}\times\R^d} \iota_M(q,p) v^\bfen_1(q,p) \mathcal{L}u^\bfen_0(q,p) \rho(q,p)\dd q\dd p\\
      &= \int_{\Sigma_{M,\alpha} \times \R^d}(-p \cdot \mathsf{n}_{M,\alpha}(q))\iota_M(q,p) v^\bfen_1(q,p) u^\bfen_0(q,p) \rho(q,p)\dd\sigma_{\Sigma_{M,\alpha}}(q)\dd p\\
      &\quad + \int_{\mathcal{U}_{M,\alpha}\times\R^d} (u^\bfen_0 \circ \mathsf{R})(q,p) \mathcal{L}((\iota_M v^\bfen_1)\circ \mathsf{R})(q,p) \rho(q,p)\dd q\dd p,
  \end{align*}
  see~\cite[Section~2.2.3.1]{LelRouSto10} for details on the computation. On the one hand, by the same arguments as in the proof of Proposition~\ref{prop:exist}, we get
  \begin{align*}
      &\lim_{M \to +\infty} \lim_{\alpha \dto 0} \int_{\Sigma_{M,\alpha} \times \R^d}(-p \cdot \mathsf{n}_{M,\alpha}(q))\iota_M(q,p) v^\bfen_1(q,p) u^\bfen_0(q,p) \rho(q,p)\dd\sigma_{\Sigma_{M,\alpha}}(q)\dd p\\
      &= \int_{\Sigma \times \R^d}(-p \cdot \mathsf{n}(q)) v^\bfen_1(q,p) u^\bfen_0(q,p) \rho(q,p)\dd\sigma_\Sigma(q)\dd p.
  \end{align*}
  On the other hand, setting $\iota^\mathsf{R}_M := \iota_M \circ \mathsf{R}$, we get
  \begin{align*}
      \mathcal{L}((\iota_M v^\bfen_1)\circ \mathsf{R}) &=\mathcal{L}(\iota^\mathsf{R}_M u^\bfen_1)\\ 
      &= \iota^\mathsf{R}_M \mathcal{L} u_1^\bfen + u_1^\bfen \mathcal{L} \iota^\mathsf{R}_M + 2\gamma\beta^{-1} \nabla_p \iota^\mathsf{R}_M \cdot \nabla_p u_1^\bfen\\
      &= u_1^\bfen \mathcal{L} \iota^\mathsf{R}_M + 2\gamma\beta^{-1} \nabla_p \iota^\mathsf{R}_M \cdot \nabla_p u_1^\bfen,
  \end{align*}
  thanks to~\eqref{eq:pf-rev:1} again. Therefore,
  \begin{align*}
      &\int_{\mathcal{U}_{M,\alpha}\times\R^d} (u^\bfen_0 \circ \mathsf{R})(q,p) \mathcal{L}((\iota_M v^\bfen_1)\circ \mathsf{R})(q,p)\rho(q,p) \dd q\dd p\\
      &= \int_{\mathcal{U}_{M,\alpha}\times\R^d} u^\bfen_0(q,-p) u_1^\bfen(q,p) \mathcal{L} \iota^\mathsf{R}_M(q,p)\rho(q,p) \dd q\dd p\\
      &\quad + 2\gamma\beta^{-1} \int_{\mathcal{U}_{M,\alpha}\times\R^d} u^\bfen_0(q,-p) \nabla_p \iota^\mathsf{R}_M(q,p) \cdot \nabla_p u_1^\bfen(q,p)\rho(q,p) \dd q\dd p.
  \end{align*}
  The boundedness of $u^\bfen_0$, $u_1^\bfen$, the definition of $\iota_M$ and Assumption~(\ref{ass:A3}) allow to show that
  \begin{equation*}
      \lim_{M \to +\infty} \lim_{\alpha \dto 0} \int_{\mathcal{U}_{M,\alpha}\times\R^d} u^\bfen_0(q,-p) u_1^\bfen(q,p) \mathcal{L} \iota^\mathsf{R}_M(q,p)\rho(q,p) \dd q\dd p = 0,
  \end{equation*}
  while the Cauchy--Schwarz inequality yields
  \begin{align*}
      &\left|\int_{\mathcal{U}_{M,\alpha}\times\R^d} u^\bfen_0(q,-p) \nabla_p \iota^\mathsf{R}_M(q,p) \cdot \nabla_p u_1^\bfen(q,p)\rho(q,p) \dd q\dd p\right|\\
      &\leq \|u^\bfen_0\|_\infty \left(\int_{\mathcal{U}_{M,\alpha}\times\R^d} |\nabla_p \iota^\mathsf{R}_M(q,p)|^2 \rho(q,p)\dd q\dd p \int_{\mathcal{U}_{M,\alpha}\times\R^d} |\nabla_p u_1^\bfen(q,p)|^2 \rho(q,p)\dd q\dd p\right)^{1/2}.
  \end{align*}
  By the definition of $\iota_M$,
  \begin{equation*}
      \lim_{M \to +\infty} \lim_{\alpha \dto 0} \int_{\mathcal{U}_{M,\alpha}\times\R^d} |\nabla_p \iota^\mathsf{R}_M(q,p)|^2 \rho(q,p)\dd q\dd p = 0,
  \end{equation*}
  while by Lemma~\ref{lem:energy},
  \begin{equation*}
     \int_{\mathcal{U}_{M,\alpha}\times\R^d} |\nabla_p u_1^\bfen(q,p)|^2 \rho(q,p)\dd q\dd p \leq \int_{(\R^d \setminus \bar{\metasp}) \times\R^d} |\nabla_p u_1^\bfen(q,p)|^2 \rho(q,p)\dd q\dd p < +\infty.
  \end{equation*}
  Overall, this completes the proof of~\eqref{eq:pf-rev:3}.
\end{proof}

%%%%%%%%%%%%%%%%%%%%%%%%%%%%%%%%%%%%%%%%%%%%%%%%%%%%%%%%%%%%%%%%%%%%%%%%%%%%%%%%%%%%%%%%%%%%%%%%%%%%%%%%%
%%%%%%%%%%%%%%%%%%%%%%%%%%%%%%%%%%%%%%%%%%%%%%%%%%%%%%%%%%%%%%%%%%%%%%%%%%%%%%%%%%%%%%%%%%%%%%%%%%%%%%%%%
%%%%%%%%%%%%%%%%%%%%%%%%%%%%%%%%%%%%%%%%%%%%%%%%%%%%%%%%%%%%%%%%%%%%%%%%%%%%%%%%%%%%%%%%%%%%%%%%%%%%%%%%%
\section{Proofs of Proposition~\ref{prop:nuA}, Theorem~\ref{theo:Hill-Langevin} and Corollary~\ref{cor:MRT}}\label{s:Hill}

This section is divided into four subsections. In
Subsections~\ref{ss:react-distrib} and~\ref{ss:Hill-gen}, we
temporarily leave the Langevin dynamics~\eqref{eq:Langevin} aside and
consider an arbitrary Markov chain $(Y_n)_{n \geq 0}$, with transition
kernel denoted by $P$, which takes its values in a separable
measurable space $\mathcal{S}$. We work under the following general
setting for the Markov chain $(Y_n)_{n \geq 0}$ (we recall that generalities on Harris recurrent chains are gathered in
Appendix~\ref{app:Harris}):

  \begin{enumerate}[label=(E\arabic*),ref=E\arabic*]
    \item\label{ass:Harris1} The chain $(Y_n)_{n \geq 0}$ is positive Harris recurrent on $\mathcal{S}$. Its unique stationary probability distribution is denoted by $\pi$.
    \item\label{ass:Harris2} There exist two measurable sets ${\mathcal{A}}, {\mathcal{B}} \subset \mathcal{S}$ such that $\mathcal{S} = {\mathcal{A}} \cup {\mathcal{B}}$, ${\mathcal{A}} \cap {\mathcal{B}} = \emptyset$, and $\pi({\mathcal{A}})\pi({\mathcal{B}})>0$.
  \end{enumerate}

Under Assumptions~(\ref{ass:Harris1}--\ref{ass:Harris2}), we establish general versions of Proposition~\ref{prop:nuA} and Theorem~\ref{theo:Hill-Langevin}. Then we check in Subsection~\ref{ss:appl-Hill} that Assumptions~(\ref{ass:Harris1}--\ref{ass:Harris2}) are satisfied by the sequence $(Y^\bfen_n)_{n \geq 0}$ in the setting of Subsection~\ref{ss:Hill-Langevin}, which proves Proposition~\ref{prop:nuA} and Theorem~\ref{theo:Hill-Langevin}. We also prove Corollary~\ref{cor:MRT} there. Last, in Subsection~\ref{ss:rev-reactive}, we exhibit remarkable identities between reactive distributions in the conservative case.

%%%%%%%%%%%%%%%%%%%%%%%%%%%%%%%%%%%%%%%%%%%%%%%%%%%%%%%%%%%%%%%%%%%%%%%%%%%%%%%%%%%%%%%%%%%%%%%%%%%%%%%%%
\subsection{Reactive entrance and exit distributions}\label{ss:react-distrib}

%\CCC{une figure (un peu à l'image de la figure 3, qui arrive trop tard)
%serait la bienvenue pour digérer les notations.} Under Assumptions~(\ref{ass:Harris1}--\ref{ass:Harris2}), the chain $(Y_n)_{n \geq 0}$ is recurrent in both ${\mathcal{A}}$ and ${\mathcal{B}}$, which allows to define the increasing sequence of finite random times
\begin{equation*}
  \begin{array}{ll}
    \eta^\ret_{{\mathcal{A}},0} := \min\{n \geq 0: Y_n \in {\mathcal{A}}\}, \qquad &\eta^\ret_{{\mathcal{B}},0} := \min\{n \geq \eta^\ret_{{\mathcal{A}},0}: Y_n \in {\mathcal{B}}\},\\
    \eta^\ret_{{\mathcal{A}},k+1} := \min\{n \geq \eta^\ret_{{\mathcal{B}},k}: Y_n \in {\mathcal{A}}\}, \qquad & \eta^\ret_{{\mathcal{B}},k+1} := \min\{n \geq \eta^\ret_{{\mathcal{A}},k+1}: Y_n \in {\mathcal{B}}\}, \qquad k \geq 0.
  \end{array}
\end{equation*}
Both $(\eta^\ret_{{\mathcal{A}},k})_{k \geq 0}$ and $(\eta^\ret_{{\mathcal{B}},k})_{k \geq 0}$ are sequences of stopping times for the natural filtration of $(Y_n)_{n \geq 0}$. Obviously, they respectively refer to the successive return times of the chain in ${\mathcal{A}}$ after a visit in ${\mathcal{B}}$ (and conversely), with the convention that these times are counted from the first visit of the chain in ${\mathcal{A}}$. Therefore we always have
\begin{equation*}
  0 \leq \eta^\ret_{{\mathcal{A}},0} < \eta^\ret_{{\mathcal{B}},0} < \eta^\ret_{{\mathcal{A}},1} < \eta^\ret_{{\mathcal{B}},1} < \cdots.
\end{equation*}
In the sequel it shall be convenient to denote by
\begin{equation*}
  \eta^\exi_{{\mathcal{A}},k} := \eta^\ret_{{\mathcal{B}},k}-1, \qquad \eta^\exi_{{\mathcal{B}},k} := \eta^\ret_{{\mathcal{A}},k+1}-1, \qquad k \geq 0,
\end{equation*}
the respective reactive exit times from ${\mathcal{A}}$ and
${\mathcal{B}}$. In words, $\eta^\exi_{{\mathcal{A}},k}$
(resp. $\eta^\exi_{{\mathcal{B}},k}$) is the time of the last visit in
$\mathcal A$ (resp. $\mathcal B$) before the $k$-th entry in $\mathcal B$ (resp. $\mathcal A$).
Notice that in general, these times are \emph{not} stopping times.

For any $k \geq 0$, we set 
\begin{equation*}
  Y^\ret_{{\mathcal{A}},k} := Y_{\eta^\ret_{{\mathcal{A}},k}}, \qquad Y^\exi_{{\mathcal{A}},k} := Y_{\eta^\exi_{{\mathcal{A}},k}}.
\end{equation*}
These notations are illustrated on the Markov chain $(Y_n^-)_{n \ge 0}$, respectively $(Y_n^+)_{n \ge 0}$, on Figure~\ref{fig:proposition57} below (see in particular the points $Y^\ret_{{\mathcal{A}^-},k}$, $Y^\exi_{{\mathcal{A}^-},k+1}$, respectively the points $Y^\ret_{{\mathcal{B}^-},k+1}$, $Y^\exi_{{\mathcal{B}^-},k}$). Following~\cite{LuNol15}, for all $\ell \geq 1$ we define the \emph{empirical reactive entrance distribution} in ${\mathcal{A}}$ by
\begin{equation*}
  \nu^\ret_{{\mathcal{A}},\ell} := \frac{1}{\ell}\sum_{k=0}^{\ell-1} \delta_{Y^\ret_{{\mathcal{A}},k}},
\end{equation*}
and the \emph{empirical reactive exit distribution} from ${\mathcal{A}}$ by
\begin{equation*}
  \nu^\exi_{{\mathcal{A}},\ell} := \frac{1}{\ell}\sum_{k=0}^{\ell-1} \delta_{Y^\exi_{{\mathcal{A}},k}}.
\end{equation*}

\begin{prop}[Reactive entrance and exit distributions]\label{prop:reactive}
  Under Assumptions~(\ref{ass:Harris1}--\ref{ass:Harris2}), let us define the probability measures
  \begin{equation*}
    \nu^\ret_{\mathcal{A}}(\dd y) := \frac{\Pr_\pi(Y_0 \in {\mathcal{B}}, Y_1 \in \dd y)}{\Pr_\pi(Y_0 \in {\mathcal{B}}, Y_1 \in {\mathcal{A}})}, \qquad \nu^\exi_{\mathcal{A}}(\dd y) := \frac{\Pr_\pi(Y_0 \in \dd y, Y_1 \in {\mathcal{B}})}{\Pr_\pi(Y_0 \in {\mathcal{A}}, Y_1 \in {\mathcal{B}})}, \qquad \text{on ${\mathcal{A}}$.}
  \end{equation*}
  For any $f \in L^1({\mathcal{A}},\nu^\ret_{\mathcal{A}})$,
  \begin{equation*}
    \lim_{\ell \to +\infty} \nu^\ret_{{\mathcal{A}},\ell}(f) = \nu^\ret_{\mathcal{A}}(f), \qquad \text{almost surely,}
  \end{equation*}
  and for any $f \in L^1({\mathcal{A}},\nu^\exi_{\mathcal{A}})$,
  \begin{equation*}
    \lim_{\ell \to +\infty} \nu^\exi_{{\mathcal{A}},\ell}(f) = \nu^\exi_{\mathcal{A}}(f), \qquad \text{almost surely.}
  \end{equation*}
\end{prop}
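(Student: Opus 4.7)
My strategy is to convert the empirical reactive averages into empirical averages of the pair chain $(Y_{n-1},Y_n)_{n \geq 1}$ and then invoke the ergodic theorem for positive Harris recurrent chains. The first step is a bookkeeping identity: for $k \geq 1$, the stopping time $\eta^\ret_{\mathcal{A},k}$ is an index $n \geq 1$ with $Y_{n-1}\in\mathcal{B}$ and $Y_n\in\mathcal{A}$ (by minimality, the chain stays in $\mathcal{B}$ throughout $[\eta^\ret_{\mathcal{B},k-1},\eta^\ret_{\mathcal{A},k}-1]$); conversely every such $n > \eta^\ret_{\mathcal{A},0}$ equals some $\eta^\ret_{\mathcal{A},k}$. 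Setting $N_\ell := \eta^\ret_{\mathcal{A},\ell-1}$, this yields
$$
\sum_{k=0}^{\ell-1} f(Y^\ret_{\mathcal{A},k}) = \iind{Y_0 \in \mathcal{A}}\, f(Y_0) + \sum_{n=1}^{N_\ell} \ind{Y_{n-1} \in \mathcal{B},\, Y_n \in \mathcal{A}}\, f(Y_n).
$$

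Next, I would apply the ergodic theorem to the pair chain $(Y_{n-1},Y_n)_{n\geq 1}$, which inherits positive Harris recurrence from $(Y_n)_{n\geq 0}$ and admits $\pi\otimes P$ as stationary distribution. Taking the test function $g(y,y') := \ind{y\in\mathcal{B},\, y'\in\mathcal{A}}\, f(y')$, which lies in $L^1(\pi\otimes P)$ precisely because $f \in L^1(\mathcal{A},\nu^\ret_{\mathcal{A}})$ by the very definition of $\nu^\ret_{\mathcal{A}}$, and using that $N_\ell \to +\infty$ almost surely by Harris recurrence, gives
$$
\frac{1}{N_\ell} \sum_{n=1}^{N_\ell} \ind{Y_{n-1} \in \mathcal{B},\, Y_n \in \mathcal{A}}\, f(Y_n) \longrightarrow \Pr_\pi(Y_0 \in \mathcal{B},\, Y_1 \in \mathcal{A})\,\nu^\ret_{\mathcal{A}}(f) \qquad \text{a.s.}
$$
Specialising to $f \equiv 1$ in the bookkeeping identity and dividing by $N_\ell$ shows that $\ell/N_\ell$ converges almost surely to the \emph{positive} number $\Pr_\pi(Y_0 \in \mathcal{B}, Y_1 \in \mathcal{A})$; positivity is forced by positive Harris recurrence on $\mathcal{S} = \mathcal{A} \cup \mathcal{B}$ combined with $\pi(\mathcal{A})\pi(\mathcal{B}) > 0$, which prevents the chain from performing only finitely many $\mathcal{B} \to \mathcal{A}$ transitions. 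Dividing the bookkeeping identity by $\ell$, the boundary term becomes negligible and the ratio of the two limits delivers $\nu^\ret_{\mathcal{A},\ell}(f) \to \nu^\ret_{\mathcal{A}}(f)$ almost surely.

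The exit case is treated by the symmetric argument, applied to $g(y,y') := \ind{y\in\mathcal{A},\, y'\in\mathcal{B}}\, f(y)$ together with the enumeration of the indices $n \geq 0$ satisfying $Y_n\in\mathcal{A}$ and $Y_{n+1}\in\mathcal{B}$, which are precisely the $\eta^\exi_{\mathcal{A},k}$. The only mildly delicate point is the appeal to the Harris ergodic theorem for functions of two consecutive states rather than of a single state; this extension, which amounts to recognising that the pair chain $(Y_{n-1},Y_n)_{n \geq 1}$ is itself positive Harris recurrent with stationary distribution $\pi \otimes P$, is standard and should be covered by the material collected in Appendix~\ref{app:Harris}.
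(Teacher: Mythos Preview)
Your proof is correct and follows essentially the same route as the paper: both rely on the positive Harris recurrence of the pair chain $(Y_n,Y_{n+1})_{n\geq 0}$ (Lemma~\ref{lem:pair-Harris}) and the ergodic theorem (Proposition~\ref{prop:LLN-Harris}). The only cosmetic difference is that the paper packages your ratio argument by invoking Lemma~\ref{lem:trace-Harris}, recognising $(Y_{\eta^\exi_{\mathcal{B},k}},Y_{\eta^\ret_{\mathcal{A},k+1}})_{k\geq 0}$ as the trace of the pair chain on $\mathcal{B}\times\mathcal{A}$ and reading off $\nu^\ret_{\mathcal{A}}$ as the second marginal of its invariant measure $\bar\pi(\cdot\mid\mathcal{B}\times\mathcal{A})$, whereas you carry out the same division by hand via your bookkeeping identity.
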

\begin{proof}
  The starting point of the proof is the elementary observation that the sequences of pairs $(Y_{\eta^\exi_{{\mathcal{A}},k}}, Y_{\eta^\ret_{{\mathcal{B}},k}})_{k \geq 0}$ and $(Y_{\eta^\exi_{{\mathcal{B}},k}}, Y_{\eta^\ret_{{\mathcal{A}},k+1}})_{k \geq 0}$, which respectively represent the transitions of the chain $(Y_n)_{n \geq \eta^\ret_{{\mathcal{A}},0}}$ from ${\mathcal{A}}$ to ${\mathcal{B}}$ and from ${\mathcal{B}}$ to ${\mathcal{A}}$, are the respective traces of the chain $(Y_n,Y_{n+1})_{n \geq \eta^\ret_{{\mathcal{A}},0}}$ on ${\mathcal{A}} \times {\mathcal{B}}$ and ${\mathcal{B}} \times {\mathcal{A}}$. We thus deduce from Lemmas~\ref{lem:pair-Harris} and~\ref{lem:trace-Harris} that these sequences are positive Harris recurrent chains on $\mathcal{S} \times \mathcal{S}$, with respective stationary distributions 
  \begin{equation*}
    \pi \otimes P(\dd y_0\dd y_1|{\mathcal{A}} \times {\mathcal{B}}) = \frac{\Pr_\pi(Y_0 \in \dd y_0, Y_1 \in \dd y_1)}{\Pr_\pi(Y_0 \in {\mathcal{A}}, Y_1 \in {\mathcal{B}})} \qquad \text{on ${\mathcal{A}} \times {\mathcal{B}}$},
  \end{equation*}
   and 
   \begin{equation*}
     \pi \otimes P(\dd y_0\dd y_1|{\mathcal{B}} \times {\mathcal{A}}) = \frac{\Pr_\pi(Y_0 \in \dd y_0, Y_1 \in \dd y_1)}{\Pr_\pi(Y_0 \in {\mathcal{B}}, Y_1 \in {\mathcal{A}})} \qquad \text{on ${\mathcal{B}} \times {\mathcal{A}}$}.
   \end{equation*} 
   Therefore, applying Proposition~\ref{prop:LLN-Harris} to these chains, we deduce that $\nu^\ret_{{\mathcal{A}},\ell}(f)$ converges to $\nu^\ret_{\mathcal{A}}(f)$, where $\nu^\ret_{\mathcal{A}}$ is the second marginal of the stationary distribution of $(Y_{\eta^\exi_{{\mathcal{B}},k}}, Y_{\eta^\ret_{{\mathcal{A}},k+1}})_{k \geq 0}$, and similarly $\nu^\exi_{{\mathcal{A}},\ell}(f)$ converges to $\nu^\exi_{\mathcal{A}}(f)$, where $\nu^\exi_{\mathcal{A}}$ is the first marginal of the stationary distribution of $(Y_{\eta^\exi_{{\mathcal{A}},k}}, Y_{\eta^\ret_{{\mathcal{B}},k}})_{k \geq 0}$. 
\end{proof}

We respectively call $\nu^\ret_{\mathcal{A}}$ and $\nu^\exi_{\mathcal{A}}$
the \emph{reactive entrance} and \emph{reactive exit
  distributions}. For a trajectory $(Y_n)_{n \ge 0}$ starting under
the stationary state ($Y_0 \sim \pi$), $\nu^\ret_{\mathcal{A}}$ is the
law of $Y_1$ conditionally to $(Y_0,Y_1) \in \mathcal{B} \times
\mathcal{A}$ and $\nu^\exi_{\mathcal{A}}$ is the law of $Y_0$ conditionally to $(Y_0,Y_1) \in \mathcal{A} \times
\mathcal{B}$. In a potential theoretic setting they might also be called \emph{first-entrance} and \emph{last-exit biased distributions}~\cite[Theorem~7.10]{BovDen15}. Clearly, similar distributions can be defined on ${\mathcal{B}}$ using the sequences $(Y_{\eta^\ret_{{\mathcal{B}},k}})_{k \geq 0}$ and $(Y_{\eta^\exi_{{\mathcal{B}},k}})_{k \geq 0}$. 

\begin{rk}[Reversibility and capacity]\label{rk:cap}
  If the chain $(Y_n)_{n \geq 0}$ is reversible with respect to $\pi$,
  then it is clear that $\nu^\ret_{\mathcal{A}} =
  \nu^\exi_{\mathcal{A}}$ (see also
  Proposition~\ref{prop:momentum_reversal} for a related discussion
  for the Langevin dynamics). In this case and under Assumptions~(\ref{ass:Harris1}--\ref{ass:Harris2}), the normalisation factor
  \begin{equation}\label{eq:capacity}
    \Pr_\pi(Y_0 \in {\mathcal{A}}, Y_1 \in {\mathcal{B}})=\Pr_\pi(Y_0 \in {\mathcal{B}}, Y_1 \in {\mathcal{A}})
  \end{equation}
  is usually called the \emph{capacity} between ${\mathcal{A}}$ and ${\mathcal{B}}$~\cite[Theorem~7.10]{BovDen15} and denoted by $\mathrm{cap}({\mathcal{A}},{\mathcal{B}})$. In the non-reversible case, the measures $\nu^\ret_{\mathcal{A}}$ and $\nu^\exi_{\mathcal{A}}$ need not coincide, however under Assumptions~(\ref{ass:Harris1}--\ref{ass:Harris2}), the identity~\eqref{eq:capacity} remains true. Indeed, it is obvious that
  \begin{equation*}
    \pi({\mathcal{A}}) = \Pr_\pi(Y_0 \in {\mathcal{A}}) = \Pr_\pi(Y_0 \in {\mathcal{A}}, Y_1 \in {\mathcal{A}}) + \Pr_\pi(Y_0 \in {\mathcal{A}}, Y_1 \in {\mathcal{B}}),
  \end{equation*}
  but since $\pi$ is stationary we also have
  \begin{equation*}
    \pi({\mathcal{A}}) = \Pr_\pi(Y_1 \in {\mathcal{A}}) = \Pr_\pi(Y_0 \in {\mathcal{A}}, Y_1 \in {\mathcal{A}}) + \Pr_\pi(Y_0 \in {\mathcal{B}}, Y_1 \in {\mathcal{A}}),
  \end{equation*}
  which yields the claimed identity.
\end{rk}

As far as the reactive entrance distribution is concerned, the statement of Proposition~\ref{prop:reactive} can be strengthened as follows.

\begin{prop}[Markov property for entrance points]\label{prop:Markov-reactive}
  Under Assumptions~(\ref{ass:Harris1}--\ref{ass:Harris2}), the sequence $(Y^\ret_{{\mathcal{A}},k})_{k \geq 0}$ is a positive Harris recurrent chain in ${\mathcal{A}}$, with stationary probability distribution $\nu^\ret_{\mathcal{A}}$.
\end{prop}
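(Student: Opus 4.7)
The plan is to establish three things in sequence: the Markov property of $(Y^\ret_{\mathcal{A},k})_{k \geq 0}$, its Harris recurrence on $\mathcal{A}$, and the invariance of $\nu^\ret_{\mathcal{A}}$ under its transition kernel $Q$.

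First, the $\eta^\ret_{\mathcal{A},k}$ are stopping times for the natural filtration of $(Y_n)_{n \geq 0}$ and, under Assumption~(\ref{ass:Harris2}), Harris recurrence of $(Y_n)$ forces both $\mathcal{A}$ and $\mathcal{B}$ to be visited infinitely often, so the $\eta^\ret_{\mathcal{A},k}$ are almost surely finite. By the strong Markov property, conditionally on $Y^\ret_{\mathcal{A},k}=y$, the shifted process $(Y_{\eta^\ret_{\mathcal{A},k}+j})_{j \geq 0}$ has the same law as $(Y_j)_{j \geq 0}$ started at $y$. Since $Y^\ret_{\mathcal{A},k+1}$ is a measurable functional of this shifted trajectory (its value at the first re-entrance to $\mathcal{A}$ after visiting $\mathcal{B}$), the sequence $(Y^\ret_{\mathcal{A},k})_{k \geq 0}$ is a time-homogeneous Markov chain on $\mathcal{A}$, with transition kernel $Q(y,\cdot):=\Pr_y(Y^\ret_{\mathcal{A},1} \in \cdot)$.

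Next, for Harris recurrence I would leverage the pair chain $Z_n:=(Y_n,Y_{n+1})$, which by Lemma~\ref{lem:pair-Harris} is positive Harris recurrent on $\mathcal{S}\times\mathcal{S}$ with stationary distribution $\pi \otimes P$. For any measurable $B \subseteq \mathcal{A}$ with $\nu^\ret_{\mathcal{A}}(B)>0$, the identity $(\pi \otimes P)(\mathcal{B} \times B)=\Pr_\pi(Y_0 \in \mathcal{B}, Y_1 \in B)>0$ shows that $\mathcal{B} \times B$ has positive stationary mass. Harris recurrence of $Z_n$ then implies that from any starting point, $Z_n$ visits $\mathcal{B}\times B$ infinitely often almost surely. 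Since past the time $\eta^\ret_{\mathcal{A},0}$ these visits correspond bijectively to the indices $k$ such that $Y^\ret_{\mathcal{A},k}\in B$ (every transition of $(Y_n)$ from $\mathcal{B}$ to $\mathcal{A}$ coincides with some $\eta^\ret_{\mathcal{A},k}$), the chain $(Y^\ret_{\mathcal{A},k})$ visits $B$ infinitely often almost surely, which yields Harris recurrence.

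Finally, to identify $\nu^\ret_{\mathcal{A}}$ as the invariant distribution, I would invoke the pair trace chain $(Y^\exi_{\mathcal{B},k},Y^\ret_{\mathcal{A},k+1})_{k \geq 0}$ on $\mathcal{B} \times \mathcal{A}$, shown in the proof of Proposition~\ref{prop:reactive} to be positive Harris recurrent with stationary distribution $\pi \otimes P(\cdot \mid \mathcal{B} \times \mathcal{A})$, whose second marginal is precisely $\nu^\ret_{\mathcal{A}}$. Initialising the pair trace chain stationarily, the sequence $(Y^\ret_{\mathcal{A},k+1})_{k \geq 0}$ becomes a stationary sequence whose one-step dynamics is governed by $Q$ from Step~1, forcing $\nu^\ret_{\mathcal{A}}$ to be $Q$-invariant; uniqueness then follows from Harris recurrence. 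The main obstacle I anticipate is precisely this last passage: a marginal of a stationary Markov chain does not in general inherit invariance under the marginal kernel, so the argument must explicitly rely on the fact that $(Y^\ret_{\mathcal{A},k})$ has been independently verified to be Markov.
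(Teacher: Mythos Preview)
Your argument is correct, but it takes a longer route than the paper's. You separately establish (i) the Markov property, (ii) Harris recurrence via a direct $\varphi$-recurrence check using the pair chain, and (iii) invariance of $\nu^\ret_{\mathcal{A}}$ via stationarity of the pair trace chain. The paper instead observes that once the Markov property is established, Proposition~\ref{prop:reactive} already gives the full ergodic theorem for $(Y^\ret_{\mathcal{A},k})_{k\geq 0}$: for every $f\in L^1(\mathcal{A},\nu^\ret_{\mathcal{A}})$ and every initial condition, the empirical averages converge almost surely to $\nu^\ret_{\mathcal{A}}(f)$. By the characterisation in Proposition~\ref{prop:LLN-Harris}, this is \emph{equivalent} to positive Harris recurrence with invariant measure $\nu^\ret_{\mathcal{A}}$, so steps (ii) and (iii) come for free in one stroke. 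Your approach has the merit of being self-contained and making the recurrence mechanism explicit (the bijection between $\mathcal{B}\to\mathcal{A}$ transitions of $(Y_n)$ and the times $\eta^\ret_{\mathcal{A},k}$), but it duplicates work already encapsulated in Proposition~\ref{prop:reactive}. The concern you flag at the end about marginals is legitimate in general, but here it is resolved precisely because you have independently verified the Markov property of $(Y^\ret_{\mathcal{A},k})$; with that in hand, stationarity of the marginal sequence does force $Q$-invariance.
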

\begin{proof}
  It follows from the strong Markov property that $(Y^\ret_{{\mathcal{A}},k})_{k \geq 0}$ is indeed a Markov chain, because $(\eta^\ret_{{\mathcal{A}},k})_{k \geq 0}$ is a sequence of stopping times, and the recursive construction of these times makes time homogeneity clear. Now Proposition~\ref{prop:reactive} shows that $\nu^\ret_{\mathcal{A}}$ is invariant for this chain, and thus by Proposition~\ref{prop:LLN-Harris} the latter is positive Harris recurrent.
\end{proof}

%%%%%%%%%%%%%%%%%%%%%%%%%%%%%%%%%%%%%%%%%%%%%%%%%%%%%%%%%%%%%%%%%%%%%%%%%%%%%%%%%%%%%%%%%%%%%%%%%%%%%%%%%
\subsection{The Hill relation}\label{ss:Hill-gen}

We may now write the Hill relation in the setting of Assumptions~(\ref{ass:Harris1}--\ref{ass:Harris2}). In the next statement, we use the notation $\pi_{\mathcal{A}}(\cdot) := \pi(\cdot|{\mathcal{A}})$.

\begin{theo}[Hill relation]\label{theo:Hill}
  Under Assumptions~(\ref{ass:Harris1}--\ref{ass:Harris2}) and with the notation of Proposition~\ref{prop:reactive}, we have for any $g \in L^1({\mathcal{A}},\pi_{\mathcal{A}})$,
  \begin{equation}\label{eq:Hill0}
    \Exp_{\nu^\ret_{\mathcal{A}}}\left[\sum_{n=0}^{\eta^\ret_{{\mathcal{B}},0}-1} |g(Y_n)|\right] < +\infty,
  \end{equation}
  and
  \begin{equation}\label{eq:Hill}
    \Exp_{\nu^\ret_{\mathcal{A}}}\left[\sum_{n=0}^{\eta^\ret_{{\mathcal{B}},0}-1} g(Y_n)\right] = \frac{\pi_{\mathcal{A}}(g)}{\Pr_{\pi_{\mathcal{A}}}(Y_1 \in {\mathcal{B}})}.
  \end{equation}
\end{theo}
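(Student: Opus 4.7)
The plan is to realise $\Exp_{\nu^\ret_{\mathcal{A}}}\bigl[\sum_{n=0}^{\eta^\ret_{{\mathcal{B}},0}-1} g(Y_n)\bigr]$ as the almost sure limit of the Birkhoff averages $\ell^{-1}\sum_{k=0}^{\ell-1} S_k$ with $S_k := \sum_{n=\eta^\ret_{\mathcal{A},k}}^{\eta^\ret_{\mathcal{B},k}-1} g(Y_n)$, and then to compute that limit by applying the Harris ergodic theorem (Proposition~\ref{prop:LLN-Harris}) directly to the underlying chain $(Y_n)$. The driving observation is the telescoping identity
\begin{equation*}
\sum_{k=0}^{\ell-1} S_k = \sum_{n=\eta^\ret_{\mathcal{A},0}}^{\eta^\ret_{\mathcal{A},\ell}-1} g(Y_n)\ind{Y_n \in \mathcal{A}},
\end{equation*}
which holds because, between $\eta^\ret_{\mathcal{A},k}$ and $\eta^\ret_{\mathcal{B},k}-1$, the chain lies in $\mathcal{A}$ while between $\eta^\ret_{\mathcal{B},k}$ and $\eta^\ret_{\mathcal{A},k+1}-1$ it lies in $\mathcal{B}$.

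I would first show that $\ell/\eta^\ret_{\mathcal{A},\ell}$ converges almost surely to $\pi(\mathcal{A})\Pr_{\pi_{\mathcal{A}}}(Y_1 \in \mathcal{B})$. By Lemma~\ref{lem:pair-Harris}, the pair chain $((Y_n,Y_{n+1}))_{n \geq 0}$ is positive Harris recurrent with invariant distribution $\pi \otimes P$; applying Proposition~\ref{prop:LLN-Harris} to it with test function $\ind{(y_0,y_1) \in \mathcal{B} \times \mathcal{A}}$, the number of $n \leq N$ such that $(Y_{n-1}, Y_n) \in \mathcal{B} \times \mathcal{A}$ grows like $N \Pr_\pi(Y_0 \in \mathcal{B}, Y_1 \in \mathcal{A})$. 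Since, up to the initial entry, these are exactly the reactive entries in $\mathcal{A}$, evaluating at $N = \eta^\ret_{\mathcal{A},\ell}$ yields the claim, and Remark~\ref{rk:cap} combined with the definition of conditional probability gives the stated value.

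Next, for nonnegative $g \in L^1(\pi_{\mathcal{A}})$, I would apply Proposition~\ref{prop:LLN-Harris} to $(Y_n)$ with test function $g\ind_{\mathcal{A}} \in L^1(\pi)$ to get
\begin{equation*}
\frac{1}{N}\sum_{n=0}^{N-1} g(Y_n)\ind{Y_n \in \mathcal{A}} \longrightarrow \pi(\mathcal{A})\pi_{\mathcal{A}}(g) \quad \text{almost surely.}
\end{equation*}
Substituting $N = \eta^\ret_{\mathcal{A},\ell}$ (which tends to infinity by the previous step) and using the telescoping identity, one obtains
\begin{equation*}
\frac{1}{\ell}\sum_{k=0}^{\ell-1} S_k \longrightarrow \frac{\pi_{\mathcal{A}}(g)}{\Pr_{\pi_{\mathcal{A}}}(Y_1 \in \mathcal{B})} < +\infty \quad \text{almost surely.}
\end{equation*}

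It remains to identify this limit as $\Exp_{\nu^\ret_{\mathcal{A}}}[S_0]$. By Proposition~\ref{prop:Markov-reactive}, the reactive entrance chain $(Y^\ret_{\mathcal{A},k})$ is positive Harris recurrent with stationary distribution $\nu^\ret_{\mathcal{A}}$. The strong Markov property implies that the sequence of excursions $\mathcal{E}_k := (Y_{\eta^\ret_{\mathcal{A},k}}, \ldots, Y_{\eta^\ret_{\mathcal{A},k+1}-1})$, viewed in an appropriate path space, is itself a positive Harris recurrent Markov chain whose invariant measure projects to $\nu^\ret_{\mathcal{A}}$ on its starting coordinate. Since $S_k$ is a measurable function of $\mathcal{E}_k$, a Birkhoff-type ergodic theorem then yields $\ell^{-1}\sum_{k=0}^{\ell-1} S_k \to \Exp_{\nu^\ret_{\mathcal{A}}}[S_0]$ almost surely, with the convention that the limit is $+\infty$ when the expectation is infinite (valid for nonnegative summands). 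Comparison with the preceding paragraph applied to $|g|$ yields both the integrability~\eqref{eq:Hill0} and the value of the limit; decomposing a signed $g$ as $g_+ - g_-$ then gives~\eqref{eq:Hill}. I expect the main technical obstacle to be making precise this last ergodic theorem for excursion-valued Harris chains within the abstract framework of the appendix, specifically verifying that the enlarged chain $(\mathcal{E}_k)$ satisfies the hypotheses of Proposition~\ref{prop:LLN-Harris} on its path-valued state space.
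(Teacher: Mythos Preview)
Your approach is correct in outline but takes a genuinely different route from the paper. The paper's proof is a one-line application of the representation formula (Proposition~\ref{prop:rep-pi}, essentially Kac's formula) to the \emph{pair chain} $\bar Y_n=(Y_n,Y_{n+1})$ with the set $\mathcal{C}=\mathcal{B}\times\mathcal{A}$: under the conditioned measure $\bar\pi_{\mathcal{B}\times\mathcal{A}}$ the second marginal is exactly $\nu^\ret_{\mathcal{A}}$, the first return of $\bar Y$ to $\mathcal{B}\times\mathcal{A}$ is the next reactive entrance, and taking $\bar g(y_0,y_1)=g(y_1)\ind{y_1\in\mathcal{A}}$ makes Proposition~\ref{prop:rep-pi} yield both~\eqref{eq:Hill0} and~\eqref{eq:Hill} in one stroke. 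No enlarged or excursion chain needs to be built.

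Your approach---matching two ergodic limits, one for $(Y_n)$ and one for the reactive sequence---also leads to the result, and the obstacle you flag is real but surmountable. Rather than working on a path space, it is cleaner to consider the chain $(Y^\ret_{\mathcal{A},k},S_k)$: by the strong Markov property its transition kernel does not depend on the $S$-coordinate (a Markov-renewal structure), so positive Harris recurrence transfers from $(Y^\ret_{\mathcal{A},k})$ by the exact argument used in the proof of Proposition~\ref{prop:phrMRP}. To deduce $\Exp_{\nu^\ret_{\mathcal{A}}}[S_0]<\infty$ before you are allowed to apply Proposition~\ref{prop:LLN-Harris} to the unbounded function $(y,s)\mapsto s$, you need a truncation step: apply it to $s\wedge M$, compare with your finite Birkhoff limit, and let $M\to\infty$.

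What each buys: the paper's route is shorter, stays entirely within Lemma~\ref{lem:pair-Harris} and Proposition~\ref{prop:rep-pi}, and delivers integrability for free. Your route makes the ergodic-average interpretation $\ell^{-1}\sum_k S_k\to \pi_{\mathcal{A}}(g)/\Pr_{\pi_{\mathcal{A}}}(Y_1\in\mathcal{B})$ explicit, which is pleasant, but requires building and analysing an auxiliary chain plus a truncation argument.
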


Extending $g$ to $\mathcal{S}$ by letting $g(y)=0$ on ${\mathcal{B}}$ and dropping the normalisation by $\pi({\mathcal{A}})$, the right-hand side of~\eqref{eq:Hill} rewrites $\pi(g)/\mathrm{cap}({\mathcal{A}},{\mathcal{B}})$, where we recall from Remark~\ref{rk:cap} that 
\begin{equation*}
  \mathrm{cap}({\mathcal{A}},{\mathcal{B}}) = \Pr_\pi\left(Y_0 \in {\mathcal{A}}, Y_1 \in {\mathcal{B}}\right) = \Pr_\pi\left(Y_0 \in {\mathcal{B}}, Y_1 \in {\mathcal{A}}\right).
\end{equation*}
Besides, the left-hand side rewrites $\nu^\ret_{\mathcal{A}}(f)$, where $f$ is the solution to the Dirichlet problem
\begin{equation}\label{eq:Dirichlet}
  \left\{\begin{aligned}
    -(P-I)f(y) &= g(y), &y \in {\mathcal{A}},\\
    f(y) &= 0, &y \in {\mathcal{B}}.
  \end{aligned}\right.
\end{equation}
Therefore the Hill relation~\eqref{eq:Hill} establishes a link between this Dirichlet problem (and its associated Green kernel, which maps $g$ to $f$), the reactive entrance distribution $\nu^\ret_{\mathcal{A}}$, the invariant measure $\pi$ and the capacity $\mathrm{cap}({\mathcal{A}},{\mathcal{B}})$. This relation is proved in~\cite[Theorem~7.10, Eq.~(7.1.37)]{BovDen15} for a reversible Markov chain in a discrete state space\footnote{In this reference, the relation involves the reactive exit distribution $\nu^\exi_{\mathcal{A}}$ rather that the reactive entrance distribution $\nu^\ret_{\mathcal{A}}$. Since the chain is assumed there to be reversible with respect to $\pi$, following Remark~\ref{rk:cap} both measures coincide.} and in~\cite[Corollary~4.3]{BauGuyLel} for a Feller chain in a compact state space. Both references are based on a potential theoretic approach, in the sense that they crucially exploit the representation of the left-hand side of~\eqref{eq:Hill} in terms of the Dirichlet problem~\eqref{eq:Dirichlet}. In contrast, our argument starts with the remark, already made in the proof of Proposition~\ref{prop:reactive}, that $\nu^\ret_{\mathcal{A}}$ is the second marginal of the invariant measure of the trace of the pair chain $(\bar{Y}_n)_{n \geq 0}=(Y_n,Y_{n+1})_{n \geq 0}$ on ${\mathcal{B}} \times {\mathcal{A}}$. This allows us to deduce~\eqref{eq:Hill} from the application of a standard representation formula, which is recalled in Proposition~\ref{prop:rep-pi}, for the invariant measure of this chain.

\begin{proof}[Proof of Theorem~\ref{theo:Hill}]
  Let us take $g \in L^1({\mathcal{A}},\pi_{\mathcal{A}})$ and set $g(y)=0$ for any $y \in {\mathcal{B}}$. Then the function $\bar{g}$ defined on $\mathcal{S} \times \mathcal{S}$ by $\bar{g}(y_0,y_1) := g(y_1)$ is in $L^1(\mathcal{S} \times \mathcal{S},\bar{\pi})$, where we denote by $\bar{\pi} = \pi \otimes P$ the invariant measure of the pair chain $(\bar{Y}_n)_{n \geq 0}$. We may therefore apply Proposition~\ref{prop:rep-pi} to this chain, with the set ${\mathcal{B}} \times {\mathcal{A}}$. This yields
  \begin{equation}\label{eq:Hill-barY}
    \Exp_{\bar{\pi}_{{\mathcal{B}} \times {\mathcal{A}}}}\left[\sum_{n=0}^{\bar{\eta}-1} |\bar{g}(\bar{Y}_n)|\right] < +\infty, \qquad \Exp_{\bar{\pi}_{{\mathcal{B}} \times {\mathcal{A}}}}\left[\sum_{n=0}^{\bar{\eta}-1} \bar{g}(\bar{Y}_n)\right] = \frac{\bar{\pi}(\bar{g})}{\bar{\pi}({\mathcal{B}} \times {\mathcal{A}})},
  \end{equation}
  with $\bar{\eta} := \inf\{n \geq 1: \bar{Y}_n \in {\mathcal{B}} \times {\mathcal{A}}\}$. 
  
  If $\bar{Y}_0 = (Y_0,Y_1) \sim \bar{\pi}_{{\mathcal{B}} \times {\mathcal{A}}}$, then $\eta^\ret_{{\mathcal{A}},0}=1$ and $\eta^\ret_{{\mathcal{A}},1}=\bar{\eta}+1$. Therefore
  \begin{equation*}
    \sum_{n=0}^{\bar{\eta}-1} |\bar{g}(\bar{Y}_n)| = \sum_{n=0}^{\eta^\ret_{{\mathcal{A}},1}-2} |g(Y_{n+1})| = \sum_{n=1}^{\eta^\ret_{{\mathcal{A}},1}-1} |g(Y_n)| = \sum_{n=\eta^\ret_{{\mathcal{A}},0}}^{\eta^\ret_{{\mathcal{B}},0}-1} |g(Y_n)|,
  \end{equation*}
  where we have used the fact that $g$ vanishes on ${\mathcal{B}}$ for the last equality. Furthermore, if $\bar{Y}_0 \sim \bar{\pi}_{{\mathcal{B}} \times {\mathcal{A}}}$, then $Y_1 \sim \nu^\ret_{\mathcal{A}}$ and by the Markov property, the sequence $(Y_n)_{n \geq 1}$ has the same law as $(Y_n)_{n \geq 0}$ under $\Pr_{\nu^\ret_{\mathcal{A}}}$, for which $\eta^\ret_{{\mathcal{A}},0}=0$. Therefore,
  \begin{equation*}
    \Exp_{\bar{\pi}_{{\mathcal{B}} \times {\mathcal{A}}}}\left[\sum_{n=0}^{\bar{\eta}-1} |\bar{g}(\bar{Y}_n)|\right] = \Exp_{\nu^\ret_{\mathcal{A}}}\left[\sum_{n=\eta^\ret_{{\mathcal{A}},0}}^{\eta^\ret_{{\mathcal{B}},0}-1} |g(Y_n)|\right] = \Exp_{\nu^\ret_{\mathcal{A}}}\left[\sum_{n=0}^{\eta^\ret_{{\mathcal{B}},0}-1} |g(Y_n)|\right],
  \end{equation*}
  so that the first part of~\eqref{eq:Hill-barY} yields~\eqref{eq:Hill0}. Replacing $|\bar{g}|$ with $\bar{g}$ in the argument, we then deduce from the second part of~\eqref{eq:Hill-barY} that
  \begin{equation*}
    \Exp_{\nu^\ret_{\mathcal{A}}}\left[\sum_{n=0}^{\eta^\ret_{{\mathcal{B}},0}-1} g(Y_n)\right] = \frac{\bar{\pi}(\bar{g})}{\bar{\pi}({\mathcal{B}} \times {\mathcal{A}})}.
  \end{equation*}
  On the one hand,
  \begin{equation*}
    \bar{\pi}(\bar{g}) = \int_{y_0,y_1 \in \mathcal{S}} g(y_1) \pi(\dd y_0)P(y_0,\dd y_1) = \pi(g)
  \end{equation*}
  since $\pi P = \pi$; on the other hand,
  \begin{equation*}
    \bar{\pi}({\mathcal{B}} \times {\mathcal{A}}) = \pi \otimes P({\mathcal{B}} \times {\mathcal{A}}) = \Pr_\pi(Y_0 \in {\mathcal{B}}, Y_1 \in {\mathcal{A}}).
  \end{equation*}
  Following the discussion preceding this proof, this shows~\eqref{eq:Hill}.
\end{proof}

%%%%%%%%%%%%%%%%%%%%%%%%%%%%%%%%%%%%%%%%%%%%%%%%%%%%%%%%%%%%%%%%%%%%%%%%%%%%%%%%%%%%%%%%%%%%%%%%%%%%%%%%%
%%%%%%%%%%%%%%%%%%%%%%%%%%%%%%%%%%%%%%%%%%%%%%%%%%%%%%%%%%%%%%%%%%%%%%%%%%%%%%%%%%%%%%%%%%%%%%%%%%%%%%%%%
\subsection{Proofs of Proposition~\ref{prop:nuA}, Theorem~\ref{theo:Hill-Langevin} and Corollary~\ref{cor:MRT}}\label{ss:appl-Hill}

Under the assumptions of Theorem~\ref{theo:main} with $\metasp = A \cup B$ as in Subsection~\ref{ss:Hill-Langevin}, the sequence $(Y^\bfen_n)_{n \geq 0}$ satisfies Assumptions~(\ref{ass:Harris1}--\ref{ass:Harris2}) and therefore Proposition~\ref{prop:nuA} and Theorem~\ref{theo:Hill-Langevin} immediately follow from Proposition~\ref{prop:Markov-reactive} and Theorem~\ref{theo:Hill}, respectively. 

The proof of Corollary~\ref{cor:MRT} requires more work. We start by stating the following result.

\begin{lem}[Integrability of $\tau^\bfen_1$ under $\pi_{\mathcal{A}^-}$]\label{lem:integ-g}
  With the assumptions of Proposition~\ref{prop:nuA} and Assumption~(\ref{ass:D}), $\Exp_{\pi_{\mathcal{A}^-}}[\tau^\bfen_1] < +\infty$. 
\end{lem}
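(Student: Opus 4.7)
The plan is to decompose $\tau^\bfen_1$ into the first exit time plus a shifted first entry time, and then apply the strong Markov property together with the invariance relation between $\pi^\bfen$ and $\pi^\bfex$ established in Proposition~\ref{prop:exist}.

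More precisely, under $\Pr_{\pi^\bfen}$ we have $(q_0,p_0)\in\Gamma^\bfen$, so $\tau^\bfen_0=0$, and by Lemma~\ref{lem:return} combined with Remark~\ref{rk:intertwining} we have $0<\tau^\bfex_0<\tau^\bfen_1$, almost surely. Applying the strong Markov property at the stopping time $\tau^\bfex_0$, and noting that for an initial condition in $\Gamma^\bfex$ the first entry time $\tau^\bfen_0$ is strictly positive and coincides with $\tau^\bfen_1-\tau^\bfex_0$ for the original process, I would write
\begin{equation*}
  \Exp_{\pi^\bfen}\bigl[\tau^\bfen_1\bigr]
  = \Exp_{\pi^\bfen}\bigl[\tau^\bfex_0\bigr]
  + \Exp_{\pi^\bfen}\!\left[\Exp_{(q_{\tau^\bfex_0},p_{\tau^\bfex_0})}\bigl[\tau^\bfen_0\bigr]\right].
\end{equation*}
By Proposition~\ref{prop:exist}~(ii), the law of $(q_{\tau^\bfex_0},p_{\tau^\bfex_0})$ under $\Pr_{\pi^\bfen}$ is exactly $\pi^\bfex$, so the second term equals $\Exp_{\pi^\bfex}[\tau^\bfen_0]$. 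Hence
\begin{equation*}
  \Exp_{\pi^\bfen}\bigl[\tau^\bfen_1\bigr]
  = \Exp_{\pi^\bfen}\bigl[\tau^\bfex_0\bigr] + \Exp_{\pi^\bfex}\bigl[\tau^\bfen_0\bigr] < +\infty
\end{equation*}
thanks to Assumption~(\ref{ass:D}).

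To pass from $\pi^\bfen$ to $\pi_{\mathcal{A}^-}=\pi^\bfen(\cdot\mid\mathcal{A}^-)$, it suffices to check that $\pi^\bfen(\mathcal{A}^-)>0$, and then bound
\begin{equation*}
  \Exp_{\pi_{\mathcal{A}^-}}\bigl[\tau^\bfen_1\bigr]
  = \frac{1}{\pi^\bfen(\mathcal{A}^-)}\,\Exp_{\pi^\bfen}\!\left[\tau^\bfen_1 \ind{Y^\bfen_0\in\mathcal{A}^-}\right]
  \leq \frac{1}{\pi^\bfen(\mathcal{A}^-)}\,\Exp_{\pi^\bfen}\bigl[\tau^\bfen_1\bigr].
\end{equation*}
The positivity $\pi^\bfen(\mathcal{A}^-)>0$ follows directly from Theorem~\ref{theo:main}: the density $\varrho^\bfen$ of $\pi^\bfen$ is proportional to $|p\cdot\mathsf{n}(q)|\rho(q,p)$ on $\Gamma^\bfen$ with $\rho>0$, and $\mathcal{A}^-=\Gamma^\bfen\cap(\partial A\times\R^d)$ has positive $\dd\sigma_\Sigma(q)\dd p$ measure since $A$ is a nonempty open $C^2$ set.

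None of the steps looks delicate: the only point requiring a little care is the identification of $\tau^\bfen_1-\tau^\bfex_0$, for the original process started under $\pi^\bfen$, with $\tau^\bfen_0$ of the shifted process started at $(q_{\tau^\bfex_0},p_{\tau^\bfex_0})\in\Gamma^\bfex$, which relies on Remark~\ref{rk:intertwining} to guarantee that no entry in $\Gamma^\bfen$ occurs strictly between $0$ and $\tau^\bfex_0$.
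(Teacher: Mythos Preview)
Your proof is correct and follows essentially the same approach as the paper: decompose $\tau^\bfen_1$ via the strong Markov property at $\tau^\bfex_0$, use Proposition~\ref{prop:exist}~(ii) to identify the law of the exit point as $\pi^\bfex$, obtain $\Exp_{\pi^\bfen}[\tau^\bfen_1]=\Exp_{\pi^\bfen}[\tau^\bfex_0]+\Exp_{\pi^\bfex}[\tau^\bfen_0]<+\infty$ by Assumption~(\ref{ass:D}), and then pass to the conditional measure. Your write-up is in fact slightly more explicit than the paper's, spelling out why $\pi^\bfen(\mathcal{A}^-)>0$ and why the conditioning only introduces a harmless multiplicative factor.
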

\begin{proof}
  Since, starting from any $(q_0,p_0) \in \Gamma^\bfen$, we have $0 = \tau^\bfen_0 < \tau^\bfex_0 < \tau^\bfen_1$, by Proposition~\ref{prop:exist} and the strong Markov property we have the identity
  \begin{equation*}
    \Exp_{\pi^\bfen}[\tau^\bfen_1] = \Exp_{\pi^\bfen}[\tau^\bfex_0] + \Exp_{\pi^\bfex}[\tau^\bfen_0].
  \end{equation*}
  We thus deduce from Assumption~(\ref{ass:D}) that $\Exp_{\pi^\bfen}[\tau^\bfen_1] < +\infty$, which implies that $\Exp_{\pi_{\mathcal{A}^-}}[\tau^\bfen_1] < +\infty$.
\end{proof}

We prove Corollary~\ref{cor:MRT} assuming that $G \geq 0$. The general statement then follows by applying the result to $[G]_-$ and $[G]_+$, separately. By the Fubini--Tonelli theorem,
\begin{align*}
  \Exp_{\nu^\ret_{\mathcal{A}^-}}\left[\int_0^{\tau^\ret_{{\mathcal{B}^-},0}} G(q_s,p_s)\dd s\right] &= \Exp_{\nu^\ret_{\mathcal{A}^-}}\left[\sum_{n=0}^{\eta^\ret_{{\mathcal{B}^-},0}-1} \int_{\tau^\bfen_n}^{\tau^\bfen_{n+1}} G(q_s,p_s)\dd s\right]\\
  &= \sum_{n=0}^{+\infty} \Exp_{\nu^\ret_{\mathcal{A}^-}}\left[\Exp_{\nu^\ret_{\mathcal{A}^-}}\left[\left.\ind{n < \eta^\ret_{{\mathcal{B}^-},0}}\int_{\tau^\bfen_n}^{\tau^\bfen_{n+1}} G(q_s,p_s)\dd s \right| \mathcal{F}_{\tau^\bfen_n}\right]\right].
\end{align*}
For any $n \geq 0$, the event $\{n < \eta^\ret_{{\mathcal{B}^-},0}\}$ is in $\mathcal{F}_{\tau^\bfen_n}$, while by the strong Markov property for $(q_t,p_t)_{t \geq 0}$, we have
\begin{equation*}
  \Exp_{\nu^\ret_{\mathcal{A}^-}}\left[\left.\int_{\tau^\bfen_n}^{\tau^\bfen_{n+1}} G(q_s,p_s)\dd s\right| \mathcal{F}_{\tau^\bfen_n}\right] = g^\bfen\left(Y^\bfen_n\right),
\end{equation*}
where
\begin{equation*}
  \forall y \in {\mathcal{A}^-}, \qquad g^\bfen(y) := \Exp_y\left[\int_0^{\tau^\bfen_1} G(q_s,p_s)\dd s\right].
\end{equation*}
Therefore we deduce that
\begin{equation}\label{eq:pfcor-MRT:1}
  \Exp_{\nu^\ret_{\mathcal{A}^-}}\left[\int_0^{\tau^\ret_{{\mathcal{B}^-},0}} G(q_s,p_s)\dd s\right] = \Exp_{\nu^\ret_{\mathcal{A}^-}}\left[\sum_{n=0}^{\eta^\ret_{{\mathcal{B}^-},0}-1} g^\bfen(Y^\bfen_n)\right].
\end{equation}

Lemma~\ref{lem:integ-g} combined with the boundedness of $G$ ensures that the function $g^\bfen$ defined above is in $L^1({\mathcal{A}^-},\pi_{\mathcal{A}^-})$. Thus,~\eqref{eq:MRT:exp} and~\eqref{eq:MRT:Hill} in Corollary~\ref{cor:MRT} follow from~\eqref{eq:pfcor-MRT:1} and Theorem~\ref{theo:Hill-Langevin}. To complete the proof of Corollary~\ref{cor:MRT}, it therefore remains to show~\eqref{eq:MRT:limps}, which we rewrite
\begin{equation}\label{eq:pfcor-MRT:2}
  \lim_{\ell \to +\infty} \frac{1}{\ell}\sum_{k=0}^{\ell-1} Z^\bfren_{AB,k} = \Exp_{\nu^\ret_{\mathcal{A}^-}}\left[Z^\bfren_{AB,0}\right], \qquad \text{almost surely,}
\end{equation}
where we have set
\begin{equation*}
  Z^\bfren_{AB,k} := \int_{\tau^\ret_{{\mathcal{A}^-},k}}^{\tau^\ret_{{\mathcal{B}^-},k}} G(q_s,p_s)\dd s.
\end{equation*}
In this purpose, we first remark that the sequence $(Y^\ret_{{\mathcal{A}^-},k}, Z^\bfren_{AB,k})_{k \geq 0}$ is a time homogeneous Markov chain in ${\mathcal{A}^-} \times [0,+\infty)$, with transition kernel
\begin{equation*}
  \mathbf{P}^\bfren_{AB}((y_0,z_0), \dd y \dd z) = P^\ret_{\mathcal{A}^-}(y_0,\dd y) Q^\bfren_{AB}(y, \dd z),
\end{equation*}
where $P^\ret_{\mathcal{A}^-}(y_0,\dd y)$ denotes the transition kernel of the chain $(Y^\ret_{{\mathcal{A}^-},k})_{k \geq 0}$, while for any $y \in {\mathcal{A}^-}$, $Q^\bfren_{AB}(y, \dd z)$ is the law of $Z^\bfren_{AB,0}$ under $\Pr_y$. The fact that $\mathbf{P}^\bfren_{AB}((y_0,z_0), \dd y \dd z)$ does not depend on $z_0$ expresses the fact that conditionally on $(Y^\ret_{{\mathcal{A}^-},k})_{k \geq 0}$, the variables $Z^\bfren_{AB,k}$ are independent and respectively depend only on $Y^\ret_{{\mathcal{A}^-},k}$, a fact which is reminiscent of the notion of \emph{Markov renewal process}~\cite[Section~VII.4]{Asm03}.

\begin{prop}[Positive Harris recurrence of $(Y^\ret_{{\mathcal{A}^-},k}, Z^\bfren_{AB,k})_{k \geq 0}$]\label{prop:phrMRP}
  Under the assumptions of Proposition~\ref{prop:nuA}, the Markov chain $(Y^\ret_{{\mathcal{A}^-},k}, Z^\bfren_{AB,k})_{k \geq 0}$ is positive Harris recurrent, and its invariant probability measure writes
  \begin{equation*}
    \boldsymbol{\nu}^\bfren_{AB}(\dd y\dd z) := \nu^\ret_{\mathcal{A}^-}(\dd y) Q^\bfren_{AB}(y, \dd z),
  \end{equation*}
  where we recall from Proposition~\ref{prop:nuA} that $\nu^\ret_{\mathcal{A}^-}$ is the invariant measure of $(Y^\ret_{{\mathcal{A}^-},k})_{k \geq 0}$.
\end{prop}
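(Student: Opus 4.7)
The plan is to proceed in two steps: first verify that $\boldsymbol{\nu}^\bfren_{AB}$ is invariant for the transition kernel $\mathbf{P}^\bfren_{AB}$, and then establish positive Harris recurrence of the joint chain $(Y^\ret_{{\mathcal{A}^-},k},Z^\bfren_{AB,k})_{k \geq 0}$ by combining the Harris recurrence of its $y$-marginal (granted by Proposition~\ref{prop:nuA}) with a conditional Borel--Cantelli argument.

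For invariance, I would pick a measurable $\mathsf{C} \subset {\mathcal{A}^-} \times [0,+\infty)$ and compute, using Fubini and the product structure of $\mathbf{P}^\bfren_{AB}$,
\begin{align*}
    \int \mathbf{P}^\bfren_{AB}((y,z),\mathsf{C})\,\boldsymbol{\nu}^\bfren_{AB}(\dd y\dd z)
    &= \int_{\mathcal{A}^-}\!\! \nu^\ret_{\mathcal{A}^-}(\dd y) \int_{\mathcal{A}^-}\!\! P^\ret_{\mathcal{A}^-}(y,\dd y') \int_0^{+\infty}\!\! \ind{(y',z')\in\mathsf{C}}\, Q^\bfren_{AB}(y',\dd z') \\
    &= \int_{\mathcal{A}^-}\!\! \nu^\ret_{\mathcal{A}^-}(\dd y') \int_0^{+\infty}\!\! \ind{(y',z')\in\mathsf{C}}\, Q^\bfren_{AB}(y',\dd z') = \boldsymbol{\nu}^\bfren_{AB}(\mathsf{C}),
\end{align*}
using the invariance of $\nu^\ret_{\mathcal{A}^-}$ under $P^\ret_{\mathcal{A}^-}$ (Proposition~\ref{prop:nuA}) and the fact that the inner integrand does not depend on the ``old'' component $z$.

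For Harris recurrence, I would fix a measurable $\mathsf{C}$ with $\boldsymbol{\nu}^\bfren_{AB}(\mathsf{C}) > 0$, set $\mathsf{C}_y := \{z \geq 0 : (y,z) \in \mathsf{C}\}$ and $h(y) := Q^\bfren_{AB}(y,\mathsf{C}_y)$, so that Fubini yields $\nu^\ret_{\mathcal{A}^-}(h) = \boldsymbol{\nu}^\bfren_{AB}(\mathsf{C}) > 0$. Proposition~\ref{prop:LLN-Harris} applied to the positive Harris chain $(Y^\ret_{{\mathcal{A}^-},k})_{k \geq 0}$ then ensures, almost surely and for any initial condition,
\begin{equation*}
    \sum_{k=0}^{+\infty} h(Y^\ret_{{\mathcal{A}^-},k}) = +\infty.
\end{equation*}
Setting $\mathcal{G} := \sigma(Y^\ret_{{\mathcal{A}^-},k} : k \geq 0)$, the product form of $\mathbf{P}^\bfren_{AB}$ entails that, conditionally on $\mathcal{G}$, the variables $(Z^\bfren_{AB,k})_{k \geq 0}$ are mutually independent with $Z^\bfren_{AB,k} \sim Q^\bfren_{AB}(Y^\ret_{{\mathcal{A}^-},k},\cdot)$. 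Therefore the events $E_k := \{(Y^\ret_{{\mathcal{A}^-},k},Z^\bfren_{AB,k}) \in \mathsf{C}\}$ are conditionally independent given $\mathcal{G}$ with $\Pr(E_k \mid \mathcal{G}) = h(Y^\ret_{{\mathcal{A}^-},k})$, and the conditional second Borel--Cantelli lemma yields that $E_k$ occurs infinitely often almost surely. This shows that the joint chain visits any $\boldsymbol{\nu}^\bfren_{AB}$-positive set infinitely often from any starting point, which together with invariance amounts to positive Harris recurrence with unique invariant probability $\boldsymbol{\nu}^\bfren_{AB}$.

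The main point requiring care is the upgrade from the one-step product form of $\mathbf{P}^\bfren_{AB}$ to mutual conditional independence of the whole sequence $(Z^\bfren_{AB,k})_{k \geq 0}$ given $\mathcal{G}$; this will follow from an iterated application of the Markov property of the joint chain, exploiting that at each step the $z$-component is sampled from a kernel depending only on the new $y$-coordinate. Beyond this point, the argument is essentially bookkeeping and reuse of the ergodic properties of the $y$-marginal already established.
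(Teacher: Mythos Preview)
Your proof is correct and takes a genuinely different route from the paper. The invariance step coincides with the paper's. For Harris recurrence, the paper works via the \emph{regeneration set} characterization (Definition~\ref{defi:reg-set}): since $(Y^\ret_{{\mathcal{A}^-},k})_{k\geq 0}$ admits a regeneration set $\mathcal{R}$ with minorisation $(P^\ret_{\mathcal{A}^-})^r(y_0,\cdot)\geq\epsilon\lambda(\cdot)$, and since the $r$-step kernel of the joint chain inherits the product form $(P^\ret_{\mathcal{A}^-})^r(y_0,\dd y)\,Q^\bfren_{AB}(y,\dd z)$, the set $\mathcal{R}\times[0,+\infty)$ is a regeneration set for the joint chain with minorising measure $\lambda(\dd y)Q^\bfren_{AB}(y,\dd z)$. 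Your argument instead establishes $\boldsymbol{\nu}^\bfren_{AB}$-recurrence directly (Definition~\ref{defi:phi-rec}), combining the ergodic theorem for the $y$-marginal with a conditional Borel--Cantelli step. The paper's route is more structural and sidesteps the need to justify conditional independence of the whole sequence $(Z^\bfren_{AB,k})_{k\geq 1}$ given $\mathcal{G}$, which you correctly flag as the point requiring care; your route is arguably more elementary in that it avoids regeneration sets altogether. One minor correction: for $k=0$ the component $Z^\bfren_{AB,0}$ is fixed by the initial condition rather than drawn from $Q^\bfren_{AB}(Y^\ret_{{\mathcal{A}^-},0},\cdot)$, so your conditional-law statement should be restricted to $k\geq 1$; this does not affect the ``infinitely often'' conclusion.
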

\begin{proof}
  We first check that the probability measure $\boldsymbol{\nu}^\bfren_{AB}$ is invariant for the Markov kernel $\mathbf{P}^\bfren_{AB}$. We have
  \begin{align*}
    \boldsymbol{\nu}^\bfren_{AB}\mathbf{P}^\bfren_{AB}(\dd y\dd z) &= \int_{(y_0,z_0) \in {\mathcal{A}^-} \times [0,+\infty)} \boldsymbol{\nu}^\bfren_{AB}(\dd y_0\dd z_0)\mathbf{P}^\bfren_{AB}((y_0,z_0), \dd y \dd z)\\
    &= \int_{(y_0,z_0) \in {\mathcal{A}^-} \times [0,+\infty)} \nu^\ret_{\mathcal{A}^-}(\dd y_0) Q^\bfren_{AB}(y_0, \dd z_0)P^\ret_{\mathcal{A}^-}(y_0,\dd y) Q^\bfren_{AB}(y, \dd z)\\
    &= \int_{y_0 \in {\mathcal{A}^-}} \left(\int_{z_0 \in [0,+\infty)} Q^\bfren_{AB}(y_0, \dd z_0)\right) \nu^\ret_{\mathcal{A}^-}(\dd y_0)P^\ret_{\mathcal{A}^-}(y_0,\dd y) Q^\bfren_{AB}(y, \dd z)\\
    &= \nu^\ret_{\mathcal{A}^-}(\dd y)Q^\bfren_{AB}(y, \dd z) = \boldsymbol{\nu}^\bfren_{AB}(\dd y\dd z),
  \end{align*}
  where, at the fourth line, we have used the facts that $Q^\bfren_{AB}(y_0, \cdot)$ is a probability measure on $[0,+\infty)$ and that $\nu^\ret_{\mathcal{A}^-}P^\ret_{\mathcal{A}^-}=\nu^\ret_{\mathcal{A}^-}$.
  
  It now remains to show that the chain $(Y^\ret_{{\mathcal{A}^-},k}, Z^\bfren_{AB,k})_{k \geq 0}$ is Harris recurrent. Since $(Y^\ret_{{\mathcal{A}^-},k})_{k \geq 0}$ is positive Harris recurrent, it admits a regeneration set $\mathcal{R}$. Let $\epsilon$, $r$ and $\lambda$ be associated with $\mathcal{R}$ in Definition~\ref{defi:reg-set}. We check that $\mathcal{R} \times [0,+\infty)$ defines a regeneration set for $(Y^\ret_{{\mathcal{A}^-},k}, Z^\bfren_{AB,k})_{k \geq 0}$. It is obvious that the second point of Definition~\ref{defi:reg-set} is satisfied. In order to address the first point, we note that the conditional structure of $(Y^\ret_{{\mathcal{A}^-},k}, Z^\bfren_{AB,k})_{k \geq 0}$ implies that the $r$-th iterate of the Markov kernel $\mathbf{P}^\bfren_{AB}$ writes
  \begin{equation*}
    \left(\mathbf{P}^\bfren_{AB}\right)^r\left((y_0,z_0),\dd y\dd z\right) = \left(P^\ret_{\mathcal{A}^-}\right)^r(y_0,\dd y) Q^\bfren_{AB}(y,\dd z).
  \end{equation*}
  This is checked by a direct computation. Therefore, for any $(y_0,z_0) \in \mathcal{R} \times [0,+\infty)$, 
  \begin{equation*}
    \left(\mathbf{P}^\bfren_{AB}\right)^r\left((y_0,z_0),\dd y \dd z\right) = \left(P^\ret_{\mathcal{A}^-}\right)^r(y_0,\dd y) Q^\bfren_{AB}(y,\dd z) \geq \epsilon \lambda(\dd y) Q^\bfren_{AB}(y,\dd z),
  \end{equation*}
  so that the first point of Definition~\ref{defi:reg-set} is satisfied with $\boldsymbol{\lambda}(\dd y\dd z) := \lambda(\dd y) Q^\bfren_{AB}(y,\dd z)$.
\end{proof}

As a consequence of Proposition~\ref{prop:phrMRP}, one may apply Proposition~\ref{prop:LLN-Harris} to the positive Harris recurrent chain $(Y^\ret_{{\mathcal{A}^-},k}, Z^\bfren_{AB,k})_{k \geq 0}$ with the function $\mathbf{f}(y,z) = z$ so as to obtain~\eqref{eq:pfcor-MRT:2}. This completes the proof of Corollary~\ref{cor:MRT}.

%%%%%%%%%%%%%%%%%%%%%%%%%%%%%%%%%%%%%%%%%%%%%%%%%%%%%%%%%%%%%%%%%%%%%%%%
%%%%%%%%%%%%%%%%%%%%%%%%%%%%%%%%%%%%%%%%%%%%%%%%%%%%%%%%%%%%%%%%%%%%%%%%
\subsection{Remarkable identities in the conservative case}\label{ss:rev-reactive} 
Let us conclude this section by complementary results which may be of interest to better understand the links between the probability measures we have introduced, in the conservative case $F=-\nabla V$. 

Until now, we considered in this section the process
$(Y_n^-)_{n \ge0}$ with values in $\mathcal{A}^- \cup \mathcal{B}^-$,
and the associated probability measures $\pi^-$,
 $\nu^{\ret}_{\mathcal{A}^-/\mathcal{B}^-}$ and $\nu^{\exi}_{\mathcal{A}^-/\mathcal{B}^-}$,
 which respectively correspond to the stationary measures of the
 successive entry points in $A \cup B$, of the first entrance in $A/B$
 coming from $B/A$, and of the last entrance in $A/B$ before going to
 $B/A$. Likewise, one can consider, with obvious notation,  the process
$(Y_n^+)_{n \ge0}$ with values in $\mathcal{A}^+ \cup \mathcal{B}^+$,
and the associated probability measures $\pi^+$,
 $\nu^{\ret}_{\mathcal{A}^+/\mathcal{B}^+}$ and $\nu^{\exi}_{\mathcal{A}^+/\mathcal{B}^+}$,
 which respectively correspond to the stationary measures of the
 successive exit points from $A \cup B$, of the first exit from $A/B$
 coming from $B/A$, and of the last exit from $A/B$ before going to
 $B/A$. Then the following statement easily stems from the combination of the reversibility identities from Corollary~\ref{cor:rev} with the explicit expressions of the reactive distributions from Proposition~\ref{prop:reactive}. We also refer to Figure~\ref{fig:proposition57} for
an illustration.

\begin{prop}[Links between reactive exit and entrance distributions in the conservative case]\label{prop:momentum_reversal}
  In the setting of Proposition~\ref{prop:nuA} with $F=-\nabla V$, one has the following
identities:
$$\nu^{\exi}_{\mathcal{A}^-} =   \nu^{\ret}_{\mathcal{A}^+} \circ \mathsf{R}^{-1}
\text{ and } \nu^{\exi}_{\mathcal{A}^+}=  \nu^{\ret}_{\mathcal{A}^-}\circ \mathsf{R}^{-1},$$
$$\nu^{\ret}_{\mathcal{B}^-}= \nu^{\exi}_{\mathcal{B}^+}\circ \mathsf{R}^{-1} \text{
  and }\nu^{\ret}_{\mathcal{B}^+} = \nu^{\exi}_{\mathcal{B}^-}\circ \mathsf{R}^{-1}.$$
\end{prop}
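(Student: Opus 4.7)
The plan is to test each identity against an arbitrary bounded measurable function and reduce everything to Corollary~\ref{cor:rev} by means of the explicit formulas from Proposition~\ref{prop:reactive}. The key preliminary observation is that the momentum-reversal map $\mathsf{R}:(q,p)\mapsto(q,-p)$ flips the sign of $p\cdot\mathsf{n}(q)$ while leaving $q$ fixed, so it induces measurable bijections $\mathcal{A}^\bfen\leftrightarrow\mathcal{A}^\bfex$ and $\mathcal{B}^\bfen\leftrightarrow\mathcal{B}^\bfex$. In particular the pushforwards $\nu^\ret_{\mathcal{A}^\bfex}\circ\mathsf{R}^{-1}$, etc., are well-defined probability measures on the intended target spaces, and for any bounded measurable $f$ one has $(\mu\circ\mathsf{R}^{-1})(f)=\mu(f\circ\mathsf{R})$.

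I would then treat the first identity $\nu^\exi_{\mathcal{A}^\bfen}=\nu^\ret_{\mathcal{A}^\bfex}\circ\mathsf{R}^{-1}$ in detail. For a bounded measurable $f:\mathcal{A}^\bfen\to\R$, Proposition~\ref{prop:reactive} gives
\begin{equation*}
\nu^\exi_{\mathcal{A}^\bfen}(f)=\frac{\Exp_{\pi^\bfen}\!\left[f(Y^\bfen_0)\ind{Y^\bfen_1\in\mathcal{B}^\bfen}\right]}{\Pr_{\pi^\bfen}\!\left(Y^\bfen_0\in\mathcal{A}^\bfen,\,Y^\bfen_1\in\mathcal{B}^\bfen\right)}.
\end{equation*}
Applying Corollary~\ref{cor:rev}(ii), which identifies the law of $(Y_0^\bfen,Y_1^\bfen)$ under $\Pr_{\pi^\bfen}$ with the law of $(\mathsf{R}(Y_1^\bfex),\mathsf{R}(Y_0^\bfex))$ under $\Pr_{\pi^\bfex}$, and using $\mathsf{R}(\mathcal{B}^\bfen)=\mathcal{B}^\bfex$, the numerator transforms into $\Exp_{\pi^\bfex}[(f\circ\mathsf{R})(Y_1^\bfex)\ind{Y_0^\bfex\in\mathcal{B}^\bfex}]$ and the denominator into $\Pr_{\pi^\bfex}(Y_0^\bfex\in\mathcal{B}^\bfex,\,Y_1^\bfex\in\mathcal{A}^\bfex)$. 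Recognising the resulting ratio as $\nu^\ret_{\mathcal{A}^\bfex}(f\circ\mathsf{R})=(\nu^\ret_{\mathcal{A}^\bfex}\circ\mathsf{R}^{-1})(f)$ via Proposition~\ref{prop:reactive} applied to the chain $(Y^\bfex_n)_{n\geq 0}$ concludes.

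The three remaining identities follow by the same mechanical recipe, merely selecting the appropriate half of Corollary~\ref{cor:rev}: $\nu^\exi_{\mathcal{A}^\bfex}=\nu^\ret_{\mathcal{A}^\bfen}\circ\mathsf{R}^{-1}$ from (i), and $\nu^\ret_{\mathcal{B}^\bfen}=\nu^\exi_{\mathcal{B}^\bfex}\circ\mathsf{R}^{-1}$, $\nu^\ret_{\mathcal{B}^\bfex}=\nu^\exi_{\mathcal{B}^\bfen}\circ\mathsf{R}^{-1}$ from (ii) and (i) respectively, with the roles of $\mathcal{A}$ and $\mathcal{B}$ swapped in the ratios of Proposition~\ref{prop:reactive}. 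Since the capacity-type identities ensuring equality of the two possible normalising constants are built into Corollary~\ref{cor:rev}, nothing more is needed.

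There is essentially no obstacle here; this is pure bookkeeping. The only point where one must pay attention is to verify at each step that $\mathsf{R}$ indeed sends the relevant set among $\mathcal{A}^\pm,\mathcal{B}^\pm$ to its opposite-sign counterpart, so that the event indicators transform correctly; this is immediate from the definition of $\Gamma^\pm$. Accordingly I would keep the detailed computation for the first identity and indicate that the remaining three are obtained by the same argument mutatis mutandis.
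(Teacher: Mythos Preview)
Your proposal is correct and follows exactly the approach indicated in the paper, which merely states that the result ``easily stems from the combination of the reversibility identities from Corollary~\ref{cor:rev} with the explicit expressions of the reactive distributions from Proposition~\ref{prop:reactive}.'' Your write-up faithfully unpacks this combination, and the detailed verification of the first identity is accurate.
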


\begin{psfrags}
  \psfrag{A}{$A$}
  \psfrag{B}{$B$}
  \psfrag{YexB-}{\small$Y^\exi_{{\mathcal{B}^-},k}$}
  \psfrag{YexB+}{\small$Y^\exi_{{\mathcal{B}^+},k}$}
  \psfrag{YreA-}{\small$Y^\ret_{{\mathcal{A}^-},k}$}
  \psfrag{YreA+}{\small $Y^\ret_{{\mathcal{A}^+},k}$}
  \psfrag{YexA-}{\small$Y^\exi_{{\mathcal{A}^-},k+1}$}
  \psfrag{YexA+}{\small$Y^\exi_{{\mathcal{A}^+},k+1}$}
  \psfrag{YreB+}{\small$Y^\ret_{{\mathcal{B}^+},k+1}$}
  \psfrag{YreB-}{\small$Y^\ret_{{\mathcal{B}^-},k+1}$}
\begin{figure}
\begin{center}
    \includegraphics[width=0.7\textwidth]{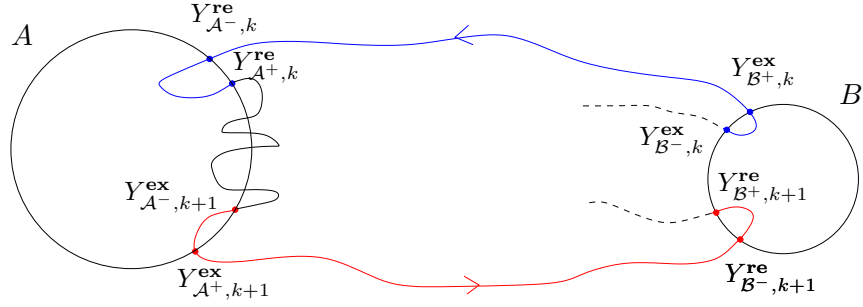}
\caption{Schematic representation of a reactive trajectory from $B$ to
$A$ (in blue), and of a reactive trajectory from $A$ to $B$ (in
red). At stationarity and in the conservative case, the law of a reactive trajectory from $B$ to $A$,
reversed in time and
after momentum reversal, is the same as the law of a reactive trajectory from $A$ to $B$.}
\label{fig:proposition57}
\end{center}
\end{figure}
\end{psfrags}

%%%%%%%%%%%%%%%%%%%%%%%%%%%%%%%%%%%%%%%%%%%%%%%%%%%%%%%%%%%%%%%%%%%%%%%%%%%%%%%%%%%%%%%%%%%%%%%%%%%%%%%%%
%%%%%%%%%%%%%%%%%%%%%%%%%%%%%%%%%%%%%%%%%%%%%%%%%%%%%%%%%%%%%%%%%%%%%%%%%%%%%%%%%%%%%%%%%%%%%%%%%%%%%%%%%
%%%%%%%%%%%%%%%%%%%%%%%%%%%%%%%%%%%%%%%%%%%%%%%%%%%%%%%%%%%%%%%%%%%%%%%%%%%%%%%%%%%%%%%%%%%%%%%%%%%%%%%%%
\appendix
\section{Generalities on Harris recurrent chains}\label{app:Harris}

In this appendix we gather various definitions and facts related with the notion of \emph{Harris recurrent chain}. Most statements are taken from~\cite[Section~VII.3]{Asm03} and~\cite[Chapter~4]{HerLas03}. We also refer to~\cite{MeyTwe09} and~\cite{DouMouPriSou18} for extensive monographs.

%%%%%%%%%%%%%%%%%%%%%%%%%%%%%%%%%%%%%%%%%%%%%%%%%%%%%%%%%%%%%%%%%%%%%%%%%%%%%%%%%%%%%%%%%%%%%%%%%%%%%%%%%
\subsection{Harris recurrence} Let $(Y_n)_{n \geq 0}$ be a time homogeneous Markov chain taking its values in a measurable space $\mathcal{S}$, which is assumed to be \emph{separable}, that is to say that the $\sigma$-field on $\mathcal{S}$ is generated by a countable collection of sets~\cite[Section~4.1]{HerLas03}. The transition kernel of the chain is denoted by~$P$. 

\begin{defi}[$\varphi$-recurrence]\label{defi:phi-rec}
  Let $\varphi$ be a $\sigma$-finite nonnegative measure on $\mathcal{S}$. The Markov chain $(Y_n)_{n \geq 0}$ is called \emph{$\varphi$-recurrent} if, for any $y \in \mathcal{S}$ and any measurable subset $\mathcal{C} \subset \mathcal{S}$ such that $\varphi(\mathcal{C})>0$, then
  \begin{equation*}
    \sum_{n \geq 0} \ind{Y_n \in \mathcal{C}} = +\infty, \qquad \text{$\Pr_y$-almost surely.}
  \end{equation*}
\end{defi}

\begin{defi}[Regeneration set]\label{defi:reg-set}
  A set $\mathcal{R} \subset \mathcal{S}$ is called a \emph{regeneration set} for the Markov chain $(Y_n)_{n \geq 0}$ if:
  \begin{enumerate}[label=(\roman*),ref=\roman*]
    \item there exist $\epsilon>0$, $r\geq 1$ and a probability measure $\lambda$ on $\mathcal{S}$ such that
    \begin{equation*}
      \forall y \in \mathcal{R}, \qquad \Pr_y(Y_r \in \cdot) \geq \epsilon \lambda(\cdot);
    \end{equation*}
    \item for any $y \in \mathcal{S}$, $\Pr_y(\exists n \geq 1: Y_n \in \mathcal{R})=1$.
  \end{enumerate}
\end{defi}

The next result allows to define the notion of Harris recurrence. It may be found in~\cite[Corollary~3.12, p.~205]{Asm03}.

\begin{prop}[Harris recurrence]\label{prop:Harris}
  The Markov chain $(Y_n)_{n \geq 0}$ is $\varphi$-recurrent for some nontrivial $\varphi$ if and only if it has a regeneration set. In this case, it is called \emph{Harris recurrent}, and:
  \begin{enumerate}[label=(\roman*),ref=\roman*]
    \item it has a unique, up to multiplicative constant, $\sigma$-finite nonnegative invariant measure $\Pi$;
    \item any measurable set $\mathcal{C}$ such that $\Pi(\mathcal{C})>0$ contains a regeneration set.
  \end{enumerate}
\end{prop}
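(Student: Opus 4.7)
The plan is to treat the equivalence and the two structural conclusions separately, relying on the Athreya--Ney--Nummelin splitting technique as the main technical tool. Throughout, I will use the fact that a regeneration set provides a built-in minorisation which, combined with the Markov property, can be exploited via a geometric-trials argument to produce a genuine atom on an enlarged ``split'' chain.

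For the easy direction, assume $(Y_n)_{n\geq 0}$ admits a regeneration set $\mathcal{R}$ with parameters $\epsilon$, $r$ and $\lambda$ as in Definition~\ref{defi:reg-set}, and set $\varphi:=\lambda$. Fix $y\in\mathcal{S}$ and a measurable $\mathcal{C}$ with $\lambda(\mathcal{C})>0$. By property~(ii), the chain visits $\mathcal{R}$ at an almost surely infinite sequence of stopping times $\sigma_1<\sigma_2<\cdots$. By property~(i) and the strong Markov property, conditionally on $\mathcal{F}_{\sigma_k}$ one has $\Pr(Y_{\sigma_k+r}\in\mathcal{C}\mid\mathcal{F}_{\sigma_k})\geq\epsilon\lambda(\mathcal{C})$. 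A conditional Borel--Cantelli argument then gives $\sum_n\ind{Y_n\in\mathcal{C}}=+\infty$ almost surely, establishing $\varphi$-recurrence.

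For the converse, assume $(Y_n)_{n\geq 0}$ is $\varphi$-recurrent for a nontrivial $\sigma$-finite measure $\varphi$. The substantive step is to produce a set satisfying a minorisation of the form $\Pr_y(Y_r\in\cdot)\geq\epsilon\lambda(\cdot)$ for all $y$ in that set. I would invoke the Jain--Jamison theorem, which asserts that any $\varphi$-irreducible Markov chain on a separable measurable space admits \emph{small sets}, i.e.\ measurable sets $\mathcal{R}$ with $\varphi(\mathcal{R})>0$ on which such a minorisation holds for some $r\geq 1$, $\epsilon>0$ and probability measure $\lambda$. The second property in Definition~\ref{defi:reg-set} (return to $\mathcal{R}$ from every starting point) is then a direct consequence of $\varphi$-recurrence applied to $\mathcal{R}$ itself, since $\varphi(\mathcal{R})>0$.

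With the equivalence in hand, I would prove (i) by the splitting construction: enlarge the state space to $\mathcal{S}\times\{0,1\}$ and define a split chain on which $\mathcal{R}\times\{1\}$ becomes a true atom $\alpha$; an invariant $\sigma$-finite measure is then furnished by the cycle formula $\Pi(A)=\mathbb{E}_\alpha\bigl[\sum_{n=0}^{T_\alpha-1}\ind{Y_n\in A}\bigr]$, where $T_\alpha$ is the first return to $\alpha$. Uniqueness up to a multiplicative constant follows from the fact that any invariant measure for the original chain lifts to an invariant measure for the split chain, which is forced to be proportional to the cycle measure at the atom. For (ii), given $\mathcal{C}$ with $\Pi(\mathcal{C})>0$, the invariance of $\Pi$ and $\varphi$-recurrence imply $\varphi(\mathcal{C})>0$ (up to restricting to the ``maximal'' irreducibility measure), and a second application of the small-set theorem produces a regeneration set contained in $\mathcal{C}$. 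The main obstacle is the Jain--Jamison existence theorem for small sets: it is a nontrivial measure-theoretic argument requiring the separability assumption on $\mathcal{S}$ in an essential way, and everything else in the proof is comparatively routine once it is available.
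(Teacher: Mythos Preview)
The paper does not actually prove this proposition: it simply cites it as \cite[Corollary~3.12, p.~205]{Asm03}. So there is no ``paper's own proof'' to compare against; this is a background result imported from the general theory.

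Your sketch follows the standard Athreya--Ney--Nummelin route (small-set existence via Jain--Jamison, splitting to obtain an atom, cycle formula for the invariant measure), which is essentially how the result is established in Asmussen and in Meyn--Tweedie. Two minor points are worth tightening. First, in the easy direction, the events $\{Y_{\sigma_k+r}\in\mathcal{C}\}$ are not adapted to a filtration indexed by the $\sigma_k$ in a way that makes the conditional Borel--Cantelli lemma apply directly, because $\sigma_{k+1}$ may occur before $\sigma_k+r$; the clean fix is to thin the return times so that consecutive ones are at least $r$ apart. Second, in part~(ii) your passage from $\Pi(\mathcal{C})>0$ to the existence of a regeneration set inside $\mathcal{C}$ should go through the fact that for a Harris chain the invariant measure $\Pi$ is itself a maximal irreducibility measure; then Jain--Jamison yields a small set $\mathcal{R}'\subset\mathcal{C}$ with $\Pi(\mathcal{R}')>0$, and $\Pi$-recurrence gives the return property. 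Both are routine once stated, and your identification of the small-set theorem as the one genuinely nontrivial ingredient is correct.
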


Under the assumptions of Proposition~\ref{prop:Harris}, if $\Pi(\mathcal{S})<+\infty$ then the chain has a unique invariant probability measure $\pi(\cdot)=\Pi(\cdot)/\Pi(\mathcal{S})$, and it is called \emph{positive Harris recurrent}. 

The next result, based on~\cite[Theorem~4.2.13, p.~51]{HerLas03}, shows that positive Harris recurrence is equivalent to the fact that the ergodic theorem for Markov chains holds for \emph{all} initial conditions.

\begin{prop}[Positive Harris recurrence and ergodic theorem]\label{prop:LLN-Harris}
  A Markov chain $(Y_n)_{n \geq 0}$ is positive Harris recurrent, with invariant probability measure $\pi$, if and only if, for any $y \in \mathcal{S}$ and $f \in L^1(\mathcal{S},\pi)$, 
  \begin{equation*}
    \lim_{n \to +\infty} \frac{1}{n}\sum_{k=0}^{n-1} f(Y_k) = \pi(f), \qquad \text{$\Pr_y$-almost surely.}
  \end{equation*}
\end{prop}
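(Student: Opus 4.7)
The plan has two directions to address, and the easy one comes first. Assuming that the Cesàro convergence holds for every $y \in \mathcal{S}$ and every $f \in L^1(\mathcal{S},\pi)$, fix a measurable $\mathcal{C} \subset \mathcal{S}$ with $\pi(\mathcal{C})>0$ and apply the convergence to $f = \ind{\cdot \in \mathcal{C}}$. Then $\Pr_y$-almost surely, $n^{-1}\sum_{k=0}^{n-1}\ind{Y_k \in \mathcal{C}} \to \pi(\mathcal{C}) > 0$, which forces $\sum_{n \ge 0} \ind{Y_n \in \mathcal{C}} = +\infty$ $\Pr_y$-almost surely. This is exactly $\pi$-recurrence in the sense of Definition~\ref{defi:phi-rec}, so Proposition~\ref{prop:Harris} gives Harris recurrence, and positivity is immediate since $\pi$ is a probability measure.

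For the converse (hard) direction, I would rely on the Athreya--Ney--Nummelin splitting. By Proposition~\ref{prop:Harris} the chain admits a regeneration set $\mathcal{R}$ with parameters $(\epsilon,r,\lambda)$ as in Definition~\ref{defi:reg-set}. The minorisation $\Pr_y(Y_r \in \cdot) \geq \epsilon\lambda(\cdot)$ on $\mathcal{R}$ allows one to enlarge the probability space with independent Bernoulli$(\epsilon)$ coins that are drawn at every visit of $(Y_n)_{n \ge 0}$ to $\mathcal{R}$; on a successful coin, the chain is forced to regenerate after $r$ steps according to~$\lambda$. This construction yields a strictly increasing sequence of honest regeneration times $T_1 < T_2 < \cdots$ such that the excursion blocks $(Y_{T_j},\ldots,Y_{T_{j+1}-1})$ for $j \geq 1$ are i.i.d., and the second clause of Definition~\ref{defi:reg-set} ensures $T_1 < +\infty$ $\Pr_y$-almost surely for every $y$.

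With this structure in hand, the conclusion follows from the classical strong law of large numbers applied to the i.i.d.\ pairs $(S_j,\Delta_j) := (\sum_{k=T_j}^{T_{j+1}-1} f(Y_k), T_{j+1}-T_j)$:
\begin{equation*}
  \frac{1}{T_N - T_1}\sum_{k=T_1}^{T_N-1} f(Y_k) = \frac{N^{-1}\sum_{j=1}^{N-1} S_j}{N^{-1}\sum_{j=1}^{N-1}\Delta_j} \underset{N \to +\infty}{\longrightarrow} \frac{\Exp[S_1]}{\Exp[\Delta_1]}, \qquad \text{a.s.}
\end{equation*}
An interpolation argument between consecutive $T_N$ and $T_{N+1}$ extends the convergence to the continuous index $n$, while the pre-$T_1$ segment contributes a vanishing correction to the Cesàro average. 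The standard cycle identity and uniqueness of the invariant measure asserted in Proposition~\ref{prop:Harris} together identify the right-hand side as $\pi(f)$.

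The principal obstacle is twofold: (a) implementing the splitting rigorously when $r>1$, so that the coins must be coupled to the chain restricted to the epochs of $\mathcal{R}$-visits in such a way that the resulting $T_j$'s are genuine regeneration times; and (b) proving $\Exp[\Delta_1] < +\infty$, which is precisely where positive (as opposed to null) Harris recurrence enters. Rather than reproducing these classical arguments, I would invoke~\cite[Theorem~4.2.13]{HerLas03} for the detailed construction, limiting the text of the appendix to the sketch above.
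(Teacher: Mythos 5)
The paper does not actually prove this proposition; it introduces it with the words ``based on~\cite[Theorem~4.2.13, p.~51]{HerLas03}'' and leaves it at that — which is exactly the same reference you end up invoking. Your sketch is therefore strictly more detailed than the paper's treatment, and it follows the standard route (Nummelin splitting, i.i.d.\ regeneration blocks, strong law, cycle identity).

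One genuine, if small, gap sits in your easy direction. Deducing $\pi$-recurrence from the Cesàro convergence and invoking Proposition~\ref{prop:Harris} gives Harris recurrence together with a unique (up to scaling) $\sigma$-finite invariant measure $\Pi$, but it does not by itself show that $\pi$ is invariant, and you need that before ``positivity is immediate since $\pi$ is a probability measure'' can be asserted: the sentence implicitly identifies $\pi$ with the normalisation of $\Pi$. The fix is short: for bounded $f$, bounded convergence turns the almost-sure Cesàro limit into $\lim_n n^{-1}\sum_{k=0}^{n-1}P^kf(y)=\pi(f)$, while applying the hypothesis to $Pf$ gives $\lim_n n^{-1}\sum_{k=0}^{n-1}P^{k+1}f(y)=\pi(Pf)$; the two partial sums differ by $n^{-1}(P^nf(y)-f(y))\to 0$, hence $\pi(Pf)=\pi(f)$ for all bounded $f$, so $\pi P=\pi$, and then $\Pi(\mathcal{S})<+\infty$ and $\pi=\Pi/\Pi(\mathcal{S})$. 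For the hard direction your outline is the classical one, but as you note, the two substantial steps — rigorously constructing the regeneration times when $r>1$, and showing $\Exp[\Delta_1]<+\infty$ — are pushed to~\cite[Theorem~4.2.13]{HerLas03}. Since the paper does the same, this is acceptable in context, but be aware that your appendix text then contains no more of an independent proof than the paper's does.
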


\begin{rk}\label{rk:pi-rec}
  It follows from Proposition~\ref{prop:LLN-Harris} that a positive Harris recurrent chain with invariant probability measure $\pi$ is $\pi$-recurrent in the sense of Definition~\ref{defi:phi-rec}.
\end{rk}

%%%%%%%%%%%%%%%%%%%%%%%%%%%%%%%%%%%%%%%%%%%%%%%%%%%%%%%%%%%%%%%%%%%%%%%%%%%%%%%%%%%%%%%%%%%%%%%%%%%%%%%%%
\subsection{Trace chain} Let $(Y_n)_{n \geq 0}$ be a positive Harris
recurrent chain with invariant probability measure $\pi$ on
$\mathcal{S}$. Let $\mathcal{C}$ be a measurable subset of
$\mathcal{S}$ such that $\pi(\mathcal{C})>0$. Then by
Remark~\ref{rk:pi-rec}, the set $\{n \geq 0: Y_n \in \mathcal{C}\}$ is
infinite almost surely. We denote by $(\eta_k)_{k \geq 0}$ the
increasing ordering of its elements, namely the successive return
times of the chain in $\mathcal{C}$:
$$\eta_0=\inf\{n \ge 0, \, Y_n \in \mathcal{C}\} \qquad \text{ and } \quad \forall k \ge 0,
\eta_{k+1}=\inf\{n > \eta_k,\,  Y_n \in \mathcal{C}\}.$$ 
 It is clear that they are stopping times for the natural filtration of $(Y_n)_{n \geq 0}$, and by the strong Markov property, the sequence $(Y_{\eta_k})_{k \geq 0}$ is a time homogeneous Markov chain with values in $\mathcal{C}$. It is called the \emph{trace} of the chain on the set $\mathcal{C}$. The next result is a straightforward consequence of Proposition~\ref{prop:LLN-Harris}.

\begin{lem}[Positive Harris recurrence for trace chains]\label{lem:trace-Harris}
  In the setting described above, the trace chain $(Y_{\eta_k})_{k \geq 0}$ is positive Harris recurrent, and its invariant probability measure is the conditional measure $\pi_\mathcal{C}(\cdot) := \pi(\cdot|\mathcal{C})$.
\end{lem}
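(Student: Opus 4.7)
The plan is to verify Lemma~\ref{lem:trace-Harris} directly from the characterisation of positive Harris recurrence provided by Proposition~\ref{prop:LLN-Harris}. Fixing an initial condition $y \in \mathcal{C}$ for the trace chain (which corresponds to an initial condition $y \in \mathcal{C}$ for $(Y_n)_{n \geq 0}$, with $\eta_0=0$), and a test function $f \in L^1(\mathcal{C},\pi_\mathcal{C})$, I need to show
\begin{equation*}
  \lim_{\ell \to +\infty} \frac{1}{\ell}\sum_{k=0}^{\ell-1} f(Y_{\eta_k}) = \pi_\mathcal{C}(f), \qquad \text{$\Pr_y$-almost surely.}
\end{equation*}

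First I would extend $f$ to $\mathcal{S}$ by setting $f \equiv 0$ on $\mathcal{S} \setminus \mathcal{C}$; then $f \in L^1(\mathcal{S},\pi)$, with $\pi(f) = \pi(\mathcal{C})\pi_\mathcal{C}(f)$. Since $\mathbf{1}_\mathcal{C} \in L^1(\mathcal{S},\pi)$ as well, applying the ergodic theorem for the original chain via Proposition~\ref{prop:LLN-Harris} yields $\Pr_y$-almost sure convergence of both
\begin{equation*}
    \frac{1}{n}\sum_{m=0}^{n-1} f(Y_m) \longrightarrow \pi(\mathcal{C})\pi_\mathcal{C}(f), \qquad \frac{1}{n}\sum_{m=0}^{n-1}\mathbf{1}_\mathcal{C}(Y_m) \longrightarrow \pi(\mathcal{C}) > 0.
\end{equation*}

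Next I would exploit the definition of $(\eta_k)_{k \geq 0}$ together with the vanishing of $f$ outside $\mathcal{C}$ to rewrite, for any $\ell \geq 1$,
\begin{equation*}
    \sum_{k=0}^{\ell-1} f(Y_{\eta_k}) = \sum_{m=0}^{\eta_{\ell-1}} f(Y_m), \qquad \sum_{m=0}^{\eta_{\ell-1}} \mathbf{1}_\mathcal{C}(Y_m) = \ell.
\end{equation*}
By Remark~\ref{rk:pi-rec}, the chain $(Y_n)_{n \geq 0}$ is $\pi$-recurrent and $\pi(\mathcal{C})>0$, so $\eta_{\ell-1} \to +\infty$ almost surely as $\ell \to +\infty$. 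Taking the ratio of the two Cesàro averages above along the (random) subsequence $n = \eta_{\ell-1}+1$ therefore gives the desired limit $\pi(\mathcal{C})\pi_\mathcal{C}(f)/\pi(\mathcal{C}) = \pi_\mathcal{C}(f)$, and the reverse direction of Proposition~\ref{prop:LLN-Harris} identifies $(Y_{\eta_k})_{k \geq 0}$ as positive Harris recurrent with invariant probability measure~$\pi_\mathcal{C}$.

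There is no real obstacle here; the only subtlety to double-check is that Proposition~\ref{prop:LLN-Harris} requires the ergodic convergence for \emph{every} initial condition $y \in \mathcal{C}$ of the trace chain, not only $\pi_\mathcal{C}$-almost every one. This is automatic because the ergodic theorem for the original chain holds starting from any $y \in \mathcal{S}$, and in particular from any $y \in \mathcal{C}$, where the trace chain is started with $\eta_0 = 0$.
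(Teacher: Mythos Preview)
Your argument is correct and is exactly the kind of detail the paper has in mind: the paper does not write out a proof and simply states that the lemma ``is a straightforward consequence of Proposition~\ref{prop:LLN-Harris}'', which is precisely the characterisation you invoke. Your ratio-of-Ces\`aro-averages computation is the standard way to unpack that sentence, and the care you take about initial conditions (every $y \in \mathcal{C}$, with $\eta_0=0$) is the only point worth checking.
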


Our derivation of the Hill formula in Section~\ref{s:Hill} relies on the use of the representation formula for $\pi$ given in the next proposition. This formula can be seen as a generalisation to the continuous state space setting of the standard identity
\begin{equation*}
  \Exp_y\left[\sum_{n=0}^{\sigma_y-1} g(Y_n)\right] = \frac{\pi(g)}{\pi(y)}, \qquad \sigma_y := \inf\{n \geq 1: Y_n=y\},
\end{equation*}
for Markov chains $(Y_n)_{n \geq 0}$ which take their values in discrete spaces~\cite[Lemma~4.21, p.~76]{BovDen15}.

\begin{prop}[Representation formula for $\pi$]\label{prop:rep-pi}
  In the setting and with the notation of Lemma~\ref{lem:trace-Harris}, for any $g \in L^1(\mathcal{S},\pi)$, we have
  \begin{equation*}
    \Exp_{\pi_\mathcal{C}}\left[\sum_{n=0}^{\eta_1-1} |g(Y_n)|\right] < +\infty,
  \end{equation*}
  and
  \begin{equation*}
    \Exp_{\pi_\mathcal{C}}\left[\sum_{n=0}^{\eta_1-1} g(Y_n)\right] = \frac{\pi(g)}{\pi(\mathcal{C})}.
  \end{equation*}
\end{prop}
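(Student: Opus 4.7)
I would prove the proposition by identifying the excursion occupation measure
$$\nu(A) := \Exp_{\pi_\mathcal{C}}\left[\sum_{n=0}^{\eta_1-1} \ind{Y_n \in A}\right], \qquad A \subset \mathcal{S} \text{ measurable},$$
as $\nu = \pi/\pi(\mathcal{C})$. Once this identification is established, the first assertion follows by monotone convergence applied to $|g|$, which gives $\nu(|g|) = \pi(|g|)/\pi(\mathcal{C}) < +\infty$; the claimed equality of integrals then follows by decomposing $g = [g]_+ - [g]_-$, applying the identification separately to each part, and subtracting.

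The core computational step is to show $\nu P = \nu$. For any measurable $A$, the event $\{n < \eta_1\}$ belongs to $\mathcal{F}_n$ since $\eta_1$ is a stopping time, so Fubini and the Markov property give
$$\nu P(A) = \Exp_{\pi_\mathcal{C}}\left[\sum_{n=0}^{\eta_1-1} P(Y_n,A)\right] = \Exp_{\pi_\mathcal{C}}\left[\sum_{n=0}^{\eta_1-1} \ind{Y_{n+1} \in A}\right].$$
Shifting the summation index, the right-hand side rewrites $\nu(A) + \Pr_{\pi_\mathcal{C}}(Y_{\eta_1} \in A) - \Pr_{\pi_\mathcal{C}}(Y_0 \in A)$; by Lemma~\ref{lem:trace-Harris}, $\pi_\mathcal{C}$ is the invariant distribution of the trace chain, so $Y_{\eta_1}$ and $Y_0$ share the same distribution under $\Pr_{\pi_\mathcal{C}}$ and the boundary terms cancel. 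In addition, $\nu(\mathcal{C}) = 1$ by construction, since $Y_n \notin \mathcal{C}$ for $1 \leq n \leq \eta_1 - 1$ whereas $Y_0 \in \mathcal{C}$ almost surely. Invoking the uniqueness of the $\sigma$-finite invariant measure of the Harris recurrent chain (Proposition~\ref{prop:Harris}(i)), we then have $\nu = c\pi$ for some $c > 0$, and the normalization $\nu(\mathcal{C}) = 1$ fixes $c = 1/\pi(\mathcal{C})$.

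The main obstacle is the $\sigma$-finiteness hypothesis needed to apply Proposition~\ref{prop:Harris}(i), i.e.\ checking $\Exp_{\pi_\mathcal{C}}[\eta_1] = \nu(\mathcal{S}) < +\infty$. I would derive this through a Kac-type identity: on the one hand, Proposition~\ref{prop:LLN-Harris} applied to $\ind{\mathcal{C}}$ gives $n^{-1}\#\{k < n: Y_k \in \mathcal{C}\} \to \pi(\mathcal{C})$ almost surely, which by inverting the map $k \mapsto \eta_k$ yields $\eta_k/k \to 1/\pi(\mathcal{C})$ along the subsequence of successive visits to $\mathcal{C}$; on the other hand, under $\Pr_{\pi_\mathcal{C}}$ the increments $\eta_k - \eta_{k-1}$ form a stationary sequence whose Birkhoff average equals $\Exp_{\pi_\mathcal{C}}[\eta_1]$ thanks to the ergodicity granted by positive Harris recurrence of the trace chain (Lemma~\ref{lem:trace-Harris}). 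Identifying the two limits produces $\Exp_{\pi_\mathcal{C}}[\eta_1] = 1/\pi(\mathcal{C})$, which simultaneously establishes $\sigma$-finiteness of $\nu$ and provides the explicit value of the constant.
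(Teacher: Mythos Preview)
Your argument is considerably more self-contained than the paper's, which simply cites \cite[Theorem~3.6.5]{DouMouPriSou18} for the nonnegative case and then decomposes $g=[g]_+-[g]_-$. The invariance computation $\nu P=\nu$ via the index shift, together with the cancellation of boundary terms from Lemma~\ref{lem:trace-Harris}, is exactly the mechanism behind that cited result, so in effect you are reproving what the paper outsources.

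There is one small circularity in your Kac step. To identify the Birkhoff average of the increments $(\eta_k-\eta_{k-1})_{k\geq 1}$ with $\Exp_{\pi_\mathcal{C}}[\eta_1]$, you invoke Birkhoff's theorem, which already requires $\eta_1\in L^1$; and the ergodicity you appeal to is that of the trace chain $(Y_{\eta_k})_{k\geq 0}$, whereas the increments depend on the full excursions and are not functions of the trace chain alone. Both issues dissolve once you notice that you do not need a second, independent computation of the limit: you have already established $\eta_k/k\to 1/\pi(\mathcal{C})$ almost surely from Proposition~\ref{prop:LLN-Harris}. For each $M>0$ the truncated increments $(\eta_k-\eta_{k-1})\wedge M$ form a bounded stationary sequence, so Birkhoff applies and its almost-sure limit is dominated by $1/\pi(\mathcal{C})$; taking expectations gives $\Exp_{\pi_\mathcal{C}}[\eta_1\wedge M]\leq 1/\pi(\mathcal{C})$, and monotone convergence yields $\Exp_{\pi_\mathcal{C}}[\eta_1]\leq 1/\pi(\mathcal{C})<\infty$. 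This suffices for the $\sigma$-finiteness you need, and once $\eta_1\in L^1$ the full Birkhoff theorem recovers the equality. With this patch your route through uniqueness of the invariant measure is complete.
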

\begin{proof}
  Theorem~3.6.5, p.~71 in~\cite{DouMouPriSou18} asserts that the second identity holds in $[0,+\infty)$ for any nonnegative measurable function $g$ on $\mathcal{S}$. Therefore, letting $g \in L^1(\mathcal{S},\pi)$ and applying this identity to $[g]_-$ and $[g]_+$ separately, we get the claimed statements.
\end{proof}

%%%%%%%%%%%%%%%%%%%%%%%%%%%%%%%%%%%%%%%%%%%%%%%%%%%%%%%%%%%%%%%%%%%%%%%%%%%%%%%%%%%%%%%%%%%%%%%%%%%%%%%%%
\subsection{Pair chain} Let $(Y_n)_{n \geq 0}$ be a time homogeneous Markov chain on $\mathcal{S}$, with transition kernel $P$. It is known that the chain $(\bar{Y}_n)_{n \geq 0}$ defined by $\bar{Y}_n = (Y_n,Y_{n+1})$ is a time homogeneous Markov chain on $\mathcal{S} \times \mathcal{S}$, with transition kernel $\bar{P}$ given by
\begin{equation*}
  \bar{P}\,\bar{f}(y_0,y_1) := \int_{\mathcal{S}} \bar{f}(y_1,y_2)P(y_1,\dd y_2),
\end{equation*}
for any measurable and bounded function $\bar{f}$ on $\mathcal{S} \times \mathcal{S}$. It is called the \emph{pair chain}. The following result is taken from the proof of~\cite[Proposition~3.9]{BauGuyLel}.

\begin{lem}[Harris recurrence for the pair chain]\label{lem:pair-Harris}
  If the chain $(Y_n)_{n \geq 0}$ is positive Harris recurrent, with invariant probability measure $\pi$, then the pair chain $(\bar{Y}_n)_{n \geq 0}$ is positive Harris recurrent with invariant probability measure $\bar{\pi} := \pi \otimes P$.
\end{lem}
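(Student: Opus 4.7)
The plan is to establish the two ingredients of Proposition~\ref{prop:Harris}: invariance of $\bar{\pi}$ under $\bar{P}$ and existence of a regeneration set for $(\bar{Y}_n)_{n\geq 0}$. The invariance is a direct Fubini calculation: for any bounded measurable $\bar{f}$,
\[
\int_{\mathcal{S}\times\mathcal{S}} \bar{P}\bar{f}(y_0,y_1)\,\pi(\dd y_0)P(y_0,\dd y_1) = \int_{\mathcal{S}\times\mathcal{S}}\bar{f}(y_1,y_2)\,(\pi P)(\dd y_1)P(y_1,\dd y_2)=\bar{\pi}(\bar{f}),
\]
where $\pi P=\pi$ is used in the last step. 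Once Harris recurrence of the pair chain is established, this together with $\bar{\pi}(\mathcal{S}\times\mathcal{S})=1$ and the uniqueness (up to multiplicative constant) of the invariant $\sigma$-finite measure in Proposition~\ref{prop:Harris} will deliver positive Harris recurrence of $(\bar{Y}_n)_{n\geq 0}$ together with the identification of its invariant probability measure.

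The core observation for the regeneration set construction is that $\bar{P}((y_0,y_1),\cdot)$ does not depend on $y_0$: starting from $(y_0,y_1)$, the sequence $(\bar{Y}_n)_{n\geq 1}$ has the same law as $(Y_{n-1},Y_n)_{n\geq 1}$ where $(Y_n)_{n\geq 0}$ is the underlying chain under $\Pr_{y_1}$, so the first coordinate is forgotten after one step. By Proposition~\ref{prop:Harris}, $(Y_n)_{n\geq 0}$ admits a regeneration set $\mathcal{R}\subset\mathcal{S}$ with parameters $(\epsilon,r,\lambda)$ per Definition~\ref{defi:reg-set}. I would then propose $\bar{\mathcal{R}}:=\mathcal{S}\times\mathcal{R}$ as a regeneration set for $(\bar{Y}_n)_{n\geq 0}$, with associated probability measure $\bar{\lambda}(\dd y\dd y'):=\lambda(\dd y)P(y,\dd y')$ on $\mathcal{S}\times\mathcal{S}$. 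The minorisation condition follows at index $r+1$: for any $(y_0,y_1)\in\bar{\mathcal{R}}$ and measurable $A,B\subset\mathcal{S}$,
\[
\bar{P}^{r+1}\bigl((y_0,y_1),A\times B\bigr)=\int_A P^r(y_1,\dd y)\,P(y,B)\geq \epsilon\int_A \lambda(\dd y)P(y,B)=\epsilon\,\bar{\lambda}(A\times B),
\]
by property~(i) of $\mathcal{R}$ applied at $y_1\in\mathcal{R}$. For the recurrence condition, $\bar{Y}_n\in\bar{\mathcal{R}}$ is equivalent to the second coordinate lying in $\mathcal{R}$; by property~(ii) of $\mathcal{R}$ applied to the underlying chain started at $y_1$, such an index $n\geq 1$ exists almost surely.

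Combining these two ingredients with Proposition~\ref{prop:Harris} yields Harris recurrence of $(\bar{Y}_n)_{n\geq 0}$, and the invariance of the probability measure $\bar{\pi}$ then upgrades this to positive Harris recurrence with $\bar{\pi}$ as the unique invariant probability measure. The only subtlety I expect lies in the bookkeeping around the one-step delay: the minorisation must be read off at the $(r+1)$-th iterate of $\bar{P}$ rather than the $r$-th, and the recurrence property must be invoked for times $n\geq 1$ along the underlying chain started at $y_1$, which is precisely what Definition~\ref{defi:reg-set}(ii) provides. Apart from this, the argument is essentially mechanical once the decoupling structure of $\bar{P}$ is identified.
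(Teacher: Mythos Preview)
Your argument is correct. The paper itself does not give a proof of this lemma; it simply cites \cite[Proposition~3.9]{BauGuyLel}. Your direct verification---checking invariance of $\bar\pi$ by a one-line computation using $\pi P=\pi$, and then building a regeneration set $\bar{\mathcal{R}}=\mathcal{S}\times\mathcal{R}$ with minorisation at step $r+1$ via $\bar\lambda=\lambda\otimes P$---is self-contained and sound. The key structural point you identified, that $\bar P((y_0,y_1),\cdot)$ depends only on $y_1$, is exactly what makes both the minorisation and the recurrence reduce cleanly to the corresponding properties of the underlying chain started at $y_1$; your bookkeeping of the one-step shift (minorisation at $r+1$ rather than $r$) is handled correctly.
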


%%%%%%%%%%%%%%%%%%%%%%%%%%%%%%%%%%%%%%%%%%%%%%%%%%%%%%%%%%%%%%%%%%%%%%%%%%%%%%%%%%%%%%%%%%%%%%%%%%%%%%%%%
%%%%%%%%%%%%%%%%%%%%%%%%%%%%%%%%%%%%%%%%%%%%%%%%%%%%%%%%%%%%%%%%%%%%%%%%%%%%%%%%%%%%%%%%%%%%%%%%%%%%%%%%%
%%%%%%%%%%%%%%%%%%%%%%%%%%%%%%%%%%%%%%%%%%%%%%%%%%%%%%%%%%%%%%%%%%%%%%%%%%%%%%%%%%%%%%%%%%%%%%%%%%%%%%%%%
\section{Proof of Proposition~\ref{prop:Dirichlet-Langevin}}\label{app:DL}

The proofs of both statements~\eqref{it:propDL:1} and~\eqref{it:propDL:2} in Proposition~\ref{prop:Dirichlet-Langevin} follow from symmetric arguments. Therefore, we only detail the proof of the first statement. We fix $f^\bfen$ a continuous and bounded function on $\Gamma^\bfen \cup \Gamma^0$ and define $u^\bfen$ on $\R^d \times \R^d$ accordingly. It is then obvious that $u^\bfen = f^\bfen$ on $\Gamma^\bfen \cup \Gamma^0$. We show that, on $(\R^d \setminus \bar{\metasp}) \times \R^d$, $u^\bfen$ is $C^\infty$ and satisfies $\mathcal{L}u^\bfen = 0$ in Subsection~\ref{ss:u-harm}, and that $u^\bfen$ is continuous on $(\R^d \setminus \metasp) \times \R^d$ in Subsection~\ref{ss:u-cont}. The proofs use arguments from~\cite[Theorem~2.10]{LelRamRey:kfp}, where the time dependent equation $\partial_t u = \mathcal{L}u$ on a bounded domain in $q$ was considered.

%%%%%%%%%%%%%%%%%%%%%%%%%%%%%%%%%%%%%%%%%%%%%%%%%%%%%%%%%%%%%%%%%%%%%%%%%%%%%%%%%%%%%%%%%%%%%%%%%%%%%%%%%
%%%%%%%%%%%%%%%%%%%%%%%%%%%%%%%%%%%%%%%%%%%%%%%%%%%%%%%%%%%%%%%%%%%%%%%%%%%%%%%%%%%%%%%%%%%%%%%%%%%%%%%%%
\subsection{Harmonicity of \texorpdfstring{$u^\bfen$}{u-}}\label{ss:u-harm}

The main step in our argument is the following result.

\begin{prop}[Weak harmonicity of $u^\bfen$]\label{prop:pfDL}
  Under the assumptions of the first statement in Proposition~\ref{prop:Dirichlet-Langevin}, let $\Phi$ be a $C^\infty$ function with compact support in the open set $(\R^d \setminus \bar{\metasp}) \times \R^d$. Then
  \begin{equation*}
    \int_{(\R^d \setminus \bar{\metasp}) \times \R^d} u^\bfen(q,p) \mathcal{L}^*\Phi(q,p) \dd q \dd p = 0,
  \end{equation*}
  where we recall that the differential operator $\mathcal{L}^*$ is defined in~\eqref{eq:Lstar}.
\end{prop}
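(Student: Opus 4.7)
My plan is to establish distributional harmonicity of $u^\bfen$ by combining a short-time strong Markov identity with a duality argument based on the transition density of the Langevin semigroup, whose regularity is guaranteed by hypoellipticity.

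The starting point is the following Markov identity. For any $(q_0, p_0) \in (\R^d \setminus \bar\metasp) \times \R^d$ and any $t > 0$, splitting on $\{\tau^\bfen_0 \leq t\}$ versus $\{\tau^\bfen_0 > t\}$ and using the Markov property at time $t$ on the latter event (on which $(q_s, p_s)$ remains in $(\R^d \setminus \bar\metasp) \times \R^d$ for $s \leq t$) yields
\begin{equation*}
  u^\bfen(q_0, p_0) - P_t u^\bfen(q_0, p_0) = R_t(q_0, p_0),
\end{equation*}
where $P_t\phi(q, p) := \Exp_{(q, p)}[\phi(q_t, p_t)]$ is the unrestricted Langevin semigroup and $|R_t(q_0, p_0)| \leq 2\|f^\bfen\|_\infty \Pr_{(q_0, p_0)}(\tau^\bfen_0 \leq t)$. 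Fix now $\Phi$ as in the statement, with compact support $K$, and let $d_0 > 0$ denote the distance from the $q$-projection of $K$ to $\bar\metasp$. On $\{\tau^\bfen_0 \leq t\}$, the bound $|q_t - q_0| \leq \int_0^t |p_s|\,\dd s$ forces $\sup_{s \leq t}|p_s| \geq d_0/t$, an event of super-polynomially small probability: a short-time Cameron--Martin/Gronwall argument analogous to Step~2 of the proof of Lemma~\ref{lem:Harris:1} gives $\sup_{(q_0, p_0) \in K} \Pr_{(q_0, p_0)}(\tau^\bfen_0 \leq t) \leq C\ee^{-c/t}$ for $t$ small, so that $\int R_t\,\Phi\, \dd q \dd p = O(\ee^{-c/t})$.

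To bring in $\mathcal L^*$, I would invoke the hypoellipticity of $\mathcal L$ (Hörmander's condition holds via $[\partial_{p_i}, p \cdot \nabla_q] = \partial_{q_i}$), which guarantees a smooth transition density $p_t(q_0, p_0; q, p)$ of $P_t$ for $t > 0$. Fubini then yields $\int (P_t u^\bfen)\,\Phi\, \dd q \dd p = \int u^\bfen\, (P_t^\dagger \Phi)\, \dd q \dd p$, where the $L^2(\dd q \dd p)$-adjoint semigroup $P_t^\dagger$ has generator $\mathcal L^*$. Combining with the Markov identity,
\begin{equation*}
  \int u^\bfen \bigl[\Phi - P_t^\dagger \Phi\bigr] \dd q \dd p = \int R_t\, \Phi\, \dd q \dd p = O(\ee^{-c/t}).
\end{equation*}
Dividing by $t$ and passing to the limit using the Duhamel identity $(P_t^\dagger\Phi - \Phi)/t \to \mathcal L^*\Phi$ then yields the desired $\int u^\bfen\, \mathcal L^*\Phi\, \dd q \dd p = 0$.

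The main obstacle is the rigorous justification of this last limiting step: $u^\bfen$ is merely bounded, not integrable, while the support of $P_s^\dagger \mathcal L^*\Phi$ spreads with $s$. A dominating function integrable against $|u^\bfen|$ must therefore be produced uniformly in $s \in [0, t_0]$ for some $t_0 > 0$, which is precisely where the short-time kinetic Gaussian-type estimates on $p_t$ from~\cite{LelRamRey:kfp} enter the picture. Once this weak identity is established, it combines with hypoellipticity of $\mathcal L$ (together with the continuity statement to be proved in Subsection~\ref{ss:u-cont}) to yield the classical smoothness and harmonicity of $u^\bfen$ in the interior.
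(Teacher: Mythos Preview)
Your approach is genuinely different from the paper's. The paper proceeds by \emph{elliptic regularisation}: it adds a noise $\sqrt{2\epsilon}\,\dd\tilde W_t$ to the position equation, truncates $F$ to a compactly supported $F_m$, and works on bounded subdomains $V_k$, so that classical uniformly elliptic Dirichlet theory yields smooth approximations $u^\bfen_{\epsilon,m,k}$ with $\mathcal L_{\epsilon,m}u^\bfen_{\epsilon,m,k}=0$; integration by parts against $\Phi$ is then exact, and the conclusion follows from a triple limit $k\to\infty$, $\epsilon\to 0$, $m\to\infty$ justified by trajectorial convergence of exit times and exit configurations. Your route instead stays with the degenerate process and exploits the semigroup directly via the almost-invariance $u^\bfen=P_tu^\bfen+R_t$ and duality with $P_t^\dagger$.

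Your strategy is sound, but the obstacle you flag is partly self-inflicted. After writing the Duhamel identity $P_t^\dagger\Phi-\Phi=\int_0^tP_s^\dagger(\mathcal L^*\Phi)\,\dd s$ and integrating against $u^\bfen$, apply duality \emph{once more}: $\int u^\bfen\,P_s^\dagger(\mathcal L^*\Phi)=\int(P_su^\bfen)\,\mathcal L^*\Phi$. The right-hand integral now lives on the fixed compact support of $\mathcal L^*\Phi$, $P_su^\bfen$ is bounded by $\|f^\bfen\|_\infty$ and converges pointwise to $u^\bfen$ as $s\to 0$ by path continuity, so dominated convergence and Ces\`aro averaging give the limit directly---no tail control on $P_s^\dagger(\mathcal L^*\Phi)$ is needed. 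The genuine analytic input that remains is the Duhamel identity itself, i.e.\ smoothness of $p_t(y,x)$ in $y$ and the backward Kolmogorov equation $\partial_sp_s(y,x)=\mathcal L_yp_s(y,x)$ with enough integrability to differentiate under the integral; this is where hypoellipticity and the density estimates from~\cite{LelRamRey:kfp} enter, though not in as sharp a form as you suggest. Your remainder bound $\int R_t\Phi=o(t)$ is routine after localisation: before the first hit of $\Sigma$, the position stays within distance $d_0$ of its start, so $F(q_s)$ is bounded there and a Gronwall estimate on $p$ gives the super-polynomial decay.

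The paper's regularisation trades these density/backward-equation facts for classical elliptic theory plus a somewhat involved three-parameter limit; your semigroup route is more intrinsic and shorter once the hypoelliptic machinery is in place, but front-loads that analytic input.
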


Since, under the assumptions of Proposition~\ref{prop:Dirichlet-Langevin}, the differential operator $\mathcal{L}$ is known to be hypoelliptic, Proposition~\ref{prop:pfDL} implies that $u^\bfen$ is $C^\infty$ on the open set $(\R^d \setminus \bar{\metasp}) \times \R^d$ and satisfies $\mathcal{L}u^\bfen=0$ there. Thus we now focus on the proof of Proposition~\ref{prop:pfDL}.  Throughout the remainder of this appendix, we take the convention to denote by $x=(q,p)$ generic elements of $\R^d \times \R^d$, and by $X_t = (q_t,p_t)$ the solution to~\eqref{eq:Langevin}.

Let $(\tilde{W}_t)_{t\geq0}$ be a $d$-dimensional Brownian motion,
independent from $(W_t)_{t\geq0}$. For any $m \geq 0$, let $F_m$ be a
smooth and compactly supported vector field on $\R^d$, which coincides
with $F$ on the open ball $\mathrm{B}(0,m)$ and which is such that
$|F_m(q)| \leq |F(q)|$ for any $q \in \R^d$. For $\epsilon \geq 0$, let $(X^{\epsilon,m}_t=(q^{\epsilon,m}_t,p^{\epsilon,m}_t))_{t\geq0}$ be the strong solution, defined on the same probability space as $(X_t)_{t \geq 0}$, to the stochastic differential equation
\begin{equation}\label{eq:Langevin-eps-m}
  \left\{\begin{aligned}
    \dd q^{\epsilon,m}_t &= p^{\epsilon,m}_t \dd t + \sqrt{2\epsilon} \dd \tilde{W}_t,\\
    \dd p^{\epsilon,m}_t &= F_m(q^{\epsilon,m}_t)\dd t - \gamma p^{\epsilon,m}_t \dd t + \sqrt{2\gamma\beta^{-1}}\dd W_t.
  \end{aligned}\right.
\end{equation}
We denote by $\mathcal{L}_{\epsilon,m}$ the associated infinitesimal generator. We write $X^m_t := X^{0,m}_t$ and $\mathcal{L}_m:=\mathcal{L}_{0,m}$.

Let $(V_k)_{k\geq1}$ be a nondecreasing sequence of $C^2$ open bounded subsets of $(\R^d \setminus \bar{\metasp}) \times \R^d$ such that $\cup_{k\geq1}V_k=(\R^d \setminus \bar{\metasp}) \times \R^d$. For $k\geq1$, $m \geq 0$ and $\epsilon>0$, let us define the stopping times
\begin{align*}
\tau^{\epsilon,m,k}&:=\inf\{t> 0: X^{\epsilon,m}_t \notin V_k\},\\
\tau^{\epsilon,m}&:=\inf\{t> 0: X^{\epsilon,m}_t \not\in (\R^d \setminus \bar{\metasp}) \times \R^d\},\\
\tau^{m}&:=\inf\{t> 0: X^m_t \not\in (\R^d \setminus \bar{\metasp}) \times \R^d\}.
\end{align*} 

\begin{lem}[Approximation results in $k$ and $\epsilon$]\label{lem: tau convergence}
 For all $x\in(\R^d \setminus \bar{\metasp}) \times \R^d$, for all $m
 \ge 0$, one has $\Pr_{x}$-almost surely,
\begin{enumerate}[label=(\roman*),ref=\roman*]
    \item\label{cv 1 recrossing} $\lim_{k\to\infty}\tau^{\epsilon,m,k}=\tau^{\epsilon,m},$
    \item\label{cv 2 recrossing} for any $t>0$, $\lim_{\epsilon\to 0} \ind{\tau^{\epsilon,m}\leq t}=\ind{\tau^{m}\leq t},$
    \item\label{cv 3 recrossing} on the event $\{\tau^{m}<\infty\}$, $\lim_{\epsilon\to 0}X^{\epsilon,m}_{\tau^{\epsilon,m}}=X^{m}_{\tau^{m}}.$ 
\end{enumerate}  
\end{lem}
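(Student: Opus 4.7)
The plan is to establish (i) directly from the monotonicity of the sequence of stopping times, and to derive (ii) and (iii) from the uniform convergence of $X^{\epsilon,m}$ to $X^m$ on compact time intervals combined with the nonattainability of the tangential exit set $\Gamma^0$ for the auxiliary dynamics driven by~$F_m$.

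For (i), since $(V_k)_{k \geq 1}$ is nondecreasing with $V_k \subset (\R^d \setminus \bar{\metasp}) \times \R^d$, the sequence $(\tau^{\epsilon,m,k})_{k \geq 1}$ is nondecreasing and bounded above by $\tau^{\epsilon,m}$, hence admits a limit $\tau_\infty \leq \tau^{\epsilon,m}$. Assume, for contradiction, that $\tau_\infty < \tau^{\epsilon,m}$ (or $\tau_\infty < +\infty$ when $\tau^{\epsilon,m}=+\infty$). Then the compact set $K := \{X^{\epsilon,m}_t : t \in [0,\tau_\infty]\}$ is contained in the open set $(\R^d \setminus \bar{\metasp}) \times \R^d$, hence in $V_{k_0}$ for some $k_0 \geq 1$. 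Since $V_{k_0}$ is open, the continuity of sample paths ensures that $X^{\epsilon,m}$ remains in $V_{k_0}$ for a strictly positive time beyond $\tau_\infty$, contradicting the bound $\tau^{\epsilon,m,k_0} \leq \tau_\infty$.

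For (ii) and (iii), the key preliminary step is to prove that $X^{\epsilon,m}$ converges to $X^m$ uniformly on compact time intervals, almost surely. Writing the stochastic differential equation satisfied by the difference, using that $F_m$ is globally Lipschitz, and applying Gronwall's lemma, one obtains an estimate of the form $\sup_{t \in [0,T]} |X^{\epsilon,m}_t - X^m_t| \leq C_{m,\gamma,T} \sqrt{\epsilon} \sup_{t \in [0,T]} |\tilde{W}_t|$, which vanishes as $\epsilon \dto 0$. Second, applying the nonattainability argument underlying Lemma~\ref{lem:nonatt} to the dynamics driven by~$F_m$ --- the proof carries over verbatim, since $F_m$ is smooth, bounded and globally Lipschitz, and only non-explosion is used in a substantive way --- one infers that, on $\{\tau^m<+\infty\}$, $X^m_{\tau^m} \in \Gamma^\bfen$, so the trajectory crosses~$\Sigma$ transversally. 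Transversality then yields both $\limsup_{\epsilon \dto 0} \tau^{\epsilon,m} \leq \tau^m$, by picking $s \in (\tau^m, \tau^m+\delta)$ with $q^m_s \in \metasp$ and noting that $q^{\epsilon,m}_s \in \metasp$ for small $\epsilon$, and $\liminf_{\epsilon \dto 0} \tau^{\epsilon,m} \geq \tau^m$, from the fact that the compact set $\{q^m_t : t \in [0,\tau^m-\delta]\}$ has positive distance to $\bar{\metasp}$, a property which transfers to $q^{\epsilon,m}$ by uniform convergence. On $\{\tau^m=+\infty\}$, the same lower-bound argument applied on any $[0,t]$ gives $\tau^{\epsilon,m} > t$ eventually. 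Claim (iii) then follows from the decomposition
\begin{equation*}
  \bigl|X^{\epsilon,m}_{\tau^{\epsilon,m}} - X^m_{\tau^m}\bigr| \leq \sup_{s \in [0, \tau^m+1]}\bigl|X^{\epsilon,m}_s - X^m_s\bigr| + \bigl|X^m_{\tau^{\epsilon,m}} - X^m_{\tau^m}\bigr|
\end{equation*}
together with the continuity of $X^m$.

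The case analysis above yields (ii) on the event $\{\tau^m \neq t\}$, so the claim reduces to the atom-free property $\Pr_x(\tau^m = t) = 0$ for each fixed $t > 0$. I expect this to be the main delicate point. It can be obtained from transversality: on $\{\tau^m < +\infty\}$, the map $r \mapsto \mathsf{d}_\Sigma(q^m_{\tau^m + r})$ is differentiable at $r=0$ with strictly positive derivative $-p^m_{\tau^m} \cdot \mathsf{n}(q^m_{\tau^m})$, so that $\tau^m$ can be represented locally, via the implicit function theorem, as a $C^1$ function of the trajectory, which prevents the law of $\tau^m$ from concentrating on any fixed point. Alternatively, one may invoke the regularity results on kinetic Fokker--Planck densities from~\cite{LelRamRey:kfp} to argue that the law of $(q^m_t,p^m_t)$ charges $\Sigma \times \R^d$ only on a Lebesgue negligible set of times, and combine this with the strong Markov property.
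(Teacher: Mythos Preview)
Your argument is correct and follows essentially the same route as the paper: monotonicity for~(i), the Gronwall estimate for the uniform convergence of $X^{\epsilon,m}$ to $X^m$ on compact time intervals, transversality of the exit (via nonattainability of~$\Gamma^0$ for the dynamics driven by~$F_m$) to handle the $\limsup$ and $\liminf$ in~(ii), and the triangle inequality for~(iii).

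The one place where you overcomplicate matters is the atom-free property $\Pr_x(\tau^m = t) = 0$. Neither the implicit function theorem nor the strong Markov property is needed. The paper dispatches this in one line: on the event $\{\tau^m = t\}$ one has $q^m_t \in \Sigma$, so
\[
  \Pr_x(\tau^m = t) \leq \Pr_x\bigl(X^m_t \in \Sigma \times \R^d\bigr) = 0,
\]
because the law of $X^m_t$ has a (smooth) density with respect to Lebesgue measure on $\R^d \times \R^d$ --- this is a direct consequence of the hypoellipticity of $\partial_t - \mathcal{L}_m$, or equivalently of the existence of a transition density for the Langevin dynamics with smooth coefficients --- and $\Sigma \times \R^d$ has Lebesgue measure zero since $\Sigma$ is a $C^2$ hypersurface. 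Your second suggestion (``charges $\Sigma \times \R^d$ only on a Lebesgue negligible set of times'') is weaker than what is needed; the point is that the density exists for \emph{every} fixed $t>0$, not merely almost every~$t$.
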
 
\begin{proof} 
  In this proof we fix $m\geq 0$ and $x\in(\R^d \setminus \bar{\metasp}) \times \R^d$. 
  
  Let $\epsilon>0$. Since $(V_k)_{k\geq1}$ is a nondecreasing sequence of open subsets of $(\R^d \setminus \bar{\metasp}) \times \R^d$, then $(\tau^{\epsilon,m,k})_{k\geq1}$ is a nondecreasing sequence of stopping times, bounded from above by $\tau^{\epsilon,m}$, therefore it converges $\Pr_{x}$-almost surely toward $\sup_{k\geq1}\tau^{\epsilon,m,k} \leq \tau^{\epsilon,m}$. Besides, it follows from the equality $\cup_{k\geq1}V_k=(\R^d \setminus \bar{\metasp}) \times \R^d$ that $\Pr_{x}$-almost surely $\sup_{k\geq1}\tau^{\epsilon,m,k}=\tau^{\epsilon,m}$, hence~\eqref{cv 1 recrossing}.
  
  Now let us fix $t>0$. In order to prove~\eqref{cv 2 recrossing} it is sufficient to show the convergence on the partition of events $\{\tau^m<t\}$, $\{\tau^m>t\}$ and $\{\tau^m=t\}$. Since $\Pr_x(\tau^m=t)\leq\Pr_x(X^m_t\in \Sigma \times \R^d)=0$, one only needs to prove the convergence almost surely on the events $\{\tau^m<t\}$ and $\{\tau^m>t\}$. 
  
  Since $F_m$ is globally Lipschitz continuous on $\R^d$, using Gronwall's lemma one obtains the existence of a constant $C_m>0$ such that for all $x\in \R^d\times \R^d$, $\Pr_{x}$-almost surely, for all $t\geq0$, 
  \begin{equation}\label{ineq gronwall recrossing}
    \sup_{s\in[0,t]}\vert X^{\epsilon,m}_s-X^{m}_s\vert\leq\sqrt{2\epsilon}\sup_{s\in[0,t]}\vert\tilde{W}_s\vert\mathrm{e}^{C_mt}.
  \end{equation} 
  For a fixed $t>0$, we consider the event $\{\tau^m<t\}$. By Lemma~\ref{lem:return}, $X^m_{\tau^{m}}\in\Gamma^\bfen$, $\Pr_{x}$-almost surely. Besides, the process $(X^{m}_t)_{t\geq0}$ visits $\metasp \times \R^d$, $\Pr_{x}$-almost surely on $[\tau^{m},\tau^{m}+\alpha]$ for any $\alpha>0$. Therefore, for $\alpha$ small enough so that $\tau^{m}+\alpha<t$ and for $\epsilon$ small enough, one has, using~\eqref{ineq gronwall recrossing}, that $\tau^{\epsilon,m}<t$. This ensures the convergence~\eqref{cv 2 recrossing} on the event $\{\tau^m<t\}$.
  
  Assume now that $\{\tau^m>t\}$. It follows from~\eqref{ineq gronwall recrossing} that for $\epsilon$ small enough, the process $(X^{\epsilon,m}_s)_{s\in[0,t]}$ is at a positive distance from $\metasp \times \R^d$. Therefore, one has that $\tau^{\epsilon,m}>t$ which ensures the convergence~\eqref{cv 2 recrossing} on the event $\{\tau^m>t\}$. 
  
  On the event $\{\tau^m < \infty\}$, by~\eqref{cv 2 recrossing} we have $\tau^{\epsilon,m} < \infty$ for $\epsilon$ small enough, thus we deduce from~\eqref{ineq gronwall recrossing} and the continuity of the trajectory of $(X^m_t)_{t \geq 0}$ that $X^{\epsilon,m}_{\tau^{\epsilon,m}}$ converges to $X^{m}_{\tau^{m}}$, which is the assertion~\eqref{cv 3 recrossing}.
\end{proof}

We are now ready to prove Proposition~\ref{prop:pfDL}.

\begin{proof}[Proof of Proposition~\ref{prop:pfDL}]
  We extend $f^\bfen$ to a function which is continuous and bounded on the whole space $\R^d \times \R^d$, and for $\epsilon>0$, $k\geq1$ and $m\geq 0$, we set
  \begin{equation*}
    u^\bfen_{\epsilon,m,k}(x) := \Exp_{x}\left[f^\bfen(X^{\epsilon,m}_{\tau^{\epsilon,m,k}})\right],
  \end{equation*}
  for any $x \in V_k$. By \cite[Theorem~5.1 in Chapter~6]{Fri75}, $\tau^{\epsilon,m,k}<\infty$ almost surely and $u^\bfen_{\epsilon,m,k}$ is $C^\infty$ on $V_k$ and satisfies $\mathcal{L}_{\epsilon,m}u^\bfen_{\epsilon,m,k}=0$ there. Now there exists $k_0$ such that for all $k\geq k_0$, the support of $\Phi$ is contained in $V_k$. As a result, for all $k\geq k_0$, a direct integration by parts yields
  \begin{equation}\label{solution distrib recrossing}
    \int_{(\R^d \setminus \bar{\metasp}) \times \R^d} u^\bfen_{\epsilon,m,k}(x) \mathcal{L}^*_{\epsilon,m}\Phi(x)\dd x=0,
  \end{equation}  
  where
  \begin{equation*}
    \mathcal{L}^*_{\epsilon,m}\Phi(x) = -p \cdot \nabla_q \Phi - \nabla_p \cdot (F_m(q)\Phi) + \gamma \nabla_p \cdot (p \Phi) + \gamma\beta^{-1}\Delta_p \Phi + \epsilon \Delta_q \Phi, \qquad x=(q,p).
  \end{equation*}
  When $\epsilon \to 0$ and $m \to +\infty$, this quantity converges
  to $\mathcal{L}^*\Phi(x)$ for any $x \in (\R^d \setminus
  \bar{\metasp}) \times \R^d$, and since for $m$ sufficiently large, $F_m$ coincides with
    $F$ on the support of $\Phi$, there is a constant $N(\Phi)$ which
  depends neither on $\epsilon$ nor on $m$ such that
  $|\mathcal{L}^*_{\epsilon,m}\Phi(x)| \leq N(\Phi)\ind{x \in
    V_{k_0}}$. Besides, by construction the function
  $u^\bfen_{\epsilon,m,k}$ is bounded uniformly in $k$, $\epsilon$ and
  $m$. Therefore, using the dominated convergence theorem
  in~\eqref{solution distrib recrossing}, the proof of the proposition
  is a consequence of the following statement: for any $x \in (\R^d \setminus \bar{\metasp}) \times \R^d$,
  \begin{equation*}
    \lim_{m\to\infty}\lim_{\epsilon\to0}\lim_{k\to\infty}u^\bfen_{\epsilon,m,k}(x)=u^\bfen(x).
  \end{equation*}
  Let us now prove this result.  For $x\in (\R^d \setminus \bar{\metasp}) \times \R^d$ and $t>0$, let
  \begin{equation*}
    u^{\bfen,(1)}_{\epsilon,m,k}(t,x):=\Exp_{x}\left[f^\bfen(X^{\epsilon,m}_{\tau^{\epsilon,m,k}})\ind{\tau^{\epsilon,m,k}\leq t}\right],\qquad u^{\bfen,(2)}_{\epsilon,m,k}(t,x):=\Exp_{x}\left[f^\bfen(X^{\epsilon,m}_{\tau^{\epsilon,m,k}})\ind{\tau^{\epsilon,m,k}> t}\right],
  \end{equation*}
  so that $u^\bfen_{\epsilon,m,k}(x)=u^{\bfen,(1)}_{\epsilon,m,k}(t,x)+u^{\bfen,(2)}_{\epsilon,m,k}(t,x)$. As a result, it is enough to prove that
  \begin{equation}\label{cv u1 u2 recrossing}
    \lim_{t\to\infty}\lim_{m\to\infty}\lim_{\epsilon\to0}\lim_{k\to\infty}u^{\bfen,(1)}_{\epsilon,m,k}(t,x)=u^\bfen(x),\quad\limsup_{t\to\infty}\limsup_{m\to\infty}\limsup_{\epsilon\to0}\limsup_{k\to\infty}|u^{\bfen,(2)}_{\epsilon,m,k}(t,x)|=0.
  \end{equation}
    
  Since $k\mapsto \tau^{\epsilon,m,k}$ is nondecreasing and $s\mapsto\ind{s\leq t}$ is left-continuous, one has using~\eqref{cv 1 recrossing} in Lemma~\ref{lem: tau convergence} that almost surely, $\ind{\tau^{\epsilon,m,k}\leq t}$ converges to $\ind{\tau^{\epsilon,m}\leq t}$, hence
  \begin{equation*}
    u^{\bfen,(1)}_{\epsilon,m,k}(t,x) \underset{k\to\infty}{\longrightarrow} \Exp_{x}\left[f^\bfen(X^{\epsilon,m}_{\tau^{\epsilon,m}})\ind{\tau^{\epsilon,m}\leq t}\right].
  \end{equation*} 
  Furthermore, by~\eqref{cv 2 recrossing} and~\eqref{cv 3 recrossing} in Lemma~\ref{lem: tau convergence},
\begin{align*}
    &\left\vert\Exp_{x}\left[f^\bfen(X^{\epsilon,m}_{\tau^{\epsilon,m}})\ind{\tau^{\epsilon,m}\leq t}\right]-\Exp_{x}\left[f^\bfen(X^{m}_{\tau^{m}})\ind{\tau^{m}\leq t}\right]\right\vert\\
    &\leq\Vert f^\bfen\Vert_\infty\Exp_{x}\left[\left\vert\ind{\tau^{\epsilon,m}\leq t}-\ind{\tau^{m}\leq t}\right\vert\right]+\Exp_{x}\left[\ind{\tau^{m}\leq t}\left\vert f^\bfen(X^{\epsilon,m}_{\tau^{\epsilon,m}})-f^\bfen(X^{m}_{\tau^{m}})\right\vert\right]\underset{\epsilon\to0}{\longrightarrow}0.
\end{align*} 

  For $m\geq 0$, let $B_m:=\mathrm{B}(0,m)$, $\tau^m_{B_m^c}:=\inf\{t> 0:X^{m}_t\notin B_m \times \R^d\}$ and $\tau_{B_m^c}:=\inf\{t> 0:X_t\notin B_m \times \R^d\}$. One has for $t>0$,
  \begin{equation*}%\label{expr u_m recrossing}
    \Exp_{x}\left[f^\bfen(X^{m}_{\tau^{m}})\ind{\tau^{m}\leq t}\right]=\Exp_{x}\left[f^\bfen(X^{m}_{\tau^{m}})\ind{\tau^{m}\leq\tau^m_{B_m^c}\land t}\right]+\Exp_{x}\left[f^\bfen(X^{m}_{\tau^{m}})\ind{\tau^{m}>\tau^m_{B_m^c}}\ind{\tau^{m}\leq t}\right].
  \end{equation*}  
  Besides, since $F_m$ and $F$ coincide on $B_m$, the trajectories of $(X^m_t)_{t\geq0}$ and $(X_t)_{t\geq 0}$ coincide until $\tau^m_{B_m^c}$ and thus $\tau^m_{B_m^c}=\tau_{B_m^c}$, $\Pr_x$-almost surely. Therefore, for all $m\geq0$, 
  \begin{equation*}%\label{cv 1er terme u_m}
    \Exp_{x}\left[f^\bfen(X^m_{\tau^{m}})\ind{\tau^{m}\leq\tau^m_{B^c_m}\land t}\right]=\Exp_{x}\left[f^\bfen(X_{\tau^\bfen_0})\ind{\tau^\bfen_0\leq\tau_{B_m^c}\land t}\right]\underset{m\to\infty}{\longrightarrow}\Exp_{x}\left[f^\bfen(X_{\tau^\bfen_0})\ind{\tau^\bfen_0\leq t}\right],
  \end{equation*}
  since $\tau^\bfen_0<\infty$ by Lemma~\ref{lem:return} and $\tau_{B_m^c}\underset{m\to\infty}{\longrightarrow}\infty$ by Assumption~\eqref{ass:A1}, $\Pr_x$-almost surely. Moreover,  
  \begin{align*}
    \Exp_{x}\left[f^\bfen(X^{m}_{\tau^{m}})\ind{\tau^{m}>\tau^m_{B_m^c}}\ind{\tau^{m}\leq t}\right]&\leq\Vert f^\bfen\Vert_\infty\Pr(\tau^{m}>\tau^m_{B_m^c}) \leq\Vert f^\bfen\Vert_\infty\Pr(\tau^\bfen_0>\tau_{B_m^c})\underset{m\to\infty}{\longrightarrow}0 \label{cv 2eme terme u_m},
  \end{align*}
  by Assumption~\eqref{ass:A1} and the fact that
    $\tau^\bfen_0<\infty$ almost surely. Finally, using again that $\tau^\bfen_0<\infty$, one deduces that 
  \begin{equation*}
    \Exp_{x}\left[f^\bfen(X_{\tau^\bfen_0})\ind{\tau^\bfen_0\leq t}\right]\underset{t\to\infty}{\longrightarrow}u^\bfen(x),
  \end{equation*}
  which ensures the first part of~\eqref{cv u1 u2 recrossing}. 
  
  Consider now the convergence of $u^{\bfen,(2)}_{\epsilon,m,k}(t,x)$. We have
  \begin{equation*}
    \vert u^{\bfen,(2)}_{\epsilon,m,k}(t,x)\vert\leq\Vert
    f^\bfen\Vert_\infty\Pr_x(\tau^{\epsilon,m,k}>
    t)\underset{k\to\infty}{\longrightarrow}\Vert f^\bfen\Vert_\infty \Pr_x(\tau^{\epsilon,m}> t),
  \end{equation*}
  by~\eqref{cv 1 recrossing} in Lemma~\ref{lem: tau convergence}. Besides, $\Pr_x(\tau^{\epsilon,m}> t)\underset{\epsilon\to0}{\longrightarrow}\Pr_x(\tau^{m}> t)$ by~\eqref{cv 2 recrossing} in Lemma~\ref{lem: tau convergence}. In addition,
  \begin{align*}
    \Pr_x(\tau^{m}> t)&=\Pr_x(\tau^{m}> t,\tau^{m}>\tau^m_{B_m^c})+\Pr_x(\tau^{m}> t,\tau^{m}\leq\tau^m_{B_m^c})\\
    &\leq\Pr_x(\tau^\bfen_0>\tau_{B_m^c})+\Pr_x(\tau^\bfen_0>t)\underset{m\to\infty}{\longrightarrow}\Pr_x(\tau^\bfen_0> t),
  \end{align*}
  since $\tau^\bfen_0<\infty$, $\Pr_x$-almost surely. Finally, since $\Pr_x(\tau^\bfen_0> t)$ vanishes when $t \to \infty$, one obtains the second part of~\eqref{cv u1 u2 recrossing}.
\end{proof}

%%%%%%%%%%%%%%%%%%%%%%%%%%%%%%%%%%%%%%%%%%%%%%%%%%%%%%%%%%%%%%%%%%%%%%%%%%%%%%%%%%%%%%%%%%%%%%%%%%%%%%%%%
%%%%%%%%%%%%%%%%%%%%%%%%%%%%%%%%%%%%%%%%%%%%%%%%%%%%%%%%%%%%%%%%%%%%%%%%%%%%%%%%%%%%%%%%%%%%%%%%%%%%%%%%%
\subsection{Continuity of \texorpdfstring{$u^\bfen$}{u-}}\label{ss:u-cont}

In this section, we let $(x^n)_{n \geq 1}$ and $x^\infty$ be configurations in $(\R^d \setminus \metasp) \times \R^d$ such that $x^n$ converges to $x^\infty$, and prove that $u^\bfen(x^n)$ converges to $u^\bfen(x^\infty)$. To proceed, for any $n \in \{1, \ldots, \infty\}$ we denote by $(X^n_t)_{t \geq 0}$ the strong solution to~\eqref{eq:Langevin}, with initial condition $x^n$, and assume that all these processes are defined on the same probability space and with respect to the same Brownian motion. We denote by 
\begin{equation*}
  \tau^n = \inf\{t \geq 0: X^n_t \in \Gamma^\bfen\}    
\end{equation*}
the corresponding realisation of $\tau^\bfen_0$, so that $u^\bfen(x^n) = \Exp[f^\bfen(X^n_{\tau^n})]$. Since $f^\bfen$ is continuous and bounded, the desired convergence directly stems from the following trajectorial result.

\begin{lem}[Continuity of the exit configuration]
  In the setting described above, 
  \begin{equation*}
      \lim_{n \to +\infty} X^n_{\tau^n} = X^\infty_{\tau^\infty}, \qquad \text{almost surely.}
  \end{equation*}
\end{lem}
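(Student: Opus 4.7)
The plan is to deduce $X^n_{\tau^n} \to X^\infty_{\tau^\infty}$ almost surely from two ingredients: the pathwise uniform convergence of $(X^n_t)$ to $(X^\infty_t)$ on compact time intervals, and the convergence of the first-entry times $\tau^n \to \tau^\infty$, which crucially uses the transversality of the crossing of $\Sigma$ at time $\tau^\infty$ provided by Lemma~\ref{lem:return}.

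First, since $F$ is $C^\infty$ and the processes $(X^n_t)_{n \in \{1, \ldots, \infty\}}$ are strong solutions to~\eqref{eq:Langevin} driven by the same Brownian motion with deterministic initial conditions converging to $x^\infty$, a standard localisation argument---truncating $F$ outside balls of growing radii to obtain global Lipschitz estimates, applying Gronwall's inequality, and then passing to the limit in the truncation using Assumption~(\ref{ass:A1})---yields, almost surely,
\begin{equation*}
  \forall T > 0, \qquad \sup_{t \in [0, T]} |X^n_t - X^\infty_t| \underset{n \to \infty}{\longrightarrow} 0.
\end{equation*}

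Next, to prove $\tau^n \to \tau^\infty$ almost surely, I would use the signed distance function $h := \mathsf{d}_\Sigma$ to $\Sigma$, which by Assumption~(\ref{ass:B1}) is $C^2$ in a neighbourhood of $\Sigma$ with $\nabla h$ collinear to $\mathsf{n}$ on $\Sigma$. By Lemma~\ref{lem:return}, almost surely $\tau^\infty < +\infty$ and $X^\infty_{\tau^\infty} \in \Gamma^\bfen$, so the smooth function $\psi(t) := h(q^\infty_t)$ satisfies $\psi'(\tau^\infty) = p^\infty_{\tau^\infty} \cdot \nabla h(q^\infty_{\tau^\infty}) \neq 0$, i.e.\ $\psi$ changes sign strictly at $\tau^\infty$. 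Concretely, for every small $\varepsilon > 0$, $\psi(t)$ keeps a definite sign on $[0, \tau^\infty - \varepsilon]$, bounded away from $0$ by continuity and compactness, while $\psi(\tau^\infty + \varepsilon)$ has the opposite sign and is bounded away from $0$. Transferring these sign properties to $h(q^n_t)$ via the uniform convergence of $q^n$ to $q^\infty$, one concludes that $\tau^n \in [\tau^\infty - \varepsilon, \tau^\infty + \varepsilon]$ for $n$ large, whence $\tau^n \to \tau^\infty$. The conclusion then follows from the triangle inequality
\begin{equation*}
  |X^n_{\tau^n} - X^\infty_{\tau^\infty}| \leq \sup_{t \in [0, \tau^\infty + 1]} |X^n_t - X^\infty_t| + |X^\infty_{\tau^n} - X^\infty_{\tau^\infty}|,
\end{equation*}
whose right-hand side vanishes by the pathwise convergence and by the sample-path continuity of $X^\infty$ combined with $\tau^n \to \tau^\infty$.

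The main obstacle will be the treatment of boundary initial conditions: on $\Gamma^\bfen$, where $\tau^\infty = 0$ while $x^n$ may approach from the open set $(\R^d \setminus \bar{\metasp}) \times \R^d$ and one has to show directly that $\tau^n$ shrinks at rate $O(\mathsf{d}_\Sigma(q^n))$ using the strict inequality $p^\infty \cdot \mathsf{n}(q^\infty) < 0$; on $\Gamma^\bfex$, which is handled symmetrically using the exterior sphere property; and most delicately on $\Gamma^0$, where transversality degenerates at $t = 0$ so the sign-change argument fails at the starting point. This last case is resolved by combining the instantaneous entry into $\Gamma^\bfen$ guaranteed by Lemma~\ref{lem:return} with a small-positive-time restart of the Markov property, at which Lemma~\ref{lem:nonatt} restores transversality and reduces the situation to the generic case treated above.
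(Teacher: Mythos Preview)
Your proposal is correct and follows essentially the same route as the paper: establish pathwise uniform convergence on compacts via Gronwall, then prove $\tau^n \to \tau^\infty$ using that $X^\infty_{\tau^\infty} \in \Gamma^\bfen$ forces the position to enter $\metasp$ immediately after $\tau^\infty$, and conclude by continuity. The paper's proof is considerably terser---it writes the $\liminf$ as ``easy'' and handles the $\limsup$ by the single observation that $q^\infty_t \in \metasp$ for some $t \in (\tau^\infty, \tau^\infty+\epsilon)$---whereas you make the transversality explicit through the signed distance $\psi(t)=\mathsf{d}_\Sigma(q^\infty_t)$ and separate out the boundary cases. Two minor remarks: your sign-bounded-away-from-zero claim on $[0,\tau^\infty-\varepsilon]$ indeed fails when $x^\infty\in\Sigma\times\R^d$, which you correctly flag, but the paper sidesteps this entirely since the $\liminf$ is trivial when $\tau^\infty=0$ and otherwise $q^\infty_t$ stays strictly outside $\bar{\metasp}$ on $(0,\tau^\infty)$; and your ``restart of the Markov property'' for the $\Gamma^0$ case is more machinery than needed---the same entering-$\metasp$ argument that gives the $\limsup$ already forces $\tau^n\to 0$ there, without invoking any Markov restart.
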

\begin{proof}
  Since the coefficients of the stochastic differential equation~\eqref{eq:Langevin} are assumed to be locally Lipschitz continuous, the Gronwall lemma implies that, almost surely, for any $T>0$, there exists a finite random constant $C(T)$ such that
  \begin{equation}\label{eq:lipX}
      \forall n \geq 1, \qquad \sup_{t \in [0,T]} |X^n_t - X^\infty_t| \leq C(T)|x^n-x^\infty|.
  \end{equation}
  It is easy to deduce from this estimate that 
  \begin{equation*}
      \liminf_{n \to +\infty} \tau^n \geq \tau^\infty, \qquad \text{almost surely.}
  \end{equation*}
Besides, the proof of Lemma~\ref{lem:return} shows that, almost surely, for any $\epsilon>0$ there exists $t \in (\tau^\infty,\tau^\infty+\epsilon)$ such that $q^\infty_t \in \metasp$, and therefore by~\eqref{eq:lipX},
  \begin{equation*}
      \limsup_{n \to +\infty} \tau^n \leq \tau^\infty + \epsilon, \qquad \text{almost surely.}
  \end{equation*}
  As a conclusion, $\tau^n$ converges to $\tau^\infty$, almost surely, and the final claim follows from~\eqref{eq:lipX} again, using the fact that $\tau^\infty<\infty$ almost surely by Lemma~\ref{lem:return}.
\end{proof}

%%%%%%%%%%%%%%%%%%%%%%%%%%%%%%%%%%%%%%%%%%%%%%%%%%%%%%%%%%%%%%%%%%%%%%%%%%%%%%%%%%%%%%%%%%%%%%%%%%%%%%%%%
%%%%%%%%%%%%%%%%%%%%%%%%%%%%%%%%%%%%%%%%%%%%%%%%%%%%%%%%%%%%%%%%%%%%%%%%%%%%%%%%%%%%%%%%%%%%%%%%%%%%%%%%%
%%%%%%%%%%%%%%%%%%%%%%%%%%%%%%%%%%%%%%%%%%%%%%%%%%%%%%%%%%%%%%%%%%%%%%%%%%%%%%%%%%%%%%%%%%%%%%%%%%%%%%%%%
\section{Discussion of Assumption~(\ref{ass:D})}\label{app:Etau}

In this appendix, we provide sufficient conditions for
Assumption~(\ref{ass:D}) to hold. Moreover, in the conservative case ($F=-\nabla V$), we use results by Kopec~\cite{Kop15} to show that these conditions are satisfied under mild assumptions on the potential $V$.

Let us denote by $C^\infty_\mathrm{pol}(\R^d \times \R^d)$ the set of $C^\infty$ functions on $\R^d \times \R^d$ with polynomially growing derivatives of all orders. Under the assumptions of Theorem~\ref{theo:main}, we introduce the conditions:
\begin{enumerate}[label=(D'\arabic*),ref=D'\arabic*]
    \item\label{ass:D1} for any $k \geq 0$, $\sup_{t \geq 0} \Exp_{\pi^\bfen}[|q_t|^k + |p_t|^k] < +\infty$ and $\sup_{t \geq 0} \Exp_{\pi^\bfex}[|q_t|^k + |p_t|^k] < +\infty$;
    \item\label{ass:D2} for any $\tilde{\varphi} \in C^\infty_\mathrm{pol}(\R^d \times \R^d)$ such that $\mu(\tilde{\varphi})=0$, there exists a solution $\Phi \in C^\infty_\mathrm{pol}(\R^d \times \R^d)$ to the Poisson equation $\mathcal{L}\Phi = \tilde{\varphi}$, where we recall that $\mathcal{L}$ is the infinitesimal generator associated with~\eqref{eq:Langevin} and $\mu$ is the stationary distribution of $(q_t,p_t)_{t \geq 0}$.
\end{enumerate}

\begin{prop}[Sufficient condition for Assumption~(\ref{ass:D})]\label{prop:Etau}
   In the setting of Theorem~\ref{theo:main}, if Assumptions~(\ref{ass:D1}--\ref{ass:D2}) hold then Assumption~(\ref{ass:D}) holds.
\end{prop}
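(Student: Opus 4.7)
By symmetry, it suffices to prove that $\Exp_{\pi^\bfen}[\tau_0^\bfex] < +\infty$: the estimate on $\Exp_{\pi^\bfex}[\tau_0^\bfen]$ follows by exchanging the roles of $\metasp$ and $\R^d \setminus \bar{\metasp}$. My plan is to introduce a corrector $\Phi$ solving a Poisson equation $\mathcal{L}\Phi = \tilde\varphi$ for a well-chosen $\tilde\varphi$, apply Itô's formula, and exploit Proposition~\ref{prop:exist} together with Assumption~(\ref{ass:D1}) to bound the resulting expectation uniformly in a stopping parameter.

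Concretely, I would first pick a nonnegative $g \in C^\infty_c(\R^d \times \R^d)$ with support contained in $(\R^d \setminus \bar{\metasp}) \times \R^d$ and normalised so that $\mu(g) = 1$; such a $g$ exists because $\R^d \setminus \bar{\metasp}$ has positive Lebesgue measure under~(\ref{ass:B1}) and the density $\rho$ is strictly positive under~(\ref{ass:A2}). Then $\tilde\varphi := 1 - g$ lies in $C^\infty_\mathrm{pol}(\R^d \times \R^d)$ and satisfies $\mu(\tilde\varphi) = 0$, so Assumption~(\ref{ass:D2}) furnishes $\Phi \in C^\infty_\mathrm{pol}(\R^d \times \R^d)$ with $\mathcal{L}\Phi = \tilde\varphi$. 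Applying Itô's formula to $\Phi(q_t, p_t)$, stopping at $t \wedge \tau_0^\bfex$, and using the fact that $(q_s, p_s) \in \bar{\metasp} \times \R^d$ on the stochastic interval $[0, \tau_0^\bfex]$ (where $g \equiv 0$ by construction, so $\tilde\varphi(q_s,p_s) = 1$) would yield
$$\Phi(q_{t \wedge \tau_0^\bfex},p_{t \wedge \tau_0^\bfex}) - \Phi(q_0,p_0) = (t \wedge \tau_0^\bfex) + \sqrt{2\gamma\beta^{-1}}\int_0^{t \wedge \tau_0^\bfex}\nabla_p\Phi(q_s,p_s)\cdot\dd W_s.$$
The stopped stochastic integral is a true martingale because the polynomial growth of $\nabla_p\Phi$ combined with~(\ref{ass:D1}) makes its bracket integrable; taking expectations then provides
$$\Exp_{\pi^\bfen}[t \wedge \tau_0^\bfex] = \Exp_{\pi^\bfen}\bigl[\Phi(q_{t \wedge \tau_0^\bfex},p_{t \wedge \tau_0^\bfex})\bigr] - \Exp_{\pi^\bfen}[\Phi(q_0,p_0)].$$

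The heart of the argument is to show that the right-hand side is bounded uniformly in~$t$. Splitting according to $\{\tau_0^\bfex \leq t\}$ and $\{\tau_0^\bfex > t\}$, the contribution of $\Exp_{\pi^\bfen}[|\Phi(q_t,p_t)|]$ is handled directly by the polynomial growth of $\Phi$ and Assumption~(\ref{ass:D1}), while the contribution of $\Exp_{\pi^\bfen}[|\Phi(q_{\tau_0^\bfex},p_{\tau_0^\bfex})|]$ is recognised via Proposition~\ref{prop:exist} as $\Exp_{\pi^\bfex}[|\Phi|]$, which is finite because $\pi^\bfex$ has finite polynomial moments of every order (either from its explicit density together with~(\ref{ass:C}) and~(\ref{ass:A3}), or from~(\ref{ass:D1}) evaluated at $s=0$). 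Monotone convergence, combined with $\tau_0^\bfex < +\infty$ almost surely from Lemma~\ref{lem:return}, then delivers $\Exp_{\pi^\bfen}[\tau_0^\bfex] < +\infty$.

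The main obstacle I anticipate is ensuring that $\tilde\varphi$ simultaneously has zero $\mu$-mean, which is the compatibility condition required to apply~(\ref{ass:D2}), and equals the constant~$1$ along trajectories until $\tau_0^\bfex$, so that Itô's expansion precisely produces the desired exit time. Choosing $g$ compactly supported away from $\bar{\metasp}$ realises both constraints at once; once this is in place, the remainder of the argument reduces to the moment estimates made available by~(\ref{ass:D1}) and to the identification $(q_{\tau_0^\bfex},p_{\tau_0^\bfex}) \sim \pi^\bfex$ furnished by Proposition~\ref{prop:exist}.
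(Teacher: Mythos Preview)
Your argument is correct and takes a genuinely different route from the paper's. The paper writes $\Exp_{\pi^\bfen}[\tau_0^\bfex] = \int_0^\infty \Pr_{\pi^\bfen}(\tau_0^\bfex > t)\,\dd t$, observes that $\{\tau_0^\bfex > t\}$ forces $\frac{1}{t}\int_0^t \tilde\varphi(q_s,p_s)\,\dd s \geq \mu(\varphi)$, and then controls this tail via a Markov inequality of order $\alpha > 2$ on $|\int_0^t \tilde\varphi|$; the latter is bounded by combining the It\^o decomposition $\int_0^t \tilde\varphi = \Phi(X_t) - \Phi(X_0) - M_t$ with Burkholder--Davis--Gundy to obtain $O(t^{\alpha/2})$ growth, making the tail integrable. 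Your approach instead applies It\^o's formula \emph{stopped} at $\tau_0^\bfex$, so that the drift integral collapses to $t \wedge \tau_0^\bfex$ exactly, and bounds $\Exp_{\pi^\bfen}[\Phi(X_{t \wedge \tau_0^\bfex})]$ by splitting on $\{\tau_0^\bfex \lessgtr t\}$ and invoking Proposition~\ref{prop:exist} to identify the law of the exit point. This is shorter and avoids both the tail integral and BDG, at the price of using Proposition~\ref{prop:exist}, which the paper does not need at this point. One small imprecision: the explicit density of $\pi^\bfex$ together with~(\ref{ass:C}) and~(\ref{ass:A3}) does not by itself guarantee that $\pi^\bfex$ has all polynomial moments; your second justification, via~(\ref{ass:D1}) at $t=0$, is the one that actually does the job.
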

\begin{proof}
   Since $\Exp_{\pi^\bfen}[\tau_0^\bfex]$ and $\Exp_{\pi^\bfex}[\tau_0^\bfen]$ play symmetric roles in Assumption~(\ref{ass:D}), we only prove that $\Exp_{\pi^\bfen}[\tau_0^\bfex] < +\infty$. To proceed, let us fix $q_0$ and $r>0$ such that $\mathrm{B}(q_0,r) \subset \R^d \setminus \bar{\metasp}$ and let $\varphi \in C^\infty_\mathrm{pol}(\R^d \times \R^d)$ be a nonnegative function such that 
  \begin{equation*}
      \varphi(q,p) = \begin{cases}
        1 & \text{if $|q-q_0| \leq r/2$,}\\
        0 & \text{if $|q-q_0| \geq r$.}
      \end{cases}
  \end{equation*}
  Then, by Assumption~(\ref{ass:A2}), $\mu(\varphi)>0$, and
  \begin{align*}
      \Exp_{\pi^\bfen}[\tau_0^\bfex] &= \int_0^{+\infty} \Pr_{\pi^\bfen}(\tau_0^\bfex>t)\dd t\\
      &\leq\int_0^{+\infty} \Pr_{\pi^\bfen}\left(\frac{1}{t}\int_0^t \varphi(q_s,p_s)\dd s = 0\right)\dd t\\
      &\leq \int_0^{+\infty} \Pr_{\pi^\bfen}\left(\frac{1}{t}\int_0^t \tilde{\varphi}(q_s,p_s)\dd s \geq \mu(\varphi)\right)\dd t\\
      &\leq 1 + \int_1^{+\infty} \Pr_{\pi^\bfen}\left(\frac{1}{t}\int_0^t \tilde{\varphi}(q_s,p_s)\dd s \geq \mu(\varphi)\right)\dd t,
  \end{align*}
  where $\tilde{\varphi} := \mu(\varphi) - \varphi \in C^\infty_\mathrm{pol}(\R^d \times \R^d)$ is such that $\mu(\tilde{\varphi})=0$.
  
  Let us fix $\alpha>2$. By Markov's inequality, for any $t \geq 1$,
  \begin{equation*}
      \Pr_{\pi^\bfen}\left(\frac{1}{t}\int_0^t \tilde{\varphi}(q_s,p_s)\dd s \geq \mu(\varphi)\right) \leq \frac{1}{(t\mu(\varphi))^\alpha}\Exp_{\pi^\bfen}\left[\left|\int_0^t \tilde{\varphi}(q_s,p_s)\dd s\right|^\alpha\right].
  \end{equation*}
  Let $\Phi \in C^\infty_\mathrm{pol}(\R^d \times \R^d)$ be given by Assumption~(\ref{ass:D2}) as the solution to the Poisson equation $\mathcal{L}\Phi = \tilde{\varphi}$. By Itô's formula, for any $t \geq 0$,
  \begin{equation*}
      \int_0^t \tilde{\varphi}(q_s,p_s)\dd s = \Phi(q_t,p_t) - \Phi(q_0,p_0) - \sqrt{2\gamma\beta^{-1}} \int_0^t \nabla_p \Phi(q_s,p_s) \cdot \dd W_s,
  \end{equation*}
  and then by Jensen's inequality,
  \begin{equation*}
      \left|\int_0^t \tilde{\varphi}(q_s,p_s)\dd s\right|^\alpha \leq 3^{\alpha-1} \left(|\Phi(q_t,p_t)|^\alpha + |\Phi(q_0,p_0)|^\alpha + \left|\sqrt{2\gamma\beta^{-1}} \int_0^t \nabla_p \Phi(q_s,p_s) \cdot \dd W_s\right|^\alpha\right).
  \end{equation*}
  Since $\Phi$ has polynomial growth, by Assumption~(\ref{ass:D1}) the expectation under $\Pr_{\pi^\bfen}$ of the first two terms in the right-hand side above is bounded uniformly in $t$. Besides, since $\nabla_p \Phi$ has polynomial growth, by the Burkholder--Davis--Gundy inequality and Assumption~(\ref{ass:D1}) again, the expectation under $\Pr_{\pi^\bfen}$ of the third term is of order $t^{\alpha/2}$. We therefore conclude that
  \begin{equation*}
      \Exp_{\pi^\bfen}[\tau_0^\bfex] \leq 1 + C\int_1^{+\infty} \frac{1+t^{\alpha/2}}{t^\alpha}\dd t
  \end{equation*}
  for some constant $C \geq 0$, which completes the proof since $\alpha>2$.
\end{proof}

In the conservative case, we now provide explicit conditions on $F=-\nabla V$ ensuring that Assumptions~(\ref{ass:D1}--\ref{ass:D2}) hold. As a preliminary, we recall that in the conservative case, Assumption~(\ref{ass:C}) is equivalent to the statement that
\begin{equation*}
  \int_\Sigma \ee^{-\beta V(q)}\dd \sigma_\Sigma(q) < +\infty.
\end{equation*}
The condition~\eqref{eq:integVeps} below is of a similar nature.

\begin{lem}[Sufficient conditions for Assumptions~(\ref{ass:D1}--\ref{ass:D2}) in the conservative case]\label{lem:Kopec}
  Under the assumptions of Theorem~\ref{theo:main}, assume that $F=-\nabla V$, with a potential $V$ satisfying the conditions~B-1, B-2, B-3 and~B-4 from~\cite{Kop15}, and that $\Sigma$ is such that
  \begin{equation}\label{eq:integVeps}
    \forall \ell \geq 0, \qquad \int_\Sigma |q|^\ell \ee^{-\beta V(q)}\dd \sigma_\Sigma(q) < +\infty.
  \end{equation}
  Then Assumptions~(\ref{ass:D1}--\ref{ass:D2}) hold.
\end{lem}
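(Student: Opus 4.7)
The plan is to derive each of (\ref{ass:D1}) and (\ref{ass:D2}) by combining Kopec's results directly with the surface integrability condition~\eqref{eq:integVeps}, and then to invoke Proposition~\ref{prop:Etau}. Throughout, the explicit form of $\pi^\bfen$ and $\pi^\bfex$ given by Theorem~\ref{theo:main}~(ii) combined with $F=-\nabla V$ and~\eqref{eq:BG} will be the bridge between Kopec's bulk estimates and the surface measure $\dd\sigma_\Sigma$.

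For~(\ref{ass:D2}), Kopec's conditions B-1 through B-4 are tailored precisely to ensure well-posedness in $C^\infty_{\mathrm{pol}}(\R^d\times\R^d)$ of the Poisson equation for $\mathcal{L}$: for any $\tilde\psi\in C^\infty_{\mathrm{pol}}$ with $\mu(\tilde\psi)=0$, there exists $\Psi\in C^\infty_{\mathrm{pol}}$ such that $\mathcal{L}\Psi=-\tilde\psi$. I would simply cite this statement (and apply it to $-\tilde\varphi$ in place of $\tilde\psi$ to match our sign convention $\mathcal{L}\Phi=\tilde\varphi$).

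For~(\ref{ass:D1}), I would use the Lyapunov estimate that Kopec establishes as a prerequisite to his Poisson equation theory, which has the form $\mathcal{L}K\leq -cK+C$ for a Lyapunov function $K(q,p)$ comparable to a power of $1+H(q,p)$, possibly augmented by a small $p\cdot\nabla V(q)$ correction to handle the hypocoercive structure. Combined with It\^o's formula and Gronwall's lemma, this yields, for any initial condition $x$,
\begin{equation*}
  \sup_{t\geq 0}\Exp_x[K(q_t,p_t)] \leq K(x)+C/c,
\end{equation*}
and hence, by Fubini,
\begin{equation*}
  \sup_{t\geq 0}\Exp_{\pi^\bfen}[K(q_t,p_t)] \leq \pi^\bfen(K)+C/c,
\end{equation*}
together with the analogous bound for $\pi^\bfex$. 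Since any power of $|q|$ or $|p|$ is dominated by $K$ up to a multiplicative constant, it then suffices to check that $\pi^\bfen(K)<+\infty$ and $\pi^\bfex(K)<+\infty$. By Theorem~\ref{theo:main}~(ii), this reduces to proving that for every $\ell\geq 0$,
\begin{equation*}
  \int_{\Sigma\times\R^d} (1+|q|^\ell+|p|^\ell)\,|p\cdot\mathsf{n}(q)|\,\ee^{-\beta V(q)-\beta|p|^2/2}\,\dd\sigma_\Sigma(q)\dd p < +\infty,
\end{equation*}
which factorises into a Gaussian integral in $p$ (finite thanks to $|\mathsf{n}(q)|=1$) times a surface integral of the form $\int_\Sigma (1+|q|^\ell)\ee^{-\beta V(q)}\dd\sigma_\Sigma(q)$, finite by~\eqref{eq:integVeps}. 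A possible $p\cdot\nabla V$ correction in $K$ is absorbed by Young's inequality together with the polynomial growth of $\nabla V$ guaranteed by conditions B-1/B-2, which brings us back to the same type of surface integral.

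The main obstacle I anticipate is not mathematical but bookkeeping: carefully matching the statement of Kopec's Poisson and Lyapunov theorems (sign conventions, the precise shape of $K$, whether the polynomial weights on derivatives in $C^\infty_{\mathrm{pol}}$ are uniform in all orders) with what is needed here, and making sure the auxiliary terms in the Lyapunov function remain integrable against $\pi^\bfen$ and $\pi^\bfex$ under~\eqref{eq:integVeps}. Once this matching is carried out, Proposition~\ref{prop:Etau} closes the argument.
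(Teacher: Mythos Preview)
Your proposal is correct and follows essentially the same approach as the paper: for (\ref{ass:D2}) you both cite Kopec's Poisson equation result directly, and for (\ref{ass:D1}) you both reduce to checking that $\pi^\bfen$ and $\pi^\bfex$ have polynomial moments of all orders, which follows from the explicit densities and~\eqref{eq:integVeps}. The paper invokes \cite[Lemma~2.5]{Kop15} for the uniform-in-time moment bound whereas you unpack the underlying Lyapunov estimate, but this is the same mechanism; one small point the paper makes explicit that you only allude to is that Kopec states the Poisson result for the $L^2(\mu)$-adjoint $\mathcal{L}^\dagger$, so one recovers the statement for $\mathcal{L}$ by momentum reversal $p\mapsto -p$. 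Note also that the invocation of Proposition~\ref{prop:Etau} is not part of the lemma itself, which only asserts (\ref{ass:D1}--\ref{ass:D2}).
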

\begin{proof}
  First, by~\cite[Lemma~2.5]{Kop15}, to prove Assumption~(\ref{ass:D1}), it suffices to show that $q_0$ and $p_0$ have finite moments of all orders under $\pi^\bfen$ and $\pi^\bfex$. Using the explicit formula for the densities of $\pi^\bfen$ and $\pi^\bfex$ in the conservative case, it is readily seen that this condition holds unconditionally for $p_0$, while it follows from~\eqref{eq:integVeps} for $q_0$.
  
  Second, Assumption~(\ref{ass:D2}) is a straightforward consequence of~\cite[Lemma~2.12]{Kop15}. Note that in the latter reference, it is proved for the Poisson equation $\mathcal{L}^\dagger\Phi=\tilde{\varphi}$, where $\mathcal{L}^\dagger$ is the formal adjoint of $\mathcal{L}$ in $L^2(\R^d \times \R^d, \mu)$, so the result follows for $\mathcal{L}$ by reverting the sign of $p$.
\end{proof}

\begin{rk}
  The conditions~B-1, B-2, B-3 and~B-4 from~\cite{Kop15} are in particular satisfied if $V$ is the sum of a polynomial $V^\sharp$ with at least quadratic growth at infinity, and a function $V^\flat \in C^\infty_\mathrm{pol}(\R^d)$ which is bounded and has a bounded gradient.
\end{rk}

\begin{rk}
  The condition~\eqref{eq:integVeps} holds in particular if, in addition to Assumption~(\ref{ass:B1}), the set $\metasp$ is bounded, because then $\Sigma$ has finite surface measure. Otherwise, this condition may depend on the geometry of $\Sigma$; for instance, it may fail if for some $r>0$, $\sigma_\Sigma(\mathrm{B}(q,r))$ is not bounded as a function of $q \in \R^d$.
\end{rk}

To complete the discussion, we note that in~\cite{Kop15}, both the
uniform moment estimate (Lemma~2.5) and the existence of a solution to
the Poisson equation (Lemma~2.12) follow from the use of a suitable
Lyapunov functional, which relies on the fact that the force is conservative ($F=-\nabla V$). In the general non-conservative situation, similar Lyapunov functionals, merely based on growth and drift conditions on $F$, were constructed for example in~\cite[Assumption~4]{SacLeiDan17}. Therefore, under these conditions, one may expect the arguments of~\cite{Kop15} to be adapted in order to check Assumptions~(\ref{ass:D1}--\ref{ass:D2}), and thereby extend the statement of Lemma~\ref{lem:Kopec} to non-conservative cases.

%%%%%%%%%%%%%%%%%%%%%%%%%%%%%%%%%%%%%%%%%%%%%%%%%%%%%%%%%%%%%%%%%%%%%%%%%%%%%%%%%%%%%%%%%%%%%%%%%%%%%%%%%
%%%%%%%%%%%%%%%%%%%%%%%%%%%%%%%%%%%%%%%%%%%%%%%%%%%%%%%%%%%%%%%%%%%%%%%%%%%%%%%%%%%%%%%%%%%%%%%%%%%%%%%%%

\subsection*{Acknowledgements}
The authors would like to thank Gabriel Stoltz for fruitful discussions in particular about the proof of
Proposition~\ref{prop:Etau} and the connection with the work of Marie Kopec, and Arnaud Guyader for his careful reading and useful comments.

This work benefitted from the
support of the European Research Council under the
European Union's Horizon 2020 research and innovation programme (grant agreement No 810367), project
EMC2. This work also benefitted from the
support of the projects ANR EFI (ANR-17-CE40-0030) and ANR QuAMProcs (ANR-19-CE40-0010)
from the French National Research Agency. Part of this project
was carried out as TL was a visiting professor at Imperial College of London (ICL), with a
visiting professorship grant from the Leverhulme Trust. The Department of Mathematics
at ICL and the Leverhulme Trust are warmly thanked for their support. MR is supported by Samsung Science and Technology Foundation and was supported by the Région Ile-de-France through a PhD fellowship of the Domaine d'Intérêt Majeur (DIM) Math Innov.

%%%%%%%%%%%%%%%%%%%%%%%%%%%%%%%%%%%%%%%%%%%%%%%%%%%%%%%%%%%%%
%%                  The Bibliography                       %%
%%                                                         %%
%%  imsart-number.bst  will be used to                     %%
%%  create a .BBL file for submission.                     %%
%%                                                         %%
%%  Note that the displayed Bibliography will not          %%
%%  necessarily be rendered by Latex exactly as specified  %%
%%  in the online Instructions for Authors.                %%
%%                                                         %%
%%  MR numbers will be added by VTeX.                      %%
%%                                                         %%
%%  Use \cite{...} to cite references in text.             %%
%%                                                         %%
%%%%%%%%%%%%%%%%%%%%%%%%%%%%%%%%%%%%%%%%%%%%%%%%%%%%%%%%%%%%%

%% if your bibliography is in bibtex format, uncomment commands:
%\bibliographystyle{imsart-number} % Style BST file
%\bibliography{bibliography}       % Bibliography file (usually '*.bib')

%% or include bibliography directly:

\end{document}